\newcommand{\BIGOP}[1]{\mathop{\mathchoice%
{\raise-0.22em\hbox{\huge $#1$}}%
{\raise-0.05em\hbox{\Large $#1$}}{\hbox{\large $#1$}}{#1}}}
\newcommand{\bigtimes}{\BIGOP{\times}}
\newcommand{\BIGboxplus}{\mathop{\mathchoice%
{\raise-0.35em\hbox{\huge $\boxplus$}}%
{\raise-0.15em\hbox{\Large $\boxplus$}}{\hbox{\large $\boxplus$}}{\boxplus}}}
\renewcommand{\ldots}{\ensuremath{\dotsc}}
\newcommand{\Rplus}{{\mathbb R}_{>0}}
\def\epsilon{\varepsilon}
\def\hat{\widehat}
\def\undertilde#1{{\baselineskip=0pt\vtop
{\hbox{$#1$}\hbox{$\scriptscriptstyle\sim$}}}{}}
\def\underdtilde#1{{\baselineskip=0pt\vtop
{\hbox{$#1$}\hbox{$\scriptscriptstyle\approx$}}}{}}
\def\epsilon{\varepsilon}
\def\hat{\widehat}
\def\nabq{\undertilde{\nabla}_{q}}
\def\nabqi{\undertilde{\nabla}_{q_i}}
\def\nabqj{\undertilde{\nabla}_{q_j}}
\def\nabx{\undertilde{\nabla}_{x}}
\def\nabr1{\undertilde{\nabla}_{r_1}}
\def\nabr2{\undertilde{\nabla}_{r_2}}
\def\lae{\ell_{a}}
\def\ut{\undertilde{u}}
\def\utae{\undertilde{u}_{L}}
\def\utaed{\undertilde{u}_{L,\delta}}
\def\uta{\undertilde{u}_{L}}
\def\utaeD{\utae^{\Delta t}}
\def\utaeDm{\utae^{\Delta t,-}}
\def\utaeDp{\utae^{\Delta t,+}}
\def\vt{\undertilde{v}}
\def\wt{\undertilde{w}}
\def\xt{\undertilde{x}}
\def\qt{\undertilde{q}}
\def\bt{\undertilde{b}}
\def\ft{\undertilde{f}}
\def\gt{\undertilde{g}}
\def\rt{\undertilde{r}}
\def\nt{\undertilde{n}}
\def\zt{\undertilde{z}}
\def\Bt{\undertilde{B}}
\def\Ct{\undertilde{C}}
\def\Ft{\undertilde{F}}
\def\Ht{\undertilde{H}}
\def\Lt{\undertilde{L}}
\def\Xt{\undertilde{X}}
\def\Vt{\undertilde{V}}
\def\Wt{\undertilde{W}}
\def\Ut{\undertilde{U}}
\def\dq{\,{\rm d}\undertilde{q}}
\def\dx{\,{\rm d}\undertilde{x}}
\def\dt{\,{\rm d}t}
\def\zerot{\undertilde{0}}
\def\tautt{\underdtilde{\tau}}
\def\Att{\underdtilde{A}}
\def\Btt{\underdtilde{B}}
\def\Ctt{\underdtilde{C}}
\def\Dtt{\underdtilde{D}}
\def\Gtt{\underdtilde{G}}
\def\Itt{\underdtilde{I}}
\def\Ltt{\underdtilde{L}}
\def\unabtt{\underdtilde{\nabla}_{x}\,\ut}
\def\vnabtt{\underdtilde{\nabla}_{x}\,\vt}
\def\sigtt{\underdtilde{\sigma}}
\def\pae{p_{L}}
\def\hpsiaet{\widetilde{\psi}_{L}}
\def\hpsiaedt{\widetilde{\psi}_{L,\delta}}
\def\psiae{\psi_{L}}
\def\psia{\widetilde \psi_{L}}
\def\dd {{\,\rm d}}
\newcommand{\nabxtt}{\underdtilde{\nabla}_{x}\,}
\newcommand{\bet}{\noalign{\vskip6pt plus 3pt minus 1pt}}
\def\Xint#1{\mathchoice
{\XXint\displaystyle\textstyle{#1}}%
{\XXint\textstyle\scriptstyle{#1}}%
{\XXint\scriptstyle\scriptscriptstyle{#1}}%
{\XXint\scriptscriptstyle\scriptscriptstyle{#1}}%
\!\int}
\def\XXint#1#2#3{{\setbox0=\hbox{$#1{#2#3}{\int}$}
\vcenter{\hbox{$#2#3$}}\kern-.5\wd0}}
\def\dashint{\Xint-}
\newtheorem{example}{Example}[section]
\newtheorem{definition}{Definition}[section]
\newtheorem{lemma}{Lemma}[section]
\newtheorem{theorem}{Theorem}[section]
\newtheorem{remark}{Remark}[section]
\newtheorem{corollary}{Corollary}[section]
\renewcommand{\theequation}{\arabic{section}.\arabic{equation}}
\newcounter{ind}
\def\eqlabstart{%
 \setcounter{ind}{\value{equation}}\addtocounter{ind}{1}%
 \setcounter{equation}{0}%
 \renewcommand{\theequation}{\arabic{section}.\arabic{ind}\alph{equation}}%
}
\def\eqlabend{%
 \renewcommand{\theequation}{\arabic{section}.\arabic{equation}}%
 \setcounter{equation}{\value{ind}}%
}
\newcounter{appendix}
\renewcommand\appendix{\par
        \refstepcounter{appendix}
           \setcounter{section}{0}
       \setcounter{theorem}{0}
           \setcounter{equation}{0}
\renewcommand\thesection{\appendixname ~\Alph{section}}
\renewcommand\thesubsection{\Alph{section}.\arabic{subsection}}%
\renewcommand\theequation{\Alph{section}.\arabic{equation}}}%
\begin{document}

\markboth{John W. Barrett and Endre S\"{u}li}
{Existence 
of Global Weak Solutions for Dilute Polymers}

%
%

\title[Existence 
of Global Weak Solutions for Dilute Polymers]
{Existence 
of global weak solutions to finitely extensible nonlinear
bead-spring chain models for dilute polymers with variable density and viscosity}

\author{JOHN W. BARRETT}

\address{\footnotesize Department of Mathematics, Imperial College London\\
London SW7 2AZ, UK\\
{\tt jwb@imperial.ac.uk}}

\author{ENDRE S\"ULI}

\address{
Mathematical Institute, University of Oxford\\ Oxford OX1 3LB, UK\\
{\tt endre.suli@maths.ox.ac.uk}}

\maketitle


\begin{abstract}
We show the existence of global-in-time weak solutions to a general class of coupled
bead-spring chain models that arise from the kinetic theory of dilute
solutions of nonhomogeneous polymeric liquids with noninteracting polymer chains,
with 
finitely extensible nonlinear elastic (FENE) 
spring potentials.
The class of models under consideration involves the unsteady incompressible
Navier--Stokes equations with
variable density and density-dependent dynamic viscosity in a bounded domain in $\mathbb{R}^d$, $d = 2$ or $3$, for the
density, the velocity and the pressure of the fluid, with an elastic extra-stress tensor appearing
on the right-hand side in the momentum equation. The extra-stress tensor stems from the
random movement of the polymer chains and is defined by the Kramers expression through
the associated probability density function
that satisfies a Fokker--Planck-type parabolic equation, a crucial feature of which is
the presence of a centre-of-mass diffusion term and a nonlinear density-dependent drag coefficient.
We require no structural assumptions
on the drag term in the Fokker--Planck equation; in particular, the drag term need not
be corotational. With
initial density $\rho_0 \in [\rho_{\rm min},\rho_{\rm max}]$ for the continuity equation,
where $\rho_{\rm min}>0$; a square-integrable and divergence-free initial velocity datum
$\undertilde{u}_0$ for the Navier--Stokes equation; and a nonnegative initial
probability density function $\psi_0$ for the Fokker--Planck equation, which has finite
relative entropy with respect to the Maxwellian $M$ associated with the spring potential
in the model, we prove, {\em via} a limiting procedure on certain regularization parameters,
the existence of a global-in-time weak solution $t \mapsto (\rho(t),\undertilde{u}(t), \psi(t))$
to the coupled Navier--Stokes--Fokker--Planck system, satisfying the initial condition
$(\rho(0),\undertilde{u}(0), \psi(0)) = (\rho_0,\undertilde{u}_0, \psi_0)$,
such that $t\mapsto \rho(t) \in [\rho_{\rm min},\rho_{\rm max}]$,
$t\mapsto \undertilde{u}(t)$ belongs to the classical Leray space and
$t \mapsto \psi(t)$ has bounded relative entropy with respect to $M$ and
$t \mapsto \psi(t)/M$ has integrable Fisher information (w.r.t. the Gibbs measure
${\rm d}\nu:= M(\undertilde{q})\,{\rm d}\undertilde{q}\,{\rm d}\undertilde{x}$)
over any time interval $[0,T]$, $T>0$.

\medskip

\noindent
\textit{Keywords:} Kinetic polymer models, FENE chain, 
Navier--Stokes--Fokker--Planck system, variable density, nonhomogeneous dilute polymer

\end{abstract}

\section{Introduction}
\label{sec:1}

This paper establishes the existence of global-in-time weak solutions
to a large class of bead-spring chain models with finitely
extensible nonlinear elastic (FENE) 
type spring potentials, ---
a system of nonlinear partial differential equations that arises
from the kinetic theory of dilute polymer solutions. The solvent is an
incompressible, viscous, isothermal Newtonian fluid with variable density and viscosity
confined to a bounded open Lipschitz domain $\Omega \subset \mathbb{R}^d$, $d=2$ or $3$, with
boundary $\partial \Omega$. For the sake of simplicity of presentation,
we shall suppose that $\Omega$ has a `solid boundary' $\partial \Omega$;
the velocity field $\ut$ will then satisfy the no-slip boundary condition
$\ut=\zerot$ on $\partial \Omega$. The polymer chains, which are suspended
in the solvent, are assumed not to interact with each other. The
equations of continuity, balance of linear momentum and incompressibility then
have the form of the incompressible Navier--Stokes equations with
variable density and viscosity (cf. Antontsev, Kazhikhov \& Monakhov \cite{AKM},
Feireisl \& Novotn\'y \cite{FN}, Lions \cite{Lions1} or Simon \cite{Simon-density}) in
which the elastic {\em extra-stress} tensor $\tautt$ (i.e., the
polymeric part of the Cauchy stress tensor) appears as a source
term:

Given $T \in \mathbb{R}_{>0}$, find
$ \rho\,:\,(\xt,t)
\in \Omega \times [0,T] \mapsto
\rho(\xt,t) \in {\mathbb R}$,
$\ut\,:\,(\xt,t)\in
\overline\Omega \times [0,T] \mapsto
\ut(\xt,t) \in {\mathbb R}^d$ and $p\,:\, (\xt,t) \in \Omega \times (0,T]
\mapsto p(\xt,t) \in {\mathbb R}$ such that
\begin{subequations}
\begin{alignat}{2}
\frac{\partial \rho}{\partial t}
+ \nabx\cdot(\ut \,\rho)
&= \zerot \qquad &&\mbox{in } \Omega \times (0,T],\label{ns0a}\\
\rho(\xt,0)&=\rho_{0}(\xt)    \qquad &&\forall \xt \in \Omega,\label{ns00a} \\
\frac{\partial(\rho\,\ut)}{\partial t}
+ \nabx\cdot(\rho\,\ut \otimes \ut) - \nabx \cdot (\mu(\rho)  \,\Dtt(\ut)) + \nabx p
&= \rho\,\ft + \nabx \cdot \tautt \qquad &&\mbox{in } \Omega \times (0,T],\label{ns1a}\\
\nabx \cdot \ut &= 0        \qquad &&\mbox{in } \Omega \times (0,T],\label{ns2a}\\
\ut &= \zerot               \qquad &&\mbox{on } \partial \Omega \times (0,T],\label{ns3a}\\
(\rho\,\ut)(\xt,0)&=(\rho_0\,\ut_{0})(\xt)    \qquad &&\forall \xt \in \Omega.\label{ns4a}
\end{alignat}
\end{subequations}
It is assumed that each of the equations above has been written in its nondimensional form;
$\rho$ denotes a nondimensional solvent density,
$\ut$ a nondimensional solvent velocity, defined as the velocity field
scaled by the characteristic flow speed $U_0$.
Here
$\Dtt(\vt) := \frac{1}{2}\,(\nabxtt \vt + (\nabxtt \vt)^{\rm T})$
is the rate of strain tensor,
with  $(\vnabtt)(\xt,t) \in {\mathbb
R}^{d \times d}$ and $\{\vnabtt\}_{ij} = \textstyle
\frac{\partial v_i}{\partial x_j}$.
The scaled dynamic viscosity of the solvent,
$\mu(\cdot) \in \mathbb{R}_{>0}$, is density-dependent;
in addition, $p$ is
the nondimensional pressure and $\ft$ is the nondimensional density of body forces.

In a {\em bead-spring chain model}, consisting of $K+1$ beads coupled with $K$ elastic
springs to represent a polymer chain, the extra-stress tensor
$\tautt$ is defined by the \textit{Kramers expression}
as a weighted average of $\psi$, the probability
density function of the (random) conformation vector
$\qt := (\qt_1^{\rm T},\dots, \qt_K^{\rm T})^{\rm T}
\in \mathbb{R}^{Kd}$ of the chain (see equation (\ref{tau1}) below), with $\qt_i$
representing the $d$-component conformation/orientation vector of the $i$th spring.
The Kolmogorov equation satisfied by $\psi$ is a second-order parabolic equation,
the Fokker--Planck equation, whose transport coefficients depend
on the velocity field $\ut$, and the viscous drag coefficient appearing in the Fokker--Planck
equation is a linear function of the dynamic viscosity $\mu$ (Stokes drag is assumed), which,
in turn, is a nonlinear function of the density $\rho$.
The domain $D$ of admissible conformation
vectors $D \subset \mathbb{R}^{Kd}$ is a $K$-fold
Cartesian product $D_1 \times \cdots \times D_K$ of balanced convex
open sets $D_i \subset \mathbb{R}^d$, $i=1,\dots, K$; the term
{\em balanced} means that $\qt_i \in D_i$ if, and only if, $-\qt_i \in D_i$.
Hence, in particular, $\undertilde{0} \in D_i$, $i=1,\dots,K$.
Typically $D_i$ is the whole of $\mathbb{R}^d$ or a bounded open $d$-dimensional ball
centred at the origin $\zerot \in \mathbb{R}^d$ for each $i=1,\dots,K$.
When $K=1$, the model is referred to as the {\em dumbbell model.}

Let $\mathcal{O}_i\subset [0,\infty)$ denote the image of $D_i$
under the mapping $\qt_i \in D_i \mapsto
\frac{1}{2}|\qt_i|^2$, and consider the {\em spring potential}~$U_i
\!\in\! C^1(\mathcal{O}_i; \mathbb{R}_{\geq 0})\cap W^{2,\infty}_{\rm loc}(\mathcal{O}_i;\mathbb {R}_{\geq 0})$, $i=1,\dots, K$.
Clearly, $0 \in \mathcal{O}_i$. We shall suppose that $U_i(0)=0$
and that $U_i$ is unbounded on
$\mathcal{O}_i$ for each $i=1,\dots, K$.
The elastic spring-force $\Ft_i\,:\, D_i \subseteq \mathbb{R}^d \rightarrow \mathbb{R}^d$
of the $i$th spring in the chain is defined by
\begin{equation}\label{eqF}
\Ft_i(\qt_i) := U_i'(\textstyle{\frac{1}{2}}|\qt_i|^2)\,\qt_i, \qquad i=1,\dots,K.
\end{equation}

The partial Maxwellian $M_i$, associated with the spring potential $U_i$, is defined by
\[
M_i(\qt_i) := \frac{1}{\mathcal{Z}_i} {\rm e}^{-U_i(\frac{1}{2}\,|\qt_i|^2)}, \qquad
\mathcal{Z}_i:= {\displaystyle \int_{D_i} {\rm e}^{-U_i(\frac{1}{2}|\qt_i|^2)} \dq_i},\qquad
i=1,\dots,K.
\]
The (total) Maxwellian in the model is then
\begin{align}
M(\qt) := \prod_{i=1}^K M_i(\qt_i) \qquad \forall
\qt:=(\qt_1^{\rm T},\ldots,\qt_K^{\rm T})^{\rm T} \in D
:= \bigtimes_{i=1}^K D_i.
\label{MN}
\end{align}
Observe that, for $i=1, \dots, K$,
\begin{equation}
M(\qt)\,\nabqi [M(\qt)]^{-1} = - [M(\qt)]^{-1}\,\nabqi M(\qt) =
\nabqi U_i(\textstyle{\frac{1}{2}}|\qt_i|^2)
=U_i'(\textstyle{\frac{1}{2}}|\qt_i|^2)\,\qt_i, \label{eqM}
\end{equation}
and, by definition,
\[ \int_D M(q) \dq = 1.\]

\begin{example}
\label{ex1.1} \em
In the Hookean dumbbell model $K=1$, and the spring force
is defined by ${\Ft}({\qt}) = {\qt}$, with ${\qt} \in {D}=\mathbb{R}^d$,
corresponding to ${U}(s)= s$, $s \in \mathcal{O} = [0,\infty)$.
More generally, in a Hookean bead-spring chain model, $K \geq 1$, $\Ft_i(\qt_i) = \qt_i$,
corresponding to $U_i(s) = s$, $i=1,\dots,K$, and $D$ is
the Cartesian product of $K$ copies of $\mathbb{R}^d$. The associated Maxwellian is
\[ M(\qt) = M_1(\qt_1)\cdots M_K(\qt_K) = \frac{1}{\mathcal{Z}} {\rm e}^{-\frac{1}{2}|\qt|^2},\]
with $|\qt|^2:= |\qt_1|^2 + \cdots +|\qt_K|^2$ 
and $\mathcal{Z} := \mathcal{Z}_1 \cdots \mathcal{Z}_K = (2\pi)^{{Kd}/{2}}$.
Hookean dumbbell and Hookean bead-spring chain models are physically unrealistic
as they admit arbitrarily large extensions.
$\quad\diamond$
\end{example}

A more realistic class of models assumes that the springs in the bead-spring chain have
finite extension: the domain $D$ is then taken to be a Cartesian product of $K$
{\em bounded} open balls $D_i \subset \mathbb{R}^d$, centred at the
origin $\zerot \in \mathbb{R}^d$, $i=1,\dots, K,$ with $K \geq 1$. The spring potentials
$U_i\, :\, s \in [0,b_i/2) \mapsto U_i(s) \in [0,\infty)$, with $b_i>0$, $i=1,\dots, K$, are in that case
nonlinear and unbounded functions, and the associated bead-spring chain model is referred
to as a FENE (finitely extensible nonlinear elastic) model; in the case of $K=1$, the
corresponding model is called a FENE dumbbell model.

Here we shall be concerned with finitely extensible nonlinear 
bead-spring chain
models, with
$D:= B(\zerot,b_1^{\frac{1}{2}}) \times \cdots \times
B(\zerot, b_K^{\frac{1}{2}})$, where $b_i > 0$, $i=1,\dots,K$, and $B(\zerot, b_i^{\frac{1}{2}})$
is a bounded open ball in $\mathbb{R}^d$ of radius $b_i^{\frac{1}{2}}$, centred at $\zerot \in \mathbb{R}^d$. We shall adopt the following structural hypotheses on the spring potentials
$U_i$ and the associated partial Maxwellians $M_i$, $i=1,\dots, K$.

\medskip

We shall suppose that for $i=1,\dots, K$ there exist constants $c_{ij}>0$, $j=1, 2, 3, 4$, and $\gamma_i > 1$ such that
the spring potential $U_i$ satisfies%
\eqlabstart
\begin{eqnarray}
&& ~\hspace{6mm}c_{i1}\,[\mbox{dist}(\qt_i, \,\partial D_i)]^{\gamma_i}
\leq  M_i(\qt_i) \,
\leq
c_{i2}\,[\mbox{dist}(\qt_i, \,\partial D_i)]^{\gamma_i} \quad \forall \qt_i \in
D_i, \label{growth1}\\
&& ~\hspace{15mm}c_{i3} \leq [\mbox{dist}(\qt_i,\,\partial D_i)] \,U_i'
(\textstyle{\frac{1}{2}}|\qt_i|^2)
\leq c_{i4}\quad \forall \qt_i \in D_i. \label{growth2}
\end{eqnarray}
\eqlabend
Since
$[U_i(\textstyle{\frac{1}{2}}|\qt_i|^2)]^2 = (-\log M_i(\qt_i) + {\rm Const}.)^2$,
it follows from (\ref{growth1},b) that (if $\gamma_i >1$,
as has been assumed here,)
\begin{equation}\label{additional-1}
\int_{D_i} \left[1 + [U_i(\textstyle{\frac{1}{2}}|\qt_i|^2)]^2
+ [U_i'(\textstyle{\frac{1}{2}}|\qt_i|^2)]^2\right] M_i(\qt_i) \, \dd
\qt_i < \infty, \qquad i=1, \dots, K.
\end{equation}

\medskip

\begin{example}
\label{ex1.2} \em
In the FENE (finitely extensible nonlinear elastic)
dumbbell model, introduced by Warner \cite{Warner:1972}, $K=1$ and the spring force is given by
$
{\Ft}({\qt}) = (1 - |{\qt}|^2/b)^{-1}\,{\qt}$,
$\qt \in D = B(\zerot,b^{\frac{1}{2}})$,
corresponding to ${U}(s) = - \frac{b}{2}\log \left(1-\frac{2s}{b}\right)$,
$s \in \mathcal{O} = [0,\frac{b}{2})$, $b>2$.
%
More generally, in a FENE bead spring chain, one considers $K+1$ beads linearly
coupled with $K$ springs, each with a FENE spring potential.
Direct calculations show that the partial Maxwellians $M_i$ and the
elastic potentials $U_i$, $i=1,\dots, K$, of the FENE bead spring chain satisfy
the conditions (\ref{growth1},b) with and $\gamma_i:= \frac{b_i}{2}$,
provided that $b_i>2$, $i=1,\dots, K$. Thus, (\ref{additional-1}) also holds
and $b_i>2$, $i=1,\dots, K$.

It is interesting to note that in the (equivalent)
stochastic version of the FENE dumbbell model ($K=1$)
a solution to the system of
stochastic differential equations
associated with the Fokker--Planck equation exists and has
trajectorial uniqueness if, and only if, $\gamma = \frac{b}{2}\geq 1$;
(cf. Jourdain, Leli\`evre \& Le Bris \cite{JLL2} for details).
Thus, in the general class of FENE-type bead-spring chain models considered here,
the assumption $\gamma_i > 1$, $i=1,\dots, K$, is the weakest reasonable
requirement on the decay-rate of $M_i$  in (\ref{growth1}) as
$\mbox{dist}(\qt_i,\partial D_i) \rightarrow 0$.

Another example of a finitely extensible nonlinear elastic potential
is Cohen's Pad\'e approximant to
the inverse Langevin function (CPAIL) (cf. \cite{Cohen}), with
\begin{equation}\label{CPAIL-model}
U_i(s) := \frac{s}{3} - \frac{b_i}{3}\ln\left(1 - \frac{2 s}{b_i}\right) \qquad\text{and}\qquad \undertilde{F}_i(\qt_i) = \frac{ 1 - \frac{|\qt_i|^2}{3 b_i} }{ 1 - \frac{|\qt_i|^2}{b_i} }\qt_i,
\end{equation}
where $b_i$ is again a positive parameter. Again,  direct calculations show that
the Maxwellian $M$ and the elastic potential $U$ of the CPAIL dumbell model satisfy the
conditions (\ref{growth1},b) with $\gamma_i:= \frac{b_i}{3}$,
provided that $b_i>3$, $i=1,\dots, K$. Thus, (\ref{additional-1}) also holds
and $b_i>3$, $i=1,\dots, K$.

We note in passing that both of these force laws
are approximations to the \emph{inverse Langevin force law}
\[ \undertilde{F}_i(\qt_i) := \frac{\sqrt{b_i}}{3} L^{-1}\left( \frac{|\qt_i|}{\sqrt{b_i}} \right) \frac{\qt_i}{|\qt_i|},\]
where the $\emph{Langevin function}$ $L$ is defined by $L(t) := \coth(t)-1/t$
on $[0,\infty)$. As $L$ is strictly monotonic increasing on $[0,\infty)$ and tends to
$1$ as its argument tends to $\infty$, it follows that the function
$|\qt_i| \in [0,\sqrt{b_i}\,) \mapsto L^{-1}(|\qt_i|/\sqrt{b_i}\,)
 \in [0,\infty)$ is strictly monotonic increasing, with a vertical asymptote at $|\qt_i| = \sqrt{b_i}$.
$\quad\diamond$
\end{example}

\medskip

The governing equations of the general nonhomogeneous bead-spring chain
models with centre-of-mass diffusion considered in this paper are
(\ref{ns1a}--d), where the extra-stress tensor $\tautt$ is defined by the \textit{Kramers
expression}:
\begin{equation}\label{tau1}
\tautt(\xt,t) = k\left( \sum_{i=1}^K \int_{D}\psi(\xt,\qt,t)\, \qt_i\,
\qt_i^{\rm T}\, U_i'\left(\textstyle \frac{1}{2}|\qt_i|^2\right)
{\dd} \qt - K
\varrho(\xt,t)\,\Itt\right),
\end{equation}
with the density of polymer chains located at $\xt$ at time $t$ given by
\begin{equation}\label{rho1}
\varrho(\xt,t) = \int_{D} \psi(\xt, \qt,t)
{\dd}\qt,
\end{equation}
(not to be confused with the solvent density $\rho(\xt,t)$).
The probability density function $\psi$ is a solution of the Fokker--Planck (forward Kolmogorov) equation
\begin{align}
\label{fp0}
&\frac{\partial \psi}{\partial t} + (\ut \cdot\nabx) \psi +
\sum_{i=1}^K \nabqi
\cdot \left(\sigtt(\ut) \, \qt_i\,\psi \right)
\nonumber
\\
\bet
&\hspace{0.1in} =
\epsilon\,\Delta_x\left(\frac{\psi}{\zeta(\rho)}\right) +
\frac{1}{4 \,\lambda}\,
\sum_{i=1}^K \sum_{j=1}^K
A_{ij}\,\nabqi \cdot \left(
M
\,\nabqj \left(\frac{\psi}{\zeta(\rho)\, M}\right)\right) \quad \mbox{in } \Omega \times D \times
(0,T],
\end{align}
with $\sigtt(\vt) \equiv \nabxtt \vt$ and a density-dependent scaled drag coefficient $\zeta(\cdot)\in \Rplus$.
A concise derivation of the Fokker--Planck equation \eqref{fp0} is presented below.

The nondimensional constant $k>0$ featuring in \eqref{tau1} is a constant multiple
of the product of the polymer number density (the number of polymer molecules
per unit volume),
the Boltzmann constant $k_B$, and the absolute temperature $\mathtt{T}$.
In \eqref{fp0}, $\varepsilon>0$ is the centre-of-mass diffusion coefficient defined
as $\varepsilon := (\ell_0/L_0)^2/(4(K+1)\lambda)$ with
$L_0$ a characteristic length-scale  of the solvent flow,
$\ell_0:=\sqrt{k_B \mathtt{T}/\mathtt{H}}$ signifying the characteristic microscopic length-scale
and $\lambda :=(\zeta_0/4\mathtt{H})(U_0/L_0)$,
where $\zeta_0>0$ is a characteristic drag coefficient and $\mathtt{H}>0$ is a spring-constant.
The nondimensional parameter $\lambda \in \Rplus$, called the Weissenberg number
(and usually denoted by $\mathsf{Wi}$), characterizes
the elastic relaxation property of the fluid, and
$\Att=(A_{ij})_{i,j=1}^K$ is the symmetric positive definite
\textit{Rouse matrix}, or connectivity matrix;
for example, $\Att = {\tt tridiag}\left[-1, 2, -1\right]$ in the case
of a linear chain; see, Nitta \cite{Nitta}. Concerning these scalings and notational
conventions, we remark that the factor $1/(4\lambda)$ in equation \eqref{fp0} above
appears as a factor $1/(2\lambda)$ in the Fokker--Planck equation in our earlier papers
\cite{BS2011-fene,BS2010-hookean,BS2010,BS2011-feafene}.

%
\begin{definition} The collection of equations and structural hypotheses
{\rm(\ref{ns0a}--f)}--\eqref{fp0} will be referred to throughout the paper as model
$({\rm P})$, or as the {\em general nonhomogeneous FENE-type bead-spring chain model with
centre-of-mass diffusion}.
\end{definition}

We continue with a brief derivation of the Fokker--Planck equation \eqref{fp0} in the general nonhomogeneous FENE-type bead-spring chain model $({\rm P})$ with centre-of-mass diffusion. Henceforth, up to and including
equation \eqref{nondim-defs} below where our nondimensionalization is introduced,
the symbols $\rho$, $\ut$, $\psi$, $\varrho$, $\mu(\rho)$, $\zeta(\rho)$, $\Ut_i$, $\Ft_i$, $\xt$, $\qt$, $t$
will refer to the dimensional forms of these functions and variables, unless otherwise stated.
For the sake of simplicity of the exposition,
we shall confine ourselves to the case of linear chains, when the Rouse matrix $\Att =  {\tt tridiag}\left[-1, 2, -1\right]$; we emphasize however that the results that we prove later on in the paper are completely independent of the actual structure of $\Att$: we shall only require that $\Att$ is symmetric and positive definite.
Thus, each polymer molecule is modelled by a linear arrangement of $K+1$ beads with mass $m$ that are joined by $K$
massless elastic springs. For each $K \in \mathbb{N}_{\geq 1}$, the state of a bead-spring chain in such an
arrangement is fully determined by the position vectors of its $K+1$ beads, denoted by $\undertilde{r}_\ell$
for $\ell=1,\dots, K+1$. Equivalently the state of a bead-spring chain can be described by the center of mass of the bead
system,
$\undertilde{r}_c := \frac{1}{K +1}\sum_{\ell=1}^{K+1} \undertilde{r}_\ell$,
together with the $K$
connector vectors $\qt_i := \undertilde{r}_{i+1} - \rt_i$, $i=1,\dots, K$, which are to be understood as column vectors in $\mathbb{R}^d$ (cf. Fig. \ref{modelPoly}).
%
\begin{figure}[t]\label{modelPoly}
    \includegraphics[width=0.62\textwidth]{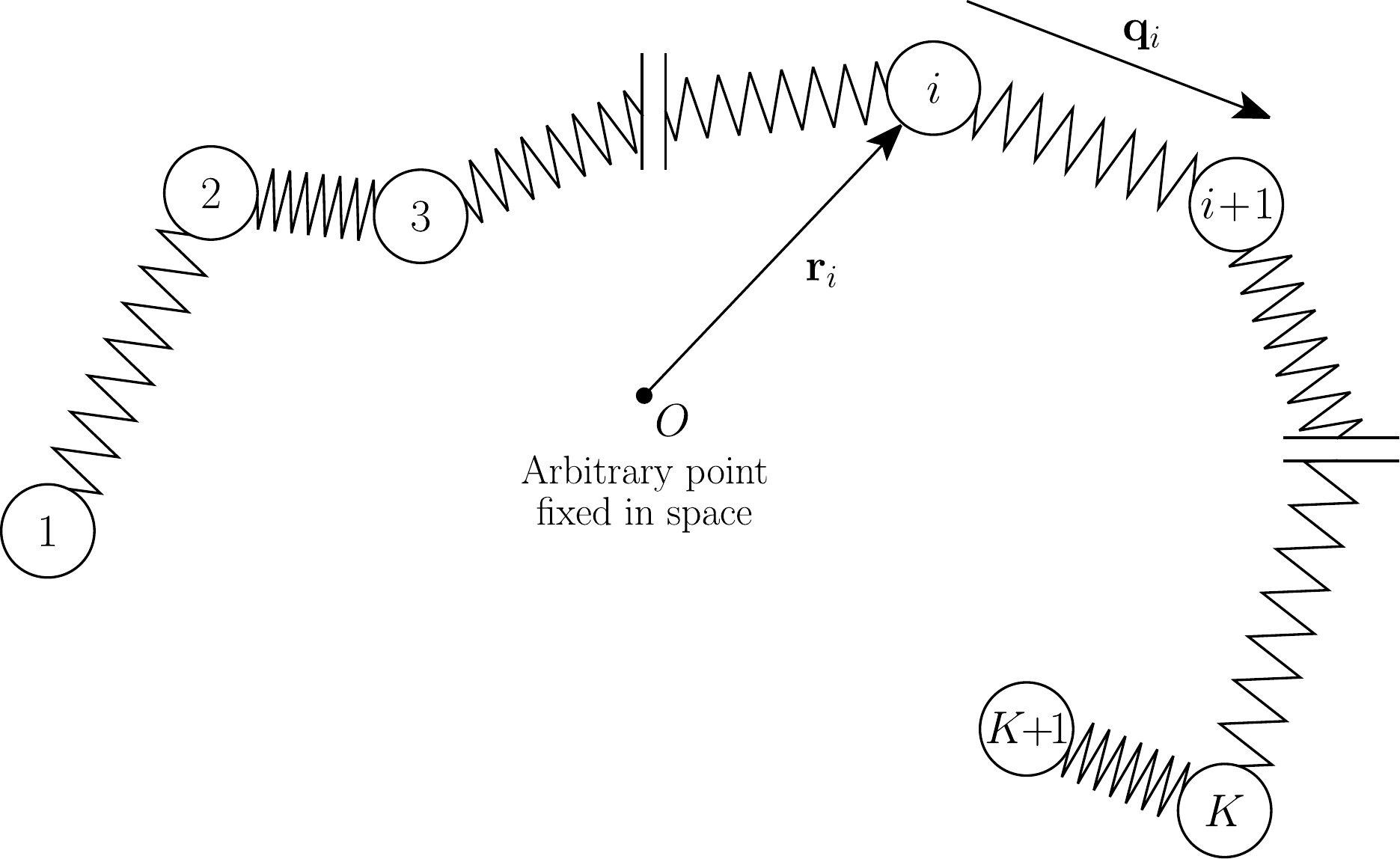}
    \caption{\vspace{0in}Rouse chain with $K$ springs and $K+1$ beads.}
\end{figure}
%

\smallskip

In what follows we shall write $\undertilde{r}:=(\undertilde{r}_1^{\rm T}, \dots , \undertilde{r}_{K+1}^{\rm T})^{\rm T}$ and $\qt:=(\qt_1^{\rm T}, \dots, \qt_K^{\rm T})^{\rm T}$. Each spring exerts an elastic conservative force on the beads it connects along the corresponding connector vector and has a magnitude that depends isotropically on it. We model the spring force by a spring force function $\Ft_i$, which has the form $\Ft_i(\qt_i) = {\tt H}\, \Ut_i'(\frac{1}{2}|\qt_i|^2)\,\qt_i$,
where ${\tt H}>0$ is a spring constant, and $U_i$ is a spring potential, $i=1,\dots,K$. It will be assumed that $|\qt_i| < q^{\rm max}_i$, where
$q^{\rm max}_i$ denotes the maximum admissible length of the $i$th spring. We let $D_i :=\{\qt_i \in \mathbb{R}^d\,:\,
|\qt_i|<q^{\rm max}_i\}$. It is immediate that $\Ft_i(\qt_i) = -\Ft(-\qt_i)$ for all $\qt_i\in D_i$,
$i=1,\dots, K$.

In the absence of external forces and neglecting inertial effects the Langevin equation for the $\ell$-th bead in this model is
\begin{align}\label{langevin}
\zeta_0 \left( \dd \rt_\ell - \ut(\rt_\ell, \cdot) \dt\right) = \Bt_\ell \dt
+ \sum_{i=1}^K G_{\ell i}\,\Ft_i(\qt_i) \dt,\qquad \ell=1,\dots, K+1.
\end{align}
Here $\zeta_0$ is a characteristic drag coefficient, which we shall assume for the moment to be a fixed positive constant; $\Bt_\ell$ denotes a Brownian force acting on the $\ell$th bead; and the $(K+1) \times K$ matrix $\Gtt$ with entries $G_{ij}$, $i=1,\dots,K+1$, $j=1,\dots, K$, called the \textit{graph incidence matrix}, which (in our case here of a linear bead-spring chain with $K+1$ beads and $K$ springs) is defined by
\[ \Gtt := \left( \begin{array}{cccc}
   1 & ~ & ~ & ~  \\
  -1 & 1 & ~ & ~  \\
  ~  & \ddots & \ddots & ~ \\
  ~  &  ~     & -1 & 1\\
  ~  &  ~     & ~  & -1
  \end{array}
  \right) \in \mathbb{R}^{(K+1) \times K}; \!\quad{i.e.},\; G_{ij}:=\left\{\begin{array}{cl}
  + 1 & \mbox{if spring $j$ starts from bead $i$},\\
  - 1 & \mbox{if spring $j$ teminates in bead $i$},\\
  0   & \mbox{otherwise.}
  \end{array} \right.\]
The Rouse matrix $\Att \in \mathbb{R}^{K \times K}$ is related to the
matrix $\Gtt$ through the equality $\Att = \Gtt^{\rm T} \Gtt$.
The form of the matrix $\Gtt$ reflects the fact that the $\ell$th bead is pulled by the $(\ell - 1)$st spring
in the $-\qt_{\ell-1} = \rt_{\ell-1} - \rt_\ell$ direction and by the $\ell$th spring in the $\qt_\ell = \rt_{\ell+1}-\rt_\ell$
direction with proper provision for the beads at the ends of the chain, which are only pulled by a single
spring each. Note that $\qt_i = -\sum_{\ell=1}^{K+1} G_{\ell i}\, \rt_\ell$, $i=1,\dots, K$.
For $\ell=1,\dots,K+1$, the Brownian force $\Bt_\ell$ is defined by a $d$-component vectorial Wiener process $\Wt_\ell$
via $\Bt_\ell \dt = \sqrt{2\, k_B\, {\tt T}\,\zeta_0}\, \dd \Wt_\ell$, where $k_B = 1.38 \times 10^{-23}\, {\rm m^2 kg\,s^{-2} K^{-1}}$ is Boltzmann's constant and ${\tt T}$ is the absolute temperature measured in Kelvin, K. The coefficient $\sqrt{2 k_B\,{\tt T}\, \zeta_0}$ is due to the Einstein--Smoluchowski relation, which determines the diffusion coefficient in Brownian motion. Therefore, \eqref{langevin} can be written in the following form
\begin{eqnarray}
\label{eq.stochastic1}
 \dd\left[ \begin{array}{c}
            {\rt}_1(t)\\
            {\rt}_2(t)\\
            \vdots\\
            {\rt}_{K+1}(t)
           \end{array} \right]
 &=&
\left\{\left[ \begin{array}{c}
            \ut ({\rt}_1(t),t)\\
            \ut ({\rt}_2(t),t)\\
            \vdots\\
            \ut ({\rt}_{K+1}(t),t)
           \end{array} \right]
+ \zeta^{-1}_0\, \Gtt\left[
\begin{array}{c}
            \Ft_1 (\rt_2(t)-\rt_1(t))\\
            \Ft_2 (\rt_3(t)-\rt_2(t))\\
            \vdots\\
            \Ft_{K} (\rt_{K+1}(t)-\rt_{K}(t))
           \end{array} \right]\right\} \dt\nonumber\\
&&+\, \sqrt{\frac{2k_B{\tt T}}{\zeta_0}}\,\dd\left[ \begin{array}{c}
            {\Wt}_1(t)\\
            {\Wt}_2(t)\\
            \vdots\\
            {\Wt}_{K+1}(t)
           \end{array} \right].
\end{eqnarray}

By defining
\[
 \Xt(t) := \left[ \begin{array}{c}
            {\rt}_1(t)\\
            {\rt}_2(t)\\
            \vdots \\
            {\rt}_{K+1}(t)
           \end{array} \right], \qquad
\Wt(t) := \left[ \begin{array}{c}
            {\Wt}_1(t)\\
            {\Wt}_2(t)\\
            \vdots\\
            {\Wt}_{K+1}(t)
           \end{array} \right], \qquad
 \sigma := \sqrt{\frac{2k_B {\tt T}}{\zeta_0}}\, {\Itt},\]

\[ \bt(\Xt(t)) := \left[ \begin{array}{c}
            \ut ({\rt}_1(t),t)\\
            \ut ({\rt}_2(t),t)\\
            \vdots\\
            \ut ({\rt}_{K+1}(t),t)
           \end{array} \right]
+ \zeta^{-1}_0\, \Gtt\left[
\begin{array}{c}
            \Ft_1 (\rt_2(t)-\rt_1(t))\\
            \Ft_2 (\rt_3(t)-\rt_2(t))\\
            \vdots\\
            \Ft_{K} (\rt_{K+1}(t)-\rt_{K}(t))
           \end{array} \right], \]
equation \eqref{eq.stochastic1} can be rewritten as the following stochastic differential equation:
\begin{equation}
\label{eq.stochastic2}
\dd \Xt(t) = \bt(\Xt(t))\dt + \sigma(\Xt(t))\dd\Wt(t), \qquad \Xt(0) = \Xt,
\end{equation}
for the $(K+1)d$-component vectorial random variable $\Xt(t)$, $t \in [0,T]$.

By recalling a classical result from stochastic analysis stated in Theorem \ref{thm.kolmogorov} below, we can now write down the associated {\em forward Kolmogorov equation} (Fokker--Planck equation), a parabolic partial differential equation governing the evolution of the probability density function of the stochastic process $t \mapsto \Xt(t)$ (see, for example, Corollary 5.2.10 in \cite{MonteCarlo}).

\begin{theorem}
\label{thm.kolmogorov}
 Suppose that the $(K+1)d$-component vectorial random variable $\Xt(t)$ has a probability density function $({\zt},t) \mapsto \psi({\zt},t)$ of class $C^{2,1}(\mathbb{R}^{(K+1)d}\times[0,\infty))$ (i.e., twice continuously differentiable with respect to ${\zt} \in \mathbb{R}^{(K+1)d}$ and once with respect to $t$), and let $\Xt(0) = \Xt$ be a square-integrable random variable with probability density function $\psi_0 \in C^2(\mathbb{R}^{(K+1)d})$. Also, suppose that $b$ and $\sigma$ in \eqref{eq.stochastic2} are globally Lipschitz continuous, and $c({\zt}) = \sigma({\zt})\,\sigma({\zt})^{\rm T}$. Then,
\begin{equation}
\label{eq.kolmogorov}
 \frac{\partial\psi}{\partial t} + \sum_{j=1}^{(K+1)d} \frac{\partial}{\partial z_j}\left(b_{j} \psi\right) = \frac{1}{2}\sum_{i,j=1}^{(K+1)d} \frac{\partial^2}{\partial z_{i}\partial z_{j}}(c_{ij}\psi),
\end{equation}
in $\mathbb{R}^{(K+1)d}\times[0,\infty)$ where $\psi({\zt},0) = \psi_0({\zt})$ for ${\zt} \in \mathbb{R}^{(K+1)d}$.
\end{theorem}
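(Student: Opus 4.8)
The plan is to \emph{identify} the equation satisfied by $\psi$ rather than to construct $\psi$: the existence and $C^{2,1}$-regularity of the density are \emph{assumed} in the statement, so the task is to combine It\^o's formula with a duality argument. Since $\bt$ and $\sigma$ in \eqref{eq.stochastic2} are globally Lipschitz continuous, classical It\^o theory provides a (pathwise unique) strong solution $t\mapsto\Xt(t)$ of \eqref{eq.stochastic2} with $\mathbb{E}[\sup_{0\le s\le t}|\Xt(s)|^2]<\infty$ for every $t\ge 0$; by hypothesis its law at time $t$ has density $\psi(\cdot,t)$, and $\psi(\cdot,0)=\psi_0$. First I would fix an arbitrary $\varphi\in C^\infty_c(\mathbb{R}^{(K+1)d})$ and apply It\^o's formula to $\varphi(\Xt(t))$, using the quadratic covariation $\dd\langle X_i,X_j\rangle_s=c_{ij}(\Xt(s))\,\dd s$ with $c=\sigma\sigma^{\rm T}$, to get
\[
\varphi(\Xt(t))=\varphi(\Xt(0))+\int_0^t(\mathcal{L}\varphi)(\Xt(s))\,\dd s+\int_0^t\nabla\varphi(\Xt(s))^{\rm T}\sigma(\Xt(s))\,\dd\Wt(s),
\]
where $\mathcal{L}\varphi:=\sum_j b_j\,\partial_{z_j}\varphi+\tfrac12\sum_{i,j}c_{ij}\,\partial_{z_i}\partial_{z_j}\varphi$ is the generator of the diffusion.

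Next I would take expectations. As $\varphi$ has compact support and $\sigma$ has at most linear growth (a consequence of the Lipschitz hypothesis), the process $s\mapsto\nabla\varphi(\Xt(s))^{\rm T}\sigma(\Xt(s))$ is bounded, so the It\^o integral is a genuine martingale of zero mean; hence $\mathbb{E}[\varphi(\Xt(t))]-\mathbb{E}[\varphi(\Xt(0))]=\int_0^t\mathbb{E}[(\mathcal{L}\varphi)(\Xt(s))]\,\dd s$. Rewriting each expectation as an integral against the density, $\mathbb{E}[\varphi(\Xt(s))]=\int_{\mathbb{R}^{(K+1)d}}\varphi(\zt)\,\psi(\zt,s)\,\dd\zt$ and likewise for $\mathcal{L}\varphi$, and differentiating in $t$ (permissible since $s\mapsto\int(\mathcal{L}\varphi)\,\psi(\cdot,s)$ is continuous by the $C^{2,1}$-regularity of $\psi$), I obtain the weak form
\[
\frac{{\rm d}}{{\rm d}t}\int_{\mathbb{R}^{(K+1)d}}\varphi\,\psi(\cdot,t)\,\dd\zt=\int_{\mathbb{R}^{(K+1)d}}(\mathcal{L}\varphi)\,\psi(\cdot,t)\,\dd\zt\qquad\forall\,\varphi\in C^\infty_c(\mathbb{R}^{(K+1)d}).
\]

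Then I would integrate by parts on the right-hand side; since $\varphi$ is compactly supported there are no boundary contributions, and $\int(\mathcal{L}\varphi)\,\psi=\int\varphi\,(\mathcal{L}^{*}\psi)$ with the formal adjoint $\mathcal{L}^{*}\psi=-\sum_j\partial_{z_j}(b_j\psi)+\tfrac12\sum_{i,j}\partial_{z_i}\partial_{z_j}(c_{ij}\psi)$, i.e.\ precisely the right-hand side of \eqref{eq.kolmogorov}. Pulling the time derivative under the integral on the left (again justified by the assumed regularity of $\psi$), the identity reads $\int_{\mathbb{R}^{(K+1)d}}\varphi\,(\partial_t\psi-\mathcal{L}^{*}\psi)(\cdot,t)\,\dd\zt=0$ for all $\varphi\in C^\infty_c$ and all $t>0$; since $\partial_t\psi-\mathcal{L}^{*}\psi$ is continuous, the fundamental lemma of the calculus of variations gives \eqref{eq.kolmogorov} pointwise, while $\psi(\cdot,0)=\psi_0$ follows from the continuity of $\psi$ up to $t=0$ together with the hypothesis on $\Xt(0)$.

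The only delicate points — which is why the result is quoted as classical (cf.\ \cite{MonteCarlo}) — are the interchanges: showing the It\^o integral is a true (not merely local) martingale, which here follows from the compact support of $\varphi$ and the linear growth of $\sigma$; differentiating under the integral in $t$ and integrating by parts in $\zt$, which rest on the assumed $C^{2,1}$-regularity of $\psi$ (with $b_j\psi$ and $c_{ij}\psi$ of class $C^1$ in $\zt$ — automatic in \eqref{eq.stochastic2}, where $\bt$ is smooth and $\sigma$ is a constant matrix); and, if one insists on starting from $\varphi$ merely of class $C^2$, a routine truncation and density argument. I expect the martingale-property verification to be the step most worth spelling out carefully.
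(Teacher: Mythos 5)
Your argument is correct and is precisely the classical derivation: It\^o's formula for $\varphi(\Xt(t))$ with $\varphi\in C^\infty_c$, vanishing of the stochastic integral's expectation, passage to the weak (adjoint) form by integration by parts, and the fundamental lemma of the calculus of variations. The paper itself offers no proof of Theorem~\ref{thm.kolmogorov} — it is quoted as a classical result with a citation to Corollary 5.2.10 of \cite{MonteCarlo} — so there is nothing to compare against beyond noting that your route is the standard one that such references follow. Your closing caveat is well placed: under the stated hypotheses $b$ is only Lipschitz, so $\partial_{z_j}(b_j\psi)$ in \eqref{eq.kolmogorov} is a priori only a distributional (or a.e.) derivative unless, as in the application to \eqref{eq.stochastic2}, $b$ has extra smoothness and $\sigma$ is constant; and the boundedness of the integrand in the It\^o integral already follows from the compact support of $\nabla\varphi$ alone, without invoking the linear growth of $\sigma$.
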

\smallskip

The Hookean spring force satisfies the global Lipschitz continuity assumption in Theorem~\ref{thm.kolmogorov}, whereas FENE-type spring forces do not. Indeed, FENE-type forces are only locally Lipschitz on $D$ and are not defined on all of $\mathbb{R}^d$. For the purposes of the present informal derivation of equation \eqref{fp0} we shall ignore this technicality, and will proceed as if Theorem~\ref{thm.kolmogorov} held in the FENE case also.
Thus, on applying Theorem~\ref{thm.kolmogorov} to \eqref{eq.stochastic2}, we arrive at the following forward
Kolmogorov equation:
\begin{eqnarray}
\label{eq.fp12}
&&\frac{\partial\psi^{1,K+1}}{\partial t} + \sum_{\ell=1}^{K+1} \undertilde{\nabla}_{r_\ell} \cdot \left[ \ut ({\rt}_\ell,t)\,\psi^{1,K+1} +
\frac{1}{\zeta_0}\sum_{i=1}^K G_{\ell i}\Ft_i ({\rt}_{i+1} - {\rt}_i )\,\psi^{1,K+1} \right]
\nonumber\\
&&\qquad\qquad\,
= \sum_{\ell=1}^{K+1} \Delta_{r_\ell} \left(\frac{k_B\,{\tt T}}{\zeta_0}\,\psi^{1, K+1}\right),
\end{eqnarray}
where $\psi^{1,K+1}$ denotes the probability density function with respect to the $({\rt}_1,\dots, {\rt}_{K+1})$ coordinates. By performing the linear change of variables
\[ \rt:=(\rt_1,\dots, \rt_{K+1}) \in \mathbb{R}^{(K+1)d}
\mapsto (\rt_c, \qt):=(\rt_c,\qt_1, \dots, \qt_K) \in \mathbb{R}^{(K+1)d}\]
and letting
$\psi(\rt_c, \qt, t) := \psi^{1,K+1}(\rt({\rt}_c, \qt),t)$, we obtain
\begin{eqnarray}
 \label{eq.fp-xq}
  &&\frac{\partial\psi}{\partial t} + \undertilde{\nabla}_{r_c} \cdot \left(\frac{1}{K+1} \sum_{\ell=1}^{K+1}
  \ut(\rt_\ell,\cdot)\,\psi\right)\nonumber\\
  &&\qquad - \sum_{i=1}^K \undertilde{\nabla}_{q_i} \cdot \left(\left[\sum_{\ell=1}^{K+1} G_{\ell i}\, \ut(\rt_\ell,\cdot) +
  \frac{1}{\zeta_0}
  \sum_{j=1}^{K} A_{ij}\,\Ft_j(\qt_j)\right]\psi\right)\nonumber\\
  &&\qquad\qquad =  \Delta_{r_c}
  \left(\frac{k_B\,{\tt T}}{K+1}\, \frac{\psi}{\zeta_0}\right)
  +
  \sum_{i=1}^K \sum_{j=1}^K A_{ij} \nabla_{q_i} \cdot  \left(\nabla_{q_j}\left(k_B\, {\tt T}\,\frac{\psi}{\zeta_0}\right)\right),
\end{eqnarray}
where $\Att = \Gtt^{\rm T} \Gtt$ is the Rouse matrix. We shall use the approximations
\[ \frac{1}{K+1} \sum_{\ell=1}^{K+1} \ut(\rt_\ell,\cdot) \approx \ut(\rt_c,t), \quad
\sum_{\ell=1}^{K+1} G_{\ell i}\, \ut(\rt_\ell,\cdot) \approx \underdtilde{\nabla}_{r_c}\ut(\rt_c,t)\,(\rt_i -
\rt_{i+1}) =  - \underdtilde{\nabla}_{r_c}\ut(\rt_c,t)\,\qt_i,\]
for $i = 1,\dots, K$,
in the second and the third term on the left-hand side of \eqref{eq.fp-xq}, respectively, and suppose that the
associated approximation errors are negligible; we note in particular that if $\ut$ is linear in its spatial
variable, then the above approximations are exact. Otherwise, the approximation errors are nonzero and their
magnitudes depend on the extent to which $\xt \mapsto \ut(\xt,\cdot)$ deviates from a linear function on the
characteristic microscopic length scale $\ell_0$. Hence,
\begin{eqnarray}
 \label{eq.fp-xq-1}
  &&\frac{\partial\psi}{\partial t} + \undertilde{\nabla}_{r_c} \cdot \left(\ut(\rt_c,\cdot)\,\psi\right)
  + \sum_{i=1}^K \undertilde{\nabla}_{q_i} \cdot \left(\left(\underdtilde{\nabla}_{r_c}\ut(\rt_c,\cdot)\,\qt_i\right)\psi -
  \sum_{j=1}^{K} A_{ij}\,\Ft_j(\qt_j)\,\frac{\psi}{\zeta_0}\right)\nonumber\\
  &&\qquad\qquad =  \Delta_{r_c}
  \left(\frac{k_B\,{\tt T}}{K+1}\, \frac{\psi}{\zeta_0}\right)
  +
  \sum_{i=1}^K \sum_{j=1}^K A_{ij} \nabla_{q_i} \cdot \left(\nabla_{q_j}\left(k_B\, {\tt T}\,\frac{\psi}{\zeta_0}\right)\right).
\end{eqnarray}
More generally, since the (Stokes) drag coefficient depends linearly on the dynamic viscosity, which, in turn, has been assumed to depend nonlinearly on the (variable) density, we shall replace $\zeta_0$ in \eqref{eq.fp-xq-1} with $\zeta_0 \,\zeta(\rho(\xt,t))$, where $\zeta(\rho)$ is a smooth
nondimensional function of the (unknown) nondimensionalized density $\rho$.
After renaming the centre of mass $\rt_c$ into $\xt$, we thus have that
\begin{eqnarray}
 \label{eq.fp-xq-2}
  &&\frac{\partial\psi}{\partial t} + \undertilde{\nabla}_{x} \cdot \left(\ut(\xt,\cdot)\,\psi\right)
  + \sum_{i=1}^K \undertilde{\nabla}_{q_i} \cdot \left(\left(\underdtilde{\nabla}_{x}\ut(\xt,\cdot)\,\qt_i\right)\psi - \frac{1}{\zeta_0}
  \sum_{j=1}^{K} A_{ij}\,\Ft_j(\qt_j)\,\frac{\psi}{\zeta}\right)\nonumber\\
  &&\qquad\qquad =  \frac{k_B\,{\tt T}}{\zeta_0}\, \frac{1}{K+1}\, \Delta_{x}
  \left(\frac{\psi}{\zeta}\right)
  +
  \frac{k_B\, {\tt T}}{\zeta_0}\,\sum_{i=1}^K \sum_{j=1}^K A_{ij} \nabla_{q_i} \cdot \left(\nabla_{q_j}\left(\frac{\psi}{\zeta}\right)\right),
\end{eqnarray}
which, in the special case of $\zeta(\rho(\xt,t)) \equiv 1$ collapses to \eqref{eq.fp-xq-1}.
Concerning alternative models with variable,
configuration-dependent drag, $\zeta_0\, \zeta(\qt)$, where $\zeta$ is a certain fixed function of
$\qt$, we refer to the comments at the end of this section.

The quantity $\varrho(\xt,t)=\int_D \psi(\xt,\qt,t)\dq$, signifying the density of polymer chains at the point $\xt \in \Omega$ and time $t\in [0,T]$, has units of length to the power minus $d$.
We shall assume temporarily, for the purposes of nondimensionalization, that there are no initial polymer molecule concentration gradients; i.e., that $\varrho(\cdot,0)$ is constant throughout the spatial domain $\Omega$; we denote this constant, called the {\em polymer density number}, by $n_p$. It then follows on integrating
\eqref{eq.fp-xq-2} over $D$, using the divergence theorem and dropping boundary integrals (cf. (\ref{eqpsi2aa},b) below)
that $\varrho$ is constant through $\Omega$ and throughout time.
We emphasize, that our analysis in subsequent sections does not require that $\varrho$ is constant in space or in time: we shall only demand that $\varrho(\cdot,0) = \int_D \psi_0(\cdot,\qt) \dq \in L^\infty(\Omega)$, in fact (cf. \eqref{inidata}). In any case, with $n_p$ thus defined,
we introduce nondimensionalized (hatted) variables in terms of their nonhatted counterparts as follows:
\begin{equation}\label{nondim-defs}
 \xt := L_0\widehat\xt, \quad \qt_i := \ell_0 {\widehat \qt}_i, \quad
\ut := U_0\widehat\ut, \quad t := (L_0/U_0)\,\widehat{t},\quad \mbox{and}\quad \psi:= (n_p/\ell_0^{Kd})\,\widehat\psi,
\end{equation}
where $\ell_0:= \sqrt{k_B\, {\tt T}/{\tt H}}$ is the characteristic length scale of a spring and $L_0$
and $U_0$ are the characteristic macroscopic length and velocity, respectively. Upon nondimensionalization using the
hatted variables, the Fokker--Planck equation \eqref{eq.fp-xq-2} becomes
\begin{eqnarray*}
&&\frac{\partial \widehat \psi}{\partial \widehat{t}} +
\nabla_{\widehat x} \cdot (\widehat\ut\, \widehat \psi) + \sum_{i=1}^K \nabla_{\widehat{q}_i} \cdot\left(
(\nabx \widehat\ut)\,\widehat{\undertilde{q}}_i\, \widehat\psi - \frac{1}{4\lambda}
\sum_{j=1}^K A_{ij} \,\widehat{\Ft}_j(\widehat{\qt}_j)\left(\frac{\widehat\psi}{\zeta}\right)\right)\\
&&\qquad = \frac{1}{4\lambda\, (K+1)} \left(\frac{\ell_0}{L_0}\right)^2 \Delta_{\widehat x} \left(\frac{\widehat\psi}{\zeta}\right)
+ \frac{1}{4\lambda} \sum_{i=1}^K \sum_{j=1}^K A_{ij} \nabla_{\widehat q_i} \cdot
\nabla_{\widehat q_j} \left(\frac{\widehat\psi}{\zeta}\right),
\end{eqnarray*}
where $\lambda := (\zeta_0/4H)\,(U_0/L_0)$ is the characteristic relaxation time of a spring (the Weissenberg number,
usually denoted by $\mathsf{Wi}$:  the ratio of the microscopic to macroscopic time scales),
\[\widehat{\Ut}_i(s) = \ell_0^{-2}\, \Ut_i(\ell_0^2\,s),\qquad \widehat{\Ft}_i(\widehat{\qt}_i)
= \widehat{\Ut}_i'({\textstyle \frac{1}{2}}|\widehat{\qt}_i|^2)\,\widehat{\qt}_i =
({\tt H}\,\ell_0)^{-1}\, \Ft_i(\ell_0\, \widehat{\qt}_i),\]
and the spatial, configurational
and temporal variables $\widehat{\xt}$, $\widehat{\qt}$ and $\widehat{t}$ now belong
to the rescaled domains
\[ \widehat \Omega := \Omega/L_0,\qquad \widehat{D}:= D/\ell_0,\qquad \mbox{and}\qquad
[0,\widehat{T}]\quad \mbox{with}\quad \widehat{T}:=U_0 T/L_0,\]
respectively. Discarding the hats
and writing $\varepsilon$ for the centre-of-mass diffusion coefficient; i.e.,
\[ \varepsilon:= \frac{1}{4\,\lambda\, (K+1)} \left(\frac{\ell_0}{L_0}\right)^2,\]
we have
\begin{eqnarray*}
&&\frac{\partial\psi}{\partial {t}} +
\nabla_{x} \cdot (\ut\,\psi) + \sum_{i=1}^K \nabqi \cdot\left(
(\nabx\ut)\,\qt_i\, \psi - \frac{1}{4\lambda}
\sum_{j=1}^K A_{ij} \,{\Ft}_i(\qt_j)\left(\frac{\psi}{\zeta}\right)\right)\\
&&\qquad = \varepsilon \,\Delta_{x} \left(\frac{\psi}{\zeta}\right)
+ \frac{1}{4\,\lambda} \sum_{i=1}^K \sum_{j=1}^K A_{ij} \nabqi \cdot
\nabqj \left(\frac{\psi}{\zeta}\right).
\end{eqnarray*}
Finally, on recalling the definition \eqref{MN} of the Maxwellian $M$ and noting the identities
\eqref{eqM}, the two terms involving the entries
$A_{ij}$ of the Rouse matrix $\Att$ in the last equation can be merged into a single term, resulting in
the Fokker--Planck equation (\ref{fp0}). From this nondimensionalization procedure one also obtains the important nondimensional parameter
\[ b_i := \frac{(q^{\rm max}_i)^2\,{\tt H}}{k_B\,{\tt T}} = \frac{(q^{\rm max}_i)^2}{\ell_0^2},\qquad i=1,\dots, K,\]
which measures how the maximal admissible extension $q^{\rm max}_i$ of the $i$th spring in the chain compares with the
characteristic microscopic length-scale $\ell_0$. Having defined $b_i$
we can express the non-dimensionalized configuration domain for the $i$th spring as $D_i =
\{\qt_i \in \mathbb{R}^d\,:\, |\qt_i| < \sqrt{b_i}\}$.

A noteworthy feature of equation (\ref{fp0}) in the model $({\rm P})$
compared to classical Fokker--Planck equations for bead-spring models in the
literature is the presence of the
$\xt$-dissipative centre-of-mass diffusion term $\varepsilon
\,\Delta_x \psi$ on the right-hand side of the Fokker--Planck equation (\ref{fp0}).
We refer to Barrett \& S\"uli \cite{BS} for the derivation of (\ref{fp0})
in the case of $K=1$ and constant $\rho$;
see also the article by Schieber \cite{SCHI}
concerning generalized dumbbell models with centre-of-mass diffusion,
and the recent paper of Degond \& Liu \cite{DegLiu} for a careful justification
of the presence of the centre-of-mass diffusion term through asymptotic analysis.
In standard derivations of bead-spring models the centre-of-mass
diffusion term is routinely omitted on the grounds that it is
several orders of magnitude smaller than the other terms in the
equation. Indeed, when the characteristic macroscopic
length-scale $L_0\approx 1$, (for example, $L_0 = \mbox{diam}(\Omega)$), Bhave,
Armstrong \& Brown \cite{Bh} estimate the ratio $\ell_0^2/L_0^2$ to
be in the range of about $10^{-9}$ to $10^{-7}$. However, the
omission of the term $\varepsilon \,\Delta_x \psi$ from (\ref{fp0})
in the case of a heterogeneous solvent velocity $\ut(\xt,t)$ is a
mathematically counterproductive model reduction. When $\varepsilon
\,\Delta_x\psi$ is absent, (\ref{fp0}) becomes a degenerate
parabolic equation exhibiting hyperbolic behaviour with respect to
$(\xt,t)$. Since the study of weak solutions to the coupled problem
requires one to work with velocity fields $\ut$ that have very
limited Sobolev regularity (typically $\ut \in
L^\infty(0,T;\Lt^2(\Omega)) \cap L^2(0,T; \Ht^1_0(\Omega))$), one is
then forced into the technically unpleasant framework of
hyperbolically degenerate parabolic equations with rough transport
coefficients (cf. Ambrosio \cite{Am}, DiPerna \& Lions \cite{DPL}, Mucha \cite{Mucha}).
The resulting difficulties are further
exacerbated by the fact that, when $D$ is bounded, a typical spring
force $\Ft(\qt)$ for a finitely extensible model (such as FENE)
explodes as $\qt$ approaches $\partial D$; see
Example~\ref{ex1.2} above. For these reasons,
as in our earlier papers in this field (cf. \cite{BS,BS2,BS2011-fene,BS2010-hookean}), we
shall retain the centre-of-mass diffusion term in (\ref{fp0}).

We continue with a brief
literature survey. Unless otherwise stated, the centre-of-mass
diffusion term is absent from the model considered in the cited reference
(i.e., $\varepsilon$ is set to $0$) and the density $\rho$ of the solvent is assumed
to be constant; also, in all references cited $K=1$,
i.e., a simple dumbbell model is considered rather than a bead-spring
chain model.

An early contribution to the existence and uniqueness of
local-in-time solutions to a family of dumbbell type polymeric
flow models is due to Renardy \cite{R}. While the class of
potentials $\Ft(\qt)$ considered by Renardy \cite{R} (cf.\
hypotheses (F) and (F$'$) on pp.~314--315) does include the case of
Hookean dumbbells, it excludes the practically relevant case of the
FENE dumbbell model (see Example~\ref{ex1.2} above). More recently, E, Li \&
Zhang \cite{E} and Li, Zhang \& Zhang \cite{LZZ} have revisited the
question of local existence of solutions for dumbbell models. A further
development in this direction is the work of Zhang \& Zhang \cite{ZZ},
where the local existence of regular solutions to FENE-type dumbbell
models has been shown. All of these papers require high regularity of the initial data.
Constantin \cite{CON} considered the Navier--Stokes equations
coupled to nonlinear Fokker--Planck equations describing the
evolution of the probability distribution of the particles
interacting with the fluid.
Otto \& Tzavaras \cite{OT} investigated the Doi model (which is
similar to a Hookean model (cf. Example \ref{ex1.1} above), except
that $D=S^2$) for suspensions of rod-like molecules in the dilute regime.
Jourdain, Leli\`evre \& Le Bris \cite{JLL2} studied the existence of
solutions to the FENE dumbbell model in the case of a simple Couette flow.
By using tools from the theory of stochastic differential equations, they showed the existence
of a unique local-in-time solution to the FENE dumbbell model for $d=2$
when the velocity field $\ut$ is unidirectional and of the particular form $\ut(x_1,x_2)
= (u_1(x_2),0)^{\rm T}$.

In the case of Hookean dumbbells ($K=1$), and assuming $\varepsilon=0$ and constant solvent
density $\rho$, the coupled
microscopic-macroscopic model described above yields, formally,
taking the second moment of $\qt \mapsto \psi(\qt,\xt,t)$, the fully macroscopic,
Oldroyd-B model of viscoelastic flow. Lions \& Masmoudi \cite{LM}
have shown the existence of global-in-time weak solutions to the Oldroyd-B model
in a simplified corotational setting (i.e., with $\sigma(\ut) = \nabxtt \ut$
replaced by $\frac{1}{2}(\nabxtt \ut - (\nabxtt u)^{\rm T})$)
by exploiting the propagation in time of the compactness of the
solution (i.e., the property that if one takes a sequence of weak solutions that
converges weakly and such that the corresponding sequence of
initial data converges strongly, then the weak
limit is also a solution) and the DiPerna--Lions \cite{DPL} theory of
renormalized solutions to linear hyperbolic
equations with nonsmooth transport coefficients. It is not
known if an identical global
existence result for the Oldroyd-B model also holds in the
absence of the crucial assumption
that the drag term is corotational. With $\varepsilon>0$ and constant solvent density $\rho$,
the coupled microscopic-macroscopic model above yields,
taking the appropriate moments in the case of Hookean dumbbells, a dissipative
version of the Oldroyd-B model. In this sense,
the Hookean dumbbell model has a macroscopic closure:
it is the Oldroyd-B model when $\varepsilon=0$, and a dissipative version of Oldroyd-B
when $\varepsilon>0$ (cf. Barrett \& S\"uli \cite{BS}).
Barrett \& Boyaval \cite{barrett-boyaval-09} have proved a global existence result
for this dissipative Oldroyd-B model in two space dimensions.
In contrast, the FENE model is not known to have an
exact closure at the macroscopic level, though Du, Yu \& Liu \cite{DU} and Yu, Du \&
Liu \cite{YU} have recently considered the analysis of approximate closures of
the FENE dumbbell model.
Lions \& Masmoudi \cite{LM2} proved the global existence of weak
solutions for the \textit{corotational} FENE dumbbell model,
once again corresponding to the
case of $\varepsilon=0$, constant solvent density $\rho$, and $K=1$, and the Doi model,
also called the rod model;
see also the work of Masmoudi \cite{M}. Recently, Masmoudi \cite{M10}
has extended this analysis to the noncorotational case.

Previously, El-Kareh \& Leal \cite{EKL} had proposed a steady macroscopic model,
with added dissipation in the equation satisfied by the conformation tensor, defined as
$$\Dtt(\xt):=\int_D\qt\,\qt^{\rm T} U'({\textstyle\frac{1}{2}}|\qt|^2) \,\psi(\xt,\qt) {\dd}\qt,$$
in order to account
for Brownian motion across streamlines; the model
can be thought of as an approximate macroscopic closure of a
FENE-type micro-macro model with centre-of-mass diffusion.

Barrett, Schwab \& S\"uli \cite{BSS}
showed the existence of global weak solutions to the coupled microscopic-macroscopic
model (\ref{ns1a}--d),
(\ref{fp0}) with $\varepsilon=0$, $K=1$, constant solvent-density $\rho$, an $\xt$-mollified
velocity gradient in the Fokker--Planck
equation and an $\xt$-mollified probability density function $\psi$ in the Kramers
expression, admitting a large class of potentials $U$
(including the Hookean dumbbell model and general FENE-type dumbbell models);
in addition to these
mollifications, $\ut$ in the
$\xt$-convective term $(\ut\cdot\nabx) \psi$ in the
Fokker--Planck equation was also mollified.
Unlike Lions \& Masmoudi \cite{LM}, the arguments in
Barrett, Schwab \& S\"uli \cite{BSS} did {\em not} require that
the drag term $\nabq\cdot(\sigtt(\ut)\,\qt\, \psi)$ in the Fokker--Planck equation
was corotational in the FENE case.

In Barrett \& S\"uli \cite{BS},
we derived the coupled Navier--Stokes--Fokker--Planck model
with centre-of-mass diffusion stated above, in the case of $K=1$ and constant solvent-density $\rho$.
We established the existence of global-in-time weak solutions to a mollification
of the model for a general class of spring-force-potentials including in
particular the FENE potential. We justified also, through a rigorous
limiting process, certain classical reductions of this model appearing
in the literature that exclude the centre-of-mass diffusion term from the
Fokker--Planck equation on the grounds that the diffusion coefficient is
small relative to other coefficients featuring in the equation.
In the case of a corotational drag term we performed a
rigorous passage to the limit as the
mollifiers in the Kramers expression and the drag term converge to identity operators.

In Barrett \& S\"uli \cite{BS2} we showed the existence of global-in-time
weak solutions to the general class of noncorotational FENE type dumbbell models
(including the standard
FENE dumbbell model) with centre-of-mass
diffusion, in the case of $K=1$ and constant solvent-density $\rho$, with microsropic  cut-off
(cf. \eqref{cut1} and \eqref{betaLa} below) in the drag term
\begin{equation}\label{drag}
 \nabq\cdot(\sigtt(\ut)\,\qt\, \psi) =
\nabq\cdot \left[\sigtt(\ut) \, \qt\,M\,\zeta(\rho)\left(\frac{\psi}{\zeta(\rho)\,M}\right)\right].
\end{equation}
Subsequently, in \cite{BS2011-fene} \cite{BS2010-hookean}, we removed the presence of the
cut-off by passing to the limit $L \rightarrow \infty$, with $K \geq 1$,
and the solvent density $\rho$, the viscosity $\mu$ and the drag coefficient $\zeta$ kept constant.
%
%

In this paper we prove the existence of global-in-time weak
solutions to 
FENE-type 
models {\em without} cut-off or mollification,
in the general case of $K \geq 1$ and with variable solvent-density $\rho$, variable viscosity
$\mu(\rho)$ and variable drag $\zeta(\rho)$.
This is achieved
by replacing the use of Dubinski\u{\i}'s compactness theorem in 
\cite{BS2011-fene} with 
the application of the Div-Curl lemma in our proof of relative compactness of the sequence of
approximating solutions to the Fokker--Planck equation in the Maxwellian-weighted $L^1$ space
$L^1(0,T;L^1_M(\Omega \times D))$.
Since the argument is long and technical, we give a brief overview of the main steps of the proof.

{\em Step 1.} Following the approach in Barrett \& S\"uli \cite{BS2,BS2011-fene,BS2010-hookean}
and motivated by recent papers of Jourdain, Leli\`evre, Le Bris \& Otto \cite{JLLO} and
Lin, Liu \& Zhang \cite{LinLZ}
(see also  Arnold, Markowich, Toscani \& Unterreiter \cite{AMTU},
and Desvillettes \& Villani \cite{DV})
concerning the convergence of the probability density function $\psi$ to its equilibrium
value $\psi_{\infty}(\xt,\qt):=M(\qt)$
(corresponding to the equilibrium value $\ut_\infty(\xt) :=\zerot$
of the velocity field in the case of constant density) in the absence of body forces $\ft$,
we observe that if $\psi/(\zeta(\rho)\,M)$ is bounded above then, for $L \in \mathbb{R}_{>0}$
sufficiently large, the drag term (\ref{drag}) is equal to
\begin{equation}\label{cut1}
\nabq\cdot \left[\sigtt(\ut) \, \qt\,M \,\zeta(\rho)\,\beta^L\left
(\frac{\psi}{\zeta(\rho)\,M}\right)\right],
\end{equation}
where $\beta^L \in C({\mathbb R})$ is a cut-off function defined as
\begin{align}
\beta^L(s) := \min(s,L).
\label{betaLa}
\end{align}
More generally, in the case of $K \geq 1$,
in analogy with \eqref{cut1}, the drag term with cut-off is
defined by
\[\sum_{i=1}^K \nabqi \cdot \left(\sigtt(\ut) \, \qt_i\,M \,\zeta(\rho)\,\beta^L\!\left(\frac{\psi}{\zeta(\rho)\,M}\right)\right).\]
It then follows that, for $L\gg 1$, any solution $\psi$ of (\ref{fp0}),
such that $\psi/(\zeta(\rho)\,M)$ is bounded above by $L$, also satisfies
\begin{eqnarray}
\label{eqpsi1aa}
&&\hspace{-6.5mm}\frac{\partial \psi}{\partial t} + (\ut \cdot\nabx) \psi
+ \sum_{i=1}^K \nabqi \cdot \left(\sigtt(\ut) \, \qt_i\,M \,\zeta(\rho)\,\beta^L\!\left(\frac{\psi}{\zeta(\rho)\,M}\right)\right)
\nonumber
\\
\bet
&&=
\epsilon\,\Delta_x \left(\frac{\psi}{\zeta(\rho)}\right) +
\frac{1}{4 \,\lambda}\,
\sum_{i=1}^K \sum_{j=1}^K
A_{ij}\,\nabqi \cdot \left(
M
\,\nabqj \left(\frac{\psi}{\zeta(\rho)\,M}\right)\right)
\quad \mbox{in } \Omega \times D \times
(0,T]. ~~~
\end{eqnarray}
%
Let
$\partial\overline{D}_i := D_1 \times \cdots \times D_{i-1} \times \partial D_i \times D_{i+1} \times \cdots \times D_K$.
We impose the following boundary and initial conditions:
\begin{subequations}
\begin{align}
&\left[\frac{1}{4\,\lambda} \sum_{j=1}^K A_{ij}\,
M\,\nabqj\!\left(\frac{\psi}{\zeta(\rho)\,M}\right)
- \sigtt(\ut) \,\qt_i\,M\,\zeta(\rho)\,\beta^L\!\left(\frac{\psi}{\zeta(\rho)\,M}\right)
\right]\! \cdot \frac{\qt_i}{|\qt_i|}
=0\qquad ~~\nonumber\\
&~ \hspace{1.49in}\mbox{on }
\Omega \times \partial \overline{D}_i\times (0,T],
\mbox{~~for $i=1,\dots, K$,} \label{eqpsi2aa}\\
&\epsilon\,\nabx \left(\frac{\psi}{\zeta(\rho)}\right)\,\cdot\,\nt =0
\qquad\mbox{on }
\partial \Omega \times D\times (0,T],\label{eqpsi2ab}\\
&\psi(\cdot,\cdot,0)=M(\cdot)\,\zeta(\rho_0(\cdot))\beta^L\!\left({\psi_{0}
(\cdot,\cdot)}/{(\zeta(\rho_0(\cdot))\,M(\cdot))}\right) \geq 0
\qquad \mbox{on $\Omega\times D$},\label{eqpsi3ac}
\end{align}
\end{subequations}
where $\qt_i$ is normal to $\partial D_i$, as $D_i$
is a bounded ball centred at the origin,
and $\nt$ is normal to $\partial \Omega$.

The initial datum $\psi_0$ for the Fokker--Planck equation is nonnegative,
defined on $\Omega\times D$, with
\[\int_{D} \psi_0(\xt,\qt) \dd \qt \in L^\infty(\Omega), \qquad \int_{\Omega \times D} \psi_0(\xt,\qt) \dq \dx = 1,\]
and assumed to have finite relative entropy with
respect to the Maxwellian $M$; i.e.
\[\int_{\Omega \times D} \psi_0(\xt,\qt)
\log \frac{\psi_0(\xt,\qt)}{M(\qt)} \dq \dx < \infty.\]
As we shall suppose throughout that the range of the function $\zeta$ is a compact subinterval $[\zeta_{\rm min}, \zeta_{\rm max}]$ of $(0,\infty)$, the finiteness of the relative entropy with respect to the Maxwellian $M$ is equivalent to demanding that
\[\int_{\Omega \times D} \frac{\psi_0(\xt,\qt)}{\zeta(\rho_0(\xt))}
\log \frac{\psi_0(\xt,\qt)}{\zeta(\rho_0(\xt))\,M(\qt)} \dq \dx < \infty.
\]
Clearly, if there exists $L>0$ such that $0 \leq \psi_0 \leq L\, \zeta(\rho_0)\,M$,
then $M\,\zeta(\rho_0)\,\beta^L(\psi_{0}/(\zeta(\rho_0)\,M)) = \psi_0$. Henceforth $L >1$ is assumed.

\begin{definition}
The coupled problem {\rm (\ref{ns0a}--f), (\ref{tau1}), (\ref{rho1}),
(\ref{eqpsi1aa}), (\ref{eqpsi2aa}--c) 
}
will be referred to
as model $({\rm P}_{L})$, or as the {\em general nonhomogeneous FENE-type bead-spring chain
model with centre-of-mass diffusion and microscopic cut-off}, with cut-off parameter $L>1$.
\end{definition}

In order to highlight the dependence
on $L$, in subsequent sections the solution to
(\ref{eqpsi1aa}), (\ref{eqpsi2aa}--c) will be labelled $\psiae$.
Because of the coupling of
(\ref{eqpsi1aa}) to (\ref{ns1a}) through (\ref{tau1}), the density, velocity and the pressure
will also depend $L$ and we shall therefore
denote them in subsequent
sections by $\rho_L$, $\ut_{L}$ and $p_{L}$.

The cut-off $\beta^{L}$ has a convenient property: when the solvent density $\rho$ is constant,
the couple $(\ut_{\infty},\psi_{\infty})$, defined by
$\ut_{\infty}(\xt) := \zerot$ and $\psi_{\infty}(\xt,\qt):=M(\qt)$,
which is an equilibrium solution of the  system
(\ref{ns1a}--d), (\ref{tau1}), (\ref{rho1}), (\ref{fp0}), with $\ft =\zerot$,
is still an equilibrium solution of the cut-off version of the problem, when (\ref{fp0})
is replaced by (\ref{eqpsi1aa}) (with boundary and initial conditions (\ref{eqpsi2aa}--c)),
for all $L>0$.
Thus, unlike the truncation of
the (unbounded) spring-potential proposed in the work of El-Kareh \& Leal \cite{EKL}
in the case of constant-density flows, the introduction of
the cut-off function $\beta^L$ into the Fokker--Planck equation (\ref{fp0})
does not alter the equilibrium solution $(\ut_{\infty},\psi_\infty)$ of the
original Navier--Stokes--Fokker--Planck system.
In addition, the boundary conditions for $\psi$
on $\partial\Omega\times D\times(0,T]$ and $\Omega \times \partial D
\times (0,T]$ ensure that
\[\int_{\Omega \times D}\psi(\xt,\qt,t) \dd \qt  =
\int_{\Omega \times D}\psi(\xt,\qt,0) \dd \qt =1\]
for a.e. $t \in \mathbb{R}_{\geq 0}$, in agreement with the requirement
that $\psi$ is a probability density function.

\textit{Step 2.} Ideally, one would like to pass to the limit $L\rightarrow \infty$
in problem $({\rm P}_{L})$ to
deduce the existence of solutions to $({\rm P})$. Unfortunately,
such a direct attack at the problem is (except in the special case of $d=2$, or
in the absence of
convection terms from the model,) fraught with technical difficulties. Instead,
we shall first (semi)discretize problem $({\rm P}_{L})$
by an implicit Euler scheme with respect to $t$, with step size $\Delta t$;
this then results in a time-discrete version $({\rm P}^{\Delta t}_{L})$ of
$({\rm P}_{L})$. By using Schauder's fixed point theorem, we will show in
Section \ref{sec:existence-cut-off} the existence of solutions to $({\rm P}^{\Delta t}_{L})$.
In the course of the proof, for technical reasons, a further cut-off,
now from below, is required,
with a cut-off parameter $\delta \in (0,1)$, which we shall let pass to $0$ to
complete the proof of existence of solutions to $({\rm P}^{\Delta t}_{L})$ in the limit
of $\delta \rightarrow 0_+$ (cf.
Section \ref{sec:existence-cut-off}).
Ultimately, of course, our aim is to show existence of weak solutions to the
general nonhomogeneous FENE-type bead-spring chain model with centre-of-mass diffusion, $({\rm P})$,
and that demands passing to
the limits $\Delta t \rightarrow 0_+$ and $L \rightarrow \infty$; this then brings us
to the next step in our argument.

{\em Step 3.} We shall link the time step $\Delta t$ to the cut-off parameter $L>1$ by
demanding that $\Delta t = o(L^{-1})$, as $L \rightarrow \infty$, so that the
only parameter in the problem $({\rm P}^{\Delta t}_{L})$ is the cut-off parameter (the centre-of-mass
diffusion parameter $\epsilon$ being fixed). We shall show that $\rho^{\Delta t}_L$ can be bounded, independent of the
cut-off parameter $L$, in $L^\infty(0,T;L^\infty(\Omega))$. By using special energy estimates, based on testing the
Fokker--Planck equation in $({\rm P}^{\Delta t}_{L})$ with the derivative of the relative entropy with respect to
the Maxwellian of the general nonhomogeneous FENE-type bead-spring
chain model, we show that $\ut^{\Delta t}_L$
can also be bounded, independent of $L$.
Specifically, $\ut^{\Delta t}_{L}$ is bounded in the norm of
the classical Leray space, independent of $L$;
also, the $L^\infty$ norm in time of the relative entropy of
$\psi^{\Delta t}_{L}/\zeta(\rho^{\Delta t}_L)$ and the $L^2$ norm in time
of the Fisher information of  $\widetilde{\psi}^{\Delta t}_{L}
:=\psi^{\Delta t}_{L}/(\zeta(\rho^{\Delta t}_L)\,M)$ are bounded,
independent of $L$.
We then use these $L$-independent
bounds on the relative entropy and the Fisher information to derive an
$L$-independent bound on a fractional-order in time Nikol'ski\u{\i} norm
of $\ut^{\Delta t}_L$.

\textit{Step 4.} The collection of $L$-independent bounds from Step 3,
then enables us to extract a weakly convergent subsequence of solutions to
problem $({\rm P}^{\Delta t}_{L})$ as $L \rightarrow \infty$; and then
further strongly convergent subsequences $\{\ut^{\Delta t}_{L}\}_{L>1}$
and $\{\rho^{\Delta t}_{L}\}_{L>1}$. The extraction of a strongly
convergent subsequence from the weakly convergent sequence $\{\widetilde\psi^{\Delta t}_{L}\}_{L>1}$
is considerably more complicated: after some technical preparation, we apply the Div-Curl lemma,
to finally obtain a strongly convergent subsequence of solutions
$(\rho^{\Delta t_k}_{L_k}, \ut^{\Delta t_k}_{L_k}, \widetilde\psi^{\Delta t_k}_{L_k})$
to $({\rm P}^{\Delta t}_{L})$ with $\Delta t = o(L^{-1})$ as $L \rightarrow \infty$,
in $L^\infty(0,T;L^p(\Omega)) \times L^2(0,T;L^r(\Omega))
\times L^p(0,T; L^1_M(\Omega \times D))$ for any $p \in [1,\infty)$; any $r \in [1,\infty)$ when $d=2$ and any $r \in [1,6)$ when $d=3$; enabling us to pass to the limit
with the microscopic cut-off parameter $L$ in the model $({\rm P}^{\Delta t}_{L})$, with
$\Delta t = o(L^{-1})$,
as $L \rightarrow \infty$, to finally deduce the existence of a weak solution to model $({\rm P})$,
the general nonhomogeneous FENE-type 
bead-spring chain models with centre-of-mass diffusion.

The paper is structured as follows. We begin, in Section~\ref{sec:2}, by stating $({\rm P}_{L})$,
the coupled nonhomogeneous Navier--Stokes--Fokker--Planck system with centre-of-mass diffusion and microscopic
cut-off for a general class of FENE-type 
spring potentials. In Section~\ref{sec:existence-cut-off}
we establish the existence of solutions to the time-discrete problem $({\rm P}^{\Delta t}_{L})$.
In Section~\ref{sec:entropy} we derive an $L$-independent bound on the solvent density $\rho^{\Delta t}_L$
in $L^\infty(0,T;L^\infty(\Omega))$; we also derive a set of $L$-independent bounds on
$\ut_L^{\Delta t}$ in the classical Leray space, together with $L$-independent bounds
on the relative entropy of
$\psi^{\Delta t}_{L}/\zeta(\rho^{\Delta t}_L)$ with respect to the Maxwellian $M$, and the $L^2$ norm in time
of the Fisher information of  $\widetilde{\psi}^{\Delta t}_{L}
:=\psi^{\Delta t}_{L}/(\zeta(\rho^{\Delta t}_L)\,M)$. We then use these $L$-independent bounds on spatial norms to show that the Nikol'ski\u{\i} norm $N^\gamma(0,T;\Lt^2(\Omega))$ of $\ut^{\Delta t}_L$ is
bounded, independent of $L$ and $\Delta t=o(L^{-1})$, for a suitable value of $\gamma \in (0,1)$.
This allows us to prove, via Simon's extension of the Aubin--Lions compactness theorem
(cf. \cite{Simon}), strong-convergence of the sequence
$\{\ut^{\Delta t}_L\}_{L>1}$ in $L^2(0,T;\Lt^r(\Omega))$ for $r \in [1,\infty)$ when $d=2$
and $r \in [1,6)$ when $d=3$. We then use this strong convergence result together with the
DiPerna--Lions theory of renormalized solutions to linear transport equations with nonsmooth
transport velocities to deduce the strong convergence of the sequence of approximate densities $\{\rho^{\Delta t}_L\}_{L>1}$, and pass to the limit in our approximation to the continuity equation,
as $L\rightarrow \infty$, with $\Delta t = o(L^{-1})$. Weak convergence of the sequence
$\{\widetilde\psi^{\Delta t}_L\}_{L>1}$ in the Maxwellian-weighted $L^1$ space
$L^1(0,T;L^1_M(\Omega \times D))$ is an immediate consequence of our entropy estimate, via
de la Vall\'ee-Poussin's theorem and the Dunford--Pettis theorem. The proof of the strong
convergence of the sequence is however considerably more complicated; it is established
in Section \ref{Lindep-time}, by first
developing interior estimates in standard (unweighted) Lebesgue and
Sobolev norms, exploiting the fact that on nonempty open relatively compact subsets of $D$ the Maxwellian is bounded above and below by positive constants. We then use these interior estimates in conjunction with the
Div-Curl lemma to deduce weak convergence of the sequence $(1+\widetilde\psi^{\Delta t}_L)^{\alpha + 1}$
on nonempty open relatively compact subsets of
$(0,T) \times \Omega \times D$, where $\alpha \in (0,1)$. Thus we can make use of
the fact that the continuous functions $s \in [0,\infty) \mapsto (1+s)^{1+\alpha}$ and $s \in [0,\infty) \mapsto s^{\alpha}$ are, respectively, strictly convex and strictly concave, and therefore weakly lower (respectively, upper) semicontinuous, to deduce that $\{\widetilde\psi^{\Delta t}_L\}_{L>1}$ converges
to a limiting function $\widetilde\psi \in L^1(0,T;L^1_M(\Omega \times D))$, almost everywhere on compact subsets of $(0,T)\times\Omega \times D$; hence, by using a nested sequence of nonempty open relatively compact sets, we show that $\{\widetilde\psi^{\Delta t}_L\}_{L>1}$ converges
to $\widetilde\psi$ almost everywhere on $(0,T)\times\Omega \times D$. Thanks to the fact that
$M(\qt) \dq$ is a probability measure on $D$, and therefore $M(\qt) \dq \dx \dt$ is a finite measure
of $(0,T)\times\Omega \times D$,
Egoroff's theorem then implies almost uniform convergence of the sequence, and therefore also
convergence in measure; thus we can appeal to Vitali's theorem to finally deduce strong convergence in
$L^1(0,T;L^1_M(\Omega \times D))$ of (a subsequence of) the sequence $\{\widetilde\psi^{\Delta t}_L\}_{L>1}$.  This strong convergence result
then allows us in Section~\ref{5-sec:passage.to.limit} to pass to the limit
with the cut-off parameter $L$ in problem $({\rm P}^{\Delta t}_{L})$, with $\Delta t = o(L^{-1})$,
as $L \rightarrow \infty$, to deduce the existence of a weak solution $(\rho, \ut,\psi:=M\,\zeta(\rho)\, \widetilde\psi)$ to problem $({\rm P})$, the general nonhomogeneous FENE-type 
bead-spring chain models with centre-of-mass diffusion.
We shall operate within Maxwellian-weighted Sobolev spaces, which provide the
natural functional-analytic framework for the problem.
Our proofs require special
density and embedding results in these spaces that
are proved,  respectively, in Appendix C and Appendix D of \cite{BS2010}, which is an extended
version of our paper \cite{BS2011-fene} for FENE-type models.

Our techniques can be easily modified to prove large-data global existence of weak solutions to
kinetic models with configuration-dependent drag, where instead of being a nonlinear function of
the unknown density $\rho$, as in \eqref{fp0}, the drag coefficient $\zeta$ is a
given $C^1$ function of $\qt$,
bounded above and below by positive constants
(cf. Hinch \cite{Hinch}, Larson \cite{Larson}, Schr\"oder et al. \cite{Schroder}
and references therein). The idea behind these models, which have been developed to explain
physical mechanism by which large stresses rapidly build up in dilute polymer solutions,
is that of a bead friction coefficient that depends strongly on the inter-bead distance
through a nonlinear friction law. This principle of conformation-dependent hydrodynamic drag
assumes that as a chain becomes extended by the flow, the strength of the hydrodynamic friction
on the chain will also increase; see, \cite{Omowunmi} for a detailed survey.

\section{The polymer model $({\rm P}_{L})$}
\label{sec:2}
\setcounter{equation}{0}

Let $\Omega \subset {\mathbb R}^d$ be a bounded open set with a
Lipschitz-continuous boundary $\partial \Omega$, and suppose that
the set $D:= D_1\times \cdots \times D_K$ of admissible
conformation vectors $\qt := (\qt_1^{\rm T}, \ldots ,
\qt_K^{\rm T})^{\rm T}$ in (\ref{fp0}) is
such that $D_i$, $i=1, \dots, K$, is an open ball
in ${\mathbb R}^d$, $d=2$ or $3$, centred at the origin with boundary $\partial D_i$
and radius $\sqrt{b_i}$, $b_i>2$; let
\begin{align}
\partial D := \bigcup_{i=1}^K \partial \overline D_i,\qquad \mbox{where}\qquad \partial \overline D_i:= D_1 \times \cdots \times D_{i-1} \times \partial D_i \times D_{i+1} \times \cdots \times D_K.
\label{dD}
\end{align}
Collecting (\ref{ns0a}--f), (\ref{tau1}), (\ref{rho1}), (\ref{eqpsi1aa}) and (\ref{eqpsi2aa}--c),
we then consider the following initial-boundary-value problem,
dependent on the parameter $L > 1$. As has been already emphasized in the
Introduction, the centre-of-mass diffusion coefficient $\varepsilon>0$ is a
physical parameter and is regarded as being fixed.

(${\rm P}_{L}$)
Find
$ \rho_L\,:\,(\xt,t)
\in \Omega \times [0,T] \mapsto
\rho_L(\xt,t) \in {\mathbb R}$,
$\utae\,:\,(\xt,t)\in \overline{\Omega} \times [0,T]
\mapsto \utae(\xt,t) \in {\mathbb R}^d$ and $\pae\,:\, (\xt,t) \in
\Omega \times (0,T] \mapsto \pae(\xt,t) \in {\mathbb R}$ such that
\begin{subequations}
\begin{alignat}{2}
\frac{\partial \rho_L}{\partial t}
+ \nabx\cdot(\ut_L \,\rho_L)
&= \zerot \qquad &&\mbox{in } \Omega \times (0,T],\label{equ0}\\
\rho_L(\xt,0)&=\rho_{0}(\xt)    \qquad &&\forall \xt \in \Omega,\label{equ00} \\
\frac{\partial (\rho_L\,\ut_L)}{\partial t}
+ \nabx\cdot(\rho_L\,\ut_L \otimes \ut_L) - 
\nabx \cdot (\mu(\rho_L)  \,\Dtt(\ut_L))
+ \nabx p_L
&= \rho_L\,\ft + \nabx \cdot \tautt(\psi_L)
&&
\nonumber
\\
&&&\mbox{in } \Omega \times (0,T],\label{equ1}\\
\nabx \cdot \utae &= 0
&&\mbox{in } \Omega \times (0,T],
\label{equ2}\\
\utae &= \zerot  &&\mbox{on } \partial \Omega \times (0,T],
\label{equ3}\\
(\rho_L\,\utae)(\xt,0)&=(\rho_0\,\ut_{0})(\xt) &&\forall \xt \in \Omega,
\label{equ4}
\end{alignat}
\end{subequations}
where $\psiae\,:\,(\xt,\qt,t)\in
\overline{\Omega} \times \overline{D} \times [0,T]
\mapsto \psiae(\xt,\qt,t)
\in {\mathbb R}$, and
$\tautt(\psiae)\,:\,(\xt,t) \in \Omega \times (0,T] \mapsto
\tautt(\psiae)(\xt,t)\in \mathbb{R}^{d\times d}$ is the symmetric
extra-stress tensor defined as
\begin{equation}
\tautt(\psiae) := k \left[\left( \sum_{i=1}^K
\Ctt_i(\psiae)\right)
- K\,\varrho(\psiae)\, \Itt\right].
\label{eqtt1}
\end{equation}
Here $k \in \Rplus$,
$\Itt$ is the unit $d
\times d$ tensor,
\begin{subequations}
\begin{eqnarray}
\Ctt_i(\psiae)(\xt,t) &:=& \int_{D} \psiae(\xt,\qt,t)\, U_i'({\textstyle
\frac{1}{2}}|\qt_i|^2)\,\qt_i\,\qt_i^{\rm T} \dq,\qquad \mbox{and} \label{eqCtt}\\
\varrho(\psiae)(\xt,t) &:=& \int_{D} \psiae(\xt,\qt,t)\dq. \label{eqrhott}
\end{eqnarray}
\end{subequations}
The Fokker--Planck equation with microscopic cut-off satisfied by $\psiae$ is:
\begin{align}
\label{eqpsi1a}
&\hspace{-2mm}\frac{\partial \psiae}{\partial t} + (\utae \cdot\nabx) \psiae +
\sum_{i=1}^K \nabqi
\cdot \left[\sigtt(\utae) \, \qt_i\,M\,\zeta(\rho_L)\,\beta^L
\left(\frac{\psiae}{\zeta(\rho_L)\,M}\right)\right]
\nonumber
\\
&\hspace{0.01in} =
\epsilon\,\Delta_x\left(\frac{\psiae}{\zeta(\rho_L)}\right) +
\frac{1}{4 \,\lambda}\,
\sum_{i=1}^K \sum_{j=1}^K
A_{ij}\,\nabqi \cdot \left(
M \,\nabqj \left(\frac{\psiae}{\zeta(\rho_L)\,M}\right)\right)
\quad \mbox{in } \Omega \times D \times
(0,T].
\end{align}
Here, for a given $L > 1$, $\beta^L \in C({\mathbb R})$ is defined by (\ref{betaLa}),
$\sigtt(\vt) \equiv \nabxtt \vt$, and
\begin{align}
\!\!\!\!\!\Att \in {\mathbb R}^{K \times K} \mbox{ is symmetric positive definite
with smallest eigenvalue $a_0 \in {\mathbb R}_{>0}$.}
\label{A}
\end{align}
%
We impose the following boundary and initial conditions:
\begin{subequations}
\begin{align}
&\left[\frac{1}{4\,\lambda} \sum_{j=1}^K A_{ij}\,
M\,
\nabqj \left(\frac{\psiae}{\zeta(\rho_L)\,M}\right)
- \sigtt(\utae) \,\qt_i\,M\,\zeta(\rho_L)\,\beta^L\left(\frac{\psiae}{\zeta(\rho_L)\,M}\right)
\right]\cdot \frac{\qt_i}{|\qt_i|}
=0 \nonumber \\
&\hspace{2.105in}\mbox{on }
\Omega \times \partial \overline{D}_i \times (0,T],
%
\quad i=1, \dots, K,
\label{eqpsi2}\\
&\epsilon
\,\nabx \left(\frac{\psiae}{\zeta(\rho_L)}\right)\,\cdot\,\nt =0 \qquad \,\, \quad\qquad\, \mbox{on }
\partial \Omega \times D\times (0,T],\label{eqpsi2a}\\
&\psiae(\cdot,\cdot,0)=M(\cdot)\,\zeta(\rho_0(\cdot))\,
\beta^L(\psi_{0}(\cdot,\cdot)/(\zeta(\rho_0(\cdot))\,M(\cdot))) \geq 0 \qquad \mbox{on  $\Omega\times D$},\label{eqpsi3}
\end{align}
\end{subequations}
where $\nt$ is the unit outward normal to $\partial \Omega$.
The boundary conditions for $\psiae$
on $\partial\Omega\times D\times(0,T]$ and $\Omega \times \partial D
\times (0,T]$ have been chosen so as to ensure that
\begin{align}
\int_{\Omega \times D}\psiae(\xt,\qt,t) \dq \dx  =
\int_{\Omega \times D} \psiae(\xt,\qt,0) \dq \dx  \qquad \forall t \in (0,T].
\label{intDcon}
\end{align}
Henceforth, we shall write
\[\widetilde\psi_L = \frac{\psi_L}{\zeta(\rho_L)\,M},\quad
  \widetilde\psi_0 = \frac{\psi_0}{\zeta(\rho_0)\,M}.\]
Thus, for example, \eqref{eqpsi3} in terms of this compact notation becomes:
$\widetilde \psi_L(\cdot,\cdot,0) = \beta^L(\widetilde\psi_0(\cdot,\cdot))$ on $\Omega \times D$.

The notation $|\cdot|$ will be used to signify one of the following.
When applied to a real number $x$,
$|x|$ will denote the absolute value of the number $x$; when applied to a vector
$\vt$,  $|\vt|$ will
stand for the Euclidean norm of the vector $\vt$; and, when applied to a square matrix
$\Att$, $|\Att|$ will
signify the Frobenius norm, $[\mathfrak{tr}(\Att^{\rm T}\Att)]^{\frac{1}{2}}$, of the matrix
$\Att$, where, for a square matrix
$\Btt$, $\mathfrak{tr}(\Btt)$ denotes the trace of $\Btt$.

\section{Existence of a solution to the discrete-in-time problem}
\label{sec:existence-cut-off}
\setcounter{equation}{0}

Let
\begin{eqnarray}
&\Ht :=\{\wt \in \Lt^2(\Omega) : \nabx \cdot \wt =0\} \quad
\mbox{and}\quad \Vt :=\{\wt \in \Ht^{1}_{0}(\Omega) : \nabx \cdot
\wt =0\},&~~~ \label{eqVt}
\end{eqnarray}
where the divergence operator $\nabx\cdot$ is to be understood in
the sense of distributions on $\Omega$. Let $\Vt'$ be
the dual of $\Vt$.

For later purposes, we recall the following well-known
Gagliardo--Nirenberg inequality. Let $r \in [2,\infty)$ if $d=2$,
and $r \in [2,6]$ if $d=3$ and $\theta = d \,\left(\frac12-\frac
1r\right)$. Then, there is a constant $C=C(\Omega,r,d)$,
such that, for all $\eta \in H^{1}(\Omega)$:
\begin{equation}\label{eqinterp}
\|\eta\|_{L^r(\Omega)}
\leq C\,
\|\eta\|_{L^2(\Omega)}^{1-\theta}
\,\|\eta\|_{H^{1}(\Omega)}^\theta.
\end{equation}

Let $\mathcal{F}\in C(\mathbb{R}_{>0})$ be defined by
$\mathcal{F}(s):= s\,(\log s -1) + 1$, $s>0$.
As $\lim_{s \rightarrow 0_+} \mathcal{F}(s) = 1$,
the function $\mathcal{F}$ can be considered
to be defined and continuous on $[0,\infty)$,
where it is a nonnegative, strictly convex function
with $\mathcal{F}(1)=0$.
We assume the following:
\begin{align}\nonumber
&\partial \Omega \in C^{0,1}; \quad
\rho_0 \in [\rho_{\rm min},\rho_{\rm max}], \mbox{ with } \rho_{\rm min}>0;
\quad \ut_0 \in \Ht;\quad
\psi_0 \geq 0 \ {\rm ~a.e.\ on}\ \Omega \times D\\
&\mbox{with }
\mathcal{F}(\widetilde\psi_0)
\in L^1_M(\Omega \times D) 
\mbox{ and}
\int_{D} \psi_0(\xt,\qt)\,\dq \in L^\infty(\Omega); \quad \int_{\Omega \times D} \psi_0(\xt,\qt)\,\dq \dx = 1;
\nonumber\\
&\mbox{the Rouse matrix $\Att\in \mathbb{R}^{K \times K}$ satisfies (\ref{A})};
\nonumber
\\
&\mu \in C([\rho_{\rm min},\rho_{\rm max}], [\mu_{\rm min},\mu_{\rm max}]),
\quad \zeta \in C^1([\rho_{\rm min},\rho_{\rm max}], [\zeta_{\rm min},\zeta_{\rm max}]),
\mbox{ with } \mu_{\rm min},\,\zeta_{\rm min} >0, \nonumber
\\
&\ft \in L^{2}(0,T;\Lt^\varkappa(\Omega))
\quad \mbox{and} \quad
D_i = B(\zerot,b^{\frac{1}{2}}_i), \quad \gamma_i>1,\quad i= 1, \dots,
K \quad  \mbox{in (\ref{growth1},b)};
\label{inidata}
\end{align}
where $\varkappa >1$ if $d=2$ and $\varkappa = \frac{6}{5}$ if $d=3$.
For this range of $\varkappa$, we note from (\ref{eqinterp}) that there exists a constant
$C_\varkappa \in \Rplus$ such that
\begin{align}
\left|\int_{\Omega} \wt_1 \cdot \wt_2 \dx \right| \leq C_\varkappa\, \|\wt_1\|_{L^\varkappa(\Omega)}\,
\|\wt_2\|_{H^1(\Omega)} \qquad \forall \wt_1 \in \Lt^{\varkappa}(\Omega), \; \wt_2 \in \Ht^1_0(\Omega).
\label{fvkbd}
\end{align}

In (\ref{inidata}), $L^p_M(\Omega \times D)$, for $p\in [1,\infty)$,
denotes the Maxwellian-weighted $L^p$ space over $\Omega \times D$ with norm
\[
\| \varphi\|_{L^{p}_M(\Omega\times D)} :=
\left\{ \int_{\Omega \times D} \!\!M\,
|\varphi|^p \dq \dx
\right\}^{\frac{1}{p}}.
\]
Similarly, we introduce $L^p_M(D)$,
the Maxwellian-weighted $L^p$ space over $D$. Letting
\begin{eqnarray}
\| \varphi\|_{H^{1}_M(\Omega\times D)} &:=&
\left\{ \int_{\Omega \times D} \!\!M\, \left[
|\varphi|^2 + \left|\nabx \varphi \right|^2 + \left|\nabq
\varphi \right|^2 \,\right] \dq \dx
\right\}^{\frac{1}{2}}\!\!, \label{H1Mnorm}
\end{eqnarray}
we then set
\begin{eqnarray}
\quad X \equiv H^{1}_M(\Omega \times D)
&:=& \left\{ \varphi \in L^1_{\rm loc}(\Omega\times D): \|
\varphi\|_{H^{1}_M(\Omega\times D)} < \infty \right\}. \label{H1M}
\end{eqnarray}
It is shown in Appendix C of 
\cite{BS2010} (with the set $X$ denoted by $\widehat X$ there)
that
\begin{align}
C^{\infty}(\overline{\Omega \times D})
\mbox{ is dense in } X.
\label{cal K}
\end{align}

We have from Sobolev embedding that
\begin{equation}
H^1(\Omega;L^2_M(D)) \hookrightarrow L^s(\Omega;L^2_M(D)),
\label{embed}
\end{equation}
where $s \in [1,\infty)$ if $d=2$ or $s \in [1,6]$ if $d=3$.
Similarly to (\ref{eqinterp}) we have,
with $r$ and $\theta$ as there,
that there is a constant $C$, depending only  on
$\Omega$, $r$ and $d$, such that
\begin{equation}\label{MDeqinterp}
\hspace{-0mm}\|\varphi\|_{L^r(\Omega;L^2_M(D))}
\leq C\,
\|\varphi\|_{L^2(\Omega;L^2_M(D))}^{1-\theta}
\,\|\varphi\|_{H^1(\Omega;L^2_M(D))}^\theta \qquad
\mbox{$\forall\varphi \in
H^{1}(\Omega;L^2_M(D))$}.
\end{equation}
In addition, we note that the embeddings
\begin{subequations}
\begin{align}
 H^1_M(D) &\hookrightarrow L^2_M(D) ,\label{wcomp1}\\
H^1_M(\Omega \times D) \equiv
L^2(\Omega;H^1_M(D)) \cap H^1(\Omega;L^2_M(D))
&\hookrightarrow
L^2_M(\Omega \times D) \equiv L^2(\Omega;L^2_M(D))
\label{wcomp2}
\end{align}
\end{subequations}
are compact if $\gamma_i \geq 1$, $i=1, \dots,  K$, in (\ref{growth1},b);
see Appendix D
of \cite{BS2010}.

We recall the Aubin--Lions--Simon compactness theorem, see, e.g.,
Simon \cite[Theorem 5]{Simon}. Let $\mathcal{B}_0$, $\mathcal{B}$ and
$\mathcal{B}_1$ be Banach
spaces, $\mathcal{B}_i$, $i=0,1$, reflexive, with a compact embedding $\mathcal{B}_0
\hookrightarrow \mathcal{B}$ and a continuous embedding $\mathcal{B} \hookrightarrow
\mathcal{B}_1$. 
Then, any bounded closed subset $E$ of $L^2(0,T;{\mathcal B}_0)$, such that
%
\begin{align}
\int_\theta^{T}
\|\eta(t)-\eta(t-\theta)\|_{\mathcal B_1}^2 \dt \rightarrow 0 \mbox{\quad as $\theta
\rightarrow 0$,
\quad uniformly for $\eta \in E$},
\label{compact1}
\end{align}
is compact in $L^2(0,T;{\mathcal B})$.

We shall also require the following result (cf. Lemma 1.1, Chapter III, \P 1, in \cite{Temam}).
\begin{lemma}\label{Bochner}
Suppose that $\mathcal{B}$ is a Banach space with dual space
$\mathcal{B}'$, $(a,b)$ is a bounded open subinterval of $\mathbb{R}$,
and $u$ and $g$ are two functions in
$L^1(a,b;\mathcal{B})$. Then, the following three conditions are
equivalent:
\begin{itemize}
\item[(i)] $u$ is almost everywhere equal to the primitive of $g$; i.e.,
\[ u(t) = \xi + \int_a^t g(s) \dd s,\qquad \xi \in \mathcal{B},\quad \mbox{for a.e.
$t \in [a,b]\,;$}\]
\item[(ii)] For each test function $\varphi \in C^\infty_0(a,b)$,
\[\int_a^b u(t)\,\frac{\dd \varphi}{\dd t}(t) \dd t = - \int_a^b g(t) \, \varphi(t) \dd t;
\]
\item[(iii)] For each $\eta \in \mathcal{B}'$,
\[\frac{\rm d}{\dd t}\langle  u , \eta \rangle = \langle g , \eta \rangle\]
in the sense of scalar-valued distributions on $(a,b)$, where $\langle \cdot,\cdot \rangle$
is the duality pairing between $\mathcal{B}$ and $\mathcal{B}'$.
\end{itemize}
If any of the conditions (i)--(iii) holds, then $u$ is almost everywhere equal to a certain
continuous function from $[a,b]$ into $\mathcal{B}$.
\end{lemma}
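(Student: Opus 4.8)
The plan is to prove the chain of implications (i)~$\Rightarrow$~(ii)~$\Rightarrow$~(iii)~$\Rightarrow$~(ii)~$\Rightarrow$~(i), the only substantive link being the last one, and then to read off the continuity assertion directly from (i). Throughout, the working tools are elementary properties of the Bochner integral --- Fubini's theorem, absolute continuity of the primitive, and the fact that bounded linear functionals commute with it --- together with the Hahn--Banach separation theorem.

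First I would verify (i)~$\Rightarrow$~(ii) by a direct computation: inserting $u(t) = \xi + \int_a^t g(s)\,\dd s$ into $\int_a^b u(t)\,\varphi'(t)\,\dd t$, the constant $\xi$ contributes $\xi\int_a^b \varphi'(t)\,\dd t = 0$ since $\varphi$ has compact support in $(a,b)$, while for the remaining double integral I would apply Fubini's theorem (legitimate because $g \in L^1(a,b;\mathcal{B})$ and $\varphi'$ is bounded) to obtain $\int_a^b\big(\int_a^t g(s)\,\dd s\big)\varphi'(t)\,\dd t = \int_a^b g(s)\big(\int_s^b \varphi'(t)\,\dd t\big)\dd s = -\int_a^b g(s)\,\varphi(s)\,\dd s$. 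For (ii)~$\Leftrightarrow$~(iii) I would pair with an arbitrary $\eta \in \mathcal{B}'$: since $\eta$ commutes with the (well-defined) Bochner integrals $\int_a^b u\,\varphi'\,\dd t$ and $\int_a^b g\,\varphi\,\dd t$, applying $\eta$ to the identity in (ii) produces exactly the scalar-distributional identity of (iii); conversely, if (iii) holds then $\int_a^b u\,\varphi'\,\dd t + \int_a^b g\,\varphi\,\dd t$ is annihilated by every $\eta \in \mathcal{B}'$ and hence is zero, since $\mathcal{B}'$ separates the points of $\mathcal{B}$.

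The heart of the argument is (ii)~$\Rightarrow$~(i). I would set $w(t) := \int_a^t g(s)\,\dd s$, which lies in $C([a,b];\mathcal{B})$ by absolute continuity of the Bochner integral and, by the already-established implication (i)~$\Rightarrow$~(ii) applied with $\xi = 0$, satisfies $\int_a^b w\,\varphi'\,\dd t = -\int_a^b g\,\varphi\,\dd t$. Subtracting, the function $v := u - w \in L^1(a,b;\mathcal{B})$ has vanishing distributional derivative: $\int_a^b v(t)\,\varphi'(t)\,\dd t = 0$ for all $\varphi \in C_0^\infty(a,b)$. It then remains to show that such a $v$ equals a constant $\xi \in \mathcal{B}$ almost everywhere. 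I would do this by the standard device: fix $\varphi_0 \in C_0^\infty(a,b)$ with $\int_a^b \varphi_0 = 1$, put $\xi := \int_a^b v(t)\,\varphi_0(t)\,\dd t \in \mathcal{B}$, and for arbitrary $\psi \in C_0^\infty(a,b)$ note that $\varphi(t) := \int_a^t\big(\psi(s) - \varphi_0(s)\int_a^b\psi\big)\dd s$ is again in $C_0^\infty(a,b)$ (it is smooth and vanishes near both endpoints, the integrand being compactly supported and of zero total integral), with $\varphi' = \psi - \varphi_0\int_a^b \psi$; feeding this into $\int_a^b v\,\varphi'\,\dd t = 0$ gives $\int_a^b (v(t) - \xi)\,\psi(t)\,\dd t = 0$ for every $\psi$, whence $v = \xi$ a.e.\ by the fundamental lemma of the calculus of variations for Bochner-integrable functions (proved by mollification, valid for an arbitrary Banach space). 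Thus $u(t) = \xi + w(t)$ a.e., i.e.\ (i).

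Finally, once (i) holds, $u$ agrees a.e.\ with $t \mapsto \xi + \int_a^t g(s)\,\dd s$, which is continuous --- indeed absolutely continuous --- as a map $[a,b] \to \mathcal{B}$ by dominated convergence for the Bochner integral, giving the concluding assertion. I expect the only genuine obstacle to be the reduction inside (ii)~$\Rightarrow$~(i): the construction of the primitive test function $\varphi$ and the appeal to the vector-valued fundamental lemma; every remaining step is a routine manipulation of Bochner integrals combined with Hahn--Banach separation.
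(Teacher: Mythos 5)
Your proof is correct. Note that the paper does not actually prove this lemma: it is stated with a citation to Temam (Lemma 1.1, Chapter III, \S 1), so there is no in-paper argument to compare against; your write-up supplies the standard textbook proof, which is essentially the one found in Temam. The implication chain (i)~$\Rightarrow$~(ii)~$\Rightarrow$~(iii)~$\Rightarrow$~(ii)~$\Rightarrow$~(i) does establish the full equivalence, the Fubini and Hahn--Banach steps are routine, and the two genuinely nontrivial ingredients --- the construction of the compactly supported antiderivative test function $\varphi$ with $\varphi'=\psi-\varphi_0\int_a^b\psi$, and the vector-valued du~Bois-Reymond lemma (which indeed goes through for an arbitrary Banach space via mollification, since Bochner-integrable functions are essentially separably valued) --- are both handled correctly.
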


\begin{corollary}\label{Bochner2}
Suppose that $(a,b)$ is a bounded open subinterval of $\mathbb{R}$ and let $\mathcal{B}$ be a Banach space.
Suppose further that $u\in W^{1,1}(a,b;\mathcal{B})$; then,
\begin{equation*}
u(t) = u(a) + \int_{a}^t \frac{\dd u}{\dd s}(s) \dd s\qquad \mbox{for all $t \in [a,b]$.}
\end{equation*}
\end{corollary}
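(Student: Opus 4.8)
The plan is to obtain Corollary \ref{Bochner2} as an immediate consequence of Lemma \ref{Bochner}, by checking that the hypothesis $u \in W^{1,1}(a,b;\mathcal{B})$ puts us in condition (ii), and then reading off the conclusion from condition (i). First I would note that, by the very definition of $W^{1,1}(a,b;\mathcal{B})$, the function $u$ together with its weak derivative $g := \frac{\dd u}{\dd t}$ both belong to $L^1(a,b;\mathcal{B})$, and for every scalar test function $\varphi \in C^\infty_0(a,b)$ one has $\int_a^b u(t)\,\varphi'(t)\,\dd t = -\int_a^b g(t)\,\varphi(t)\,\dd t$; this is exactly condition (ii) of Lemma \ref{Bochner}.

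Next I would invoke the equivalence asserted in Lemma \ref{Bochner}: (ii) implies (i), so there exists $\xi \in \mathcal{B}$ with $u(t) = \xi + \int_a^t g(s)\,\dd s$ for a.e.\ $t \in [a,b]$, and, by the final clause of the lemma, $u$ coincides a.e.\ with a continuous function $\widetilde u : [a,b] \to \mathcal{B}$. Since the map $t \mapsto \xi + \int_a^t g(s)\,\dd s$ is itself continuous on $[a,b]$ (continuity of the Bochner integral in its upper limit, which follows from the absolute continuity of the Lebesgue integral of $\|g(\cdot)\|_{\mathcal{B}} \in L^1(a,b)$), the two continuous $\mathcal{B}$-valued functions $\widetilde u$ and $t \mapsto \xi + \int_a^t g$ agree a.e.\ and therefore everywhere on $[a,b]$. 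Replacing $u$ by this continuous representative and evaluating at $t = a$ yields $u(a) = \xi + \int_a^a g(s)\,\dd s = \xi$, so $\xi = u(a)$, and hence $u(t) = u(a) + \int_a^t \frac{\dd u}{\dd s}(s)\,\dd s$ for all $t \in [a,b]$.

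The only delicate point — and the one I would flag, although it is minor — is the passage from ``a.e.\ $t$'' in Lemma \ref{Bochner}(i) to ``all $t \in [a,b]$'' in the statement of the corollary: this is legitimate precisely because both sides of the identity have continuous representatives on the closed interval, so one should state at the outset that $u$ is henceforth identified with its continuous version (as permitted by the concluding sentence of Lemma \ref{Bochner}), after which the asserted equality is simply an equality of two continuous $\mathcal{B}$-valued functions that coincide almost everywhere.
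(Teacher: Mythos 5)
Your argument is correct and follows essentially the same route as the paper's proof: verify that $u\in W^{1,1}(a,b;\mathcal{B})$ gives condition (ii) of Lemma \ref{Bochner}, apply (ii) $\Rightarrow$ (i), upgrade the a.e.\ identity to all of $[a,b]$ via the continuity conclusion of the lemma, and evaluate at $t=a$ to identify $\xi=u(a)$. Your extra care in spelling out that both sides are continuous representatives is a harmless refinement of the paper's own (terser) handling of the same point.
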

\begin{proof}
Thanks to our assumption that $u \in W^{1,1}(a,b; \mathcal{B})$, and defining $g=\dd u/\!\dd t$,
we have that $u, g \in L^1(a,b;\mathcal{B})$;
thus by the implication (ii) $\Rightarrow$ (i) in Lemma \ref{Bochner} we deduce that
\begin{equation}\label{Bochner3a}
u(t) = \xi + \int_a^t \frac{\dd u}{\dd s}(s) \dd s, \qquad \xi \in \mathcal{B}, \quad \mbox{for a.e. $t \in
[a,b]$.}
\end{equation}
Since, again by Lemma \ref{Bochner}, $u$ is a continuous function from $[a,b]$  to $\mathcal{B}$,
it follows that \eqref{Bochner3a} holds for all $t \in [a,b]$. Hence, by taking $t=a$ in \eqref{Bochner3a}, we get
that $\xi = u(a)$. 
\end{proof}

The next result is stated without proof as Corollary 1, Ch. XVIII, p.470 in the book of Dautray \& Lions \cite{DL}.
\begin{corollary}\label{Bochner4}
Suppose that $g \in L^1(a,b; \mathcal{B})$ and $\eta \in \mathcal{B}'$. Then,
\[ \bigg \langle \int_a^b g(t) \dd t , \eta \bigg \rangle = \int_a^b \langle g(t) , \eta \rangle \dd t,\]
where $\langle \cdot,\cdot \rangle$
is the duality pairing between $\mathcal{B}$ and $\mathcal{B}'$.
\end{corollary}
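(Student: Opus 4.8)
The plan is to prove the identity first for $\mathcal{B}$-valued simple functions and then obtain the general case by approximation, relying on the fact that --- by the very definition of Bochner integrability --- the simple functions are dense in $L^1(a,b;\mathcal{B})$.

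For a simple function $g=\sum_{k=1}^m c_k\,\chi_{E_k}$, with $c_k\in\mathcal{B}$ and $E_k\subset(a,b)$ measurable of finite measure, one has $\int_a^b g(t)\,\dd t=\sum_{k=1}^m |E_k|\,c_k$ by definition of the Bochner integral, while $t\mapsto\langle g(t),\eta\rangle=\sum_{k=1}^m \langle c_k,\eta\rangle\,\chi_{E_k}(t)$ is the corresponding scalar simple function; hence, by linearity of $\eta$, $\big\langle\int_a^b g(t)\,\dd t,\eta\big\rangle=\sum_{k=1}^m |E_k|\,\langle c_k,\eta\rangle=\int_a^b\langle g(t),\eta\rangle\,\dd t$, so the claim holds in this case.

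For general $g\in L^1(a,b;\mathcal{B})$ I would choose simple functions $g_n$ with $\int_a^b\|g_n(t)-g(t)\|_{\mathcal{B}}\,\dd t\to 0$ as $n\to\infty$. On one side, $\big\|\int_a^b g_n(t)\,\dd t-\int_a^b g(t)\,\dd t\big\|_{\mathcal{B}}\le\int_a^b\|g_n(t)-g(t)\|_{\mathcal{B}}\,\dd t\to 0$, so continuity of $\eta$ gives $\big\langle\int_a^b g_n(t)\,\dd t,\eta\big\rangle\to\big\langle\int_a^b g(t)\,\dd t,\eta\big\rangle$. On the other side, $|\langle g_n(t),\eta\rangle-\langle g(t),\eta\rangle|\le\|\eta\|_{\mathcal{B}'}\,\|g_n(t)-g(t)\|_{\mathcal{B}}$ pointwise, whence $\langle g_n(\cdot),\eta\rangle\to\langle g(\cdot),\eta\rangle$ in $L^1(a,b;\mathbb{R})$ and therefore $\int_a^b\langle g_n(t),\eta\rangle\,\dd t\to\int_a^b\langle g(t),\eta\rangle\,\dd t$. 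Since the two sides coincide for every $g_n$, they coincide for $g$, which is the assertion.

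An alternative, fully self-contained argument uses the results already available in the excerpt: set $u(t):=\int_a^t g(s)\,\dd s$; then $u$ is continuous on $[a,b]$ with $u(a)=0$, $u,g\in L^1(a,b;\mathcal{B})$, and $u$ is the primitive of $g$, so the implication (i)$\Rightarrow$(iii) of Lemma \ref{Bochner} yields $\frac{\rm d}{\dd t}\langle u,\eta\rangle=\langle g,\eta\rangle$ in the sense of scalar-valued distributions on $(a,b)$. Consequently $t\mapsto\langle u(t),\eta\rangle$ lies in $W^{1,1}(a,b;\mathbb{R})$ with derivative $\langle g(\cdot),\eta\rangle$, and Corollary \ref{Bochner2} (applied with the Banach space $\mathbb{R}$) gives $\langle u(b),\eta\rangle=\langle u(a),\eta\rangle+\int_a^b\langle g(t),\eta\rangle\,\dd t$, i.e. $\big\langle\int_a^b g(t)\,\dd t,\eta\big\rangle=\int_a^b\langle g(t),\eta\rangle\,\dd t$. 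There is no real obstacle in either route; the only point that needs attention is the invocation of density of simple functions, which is precisely Bochner's integrability criterion, together with the elementary observation that both sides of the identity are continuous with respect to $L^1(a,b;\mathcal{B})$-convergence of the integrand.
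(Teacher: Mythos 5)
Your proposal is correct, and in fact you give two valid proofs. Your second, ``alternative'' route is essentially the paper's own argument: the paper also sets $u(t)=\int_a^t g(s)\,\dd s$, observes that condition (i) of Lemma \ref{Bochner} holds with $\xi=0$, passes to condition (iii) to obtain $\frac{\rm d}{\dd t}\langle u,\eta\rangle=\langle g,\eta\rangle$ in the sense of scalar distributions, and then integrates over $[a,b]$, using $u(a)=0$ and $u(b)=\int_a^b g(s)\,\dd s$; your invocation of Corollary \ref{Bochner2} with $\mathcal{B}=\mathbb{R}$ is just a slightly more explicit justification of that final integration step. Your first route, by contrast, is genuinely different and more elementary: it verifies the identity for simple functions by linearity of $\eta$ and then passes to the limit using the density of simple functions in $L^1(a,b;\mathcal{B})$ (i.e.\ the definition of Bochner integrability) together with the continuity of both sides under $L^1$ convergence of the integrand. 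That argument is entirely self-contained, requires nothing beyond the definition of the Bochner integral and the bound $\|\int g\|_{\mathcal{B}}\le\int\|g\|_{\mathcal{B}}$, and would prove the statement even in a context where Lemma \ref{Bochner} were unavailable; the paper's route has the advantage of reusing machinery already set up (Lemma \ref{Bochner}) and of fitting naturally into the surrounding sequence of corollaries about primitives of $\mathcal{B}$-valued functions. Either proof is acceptable.
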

\begin{proof}
We define the $\mathcal{B}$-valued distribution $u$ by
\[ u(t) = \int_a^t g(s) \dd s,\qquad \mbox{for a.e. $t \in [a,b]$}.\]
It then follows that $u, g \in L^1(a,b; \mathcal{B})$ and condition (i) of Lemma \ref{Bochner} holds
with $\xi = 0$. Hence, by Lemma \ref{Bochner}, condition (iii) of Lemma \ref{Bochner} also holds, which, after
integration over $[a,b]$, then yields
\begin{equation}\label{Bochner4a}
\langle u(b), \eta \rangle - \langle u(a) , \eta \rangle = \int_a^b \langle g(s) , \eta \rangle  \dd s.
\end{equation}
The definition of the duality pairing $\langle \cdot , \cdot \rangle$ implies that
$|\langle g(s) , \eta \rangle| \leq \|g(s)\|_{\mathcal{B}} \|\eta\|_{\mathcal{B}'}$ for all $s \in [a,b]$ and all $\eta \in \mathcal{B}'$; and, since $g \in L^1(a,b; \mathcal{B})$, the definition of the Bochner integral implies that $\int_a^b \|g(s)\|_{\mathcal{B}} \dd s
< \infty$. It thus follows that right-hand side of \eqref{Bochner4a} is finite for all $\eta \in \mathcal{B}'$.
By noting that $u(b) = \int_a^b g(s) \dd s$ and $u(a) = 0$, the identity \eqref{Bochner4a} becomes
\[\bigg \langle \int_a^b g(s) \dd s , \eta \bigg\rangle = \int_a^b \langle g(s) , \eta \rangle \dd s,\]
which, after renaming the dummy integration variable $s$ into $t$, yields the required result.
\end{proof}
Throughout we will assume that
(\ref{inidata}) hold,
so that (\ref{additional-1}) and (\ref{wcomp1},b) hold.
We note for future reference that (\ref{eqCtt}) and
(\ref{additional-1}) yield that, for
$\varphi \in L^2_M(\Omega \times D)$,
\begin{align}
\label{eqCttbd}
\int_{\Omega} |\Ctt_i( M\,\varphi)|^2\,\dx & =
\int_{\Omega} 
\left|
\int_{D} M\,\varphi \,U_i'\,\qt_{i}\,\qt_{i}^{\rm T} \dq \right|^2 \dx
\nonumber
\\
&\leq
\left(\int_{D} M\,(U_i')^2 \,|\qt_i|^4 \,\dq\right)
\left(\int_{\Omega \times D} M\,|\varphi|^2 \dq \dx\right)
\nonumber \\
& \leq C
\left(\int_{\Omega \times D} M\,|\varphi|^2 \dq \dx\right),
\qquad i=1, \dots,  K,
\end{align}
where $C$ is a positive constant.

We state a simple integration-by-parts formula.

\begin{lemma} Let $\varphi \in H^1_M(D)$ and suppose that $B \in
\mathbb{R}^{d\times d}$ is a square
matrix such that $\mathfrak{tr}(B)=0$; then,
\begin{equation}\label{intbyparts}
\int_D M\, \sum_{i=1}^K (B\qt_i) \cdot \nabqi \varphi \dd \qt = \int_D M\,
\varphi \sum_{i=1}^KU_i'(\textstyle{\frac{1}{2}|\qt_i|^2})\,  \qt_i  \qt_i^{\rm T}: B \dq.
\end{equation}
\end{lemma}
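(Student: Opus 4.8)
The plan is to integrate by parts in each conformation variable $\qt_i$ separately, using the Maxwellian identity \eqref{eqM} for $\nabqi M$, the hypothesis $\mathfrak{tr}(B)=0$ to annihilate the resulting divergence term, and the decay of $M$ at $\partial D$ to kill the boundary contributions; the general case $\varphi\in H^1_M(D)$ is then recovered by density. First I would note that both sides of \eqref{intbyparts} define bounded linear functionals of $\varphi$ on $H^1_M(D)$: the left-hand side is controlled via $|B\qt_i|\le|B|\,b_i^{1/2}$ on $D$, the Cauchy--Schwarz inequality and $\int_D M\dq=1$, giving the bound $|B|\,(\sum_{i=1}^K b_i^{1/2})\,\|\varphi\|_{H^1_M(D)}$; the right-hand side is controlled, exactly as in the estimate \eqref{eqCttbd}, by Cauchy--Schwarz together with \eqref{additional-1} (which makes $\int_D M\,(U_i')^2|\qt_i|^4\dq$ finite), giving a bound $C\,\|\varphi\|_{L^2_M(D)}\le C\,\|\varphi\|_{H^1_M(D)}$. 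Hence it suffices to prove \eqref{intbyparts} for $\varphi$ in the dense subspace $C^\infty(\overline D)$ of $H^1_M(D)$ (the density of $C^\infty(\overline D)$ in $H^1_M(D)$ being the analogue for $D$ of \eqref{cal K}; cf. Appendix C of \cite{BS2010}).

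So let $\varphi\in C^\infty(\overline D)$ and fix $i\in\{1,\dots,K\}$. Rewriting \eqref{eqM} as $\nabqi M=-\,M\,U_i'(\tfrac{1}{2}|\qt_i|^2)\,\qt_i$ and applying the product rule gives $M\,\nabqi\varphi=\nabqi(M\varphi)+M\,\varphi\,U_i'(\tfrac{1}{2}|\qt_i|^2)\,\qt_i$; dotting with $B\qt_i$, integrating over $D$, and using $(B\qt_i)\cdot\qt_i=\qt_i\qt_i^{\rm T}:B$, we obtain
\[
\int_D M\,(B\qt_i)\cdot\nabqi\varphi\dq
=\int_D(B\qt_i)\cdot\nabqi(M\varphi)\dq
+\int_D M\,\varphi\,U_i'(\tfrac{1}{2}|\qt_i|^2)\,\qt_i\qt_i^{\rm T}:B\dq .
\]
In the first integral on the right I integrate by parts in $\qt_i$ over the ball $D_i$, with the remaining variables $\qt_j$, $j\ne i$, frozen: the interior contribution vanishes since $\nabqi\cdot(B\qt_i)=\mathfrak{tr}(B)=0$, and the surface contribution $\int_{\partial D_i}(M\varphi)\,(B\qt_i)\cdot\frac{\qt_i}{|\qt_i|}\,{\rm d}S$ vanishes because $M$ extends continuously by zero to $\partial D$ --- indeed $M_i(\qt_i)\le c_{i2}[\mathrm{dist}(\qt_i,\partial D_i)]^{\gamma_i}\to 0$ as $\qt_i\to\partial D_i$ by \eqref{growth1}, with $\gamma_i>1$ --- while $\varphi$ and $|B\qt_i|$ remain bounded on $\overline D$. (If one prefers not to rely on the regularity of $M$ up to $\partial D_i$, one integrates by parts instead over $\{\qt_i\in D_i:\mathrm{dist}(\qt_i,\partial D_i)>\delta\}$ and lets $\delta\to0_+$; the surface term then tends to $0$ because $\sup M$ over the corresponding inner sphere does.) Hence $\int_D(B\qt_i)\cdot\nabqi(M\varphi)\dq=0$, and summing the displayed identity over $i=1,\dots,K$ establishes \eqref{intbyparts} for $\varphi\in C^\infty(\overline D)$; the continuity/density reduction above then extends it to all $\varphi\in H^1_M(D)$.

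The only genuinely delicate point is the vanishing of the boundary term, which is precisely where the decay hypothesis $\gamma_i>1$ in \eqref{growth1}--\eqref{growth2} is used; the rest is bookkeeping with the product structure of $M$ under the constraint $\mathfrak{tr}(B)=0$.
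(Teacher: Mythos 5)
Your proof is correct and follows the standard route that the paper itself delegates to the proof of Lemma 3.1 in \cite{BS2011-fene}: rewrite $M\,\nabqi\varphi$ via \eqref{eqM}, integrate by parts using $\nabqi\cdot(B\qt_i)=\mathfrak{tr}(B)=0$, kill the boundary term with the decay of $M_i$ from \eqref{growth1} (with $\gamma_i>1$), and extend from $C^\infty(\overline D)$ to $H^1_M(D)$ by density and the continuity estimates you give. No gaps.
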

\begin{proof}
See the proof of Lemma 3.1 in \cite{BS2011-fene}.
\end{proof}

We now formulate our discrete-in-time approximation of problem
(P$_{L})$ for fixed parameters $\epsilon \in (0,1]$
and $L > 1$. For any $T>0$ and $N \geq 1$, let
$N \,\Delta t=T$ and $t_n = n \, \Delta t$, $n=0, \dots,  N$.
To prove existence of a solution under minimal
smoothness requirements on the initial datum $\ut_0 \in \Ht$ (recall (\ref{inidata})),
we assign to it the function $\ut^0 = \ut^0(\Delta t) \in \Vt$,  defined as the unique solution of
\begin{alignat}{2}
\int_{\Omega} \left[ \rho_0\,\ut^0 \cdot \vt + \Delta t\,
\nabxtt \ut^0 : \nabxtt \vt \right] \dx
&=   \int_{\Omega} \rho_0 \,\ut_0 \cdot \vt \dx \qquad
&&\forall \vt \in \Vt.
\label{proju0}
\end{alignat}
Hence,
\begin{equation}
\int_{\Omega} [\,\rho_0\,|\ut^0|^2 + \Delta t \,|\unabtt^0|^2 \,]\dx 
\leq
\int_{\Omega} \rho_0\,|\ut_0|^2 \dx
\leq C.
\label{idatabd}
\end{equation}
In addition, we have
that $\int_\Omega \rho_0\,(\ut^0-\ut_0)\cdot\vt \dx\,$ converges to $0$ for all $\vt \in\Ht$ in the limit of $\Delta t \rightarrow 0_+$.

Analogously to defining  $\ut^0 \in \Vt$ for a given initial velocity field
$\ut_0 \in \Ht$,
we shall assign a certain `smoothed' initial
datum,
\[\widetilde\psi^0 = \widetilde\psi^0(L,\Delta t) \in H^1_M(\Omega \times D),\]
to the given initial datum $\widetilde\psi_0 = \psi_0/(\zeta(\rho_0)\,M)$ such that
\begin{align}
&\int_{\Omega \times D} M \left[ \zeta(\rho_0)\,\widetilde \psi^0\, \varphi +
\Delta t\, \left( \nabx \widetilde \psi^0 \cdot \nabx \varphi +
\nabq \widetilde \psi^0 \cdot \nabq \varphi
\right) \right] \dq \dx \nonumber\\
&\hspace{2in}= \int_{\Omega \times D} M \,\zeta(\rho_0)\,\beta^{L}(\widetilde\psi_0)\, \varphi \dq \dx
\qquad \forall \varphi \in
H^1_M(\Omega \times D).
\label{psi0}
\end{align}
For $p\in [1,\infty)$, let
\begin{align}
Z_p &:= \left\{ \varphi \in L^p_M(\Omega \times D) :
\varphi
\geq 0 \mbox{ a.e.\ on } \Omega \times D
\mbox{ and }
\int_D M(\qt)\,\varphi(\xt,\qt) \dq \in L^\infty(\Omega)
\right\}.
\label{hatZ}
\end{align}
It is proved in the special case of $\zeta(\rho_0(\cdot)) \equiv 1$ in the Appendix of \cite{BS2011-feafene} that there exists a unique
$\widetilde\psi^0 \in H^1_M(\Omega \times D)$ satisfying \eqref{psi0}; furthermore,
$\widetilde\psi^0 \in Z_2$,
\begin{subequations}
\begin{align}
&\int_{\Omega \times D} M\,\zeta(\rho_0)\, \mathcal{F}(\widetilde\psi^0) \dq \dx
\nonumber \\
& \qquad\qquad\qquad\qquad + 4\,\Delta t \int_{\Omega \times D} M\,\left[
\big|\nabx \sqrt{\widetilde\psi^0} \big|^2 + \big|\nabq \sqrt{\widetilde\psi^{0}}\big|^2
\right]\!\dq \dx
\nonumber \\ & \qquad\qquad\qquad\qquad\qquad\qquad\qquad\qquad
\leq \int_{\Omega \times D} M \,\zeta(\rho_0)\,\mathcal{F}(\widetilde\psi_0) \dq \dx,
\label{inidata-1}
\end{align}
\begin{equation}
\mbox{ess.sup}_{\xt \in \Omega} \int_D M\, \widetilde{\psi}^0 \dq \leq
\mbox{ess.sup}_{\xt \in \Omega} \int_D M\, \widetilde{\psi}_0 \dq
\label{inidata-2}
\end{equation}
and
\begin{align}
\widetilde \psi^0 = \beta^L(\widetilde \psi^0) \rightarrow \widetilde \psi_0 \quad
\mbox{weakly in }L_M^1(\Omega \times D) \quad \mbox{as} \quad L \rightarrow \infty, \quad
\Delta t \rightarrow 0_+.
\label{psi0conv}
\end{align}
\end{subequations}
In the case of variable $\zeta(\rho_0(\cdot))$ the same properties hold under the assumptions on $\rho_0$ and $\zeta$ stated
in \eqref{inidata}. For example, the claim in \eqref{psi0conv}
that $\widetilde \psi^0 = \beta^L(\widetilde \psi^0)$ a.e. on $\Omega \times D$
follows from \eqref{psi0} on replacing $\widetilde\psi^0$
by $\widetilde\psi^0-L$ on the left-hand side of \eqref{psi0} and $\beta^L(\widetilde\psi_0)$ by $\beta^L(\widetilde\psi_0)-L$
on the right-hand side, which preserves the equality. We then take $\varphi = [\widetilde\psi^0-L]_+$ and note that $\beta^L(\widetilde\psi_0)-L\leq 0$
to deduce, thanks to the positivity of $\zeta(\rho_0)$ on $\Omega$, that $[\widetilde\psi^0-L]_+=0$ a.e. on $\Omega \times D$, which then implies that $\widetilde\psi^0 \leq L$ a.e. on $\Omega \times D$. An analogous argument shows that
$\widetilde\psi^0 \geq 0$ a.e. on $\Omega \times D$. In particular, $\widetilde \psi^0 = \beta^L(\widetilde \psi^0)$ a.e. on
$\Omega \times D$ and $\widetilde\psi^0 \in Z_2$, as was claimed in the line above \eqref{inidata-1}. In fact,
$\widetilde \psi^0 \in L^\infty(\Omega \times D) \cap H^1_M(\Omega \times D)$.

The proof of \eqref{inidata-2} is based on a similar cut-off argument.
By defining, for $\widetilde\psi^0 =\widetilde\psi^0(L,\Delta t)$, the function $\lambda^0_L$ by
\[ \lambda^0_L(x):= \int_D M(\qt) \widetilde\psi^0(\xt,\qt) \dq,\qquad \xt \in \Omega,\]
applying \eqref{psi0} with $\varphi(\xt,\qt) = \widetilde\varphi(\xt)
\otimes 1(\qt)$ and recalling Fubini's theorem, we have that
\[ \int_\Omega \left[\zeta(\rho_0)\, \lambda^0_L\,\widetilde\varphi + \Delta t\, \nabx \lambda^0_L \cdot \nabx \widetilde\varphi
\right] \dx = \int_\Omega \zeta(\rho_0)\, \widetilde \varphi \left[\int_D M(\qt)\, \beta^L(\widetilde\psi_0) \dq\right]\dx
\qquad \forall \widetilde \varphi \in H^1(\Omega),\]
and therefore, for each $\omega \in \mathbb{R}$ and all $\widetilde\varphi \in H^1(\Omega)$,
\begin{eqnarray}\label{indent-lambda}
&&\int_\Omega \left[\zeta(\rho_0)\, (\lambda^0_L-\omega)\, \widetilde\varphi + \Delta t\, \nabx (\lambda^0_L-\omega) \cdot \nabx \widetilde\varphi
\right] \dx \nonumber\\
&&\hspace{4cm}= \int_\Omega \zeta(\rho_0)\, \widetilde \varphi \left(\left[\int_D M(\qt)\, \beta^L(\widetilde\psi_0) \dq\right]-\omega\right)\dx.
\end{eqnarray}
Now thanks to \eqref{inidata} we have that
\[ 0 \leq \zeta(\rho_0)\int_D M(\qt)\, \beta^L(\widetilde\psi_0) \dq \leq \zeta(\rho_0) \int_D M(\qt)\, \widetilde\psi_0 \dq
= \int_D \psi_0(\xt,\qt) \dq \in L^\infty(\Omega),\]
and therefore by selecting
\begin{equation}\label{alpha}
\omega :=\mbox{ess.sup}_{x \in \Omega}
\int_D M(\qt) \widetilde\psi_0(\xt,\qt) \dq = \mbox{ess.sup}_{x \in \Omega} \left(\frac{1}{\zeta(\rho_0(\xt))}\int_D \psi_0(\xt,\qt) \dq  \right)
\end{equation}
we get that
\[ \zeta(\rho_0) \left(\left[\int_D M(\qt)\, \beta^L(\widetilde\psi_0) \dq\right]-\omega\right) \leq 0. \]
Thus, by choosing $\widetilde\varphi = [\lambda^0_L - \omega]_+$ in \eqref{indent-lambda}, we deduce that
\[ \int_\Omega \left[\zeta(\rho_0)\left([\lambda^0_L - \omega]_+\right)^2 + \Delta t\,
|\nabx  \left([\lambda^0_L - \omega]_+\right)|^2 \right]\dx \leq 0.\]
Hence,  $[\lambda^0_L - \omega]_+ = 0~$ a.e. on $\Omega$. In other words, $0 \leq \lambda^0_L(\xt) \leq \omega$ a.e. on $\Omega$, which then implies \eqref{inidata-2}.
We shall denote the mapping $\widetilde\psi_0 \mapsto \widetilde\psi^0$ by $S_{\Delta t, L}$; i.e.,
$\widetilde\psi^0= S_{\Delta t, L}\widetilde\psi_0$.

Let us define
\begin{align}
\Upsilon:= \{ \eta \in L^\infty(\Omega) : \eta \in [ \rho_{\rm min}, \rho_{\rm max}]
\mbox{ a.e.\ on } \Omega \}.
\label{denspace}
\end{align}

%

It follows 
for all 
$\vt,\,\wt \in \Ht^1(\Omega)$ that
\begin{align}
\vt \otimes \vt : \nabxtt \wt  &=
[(\vt \cdot \nabx) \wt] \cdot \vt
=  - [(\vt \cdot \nabx) \vt] \cdot \wt
+ (\vt \cdot \nabx)(\vt \cdot \wt)  \qquad \mbox{a.e.\ in } \Omega.
\label{ruwiden}
\end{align}
Noting the above, our discrete-in-time  approximation of (P$_{L}$) is then defined as follows.

{\boldmath $({\rm P}_{L}^{\Delta t})$} Let $N \in \mathbb{N}_{\geq 1}$ and define $\Delta t := T/N$; let, further,
$\rho^0_L := \rho_0 \in \Upsilon$, $\utae^0 := \ut^0 \in \Vt$ and $\hpsiaet^0 := \widetilde \psi^0 \in Z_2$.
For $n = 1,\dots, N$,  and given $\ut^{n-1}_L \in \Vt$, find
\begin{equation}\label{rho-reg}
\rho^{[\Delta t]}_L|_{[t_{n-1},t_n]} \in L^\infty(t_{n-1},t_n; L^\infty(\Omega)) \cap C([t_{n-1},t_n]; L^2(\Omega)) \cap W^{1,\infty}(t_{n-1},t_n; W^{1,\frac{q}{q-1}}(\Omega)'),
\end{equation}
where $q \in (2,\infty)$ when $d=2$ and $q \in [3,6]$ when $d=3$, such that
$\rho_L^{[\Delta t]}|_{[t_{n-1},t_n]}(\cdot,t_{n-1})= \rho^{n-1}_L$ 
and
\begin{subequations}
\begin{align}
& \int_{t_{n-1}}^{t_n}\! \left\langle\frac{\partial\rho^{[\Delta t]}_L}{\partial t} , \eta \right\rangle_{W^{1,\frac{q}{q-1}}(\Omega)} \!\!\!\dd t
- \int_{t_{n-1}}^{t_n}\int_{\Omega} \rho^{[\Delta t]}_L \,\utae^{n-1} \cdot \nabx \eta \dx \dd t
= 0
\nonumber\\
& \hspace{2.5in}
\qquad \!\!\!\!\!\!\!\!\forall \eta
\in L^1(t_{n-1},t_n;W^{1,\frac{q}{q-1}}(\Omega)),
\label{rhonL}
\end{align}
where the symbol $\langle \cdot , \cdot \rangle_{W^{1,\frac{q}{q-1}}(\Omega)}$ denotes the duality pairing between
$W^{1,\frac{q}{q-1}}(\Omega)'$ and $W^{1,\frac{q}{q-1}}(\Omega)$, with $q \in (2,\infty)$ when $d=2$ and $q \in [3,6]$ when $d=3$.
We then define
$\rho^{n}_L:=\rho^{[\Delta t]}_L|_{[t_{n-1},t_n]}(\cdot,t_n) \in \Upsilon$.
Given $(\rho^{n-1}_L,\utae^{n-1},\hpsiaet^{n-1}) \in \Upsilon \times \Vt \times Z_2$,
find
\[\left(\utae^n,\hpsiaet^n 
\right) \in \Vt \times (X \cap Z_2)\]
such that
\begin{align}
&\int_{\Omega} \left[
\frac{\rho^n_L\,\utae^{n}-\rho^{n-1}_L\,\utae^{n-1}}{\Delta t}
- \tfrac{1}{2}\,\frac{\rho^{n}_L-\rho^{n-1}_L}{\Delta t}
\,\ut_L^{n}
\right]
\cdot \wt \dx
+\int_{\Omega} \mu(\rho^n_L)\,
\Dtt(\utae^n)
: \Dtt(\wt) \dx
\nonumber
\\
& \qquad\qquad 
+ \tfrac{1}{2} \int_{\Omega}  \left(\frac{1}{\Delta t}\int_{t_{n-1}}^{t_n} \rho^{[\Delta t]}_L\dd t\right)
\, \left[
[(\utae^{n-1} \cdot \nabx) \utae^{n}] \cdot \wt
-[(\utae^{n-1} \cdot \nabx) \wt] \cdot \utae^n
\right]
\dx
\nonumber \\
&\qquad\qquad\qquad\qquad 
= \int_{\Omega} \rho^n_L\,\ft^n \cdot \wt \dx 
- k\,\sum_{i=1}^K \int_{\Omega} \Ctt_i(M\,\zeta(\rho^n_L)\,\hpsiaet^n): \nabxtt
\wt \dx
\qquad \forall \wt \in \Vt,
\label{Gequn}
\end{align}

\begin{align}
&\hspace{-1.6cm}\int_{\Omega \times D} M\,
\frac{\zeta(\rho^n_L)\,\hpsiaet^n
- \zeta(\rho^{n-1}_L)\,\hpsiaet^{n-1}}
{\Delta t}
\,\varphi \dq \dx
\nonumber
\\
\bet
&\hspace{-1cm}
-\int_{\Omega \times D} M \left(\frac{1}{\Delta t}\int_{t_{n-1}}^{t_n} \zeta(\rho^{[\Delta t]}_L)\dd t\right)\, \utae^{n-1} \cdot 
(\nabx\varphi)\,\hpsiaet^n
\dq \dx
\nonumber
\\
\bet
&\hspace{0.1cm} + \int_{\Omega \times D}
\sum_{i=1}^K  \left[\, \frac{1}{4\, \lambda}\,
\sum_{j=1}^K A_{ij}\,M\,\nabqj \hpsiaet^n
-[\,\sigtt(
\utae^n) \,\qt_i\,]\,M\,\zeta(\rho^n_L)\, \beta^L(\hpsiaet^{n})\right]\,\cdot\, \nabqi
\varphi \dq \dx\,
\nonumber \\
\bet
&\hspace{1.1cm} + \int_{\Omega \times D}
\epsilon\,M\,\nabx \hpsiaet^n \cdot\, \nabx \varphi\dq \dx=0
\qquad \forall \varphi \in
X;
\label{psiG}
\end{align}
\end{subequations}
where, for  $t \in [t_{n-1}, t_n)$ and $n=1, \dots,  N$,
\begin{align}
\ft^{\Delta t, +}(\cdot,t) =
\ft^n(\cdot) := \frac{1}{\Delta t}\,\int_{t_{n-1}}^{t_n}
\ft(\cdot,t) \dt \in \Lt^{\varkappa}(\Omega). 
\label{fn}
\end{align}
It follows from (\ref{inidata}) and (\ref{fn}) that
\begin{align}
\ft^{\Delta t, +} \rightarrow \ft \quad \mbox{strongly in } L^{2}(0,T;\Lt^\varkappa(\Omega))
\mbox { as } \Delta t \rightarrow 0_{+},
\label{fncon}
\end{align}
where $\varkappa >1$ if $d=2$ and $\varkappa =\frac{6}{5}$ if $d=3$.
Note that as the test function $\wt$ in \eqref{Gequn} is chosen to be divergence-free,
the term containing the density $\varrho$ of the polymer chains in the definition of
$\tautt$ (cf. \eqref{eqtt1})
is eliminated from \eqref{Gequn}.

For $n \in \{1, \dots, N\}$, and for the functions $\ut^{n-1}_L \in \Vt$ and $\rho^{n-1}_L \in \Upsilon$ fixed, the existence of a unique solution
\begin{equation}\label{fncon-1}
\rho^{[\Delta t]}_L|_{[t_{n-1},t_n]} \in L^\infty(t_{n-1},t_n; L^\infty(\Omega)) \cap C([t_{n-1},t_n]; L^2(\Omega))
\end{equation}
to \eqref{rhonL} satisfying the initial condition $\rho^{[\Delta t]}_L|_{[t_{n-1},t_n]}(\cdot,t_{n-1}) = \rho^{n-1}_L$ follows from Corollaries II.1 and II.2 and the discussion on p.546 in DiPerna \& Lions \cite{DPL} (with our $\ut^n_L \in \Vt$ here extended
from $\overline\Omega$ to $\mathbb{R}^d$ by $\zerot$). We refer to \ref{sec:apx} for a justification that the notion of
solution used in \eqref{rho-reg}, \eqref{rhonL} is equivalent to the notion of distributional solution, used by DiPerna \& Lions in \cite{DPL}. The statement
\[  \rho^{[\Delta t]}_L|_{[t_{n-1},t_n]} \in W^{1,\infty}(t_{n-1},t_n; W^{1,\frac{q}{q-1}}(\Omega)')\]
in \eqref{rhonL}, with $q \in (2,\infty)$ when $d=2$ and $q \in [3,6]$ when $d=3$, follows from the bound
\[ \left|\int_{t_{n-1}}^{t_n} \int_\Omega \ut^{n-1}_L\rho^{[\Delta t]}_L \cdot \nabx \varphi \dx \dd t \right|
\leq \|\ut^{n-1}_L\|_{L^q(\Omega)} \|\rho^{[\Delta t]}_L\|_{L^\infty(t_{n-1},t_n;L^\infty(\Omega))}
\|\nabx \varphi\|_{L^1(t_{n-1},t_n; L^{\frac{q}{q-1}}(\Omega))}
\]
and the fact that $\frac{\partial}{\partial t} \rho^{[\Delta t]}_L + \nabx \cdot (\ut^{n-1}_L\rho^{[\Delta t]}_L) = 0$
in the sense of distributions on $\Omega \times (t_{n-1},t_n)$; hence \eqref{rho-reg}.

A further relevant remark in connection with \eqref{Gequn} is that on noting that $\rho^n_L = \rho^{[\Delta t]}_L(\cdot,t_n)$
and $\rho^{n-1}_L= \rho^{[\Delta t]}_L(\cdot,t_{n-1})$, the second term on its left-hand side can be rewritten as
\begin{eqnarray}\label{Gequn-trans}
&&-\tfrac{1}{2} \int_\Omega \frac{\rho^n_L - \rho^{n-1}_L}{\Delta t}
\,\ut^n_L \cdot \wt \dx \dd t
= -\frac{1}{2 \Delta t}\int_{t_{n-1}}^{t_n}
\left\langle \frac{\partial \rho^{[\Delta t]}_L}{\partial t}  ,\ut^n_L \cdot \wt \right\rangle_{W^{1,\frac{q}{q-1}}(\Omega)} \dd t \nonumber\\
&&\qquad = - \frac{1}{2 \Delta t}\int_{t_{n-1}}^{t_n} \int_\Omega
\rho^{[\Delta t]}_L \ut^{n-1}_L \cdot \nabx (\ut^n_L \cdot \wt) \dx \dd t \nonumber\\
&&\qquad = - \tfrac{1}{2}  \int_\Omega \left(\frac{1}{\Delta t} \int_{t_{n-1}}^{t_n}
\rho^{[\Delta t]}_L\dd t \right) \left[\ut^{n-1}_L \cdot \nabx (\ut^n_L \cdot \wt)\right] \dx \qquad \forall \wt \in \Ht^1(\Omega),
\end{eqnarray}
where $q \in (2,\infty)$ when $d=2$ and $q \in [3,6]$ when $d=3$.
The first equality in \eqref{Gequn-trans} is a consequence of Corollary \ref{Bochner2} and Corollary \ref{Bochner4},
on noting that $\frac{\partial}{\partial t} \rho^{[\Delta t]}_L
\in L^\infty(t_{n-1},t_n; W^{1,\frac{q}{q-1}}(\Omega)') \subset L^1(t_{n-1},t_n; W^{1,\frac{q}{q-1}}(\Omega)')$,
with $q \in (2,\infty)$ when $d=2$ and $q \in [3,6]$ when $d=3$.

The identities \eqref{ruwiden} and \eqref{Gequn-trans} now motivate the form of the expression in the second
 line of  \eqref{Gequn}. We note here that
the requirements that $q>2$ when $d=2$ and $q \geq 3$ when $d=3$ are the consequence of our demand that the
scalar product $\ut^n_L \cdot \wt$ of the functions $\ut^n_L, \wt \in \Ht^1(\Omega)$ belongs to $W^{1, \frac{q}{q-1}}(\Omega)$,
which is required in \eqref{Gequn-trans}.

As $\rho^{n-1}_L \in \Upsilon$,  $\ut^{n-1}_L \in \Vt$ (extended from $\overline\Omega \subset \mathbb{R}^d$ to
 $\mathbb{R}^d$ by $\zerot$), $\rho^{[\Delta t]}_L \in C([t_{n-1},t_n]; L^2(\Omega))$, $\zeta \in C^1([\rho_{\rm min}, \rho_{\rm max}], [\zeta_{\rm min}, \zeta_{\rm max}])$, it follows from Corollary II.2 in the paper of DiPerna \& Lions \cite{DPL} that $\zeta(\rho^{[\Delta t]}_L)$ is a renormalized solution in the sense that
\begin{eqnarray}\label{Gequn-trans00}
&&\int_{t_{n-1}}^{t_n}
\left\langle \frac{\partial \zeta(\rho^{[\Delta t]}_L)}{\partial t}  , \varphi \right\rangle_{W^{1,\frac{q}{q-1}}(\Omega)} \dd t
\nonumber\\
&&\hspace{3cm}= \int_\Omega \left(\int_{t_{n-1}}^{t_n}
\zeta(\rho^{[\Delta t]}_L)\dd t \right) \left[\ut^{n-1}_L \cdot \nabx \varphi\right] \dx
\qquad \forall \varphi \in W^{1,\frac{q}{q-1}}(\Omega),
\end{eqnarray}
where $q \in (2,\infty)$ when $d=2$ and $q \in [3,6]$ when $d=3$.
Hence, on observing that $\zeta(\rho^n_L) = \zeta(\rho^{[\Delta t]}_L(\cdot,t_n))$ and $\zeta(\rho^{n-1}_L) =
\zeta(\rho^{[\Delta t]}_L(\cdot,t_{n-1}))$, we have that
\begin{eqnarray}\label{Gequn-trans0}
&&\int_\Omega \frac{\zeta(\rho^n_L) - \zeta(\rho^{n-1}_L)}{\Delta t}
\,\varphi \dx \dd t =
\frac{1}{\Delta t}\int_{t_{n-1}}^{t_n}
\left\langle \frac{\partial \zeta(\rho^{[\Delta t]}_L)}{\partial t}  , \varphi \right\rangle_{W^{1,\frac{q}{q-1}}(\Omega)} \dd t\nonumber\\
&&\qquad= \frac{1}{\Delta t}\int_{t_{n-1}}^{t_n} \int_\Omega
\zeta(\rho^{[\Delta t]}_L)\, \ut^{n-1}_L \cdot \nabx \varphi \dx \dd t \nonumber\\
&&\qquad=\int_\Omega \left(\frac{1}{\Delta t} \int_{t_{n-1}}^{t_n}
\zeta(\rho^{[\Delta t]}_L)\dd t \right) \left[\ut^{n-1}_L \cdot \nabx \varphi\right] \dx
\qquad \forall \varphi \in W^{1,\frac{q}{q-1}}(\Omega),
\end{eqnarray}
where $q \in (2,\infty)$ when $d=2$ and $q \in [3,6]$ when $d=3$.
%
The first equality in \eqref{Gequn-trans0} is a consequence of Corollary \ref{Bochner2} and Corollary \ref{Bochner4},
on noting that $\frac{\partial}{\partial t} \zeta(\rho^{[\Delta t]}_L)
\in L^1(t_{n-1},t_n; W^{1,\frac{q}{q-1}}(\Omega)')$,
with $q \in (2,\infty)$ when $d=2$ and $q \in [3,6]$ when $d=3$.
Since $\widetilde\psi^n_L \in X$, it follows from \eqref{Gequn-trans0} that
\begin{eqnarray}
&&-\tfrac{1}{2} \int_\Omega \frac{\zeta(\rho^n_L) - \zeta(\rho^{n-1}_L)}{\Delta t}
\,\hpsiaet^n\,\varphi \dx \dd t
= - \tfrac{1}{2}  \int_\Omega \left(\frac{1}{\Delta t} \int_{t_{n-1}}^{t_n}
\zeta(\rho^{[\Delta t]}_L)\dd t \right) \left[\ut^{n-1}_L \cdot \nabx (\hpsiaet^n\,\varphi)\right] \dx\nonumber\\
&&\hspace{10.3cm} \forall \varphi \in X,\quad \mbox{a.e. $\qt \in D$},\nonumber
\end{eqnarray}
and therefore we can rewrite \eqref{psiG}
in the following equivalent form:
\begin{align}
&\hspace{-1.6cm}\int_{\Omega \times D} M\,\left[
\frac{\zeta(\rho^n_L)\,\hpsiaet^n
- \zeta(\rho^{n-1}_L)\,\hpsiaet^{n-1}}
{\Delta t}
- \tfrac{1}{2}\,\frac{\zeta(\rho^n_L) - \zeta(\rho^{n-1}_L)}{\Delta t}\, \hpsiaet^n \right]
\,\varphi \dq \dx
\nonumber
\\
&\hspace{-1cm}
+ \tfrac{1}{2}
\int_{\Omega \times D} M
\left(\frac{1}{\Delta t}\int_{t_{n-1}}^{t_n} \zeta(\rho^{[\Delta t]}_L)\dd t\right)\, \utae^{n-1}
\cdot \left[(\nabx \hpsiaet^n)\,\varphi -
(\nabx\varphi)\,\hpsiaet^n
 \right]
\dq \dx
\nonumber
\\
&\hspace{-0.4cm} + \int_{\Omega \times D}
\sum_{i=1}^K  \left[\, \frac{1}{4\, \lambda}\,
\sum_{j=1}^K A_{ij}\,M\,\nabqj \hpsiaet^n
-[\,\sigtt(
\utae^n) \,\qt_i\,]\,M\,\zeta(\rho^n_L)\, \beta^L(\hpsiaet^{n})\right]\,\cdot\, \nabqi
\varphi \dq \dx\,
\nonumber \\
\bet
&\hspace{0.2cm} + \int_{\Omega \times D}
\epsilon\,M\,\nabx \hpsiaet^n \cdot\, \nabx \varphi\dq \dx=0
\qquad \forall \varphi \in
X.
\label{psiG-1a}
\end{align}
%

The following elementary result will play a crucial role in our proofs.

\begin{lemma}\label{basic}
Suppose that $S\subset \mathbb{R}$ is an open interval and let $F \in W^{2,1}_{\rm loc}(S)$. Let further $G$ denote the primitive function of $s\in S \mapsto s F''(s) \in \mathbb{R}$; i.e., $G'(s) = sF''(s)$, $s \in S$.
Then, the following statements hold.
\begin{itemize}
\item[a)] $s\in S \mapsto sF'(s) - F(s) - G(s) \in \mathbb{R}$ is a constant function on $\overline{S}$;
i.e., there exists $c_0 \in \mathbb{R}$ such that $sF'(s) - F(s) - G(s) = c_0$ for all $s \in \overline{S}$.
\item[b)] The following identity holds for any $a,b \in \overline{S}$ and any $A, B \in \mathbb{R}$:
\[ (Aa - Bb) F'(a) - (A-B) G(a) = A(F(a) + c_0) - B(F(b) + c_0) + B(b-a)^2 \int_0^1\! F''(\theta a + (1-\theta) b)\,\theta \dd \theta.\]
\item[c)] If in addition $B \geq 0$ and there exists a $d_0 \in \mathbb{R}$ such that ${\rm ess.inf}_{\theta \in [0,1]}
F''(\theta a + (1-\theta)b) \geq d_0$, then 
\[ (Aa - Bb) F'(a) - (A-B) G(a) \geq  A(F(a) + c_0) - B(F(b) + c_0) + \frac{1}{2}d_0 B (b-a)^2.\]
\end{itemize}
\end{lemma}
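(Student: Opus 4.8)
The plan is to establish the three parts in sequence, since (b) uses (a) and (c) is an immediate consequence of (b). The underlying content is: (a) is a one-line differentiation, (b) is Taylor's formula with integral remainder packaged algebraically, and (c) is a monotone bound on the remainder; the only thing requiring care is the Sobolev-regularity bookkeeping (product rule and fundamental theorem of calculus for $F\in W^{2,1}_{\rm loc}$ rather than $C^2$).

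For part (a), I would set $h(s):=sF'(s)-F(s)-G(s)$. Because $F\in W^{2,1}_{\rm loc}(S)$, the derivative $F'$ is locally absolutely continuous on $S$; and since $s\mapsto sF''(s)\in L^1_{\rm loc}(S)$, its primitive $G$ is locally absolutely continuous on $S$. Hence $h\in W^{1,1}_{\rm loc}(S)$, and the product rule (valid a.e.) gives $h'(s)=F'(s)+sF''(s)-F'(s)-G'(s)=sF''(s)-sF''(s)=0$ for a.e.\ $s\in S$. A locally absolutely continuous function with a.e.-vanishing derivative is constant on the interval $S$; denote this value by $c_0$. Being constant on $S$, $h$ extends continuously to $\overline{S}$ with the same value $c_0$. (The choice of additive constant in $G$ is irrelevant: shifting $G$ by $k$ shifts $c_0$ by $-k$, and one checks both sides of the identity in (b) shift by the same amount $-(A-B)k$.)

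For part (b), I would fix $a,b\in S$ first, distribute the left-hand side, and group by $A$ and $B$:
\[
(Aa-Bb)F'(a)-(A-B)G(a)=A\big(aF'(a)-G(a)\big)-B\big(bF'(a)-G(a)\big).
\]
Using (a) in the form $G(a)=aF'(a)-F(a)-c_0$, the first bracket is $F(a)+c_0$ and the second is $(b-a)F'(a)+F(a)+c_0$, so the expression equals $A(F(a)+c_0)-B\big[(b-a)F'(a)+F(a)+c_0\big]$. Comparing with the claimed right-hand side, it remains to verify $F(b)-F(a)-(b-a)F'(a)=(b-a)^2\int_0^1 F''(\theta a+(1-\theta)b)\,\theta\,{\rm d}\theta$. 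This is Taylor's formula with integral remainder, $F(b)=F(a)+(b-a)F'(a)+\int_a^b (b-t)F''(t)\,{\rm d}t$ — legitimate here because $F'$ is locally absolutely continuous — followed by the affine substitution $t=\theta a+(1-\theta)b$, which converts $\int_a^b(b-t)F''(t)\,{\rm d}t$ into $(b-a)^2\int_0^1 \theta\,F''(\theta a+(1-\theta)b)\,{\rm d}\theta$ (signs work out for both orderings of $a,b$). The case $a,b\in\overline{S}$ then follows by continuity of each term appearing, wherever they are defined on $\overline{S}$. For part (c), since $B\ge 0$, $(b-a)^2\ge 0$, $\theta\ge 0$ on $[0,1]$, and $F''(\theta a+(1-\theta)b)\ge d_0$ for a.e.\ $\theta\in[0,1]$,
\[
B(b-a)^2\!\int_0^1 F''(\theta a+(1-\theta)b)\,\theta\,{\rm d}\theta\ \ge\ B(b-a)^2 d_0\!\int_0^1\!\theta\,{\rm d}\theta=\tfrac12 d_0 B(b-a)^2,
\]
and substituting this lower bound into the identity from (b) gives the inequality.

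The main obstacle — such as it is — is purely the regularity justification: confirming that the product rule in (a) and Taylor's formula with integral remainder in (b) hold under the hypothesis $F\in W^{2,1}_{\rm loc}(S)$, and making precise the sense in which the endpoint values on $\overline{S}\setminus S$ are interpreted (as the continuous limits, when they exist). No compactness, fixed-point, or PDE machinery is needed; everything reduces to elementary facts about absolutely continuous functions of one variable.
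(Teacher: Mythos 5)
Your proof is correct and follows essentially the same route as the paper's: part a) by differentiating the locally absolutely continuous function $s\mapsto sF'(s)-F(s)-G(s)$, part b) by combining a) with Taylor's formula with integral remainder, and part c) by bounding the remainder from below. The only (welcome but inessential) differences are that you arrange the algebra in b) starting from the left-hand side rather than from $AF(a)-BF(b)$, and that you are slightly more explicit about the regularity justifications and the extension to endpoints of $\overline{S}$.
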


\begin{proof}
Before we embark on the proof of the lemma, we note that $G(s)$, the primitive function of $s \in S \mapsto s F''(s)
\in \mathbb{R}$, is only defined up to an additive constant. Altering the value of $G(s)$ by an additive constant $\gamma$,
say, does not affect the validity of the above statements: on replacing the constant $c_0$ by another constant, $c_0-\gamma$, the statements a), b), c) continue to hold.
\begin{itemize}
\item[a)] By hypothesis the function $s\in S \mapsto sF'(s) - F(s) - G(s) \in \mathbb{R}$ belongs to $ W^{1,1}_{\rm loc}(S)$, and it is therefore absolutely continuous on any compact subinterval of $\overline{S}$; hence it is differentiable almost everywhere on $S$. Upon differentiation and using that
    $G'(s) = sF''(s)$, we deduce that the first derivative of $s\in S \mapsto sF'(s) - F(s) - G(s) \in \mathbb{R}$
    is equal to $0$ almost everywhere on $S$. This implies the existence of a constant $c_0$ such that
    $sF'(s) - F(s) - G(s) = c_0$ for a.e. $s \in S$. Since the function $s\in S \mapsto sF'(s) - F(s) - G(s) \in \mathbb{R}$ is absolutely continuous on each compact subinterval of $\overline{S}$, it follows that $sF'(s) - F(s) - G(s) = c_0$ for all $s \in \overline{S}$.
\item[b)] Suppose that $a, b \in S$ and $A, B \in \mathbb{R}$. By Taylor series expansion with integral remainder,
%
\begin{align*}
~~~~~~~~~~~~~~~~ AF(a) - BF(b) = AF(a) - B\left[F(a) + (b-a)F'(a) + (b-a)^2 \int_0^1 F''(\theta a + (1-\theta) b)\, \theta \dd \theta\right].
\end{align*}
%
Hence,
\begin{eqnarray*}
&&AF(a) - BF(b) + B (b-a)^2 \int_0^1 F''(\theta a + (1-\theta) b) \theta \dd \theta\\
&&\qquad\quad  = (A-B)(F(a) - aF'(a)) + (Aa - Bb) F'(a)\\
&&\qquad\quad  = - (A-B) (aF'(a) - F(a) - G(a))  - (A-B)G(a) + (Aa - Bb) F'(a)\\
&&\qquad\quad  = - (A-B) c_0   - (A-B)G(a) + (Aa - Bb) F'(a).
\end{eqnarray*}
By transferring the first term on the right-hand side of the last equality to the left-hand side, the stated identity follows.
\item[c)] The inequality is a direct consequence of the identity stated in part b) of the lemma.
\end{itemize}
\end{proof}
\begin{remark}
Lemma \ref{basic} should be compared with the discussion between eqs. (2.27) and (2.28) in reference
\cite{surf2006}, which can be seen as a special case of Lemma \ref{basic}.
\end{remark}

In order to prove the existence of a solution to (P$_{L}^{\Delta t}$), we require the following convex regularization
$\mathcal{F}_{\delta}^L \in C^{2,1}({\mathbb R})$ of $\mathcal{F}$ defined, for any $\delta \in (0,1)$ and
$L>1$, by
\begin{align}
 &\mathcal{F}_{\delta}^L(s) := \left\{
 \begin{array}{ll}
 \textstyle\frac{s^2 - \delta^2}{2\,\delta}
 + s\,(\log \delta - 1) + 1
 \quad & \mbox{for $s \le \delta$}, \\
\mathcal{F}(s)\ \equiv
s\,(\log s - 1) + 1 & \mbox{for $\delta \le s \le L$}, \\
  \textstyle\frac{s^2 - L^2}{2\,L}
 + s\,(\log L - 1) + 1
 & \mbox{for $L \le s$}.
 \end{array} \right. \label{GLd}
\end{align}
Hence,
\begin{subequations}
\begin{align}
\quad &[\mathcal{F}_{\delta}^{L}]'(s) = \left\{
 \begin{array}{ll}
 \textstyle \frac{s}{\delta} + \log \delta - 1
 \quad & \mbox{for $s \le \delta$}, \\
 \log s & \mbox{for $\delta \le s \le L$}, \\
 \textstyle \frac{s}{L} + \log L - 1
& \mbox{for $L \le s$},
 \end{array} \right. \label{GLdp}\\
\quad
 &[\mathcal{F}_{\delta}^{L}]''(s) = \left\{
 \begin{array}{ll}
 {\delta}^{-1} \quad & \mbox{for $s \le \delta$}, \\
 s^{-1} & \mbox{for $\delta \le s \le L$}, \\
 L^{-1} & \mbox{for $L \le s$}. \,
\end{array} \right. \label{Gdlpp}
\end{align}
\end{subequations}
We note that
\begin{align}
\mathcal{F}^L_\delta(s) \geq \left\{
\begin{array}{ll}
\frac{s^2}{2\,\delta} &\quad \mbox{for $s \leq 0$},
\\
\frac{s^2}{4\,L} - C(L)&\quad \mbox{for $s \geq 0$};
\end{array}
\right.
\label{cFbelow}
\end{align}
and that
$[\mathcal{F}_{\delta}^{L}]''(s)$
is bounded below by $1/L$ for all $s \in \mathbb{R}$.
Finally, we set
\begin{align}
\beta^L_\delta(s) := ([\mathcal{F}_{\delta}^{L}]'')^{-1}(s)
= \max \{\beta^L(s),\delta\},
\label{betaLd}
\end{align}
and observe that $\beta^L_\delta(s)$
is bounded above by $L$ and bounded below by $\delta$ for all $s \in \mathbb{R}$.
Note also that both $\beta^L$ and $\beta^L_\delta$ are Lipschitz continuous on
$\mathbb{R}$, with Lipschitz constants equal to $1$.

\subsection{\boldmath Existence of a solution to  $({\rm P}_{L}^{\Delta t})$}
\label{sec:existence-cut-off.1}
On recalling the discussion following \eqref{fncon-1},
for $n\in \{1, \dots, N\}$ we define the function $\rho^n_L: = \rho^{[\Delta t]}_L|_{[t_{n-1},t_n]}(\cdot,t_{n}) \in L^\infty(\Omega)$; it will be shown below that $\rho^n_L \in \Upsilon$, in fact.
With $\rho^{[\Delta t]}_L|_{[t_{n-1},t_n]}$ thus fixed (and with its values $\rho^{n-1}_L$ and $\rho^n_L$ at
$t=t_{n-1}$ and $t=t_n$, respectively, also fixed,) we rewrite (\ref{Gequn}) as
\begin{equation}
b
(\utae^n,\wt) =
\ell_b(
\hpsiaet^n)(\wt)
\qquad \forall \wt \in \Vt;
\label{bLM}
\end{equation}
where, for all 
$\wt_i \in \Ht^{1}_{0}(\Omega)$, $i=1,2$,
\begin{subequations}
\begin{align}
b
(\wt_1,\wt_2) &:=
\int_{\Omega} \left[ \tfrac{1}{2}\,(\rho^n_L + \rho^{n-1}_L)\,\wt_1\cdot \wt_2
+ \Delta t\, \mu(\rho^n_L) \,\Dtt(\wt_1)
:\Dtt(\wt_2) \right] \dx
\nonumber \\
& \hspace{0cm} + \tfrac{1}{2}\,\Delta t
\int_{\Omega}\left(\frac{1}{\Delta t}\int_{t_{n-1}}^{t_n} \rho^{[\Delta t]}_L \dd t \right)\, \left[
[(\utae^{n-1} \cdot \nabx) \wt_1] \cdot \wt_2
-[(\utae^{n-1} \cdot \nabx) \wt_2] \cdot \wt_1
\right]
\dx
\label{bgen}
\end{align}
and, for all 
$\varphi \in L^2_M(\Omega \times D)$
and $\wt \in \Ht^1_0(\Omega)$,
\begin{align}
\ell_b(
\varphi)(\wt) &:=
\int_\Omega \left[
\rho^{n-1}_L\,
\utae^{n-1} \cdot \wt
+ \Delta t \, \rho^n_L\, \ft^n \cdot \wt
-\Delta t \,k\,\sum_{i=1}^K
\Ctt_i(M\,\zeta(\rho^n_L)\,\varphi) : \nabxtt
\wt
\right] \dx.
\label{lbgen}
\end{align}
\end{subequations}
It follows from Korn's inequality
\begin{align}
\int_{\Omega} |\Dtt(\wt)|^2 \dx \geq c_0 \,\|\wt\|_{H^1(\Omega)}^2
\qquad \forall \wt \in
\Ht_0^{1}(\Omega),
\label{Korn}
\end{align}
where $c_0>0$, that, for $\ut^{n-1}_L \in \Vt$ and $\rho^{n-1}_L, \rho^n_L \in \Upsilon$ fixed,
$b(\cdot,\cdot)$ is a continuous nonsymmetric coercive bilinear
functional on $\Ht^1_0(\Omega) \times \Ht^1_0(\Omega)$.
In addition, for $\ut^{n-1}_L \in \Vt$ and $\rho^{n-1}_L, \rho^n_L \in \Upsilon$ fixed,
thanks to (\ref{fvkbd}) and \eqref{eqCttbd}, $\ell_b(\varphi)(\cdot)$ is a continuous linear
functional on $\Ht^1_0(\Omega)$ for any $\varphi \in L^2_M(\Omega\times D)$.

%

It is also convenient to rewrite (\ref{psiG}) (or, equivalently, \eqref{psiG-1a}) as
\begin{align}
a(\hpsiaet^n,\varphi) = \lae(\utae^n,\beta^L(\hpsiaet^n))
(\varphi) \qquad \forall \varphi \in X,
\label{genLM}
\end{align}
where, for all $\varphi_i \in X$, $i=1,\,2$,
\begin{subequations}
\begin{align}
a(\varphi_1,\varphi_2)
&:=\int_{\Omega \times D} M\, \bigg[\zeta(\rho^n_L)\,
\varphi_1\,\varphi_2  + \Delta t\, \epsilon\,\nabx
\varphi_1\,\cdot\, \nabx \varphi_2 \nonumber \\
& \hspace{0.3in} - \Delta t
\left(\frac{1}{\Delta t}\int_{t_{n-1}}^{t_n}\zeta(\rho^{[\Delta t]}_L)\dd t\right)
\utae^{n-1}\,\varphi_1\,\cdot\, \nabx \varphi_2
\label{agen}
\nonumber\\
& \hspace{0.6in}
+\, \frac{\Delta t}{4\,\lambda} \,
\sum_{i=1}^K \sum_{j=1}^K A_{ij}\,
\nabqj \varphi_1 \, \cdot\, \nabqi
\varphi_2 \biggr] \dq \dx\nonumber\\
&= \int_{\Omega \times D} M\,\biggl[\tfrac{1}{2}(\zeta(\rho^n_L) +
\zeta(\rho^{n-1}_L))\,
\varphi_1\,\varphi_2  + \Delta t\, \epsilon\,\nabx
\varphi_1\,\cdot\, \nabx \varphi_2  \nonumber\\
& \hspace{0.3in}+ \tfrac{1}{2}\Delta t \left(\frac{1}{\Delta t}\int_{t_{n-1}}^{t_n}\zeta(\rho^{[\Delta t]}_L)\dd t\right)\left(\utae^{n-1}\,\varphi_2\,\cdot\, \nabx \varphi_1-
\utae^{n-1}\,\varphi_1\,\cdot\, \nabx \varphi_2 \right)
\nonumber\\
& \hspace{0.6in}
+\, \frac{\Delta t}{4\,\lambda} \,
\sum_{i=1}^K \sum_{j=1}^K A_{ij}\,
\nabqj \varphi_1 \, \cdot\, \nabqi
\varphi_2 \biggr] \dq \dx,
\end{align}
and, for all $\vt \in \Ht^1(\Omega)$, $\eta \in L^\infty(\Omega\times D)$
and $\varphi \in X$,
\begin{align}
\lae(\vt,\eta)(\varphi) &:=
\int_{\Omega \times D}
M \left[\zeta(\rho^{n-1}_L)\,\hpsiaet^{n-1}
\,\varphi
+ \Delta t\,\sum_{i=1}^K [\,\sigtt(\vt)
\,\qt_i\,]\,\zeta(\rho^n_L)\,\eta\, \cdot\, \nabqi
\varphi \right]\!\dq \dx.
\label{lgen}
\end{align}
\end{subequations}
%
Hence, on noting (\ref{A}), 
for $\ut^{n-1}_L \in \Vt$ and $\rho^{[\Delta t]}_L|_{[t_{n-1},t_n]}$ fixed (and therefore $\rho^n_L$ and
$\rho^{n-1}_L$ also fixed), $a(\cdot,\cdot)$ is a coercive bilinear functional on $ X \times X$.
In order to show that $a(\cdot,\cdot)$ is a continuous bilinear functional on $X \times X$, we shall
focus our attention on the case of $d=3$; in the case of $d=2$ the argument is completely analogous; we
begin by noting that, by H\"older's inequality and the Sobolev embedding theorem,
\begin{eqnarray*}
\int_{\Omega\times D} M |\ut^{n-1}_L|\, |\varphi_1|\, |\nabx \varphi_2| \dq \dx
&\leq& \int_D M \|\ut^{n-1}_L\|_{L^6(\Omega)} \|\varphi_1\|_{L^3(\Omega)} \|\nabx \varphi_2\|_{L^2(\Omega)} \dq
\\
&\leq& c(\Omega)\, \|\ut^{n-1}_L\|_{L^6(\Omega)} \int_D M  \|\varphi_1\|_{H^1(\Omega)} \,\|\varphi_2\|_{H^1(\Omega)} \dq
\\
& \leq & c(\Omega)\, \|\ut^{n-1}_L\|_{L^6(\Omega)}\, \|\varphi_1\|_{L^2_M(D; H^1(\Omega))}\, \|\varphi_2\|_{L^2_M(D; H^1(\Omega))}\\
& \leq & c(\Omega)\, \|\ut^{n-1}_L\|_{L^6(\Omega)} \,\|\varphi_1\|_{H^1_M(\Omega \times D)} \,\|\varphi_2\|_{H^1_M(\Omega \times D)}.
\end{eqnarray*}
This, obvious applications of the Cauchy--Schwarz inequality, and the fact that the range of the function
$\zeta$ is the compact subinterval $[\zeta_{\rm min}, \zeta_{\rm max}]$ of $(0,\infty)$, then imply that $a(\cdot,\cdot)$ is a continuous bilinear functional on $X \times X$.

In addition, for all $\vt \in \Ht^1(\Omega)$, $\eta \in L^\infty(\Omega \times D)$ and $\varphi \in X$, we have that
\begin{align}
|\lae(\vt,\eta)(\varphi)| &\leq
\|\zeta(\rho^{n-1}_L)\,\hpsiaet^{n-1}\|_{L^2_M(\Omega \times D)}
\,\|\varphi\|_{L^2_M(\Omega \times D)}
\nonumber \\
& \hspace{-0.5in}
+ \Delta t\,\left( \int_{D} M\,|\qt|^2 \dq \right)^{\frac{1}{2}}
\|\zeta(\rho^n_L)\,\eta\|_{L^\infty(\Omega \times D)}
\,\|\nabxtt \vt\|_{L^2(\Omega)}
\,\|\nabq \varphi\|_{L^2_M(\Omega \times D)}.
\label{lgenbd}
\end{align}
Therefore, by noting that $\zeta(\rho^{n-1}_L)\,\hpsiaet^{n-1} \in Z_2$,
$\zeta(\rho^n_L)\,\eta \in L^\infty(\Omega \times D)$
and recalling (\ref{MN}), it follows that $\ell_a(\vt,\eta)(\cdot)$
is a continuous linear functional on $X$
for any $\vt \in \Ht^1(\Omega)$ and $\eta \in L^\infty(\Omega \times D)$.

Before we prove existence of a solution to the problem (\ref{Gequn},c),
i.e., (\ref{bLM}) and (\ref{genLM}), let us first show by induction that the function $\rho^{[\Delta t]}_L|_{[t_{n-1},t_n]}$,
whose existence and uniqueness in the function space
\[ L^\infty(t_{n-1},t_n; L^\infty(\Omega)) \cap C([t_{n-1},t_n]; L^2(\Omega)) \cap W^{1,\infty}(t_{n-1},t_n; W^{1,\frac{q}{q-1}}(\Omega)')\]
has already been established, with $q \in (2,\infty)$ when $d=2$ and $q \in [3,6]$ when $d=3$, satisfies the two-sided bound $\rho_{\rm min} \leq \rho^{[\Delta t]}_L(\xt,t) \leq \rho_{\rm max}$ for a.e. $\xt \in \Omega$ and every $t \in [t_{n-1},t_n]$,
i.e., that $\rho^n_L \in \Upsilon$ for all $n \in \{0,\dots,N\}$. To this end,
for $\alpha \in (0,1)$, we consider the regularized problem
\begin{subequations}
\begin{align}
&\int_{t_{n-1}}^{t_n} \left\langle\frac{\partial\rho^{[\Delta t]}_{L,\alpha}}{\partial t} , \eta \right\rangle_{H^1(\Omega)}\!\! \dd t
- \int_{t_{n-1}}^{t_n}\int_{\Omega} \rho^{[\Delta t]}_{L,\alpha} \,\utae^{n-1} \cdot \nabx \eta \dx \dd t
\nonumber\\
&\hspace{1in} + \alpha \int_{t_{n-1}}^{t_n}\int_{\Omega} \nabx \rho^{[\Delta t]}_{L,\alpha} \cdot \nabx \eta \dx \dd t = 0
\qquad \forall \eta
\in L^2(t_{n-1},t_n;H^1(\Omega)),
\label{crhonLd}
\end{align}
subject to the initial condition
\begin{align}\label{crhonLd-ini}
\rho^{[\Delta t]}_{L,\alpha}|_{[t_{n-1},t_n]}(\cdot,t_{n-1}) = \rho^{n-1}_L \in \Upsilon,
\end{align}
\end{subequations}
where $\rho^{n-1}_L \in \Upsilon$ was assumed for the purposes of our inductive argument; clearly,
$\rho^{0}_L := \rho_0 \in \Upsilon$, so the basis of the induction is satisfied. We begin by
showing the existence and uniqueness of a solution $\rho^{[\Delta t]}_{L,\alpha}$ to (\ref{crhonLd},b) and
that $\rho_{\rm min} \leq \rho^{[\Delta t]}_{L,\alpha}(\xt,t) \leq \rho_{\rm max}$ for a.e. $x \in \Omega$ and
for all $t \in [t_{n-1},t_n]$; we shall then pass to the limit $\alpha \rightarrow 0_+$ to deduce that
the limiting function, which we shall show to coincide with $\rho^{[\Delta t]}_{L}$, satisfies the two-sided bound
$\rho_{\rm min} \leq \rho^{[\Delta t]}_{L}(\xt,t) \leq \rho_{\rm max}$ for a.e. $x \in \Omega$ and
for all $t \in [t_{n-1},t_n]$; hence in particular we shall deduce that $\rho^n_L = \rho^{[\Delta t]}_{L}(\cdot,t_n) \in \Upsilon$.

The existence of a unique weak solution
\[ \rho^{[\Delta t]}_{L,\alpha} \in C([t_{n-1},{t_n}]; L^2(\Omega))\cap L^2(t_{n-1},t_n;H^1(\Omega))\cap H^1(t_{n-1},{t_n}; H^1(\Omega)')\]
to (\ref{crhonLd},b)
is immediate; see, for example, Wloka \cite{Wloka}, Thm. 26.1. Further, on selecting, for $s \in (t_{n-1},t_n]$, the test function
$\eta = \chi_{[t_{n-1},s]}\, \rho^{[\Delta t]}_{L,\alpha}$ in equation \eqref{crhonLd}, where for a set $S \subset \mathbb{R}$, $\chi_S$ denotes the characteristic function of  $S$, and
noting that $\ut^{n-1}_L \in \Vt$, we obtain the energy identity
\[\|\rho^{[\Delta t]}_{L,\alpha}(s)\|^2_{L^2(\Omega)} + 2 \alpha \int_{t_{n-1}}^s \int_\Omega
|\nabx \rho^{[\Delta t]}_{L,\alpha}(s)|^2 \dx \dd t = \|\rho^{n-1}_L\|^2_{L^2(\Omega)},
\qquad s \in (t_{n-1},t_n], \]
which then implies that $\{\rho^{[\Delta t]}_{L,\alpha}\}_{\alpha \in (0,1)}$ is a bounded set in
the function space $L^\infty(t_{n-1},t_n;L^2(\Omega))$ and that $\{\sqrt{\alpha}\,\nabx \rho^{[\Delta t]}_{L,\alpha}\}_{\alpha \in (0,1)}$ is a bounded set in $L^2(t_{n-1},t_n;\Lt^2(\Omega))$. It then follows that
\[ \left\{\frac{\partial \rho^{[\Delta t]}_{L,\alpha}}{\partial t}\right\}_{\alpha \in (0,1)}
\qquad \mbox{is a bounded set in $L^2(t_{n-1},t_n;H^1(\Omega)')$}.\]

Hence there exists an element
$\rho^{[\Delta t]}_{L,0} \in L^\infty(t_{n-1},t_n;L^2(\Omega))\cap H^1(t_{n-1},t_n;H^1(\Omega)')$ and
a subsequence of $\{\rho^{[\Delta t]}_{L,\alpha}\}_{\alpha \in (0,1)}$ (not indicated) such that,
as $\alpha \rightarrow 0_+$,
\begin{subequations}
\begin{alignat}{3}
\rho^{[\Delta t]}_{L,\alpha}
&\rightarrow \rho^{[\Delta t]}_{L,0}
&&\qquad\mbox{weak$^\star$ in } L^\infty(t_{n-1},t_n;L^2(\Omega)),\label{alpha1}
\\
\alpha \nabx \rho^{[\Delta t]}_{L,\alpha}
&\rightarrow 0
&&\qquad\mbox{strongly in } L^2(t_{n-1},t_n;L^2(\Omega)),\label{alpha2}
\\
\frac{\partial}{\partial t}  \rho^{[\Delta t]}_{L,\alpha}
& \rightarrow \frac{\partial}{\partial t} \rho^{[\Delta t]}_{L,0}
&&\qquad\mbox{weakly in } L^2(t_{n-1},t_n;H^1(\Omega)').\label{alpha3}
\end{alignat}
\end{subequations}
By a weak parabolic maximum principle based on a cut-off argument, we also have that
\[ \rho_{\rm min} \leq \rho^{[\Delta t]}_{L,\alpha} \leq \rho_{\rm max},\qquad \forall L>1,\;\forall \alpha \in (0,1),\]
and therefore,
\[  \rho_{\rm min} \leq \rho^{[\Delta t]}_{L,0} \leq \rho_{\rm max},\qquad \forall L>1,\;\forall \alpha \in (0,1).\]

Thus, on passing to the limit in \eqref{crhonLd} we deduce that
\begin{align}\label{inter-xxx}
& \int_{t_{n-1}}^{t_n} \left\langle\frac{\partial\rho^{[\Delta t]}_{L,0}}{\partial t} , \eta \right\rangle_{H^1(\Omega)}\!\! \dd t
- \int_{t_{n-1}}^{t_n}\int_{\Omega} \rho^{[\Delta t]}_{L,0} \,\utae^{n-1} \cdot \nabx \eta \dx \dd t
= 0\nonumber\\
&\hspace{3in}
\forall \eta \in L^2(t_{n-1},t_n;H^1(\Omega)).
\end{align}
As
\[ \eta \in L^1(t_{n-1},t_n;W^{1,\frac{q}{q-1}}(\Omega)) \mapsto \int_{t_{n-1}}^{t_n}\int_{\Omega} \rho^{[\Delta t]}_{L,0} \,\utae^{n-1} \cdot \nabx \eta \dx \dd t \in \mathbb{R}\]
is a continuous linear functional for all $q \in (2,\infty)$ when $d=2$ and all $q \in [3,6]$ when $d=3$,
the application of a density argument to \eqref{inter-xxx} yields that
\begin{align*}
& \int_{t_{n-1}}^{t_n} \left\langle\frac{\partial\rho^{[\Delta t]}_{L,0}}{\partial t} , \eta \right\rangle_{W^{1,\frac{q}{q-1}}(\Omega)}\!\! \dd t
- \int_{t_{n-1}}^{t_n}\int_{\Omega} \rho^{[\Delta t]}_{L,0} \,\utae^{n-1} \cdot \nabx \eta \dx \dd t
= 0\nonumber\\
&\hspace{3in}
\forall \eta \in L^1(t_{n-1},t_n;W^{1,\frac{q}{q-1}}(\Omega)),
\end{align*}
where $q \in (2,\infty)$ when $d=2$ and $q \in [3,6]$ when $d=3$, and $\rho^{[\Delta t]}_{L,0}(\cdot,t_{n-1}) = \rho^{n-1}_L$.
As $\rho^{[\Delta t]}_L$ is already known
to be the unique weak solution to this problem by the argument from the beginning of this section, it follows that $\rho^{[\Delta t]}_{L,0} = \rho^{[\Delta t]}_L$, and therefore
\begin{equation}\label{rho-lower-upper}
\rho_{\rm min} \leq \rho^{[\Delta t]}_{L}|_{[t_{n-1},t_n]}(\xt,t) \leq \rho_{\rm max }\; \mbox{ for a.e. $\xt \in \Omega$, for all $t \in [t_{n-1},t_n]$ and $n=1,\dots,N$}.
\end{equation}
In particular, for $t=t_n$, $\rho_{\rm min} \leq \rho^n_L(\xt) :=\rho^{[\Delta t]}_{L}|_{[t_{n-1},t_n]}(\xt,t_n) \leq \rho_{\rm max}$ for a.e. $\xt \in \Omega$; hence, $\rho^n_L \in \Upsilon$, as was claimed in the first sentence of this section.

Thus, for any given $\rho^{n-1}_L \in \Upsilon$ and $\ut^{n-1}_L \in \Vt$, $n \in \{1,\dots, N\}$, we have shown the existence of a unique function
\[ \rho^{[\Delta t]}_{L}|_{[t_{n-1},t_n]} \in L^\infty(t_{n-1},t_n; L^\infty(\Omega)) \cap C([t_{n-1},t_n]; L^2(\Omega)) \cap W^{1,\infty}(t_{n-1},t_n; W^{1,\frac{q}{q-1}}(\Omega)')\]
such that $\rho^{[\Delta t]}_{L}|_{[t_{n-1},t_n]}(\cdot,t_{n-1}) = \rho^{n-1}_L$, with $\rho^0_L := \rho_0 \in \Upsilon$ when $n=1$, and
\begin{subequations}
\begin{align}
& \int_{t_{n-1}}^{t_n} \left\langle\frac{\partial\rho^{[\Delta t]}_L}{\partial t} , \eta \right\rangle_{W^{1,\frac{q}{q-1}}(\Omega)}\!\! \dd t
- \int_{t_{n-1}}^{t_n}\int_{\Omega} \rho^{[\Delta t]}_L \,\ut^{n-1}_{L} \cdot \nabx \eta \dx \dd t
= 0
\nonumber\\
&\hspace{3.5in}\forall \eta
\in L^1(t_{n-1},t_n;W^{1,\frac{q}{q-1}}(\Omega)),
\label{rhonL-2}
\end{align}
where $q \in (2,\infty)$ when $d=2$ and $q \in [3,6]$ when $d=3$, and $\rho^{[\Delta t]}_L|_{[t_{n-1},t_n]}
\in \Upsilon$.

 We now fix $\rho^n_L(\cdot):=\rho^{[\Delta t]}_L|_{[t_{n-1},t_n]}(\cdot,t_n) \in \Upsilon$, and we turn our attention to the proof of existence of solutions to (\ref{Gequn},c). To this end we consider the following regularized version of the
 system (\ref{Gequn},c): for a given $\delta \in (0,1)$, find
$(\utaed^{n},\widetilde{\psi}^n_{L,\delta}) \in \Vt \times X$ such that
\begin{alignat}{2}
b(\utaed^n,\wt) &= \ell_b(\hpsiaedt^n)(\wt)
\qquad &&\forall \wt \in \Vt,
\label{bLMd} \\
a(\widetilde{\psi}^n_{L,\delta},\varphi) &= \lae(\utaed^n,\beta^L_\delta(\widetilde{\psi}^n_{L,\delta}))
(\varphi) \qquad &&\forall \varphi \in X.
\label{genLMd}
\end{alignat}
\end{subequations}

We emphasize at this point that (\ref{rhonL-2}) decouples from (\ref{bLMd},c); indeed, given $\rho^{n-1}_{L}
\in \Upsilon$ and $\ut^{n-1}_{L} \in \Vt$,
one can solve (\ref{rhonL-2}) uniquely for
\[
 \rho^{[\Delta t]}_{L}|_{[t_{n-1},t_n]} \in L^\infty(t_{n-1},t_n; L^\infty(\Omega)) \cap C([t_{n-1},t_n]; L^2(\Omega)) \cap W^{1,\infty}(t_{n-1},t_n; W^{1,\frac{q}{q-1}}(\Omega)'),\]
where $q \in (2,\infty)$ when $d=2$ and $q \in [3,6]$ when $d=3$, and $\rho^{[\Delta t]}_L(\cdot,t_{n-1})
= \rho^{n-1}_L(\cdot)$; by defining $\rho^n_L(\cdot):=\rho^{[\Delta t]}_L|_{[t_{n-1},t_n]}(\cdot,t_n)$,
we can then consider the system (\ref{bLMd},c) for $\{\ut^n_{L,\delta},\widetilde{\psi}^n_{L,\delta}\}$
independently of (\ref{rhonL-2}).

The existence of a solution to (\ref{bLMd},c) will be proved by
using a fixed-point argument. Given
$\widetilde \psi \in L^2_M(\Omega \times D)$,
let $(\ut^{\star}, \widetilde \psi^\star) \in
\Vt \times X$ be such that
\begin{subequations}
\begin{alignat}{2}
\qquad b(\ut^{\star},\wt) &= \ell_b(\widetilde \psi)(\wt)
\qquad &&\forall \wt \in \Vt,
\label{fix4} \\
\qquad a(\widetilde \psi^{\star},\varphi) &=
\lae(\ut^\star,\beta^L_\delta(\widetilde \psi))(\varphi) \qquad
&&\forall \varphi \in X. \label{fix3}
\end{alignat}
\end{subequations}

The Lax--Milgram theorem yields the existence of a unique solution $\ut^\star \in \Vt$ to (\ref{fix4})
for a given $\widetilde\psi \in X$, and the existence of a unique solution $\widetilde \psi^\star \in
X$ to (\ref{fix3}) for a given pair $(\ut^\star, \widetilde \psi) \in \Vt \times X$. Therefore
the overall procedure (\ref{fix4},b) that maps a function $\widetilde\psi \in L^2_M(\Omega \times D)$
into $\widetilde\psi^\star \in X$ is well defined.

\begin{lemma}
\label{fixlem} Let ${\mathcal T}: L^2_M(\Omega \times D) \rightarrow X
\subset L^2_M(\Omega \times D)$ denote the
nonlinear map that takes the function $\widetilde{\psi}$ to $\widetilde \psi^{\star} = {\mathcal T}(\widetilde \psi)$
{\em via} the procedure
{\rm (\ref{fix4},b)}. Then, the mapping ${\mathcal T}$ has a fixed point. Hence, there exists a solution
$(\utaed^n,\hpsiaedt^n) \in \Vt \times X$ to {\rm (\ref{bLMd},c)}.
\end{lemma}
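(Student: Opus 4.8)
The plan is to prove Lemma~\ref{fixlem} by an application of Schauder's fixed point theorem to the map $\mathcal{T}\colon L^2_M(\Omega\times D)\to L^2_M(\Omega\times D)$ defined by the two-step procedure (\ref{fix4},b). For this we must establish three things: that $\mathcal{T}$ maps some nonempty, closed, bounded, convex subset $\mathcal{K}$ of $L^2_M(\Omega\times D)$ into itself; that $\mathcal{T}(\mathcal{K})$ is relatively compact in $L^2_M(\Omega\times D)$; and that $\mathcal{T}$ is continuous on $\mathcal{K}$. The existence of the intermediate pair $(\ut^\star,\widetilde\psi^\star)$ for each input $\widetilde\psi$ has already been recorded via Lax--Milgram using the coercivity and continuity of $b(\cdot,\cdot)$ on $\Vt\times\Vt$ (from Korn's inequality (\ref{Korn}) and (\ref{fvkbd}), (\ref{eqCttbd})) and of $a(\cdot,\cdot)$ on $X\times X$ (from (\ref{A}) and the $H^1_M$-bound computed just after (\ref{lgen})), together with the boundedness of the linear functionals $\ell_b(\widetilde\psi)(\cdot)$ and $\lae(\ut^\star,\beta^L_\delta(\widetilde\psi))(\cdot)$; so $\mathcal{T}$ is well defined.

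First I would derive an a priori bound. Testing (\ref{fix4}) with $\wt=\ut^\star$, using coercivity of $b$, the bound (\ref{eqCttbd}) on $\Ctt_i$, and the fact that the skew-symmetric convective term vanishes on the diagonal, gives $\|\ut^\star\|_{H^1(\Omega)}\le C_1(1+\|\widetilde\psi\|_{L^2_M(\Omega\times D)})$ with $C_1$ depending on the fixed data $\rho^{n-1}_L,\rho^n_L,\ut^{n-1}_L,\ft^n,\Delta t,L$ but \emph{not} on $\widetilde\psi$. Then testing (\ref{fix3}) with $\varphi=\widetilde\psi^\star$, using coercivity of $a$, the bound (\ref{lgenbd}) on $\lae$ together with $\|\beta^L_\delta(\widetilde\psi)\|_{L^\infty(\Omega\times D)}\le L$ (which is the crucial role of the cut-off $\beta^L_\delta$: it makes the right-hand side of (\ref{fix3}) bounded \emph{independently} of $\widetilde\psi$, up to the factor $\|\nabxtt\ut^\star\|_{L^2(\Omega)}$), and the finiteness $\int_D M|\qt|^2\dq<\infty$, yields $\|\widetilde\psi^\star\|_{H^1_M(\Omega\times D)}\le C_2(1+\|\ut^\star\|_{H^1(\Omega)})$. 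Here I would use Young's inequality to absorb; note that the term $\|\zeta(\rho^{n-1}_L)\widetilde\psi^{n-1}_L\|_{L^2_M}$ is a fixed finite constant since $\widetilde\psi^0\in Z_2$ and inductively $\widetilde\psi^{n-1}_L\in Z_2$. Chaining these, $\|\widetilde\psi^\star\|_{H^1_M(\Omega\times D)}\le R$ for a constant $R$ depending only on the fixed data. Taking $\mathcal{K}:=\{\widetilde\psi\in L^2_M(\Omega\times D):\|\widetilde\psi\|_{L^2_M(\Omega\times D)}\le R\}$, which is nonempty, closed, bounded and convex, we get $\mathcal{T}(\mathcal{K})\subset\{\varphi\in X:\|\varphi\|_{H^1_M(\Omega\times D)}\le R\}\subset\mathcal{K}$; and compactness of $\mathcal{T}(\mathcal{K})$ in $L^2_M(\Omega\times D)$ follows from the compact embedding (\ref{wcomp2}), $H^1_M(\Omega\times D)\hookrightarrow\!\!\!\rightarrow L^2_M(\Omega\times D)$, valid since $\gamma_i>1$.

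It remains to prove continuity of $\mathcal{T}$ on $\mathcal{K}$ (with respect to the $L^2_M$-norm). Take $\widetilde\psi_m\to\widetilde\psi$ in $L^2_M(\Omega\times D)$ with all terms in $\mathcal{K}$. Since $\ell_b(\cdot)(\wt)$ depends linearly and continuously on $\widetilde\psi$ in the $L^2_M$-norm through $\Ctt_i$ (by (\ref{eqCttbd})), subtracting the equations (\ref{fix4}) for $\widetilde\psi_m$ and $\widetilde\psi$ and testing with $\ut^\star_m-\ut^\star$ gives $\ut^\star_m\to\ut^\star$ strongly in $\Ht^1(\Omega)$. For the second step, I would show $\lae(\ut^\star_m,\beta^L_\delta(\widetilde\psi_m))(\varphi)\to\lae(\ut^\star,\beta^L_\delta(\widetilde\psi))(\varphi)$ uniformly over $\|\varphi\|_X\le 1$: split the difference into a term with $(\nabxtt\ut^\star_m-\nabxtt\ut^\star)$ times $\beta^L_\delta(\widetilde\psi_m)$ (bounded by $L$, and the gradient difference $\to 0$ in $L^2$) and a term with $\nabxtt\ut^\star$ times $(\beta^L_\delta(\widetilde\psi_m)-\beta^L_\delta(\widetilde\psi))$; since $\beta^L_\delta$ is $1$-Lipschitz, $\beta^L_\delta(\widetilde\psi_m)\to\beta^L_\delta(\widetilde\psi)$ in $L^2_M$, and using a subsequence-and-a.e.-convergence argument with dominated convergence (the dominating function being $2L$) this term also tends to $0$. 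Then subtracting the equations (\ref{fix3}), testing with $\widetilde\psi^\star_m-\widetilde\psi^\star$ and using coercivity of $a$ yields $\widetilde\psi^\star_m\to\widetilde\psi^\star$ in $X$, hence in $L^2_M(\Omega\times D)$; so $\mathcal{T}$ is continuous. Schauder's theorem then gives a fixed point $\hpsiaedt^n$, and setting $\utaed^n:=(\hpsiaedt^n)^\star$ produces the desired solution $(\utaed^n,\hpsiaedt^n)\in\Vt\times X$ of (\ref{bLMd},c). The main obstacle I anticipate is the continuity verification in the convective drag term: establishing strong (not just weak) convergence of $\ut^\star_m$ and then handling the composition $\beta^L_\delta(\widetilde\psi_m)$ inside the $q$-weighted integral uniformly in the test function requires care, and this is exactly where the cut-off $\beta^L_\delta$ and its Lipschitz/boundedness properties are indispensable.
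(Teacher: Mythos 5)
Your verification of continuity (via the Lipschitz property and uniform boundedness of $\beta^L_\delta$, strong convergence of $\ut^\star_m$ in $\Ht^1(\Omega)$, and a dominated-convergence/subsequence argument) and of compactness (via the compact embedding \eqref{wcomp2}) is sound and consistent with the paper. The genuine gap is in the self-mapping step. Your two a priori bounds are individually correct, but they chain to $\|\widetilde\psi^\star\|_{H^1_M(\Omega\times D)}\le C_2\bigl(1+C_1(1+\|\widetilde\psi\|_{L^2_M(\Omega\times D)})\bigr)$, which still depends on $\|\widetilde\psi\|_{L^2_M(\Omega\times D)}$; it is \emph{not} a bound ``depending only on the fixed data''. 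Tracking the constants: testing \eqref{fix4} with $\wt=\ut^\star$ gives $\Delta t\,\|\nabxtt\ut^\star\|^2\le C+C\,\Delta t\,\|\widetilde\psi\|^2_{L^2_M(\Omega\times D)}$, while testing \eqref{fix3} with $\varphi=\widetilde\psi^\star$, using $\|\beta^L_\delta(\widetilde\psi)\|_{L^\infty}\le L$ in \eqref{lgenbd} and Young's inequality to absorb $\nabq\widetilde\psi^\star$ into the coercive part of $a(\cdot,\cdot)$, gives $\|\widetilde\psi^\star\|^2_{L^2_M(\Omega\times D)}\le C(L)+C\,L^2\,\Delta t\,\|\nabxtt\ut^\star\|^2$. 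The composed estimate is therefore of the form $\|\mathcal{T}(\widetilde\psi)\|^2_{L^2_M}\le A(L,\Delta t)+B\,L^2\,\Delta t\,\|\widetilde\psi\|^2_{L^2_M}$, and the ball $\{\|\widetilde\psi\|_{L^2_M}\le R\}$ is invariant only if $B\,L^2\,\Delta t<1$ --- a smallness condition relating $\Delta t$ and $L$ that is not assumed at this stage (and would not follow from the later requirement $\Delta t=o(L^{-1})$). So the invariant convex set on which you wish to apply Schauder's theorem need not exist.

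The paper circumvents this by using the Leray--Schauder (homotopy) form of the fixed-point theorem: instead of exhibiting an invariant ball, one verifies the a priori bound \eqref{fixbound} only for those $\widetilde\psi$ and $\kappa\in(0,1]$ satisfying $\widetilde\psi=\kappa\,\mathcal{T}(\widetilde\psi)$. For such scaled fixed points the two equations \eqref{fix4sig} and \eqref{fix3sig} are genuinely coupled, and one may test the Fokker--Planck equation with the entropy derivative $[\mathcal{F}^L_\delta]'(\widetilde\psi)$ rather than with $\widetilde\psi$ itself; by the integration-by-parts identity \eqref{intbyparts} the drag term then produces exactly $\kappa\,\Delta t\sum_{i=1}^K\int_\Omega\Ctt_i(M\,\zeta(\rho^n_L)\,\widetilde\psi):\sigtt(\vt)\dx$, which cancels against the extra-stress term obtained by testing the momentum equation with $\vt$ (see \eqref{acorstab1} and \eqref{Ek}). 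The resulting bound on $\int_{\Omega\times D} M\,\zeta(\rho^n_L)\,\mathcal{F}^L_\delta(\widetilde\psi)\dq\dx$, combined with the quadratic lower bound \eqref{cFbelow}, yields the required $L^2_M$ estimate with no smallness condition on $\Delta t$ or $L$. This cancellation is unavailable in your setting because, for an arbitrary element $\widetilde\psi$ of the ball, the output velocity $\ut^\star$ and the input $\widetilde\psi$ are unrelated; the homotopy formulation is precisely what makes the entropy argument applicable.
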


\begin{proof}
The proof is a simple adaption of the proof of Lemma 3.2 in
\cite{BS2011-fene}, where $\rho^{n}_{L} = \rho^{n-1}_L \equiv 1$
and $\zeta(\cdot)$ is identically equal to a positive constant.
Clearly, a fixed point of ${\mathcal T}$ yields a
solution of (\ref{bLMd},c).
In order to show that ${\mathcal T}$ has a fixed point, we apply
Schauder's fixed-point theorem; that is, we need to show that:
(i)~${\mathcal T}:
L^2_M(\Omega \times D) \rightarrow
L^2_M(\Omega \times D)$ is continuous; (ii)~${\mathcal T}$ is compact; and
(iii)~there exists a $C_{\star} \in {\mathbb R}_{>0}$ such that
\begin{eqnarray}
\|\widetilde{\psi}\|_{L^{2}_M(\Omega\times D)} \leq C_{\star}
\label{fixbound}
\end{eqnarray}
for every $\widetilde{\psi} \in L^2_M(\Omega \times D)$ and $\kappa \in (0,1]$
satisfying $\widetilde{\psi} = \kappa\, {\mathcal T}(\widetilde{\psi})$.

(i) Let $\{\widetilde{\psi}^{(p)}\}_{p \geq 0}$ be such that
\begin{eqnarray}
\widetilde{\psi}^{(p)}
\rightarrow \widetilde{\psi} \qquad \mbox{strongly in }
L^{2}_M(\Omega\times D)\qquad \mbox{as } p \rightarrow \infty.
\label{Gcont1}
\end{eqnarray}
It follows immediately from (\ref{betaLd}) and (\ref{eqCttbd}) that
\begin{subequations}
\begin{alignat}{1}
M^{\frac{1}{2}}\,\beta^L_\delta(\widetilde{\psi}^{(p)})
\rightarrow M^{\frac{1}{2}}\,
\beta^L_\delta(\widetilde{\psi}) \qquad
\mbox{strongly in }
L^r(\Omega\times D)\quad
\mbox{as } p \rightarrow \infty,
\label{betacon}
\end{alignat}
for all $r \in [1,\infty)$ and, for $i=1, \dots, K$,
\begin{alignat}{1}
&
\Ctt_i(M\,\zeta(\rho^n_L)\,\widetilde{\psi}^{(p)}) \rightarrow \Ctt_i(M\,\zeta(\rho^n_L)\,\widetilde{\psi}) \qquad
\mbox{strongly in } L^{2}(\Omega)\quad
\mbox{as } p \rightarrow \infty.
\label{Cttcon}
\end{alignat}
\end{subequations}
In order to prove that ${\mathcal T}: L^2_M(\Omega \times D) \rightarrow
L^2_M(\Omega \times D)$ is continuous, we need to show that
\begin{eqnarray}
\widetilde{\eta}^{(p)}:={\mathcal T}(\widetilde{\psi}^{(p)}) \rightarrow {\mathcal T}(\widetilde{\psi})
\qquad \mbox{strongly in } L^2_M(\Omega\times D)\quad \mbox{as } p
\rightarrow \infty. \label{Gcont2}
\end{eqnarray}
We have from the definition of ${\mathcal T}$ (see (\ref{fix4},b)) that, for all $p \geq 0$,
\begin{subequations}
\begin{align}
a(\widetilde \eta^{(p)}, \varphi)&= \ell_a(\vt ^{(p)},
\beta^L_\delta(\widetilde \psi^{(p)}))(\varphi)
\qquad \forall \varphi \in X,
\label{Gcont5}
\end{align}
and $\vt^{(p)} \in \Vt$ satisfies
\begin{align}
b(\vt^{(p)},\wt) &= \ell_b(\widetilde \psi^{(p)})(\wt)
\qquad \forall \wt \in \Vt.
\label{Gcont5a}
\end{align}
\end{subequations}

Choosing $\widetilde \varphi = \widetilde \eta^{(p)}$ in (\ref{Gcont5})
yields, on noting the simple identity
\begin{equation}
2\,(s_1-s_2)\,s_1 = s_1^2 + (s_1 -s_2)^2 -s_2^2 \qquad \forall
s_1, s_2 \in {\mathbb R}, \label{simpid}
\end{equation}
and  
(\ref{inidata}), 
(\ref{betaLd})
that, for all $p \geq 0$,
\begin{align}
&\int_{\Omega \times D}
M\,\left [\zeta(\rho^n_L)\, |\widetilde{\eta}^{(p)}|^2
+ \zeta(\rho^{n-1}_L)\,| \widetilde\eta^{(p)} - \hpsiaet^{n-1}|^2
\right.\nonumber\\
&\hspace{6cm} \left.+ \frac{a_0\,\Delta t}{2\,\lambda}\,|\nabq \widetilde\eta^{(p)}|^2
+ 2\,\epsilon \, \Delta t\,|\nabx \widetilde\eta^{(p)}|^2 \right] \dq \dx
\nonumber
\\
& \qquad \qquad \leq \int_{\Omega \times D} M\,\zeta(\rho^{n-1}_L)\,
|\hpsiaet^{n-1}|^2 \dq \dx + C(L,\lambda,\zeta_{\rm max},a_0^{-1})\,\Delta t\int_{\Omega}
|\nabxtt \vt^{(p)}|^2 \dx.
\label{Gcont6}
\end{align}
Choosing $\wt \equiv \vt^{(p)}$ in (\ref{Gcont5a}), and noting
(\ref{simpid}), 
(\ref{inidata}), (\ref{fvkbd}),
(\ref{eqCttbd}),
(\ref{Korn})
and (\ref{Gcont1}) yields, for
all $p\geq 0$, that
\begin{align}
&\int_{\Omega} \left[ \rho^n_{L}\,|\vt^{(p)}|^2 +
\rho^{n-1}_L\,|\vt^{(p)}-\utae^{n-1}|^2  \right] \dx +
\mu_{\rm min}\,\Delta t \, \int_{\Omega} |\Dtt(\vt^{(p)})|^2 \dx
\label{Gcont4}
\nonumber\\
& \qquad
\leq \int_\Omega \rho^{n-1}_L\,|\utae^{n-1}|^2 \dx + C\,\Delta t \,
\|\ft^n\|_{L^\varkappa(\Omega)}^2
+ C\, \Delta t \, \int_{\Omega \times D}
M\,|\widetilde{\psi}^{(p)}|^2 \dq \dx
\leq C(L).
\end{align}
Combining (\ref{Gcont6}) and (\ref{Gcont4}), noting that by \eqref{inidata} the range of $\zeta $ is the compact subinterval $[\zeta_{\rm min}, \zeta_{\rm max}]$ of $(0,\infty)$, and recalling (\ref{Korn}),
we have for all $p \geq 0$ that
\begin{eqnarray}
\|\widetilde\eta^{(p)}\|_{X}
+ \|\vt^{(p)}\|_{H^1(\Omega)} \leq C(L, (\Delta t)^{-1})\,.
\label{Gcont7}
\end{eqnarray}
It follows from
(\ref{Gcont7}), (\ref{embed}) and the compactness of the embedding (\ref{wcomp2})
that there exists a subsequence
$\{(\widetilde{\eta}^{(p_k)},\vt^{(p_k)})\}_{p_k \geq 0}$ and
functions $\widetilde{\eta}\in X$ and $\vt \in \Vt$ such that, as $p_k \rightarrow \infty$,
\begin{subequations}
\begin{alignat}{2}
\widetilde{\eta}^{(p_k)}
&\rightarrow
\widetilde{\eta}
\qquad
&&\mbox{weakly in }
L^{s}(\Omega ;L^2_M(D)),
\label{Gcont8a} \\
\bet
M^{\frac{1}{2}}\,\nabx \widetilde{\eta}^{(p_k)}
&\rightarrow M^{\frac{1}{2}}\,\nabx \widetilde{\eta} \qquad
&&\mbox{weakly in }
\Lt^{2}(\Omega \times D),
\label{Gcont8bx} \\
\bet
M^{\frac{1}{2}}\,\nabq \widetilde{\eta}^{(p_k)}
&\rightarrow M^{\frac{1}{2}}\,\nabq \widetilde{\eta}
\qquad
&&\mbox{weakly in }
\Lt^{2}(\Omega \times D),
\label{Gcont8b} \\
\bet
\widetilde{\eta}^{(p_k)}
&\rightarrow
\widetilde{\eta}
\qquad
&&\mbox{strongly in }
L^{2}_M(\Omega\times D),
\label{Gcont8as} \\
\bet
\vt^{(p_k)} &\rightarrow \vt \qquad
&&\mbox{weakly in }
\Ht^{1}(\Omega);
\label{Gcont8c}
\end{alignat}
\end{subequations}
where $s \in [1,\infty)$ if $d=2$ or $s \in [1,6]$ if $d=3$.
We deduce from (\ref{Gcont5a}), (\ref{bgen},b), (\ref{Gcont8c})
and (\ref{Cttcon})
that the functions $\vt \in \Vt$ and $\widetilde{\psi} \in X$ satisfy
\begin{eqnarray}\qquad
b(\vt,\wt) =\ell_b(\widetilde \psi)(\wt)
\qquad \forall \wt \in \Vt.
\label{Gcont9}
\end{eqnarray}
It follows from (\ref{Gcont5}), (\ref{agen},b), (\ref{Gcont8a}--e) and
(\ref{betacon}) that $\widetilde\eta,\,\widetilde \psi \in X$
and $\vt \in \Vt$ satisfy
\begin{align}
a(\widetilde{\eta},\varphi)
&= \lae(\vt,\beta^L_\delta(\widetilde \psi))(\varphi)
\qquad \forall \varphi \in C^\infty(\overline{\Omega \times D}).
\label{Gcont11}
\end{align}
Then, noting that $a(\cdot,\cdot)$
is a continuous bilinear functional on $X \times X$,
that $\lae(\vt,\beta^L_\delta(\widetilde \psi))(\cdot)$ is a continuous linear functional on $X$,
and recalling (\ref{cal K}), we deduce that (\ref{Gcont11}) holds
for all $\varphi \in X$.
Combining this $X$ version of (\ref{Gcont11}) and (\ref{Gcont9}), we have that
$\widetilde\eta = {\mathcal T}(\widetilde \psi)\in X$. Therefore
the whole sequence
\[\widetilde{\eta}^{(p)} \equiv {\mathcal T}(\widetilde{\psi}^{(p)})
\rightarrow {\mathcal T}(\widetilde{\psi})\qquad \mbox{strongly in $L^2_M(\Omega\times D)$},
\]
as $p \rightarrow \infty$, and so (i) holds.

(ii) Since the embedding $X \hookrightarrow L^{2}_M(\Omega \times D)$
is compact, we directly deduce that the mapping ${\mathcal T}:L^2_M(\Omega \times D) \rightarrow
L^2_M(\Omega \times D)$ is compact. It therefore remains to show that (iii) holds.

(iii) Let us suppose that $\widetilde{\psi} =
\kappa \, {\mathcal T}(\widetilde{\psi})$; then, the pair $(\vt,\widetilde{\psi}) \in
\Vt \times X$ satisfies
\begin{subequations}
\begin{alignat}{2}
b(\vt,\wt) &= \ell_b(\widetilde \psi)(\wt)
\quad &&\forall \wt \in \Vt,
\label{fix4sig}
\\
a(\widetilde{\psi},\varphi)&=\kappa\,
\lae(\vt,\beta^L_\delta(\widetilde \psi))(\varphi)\qquad
&&\forall \varphi \in X. \label{fix3sig}
\end{alignat}
\end{subequations}
Choosing $\wt \equiv \vt$ in (\ref{fix4sig})
yields, similarly to (\ref{Gcont4}), that
\begin{align}
&\tfrac{1}{2}\,\displaystyle
\int_{\Omega} \left[ \,\rho^n_{L}\,|\vt|^2 +\rho^{n-1}_L\,
|\vt-\utae^{n-1}|^2 - \rho^{n-1}_L\,|\utae^{n-1}|^2 \,\right]
\dx 
+ \Delta t\,
\int_{\Omega} \mu(\rho^n_{L})\,|\Dtt(\vt)|^2 \dx
\nonumber \\
&\hspace{1in}
= \Delta t \left[
\int_{\Omega}\rho^n_{L}\,\ft^n \cdot \vt \dx
- k\,\sum_{i=1}^K
\int_{\Omega}
 \Ctt_i(M\,\zeta(\rho^n_L)\,\widetilde{\psi}): \nabxtt \vt \dx \right].
\label{Gequnbhat}
\end{align}
Selecting $\varphi = [\mathcal{F}_\delta^L]'(\widetilde{\psi})$ in (\ref{fix3sig}), defining
$\mathcal{G}^L_\delta \in W^{1,1}_{\rm loc}(\mathbb{R})$ by
%
%
\begin{equation}\label{Gdl}
\mathcal{G}^L_\delta(s) := \left\{\begin{array}{ll}
\frac{1}{2\delta} (s^2 + \delta^2)-1         & \qquad\mbox{if $s \leq \delta$}, \\
s - 1                                        & \qquad\mbox{if $s \in [\delta, L]$},\\
\frac{1}{2L} (s^2+L^2)-1                             & \qquad\mbox{if $s \geq L$};
\end{array}
\right.
\end{equation}
using that, thanks to (\ref{betaLd}),  $[\mathcal{G}^L_\delta]'(s) = s/\beta^L_\delta(s) = s [\mathcal{F}^L_\delta]''(s)$;
and that, by virtue of \eqref{Gequn-trans0} with $\varphi = [\mathcal{G}^L_\delta](\widetilde{\psi})$, we have
\begin{align}
&- \Delta t \int_{\Omega \times D} M
\left(\frac{1}{\Delta t}\int_{t_{n-1}}^{t_n}\zeta(\rho^{[\Delta t]}_L)\dd t\right)
\utae^{n-1}\, \widetilde\psi \,\cdot\, \nabx [\mathcal{F}^L_\delta]'(\widetilde{\psi}) \dq \dx
\nonumber\\
&\qquad = - \int_{\Omega \times D} M
\left(\int_{t_{n-1}}^{t_n}\zeta(\rho^{[\Delta t]}_L)\dd t\right)
\utae^{n-1}\,\cdot\, \nabx [\mathcal{G}^L_\delta](\widetilde{\psi}) \dq \dx
\nonumber\\
&\qquad = - \int_{\Omega \times D} M
\left(\zeta(\rho^{n}_L) - \zeta(\rho^{n-1}_L)\right)
\, \mathcal{G}^L_\delta(\widetilde{\psi}) \dq \dx.
\end{align}
The convexity of $\mathcal{F}_\delta^L$ and Lemma \ref{basic}, with $c_0=0$ on noting that
$s [\mathcal{F}^L_\delta]'(s) - \mathcal{F}^L_\delta(s) - \mathcal{G}^L_\delta(s)=0$, then imply that
\begin{align}
&
\int_{\Omega \times D} M \left( \zeta(\rho^n_L)\,\mathcal{F}_\delta^L (\widetilde{\psi})
- \zeta(\rho^{n-1}_L)\,\mathcal{F}_\delta^L (\kappa \,\hpsiaet^{n-1}) \right) \dq \dx
\nonumber \\
& \qquad
+ \frac{\Delta t}{4\,\lambda}\,
\sum_{i=1}^K \sum_{j=1}^K A_{ij}
\int_{\Omega \times D} M\,
\nabqj
\widetilde{\psi} \cdot \nabqi ([\mathcal{F}_\delta^L]'(\widetilde{\psi}))
\dq \dx
\nonumber
\\
&\qquad\qquad
+ \varepsilon \,\Delta t
\int_{\Omega \times D}
 M\,  \nabx \widetilde\psi \cdot \nabx ([\mathcal{ F}_{\delta}^L]'(\widetilde{\psi}))
\dq \dx
\nonumber \\
&
\hspace{1in}
\leq \kappa\,\Delta t \,\sum_{i=1}^K
\int_{\Omega \times D}
M\,\zeta(\rho^n_L)\,\sigtt(\vt) \,\qt_i \cdot
\nabqi \widetilde{\psi}
\dq \dx
\nonumber
\\
&\hspace{1in}
= \kappa\,\Delta t \,\sum_{i=1}^K
\int_{\Omega}
\Ctt_i(M\,\zeta(\rho^n_L)\,\widetilde \psi) : \sigtt(\vt)  \dx,
\label{acorstab1}
\end{align}
where in the transition to the final inequality we applied \eqref{intbyparts} with
$B := \sigtt(\vt)$ (on account of it being independent of the variable $\qt$), together
with the fact that $\mathfrak{tr}(\sigtt(\vt))  = \nabx \cdot ~\vt = 0$, and recalled
\eqref{eqCtt}.

Combining (\ref{Gequnbhat}) and (\ref{acorstab1}),
and noting 
\eqref{Gdlpp}, (\ref{inidata}), (\ref{fvkbd}) and
(\ref{Korn}) 
yields that
\begin{align}
&\frac{\kappa}{2}\,\displaystyle
\int_{\Omega} \left[ \,\rho^n_L\,|\vt|^2 +\rho^{n-1}_{L}\,
|\vt-\utae^{n-1}|^2 \,\right]
\dx
+ \kappa\,\Delta t\,
\int_{\Omega} \mu(\rho^n_L)\,|\Dtt(\vt)|^2 \dx
\nonumber
\\
&\qquad
+k\,
\int_{\Omega \times D} M\,\zeta(\rho^n_L)\, \mathcal{F}_\delta^L (\widetilde{\psi})
\dq \dx
\nonumber
\\
&\qquad\qquad +k\,L^{-1}\,\Delta t\,
\int_{\Omega \times D} M\,
\left[\,\epsilon
\,|\nabx \widetilde{\psi}|^2 + \frac{a_0}{4\,\lambda} \,|\nabq
\widetilde{\psi}|^2 \right] \dq \dx
\nonumber \\
&\qquad\qquad\qquad\leq
\kappa\,\Delta t \,\int_{\Omega} \rho^n_L\,\ft^n \cdot \vt \dx +
\frac{\kappa}{2}\,\int_{\Omega} \rho^{n-1}_L\,|\utae^{n-1}|^2 \dx
\nonumber\\
&\qquad \qquad \qquad \qquad + k\,
\int_{\Omega \times D} M\,\zeta(\rho^{n-1}_L)\,\mathcal{F}_\delta^L (\kappa \,\hpsiaet^{n-1})
\dq \dx
\nonumber
\\
&\qquad\qquad \qquad \leq
\frac{\kappa \,\Delta t\, \mu_{\rm min}}{2} \,
\int_{\Omega} |\Dtt(\vt)|^2 \dx
+ \frac{\kappa \,\Delta t\,\rho_{\rm max}^2 C_{\varkappa}^2}{2\mu_{\rm min}\,c_0}\,
\|\ft^n\|_{L^{\varkappa}(\Omega)}^2 \nonumber\\
&\qquad\qquad\qquad\qquad
+ \frac{\kappa}{2}\,\int_{\Omega} \rho^{n-1}_{L}\,|\utae^{n-1}|^2 \dx
+ k\,
\int_{\Omega \times D} M\,\zeta(\rho^{n-1}_L)\,\mathcal{F}_\delta^L (\kappa \,\hpsiaet^{n-1})
\dq \dx.
\label{Ek}
\end{align}

It is easy to show that $\mathcal{F}^L_\delta(s)$ is nonnegative for all
$s \in \mathbb{R}$, with  $\mathcal{F}^L_\delta(1)=0$.
Furthermore, for any $\kappa \in (0,1]$,
$\mathcal{F}^L_\delta(\kappa\, s) \leq \mathcal{F}^L_\delta(s)$
if $s<0$ or $1 \leq \kappa\, s$, and also
$\mathcal{F}^L_\delta(\kappa\, s) \leq \mathcal{F}^L_\delta(0) \leq 1$
if $0 \leq \kappa\, s \leq 1$.
Thus we deduce that
\begin{equation}\label{deltaL}
\mathcal{F}_\delta^L(\kappa\, s)
\leq \mathcal{F}_\delta^L(s)+ 1\qquad \forall s \in {\mathbb R},\quad
\forall \kappa \in (0,1].
\end{equation}
Hence, the bounds (\ref{Ek}) and (\ref{deltaL}), on noting (\ref{cFbelow}),
give rise to the desired bound (\ref{fixbound}) with $C_*$ dependent only on
$\delta$, $L$, $k$, $\mu_{\rm min}$, $\rho_{\rm max}$, $\zeta_{\rm min}$, $\ft$, $\ut^{n-1}_L$ and $\hpsiaet^{n-1}$.
Therefore (iii) holds, and so ${\mathcal T}$ has a fixed point, proving
existence of a solution
to (\ref{bLMd},c).
\qquad\end{proof}

Choosing $\wt \equiv \utaed^n$ in (\ref{bLMd})
and $\varphi \equiv [\mathcal{F}_\delta^L]'(\hpsiaedt^n)$ in (\ref{genLMd}),
and combining and noting (\ref{inidata}), then yields, as in
(\ref{Ek}), with $C(L)$ a positive constant, independent of $\delta$ and $\Delta t$,
\begin{align}\label{E1}
&
\tfrac{1}{2}\,\displaystyle
\int_{\Omega} \left[ \,\rho^n_L\,|\utaed^n|^2 +\rho^{n-1}_{L}\,
|\utaed^n-\utae^{n-1}|^2 \,\right] \dx
+k\, \int_{\Omega \times D} M\,\zeta(\rho^n_L)\,\mathcal{F}_\delta^L (\hpsiaedt^n)
\dq \dx
\nonumber
\\
&\hspace{0.5in}
+ \Delta t\, \biggl[ \tfrac{1}{2} \,
\int_{\Omega} \mu(\rho^n_L)\,|\Dtt(\utaed^n)|^2 \dx
+k\,L^{-1}\,\epsilon\,
\int_{\Omega \times D} M\,
\,|\nabx \hpsiaedt^n|^2
\dq \dx
\nonumber \\
& \hspace{1in}+ \frac{k\,L^{-1}\,a_0}{4\,\lambda} \,
\int_{\Omega \times D}
M\, |\nabq
\hpsiaedt^n|^2
 \dq \dx \biggr] \nonumber
\\
&\qquad \leq
\tfrac{1}{2}\,\int_{\Omega} \rho^{n-1}_L\,|\utae^{n-1}|^2 \dx
+
\frac{\Delta t\,\rho_{\rm max}^2 C_{\varkappa}^2}{2\mu_{\rm min}\,c_0}\,\|\ft^n \|_{L^\varkappa(\Omega)}^2
\nonumber\\
& \hspace{1.0in}
+ k\,
\int_{\Omega \times D} M\,\zeta(\rho^{n-1}_L)\,\mathcal{F}_\delta^L (\hpsiaet^{n-1})
\dq \dx
\nonumber \\
& \qquad \leq C(L).
\end{align}

Equation \eqref{rhonL-2} being independent of $\delta$, we
are now ready to pass to the limit $\delta \rightarrow 0_+$ in (\ref{bLMd},c),
to deduce the existence of
a solution $\{(\rho^{[\Delta t]}_L|_{[t_{n-1},t_n]},\utae^n,\hpsiaet^n)\}_{n=1}^N$ to
({\rm P}$^{\Delta t}_{L}$), with $\rho^n_L =\rho^{[\Delta t]}_L(\cdot,t_n) \in \Upsilon$, $\utae^n \in \Vt$
and $\hpsiaet^n \in X \cap Z_2$, $n=1,\dots, N$.

\begin{lemma}
\label{conv}
There exists a subsequence (not indicated) of $\{(\utaed^{n},
\hpsiaedt^{n})\}_{\delta >0}$, and functions $\utae^{n} \in \Vt$
and $\hpsiaet^{n} \in X \cap Z_2$, $n \in \{1,\dots, N\}$,
such that, as $\delta \rightarrow 0_+$,
%
\begin{subequations}
\begin{alignat}{2}
&\utaed^{n} \rightarrow \utae^{n} \qquad &&\mbox{weakly in } \Vt, \label{uwconH1}\\
\bet
&\utaed^{n} \rightarrow \utae^{n}
\qquad &&\mbox{strongly in }
\Lt^{r}(\Omega), \label{usconL2}
\end{alignat}
\end{subequations}
where $r \in [1,\infty)$ if $d=2$ and $r \in [1,6)$ if $d=3$;
and
\begin{subequations}
\begin{alignat}{2}
\!\!\!\!M^{\frac{1}{2}}\,\hpsiaedt^{n} &\rightarrow
M^{\frac{1}{2}}\,
\hpsiaet^{n} &&\quad \mbox{weakly in }
L^2(\Omega\times D),~~~ \label{psiwconL2}\\
\bet
M^{\frac{1}{2}}\,\nabq \hpsiaedt^{n}
&\rightarrow M^{\frac{1}{2}}\,\nabq \hpsiaet^{n}
&&\quad \mbox{weakly in }
\Lt^2(\Omega\times D), \label{psiwconH1}\\
\bet
M^{\frac{1}{2}}\,\nabx \hpsiaedt^{n}
&\rightarrow M^{\frac{1}{2}}\,\nabx \hpsiaet^{n}
&&\quad \mbox{weakly in }
\Lt^2(\Omega\times D), \label{psiwconH1x}\\
\bet
M^{\frac{1}{2}}\,\hpsiaedt^{n} &\rightarrow
M^{\frac{1}{2}}\,\hpsiaet^{n}
&&\quad \mbox{strongly in }
L^{2}(\Omega\times D),\label{psisconL2}
\\
\bet
M^{\frac{1}{2}}\,\beta_\delta^L(\hpsiaedt^{n}) &\rightarrow
M^{\frac{1}{2}}\,\beta^L(\hpsiaet^{n})
&&\quad \mbox{strongly in }
L^s(\Omega\times D),\label{betaLdsconL2}
\end{alignat}
for all $s \in [1,\infty)$; and, for $i=1, \dots,  K$,
\begin{alignat}{1}
&\hspace{-4mm}\Ctt_i(M\,\zeta({\rho^n_L})\,\hpsiaedt^{n}) \rightarrow \Ctt_i(M\,\zeta({\rho^n_L})\,\hpsiaet^{n})
\qquad \!\!\!\!\!\mbox{ strongly in }
\Ltt^{2}(\Omega).\label{CwconL2}
\end{alignat}
\end{subequations}
Furthermore, $(\rho^{[\Delta t]}_L|_{[t_{n-1},t_n]},\utae^n, \hpsiaet^n)$ solves (\ref{rhonL}--c) for $n=1,\dots, N$. Hence, there exists a solution $\{(\rho^{[\Delta t]}_L|_{[t_{n-1},t_n]},\utae^n, \hpsiaet^n)\}_{n=1}^N$ to
{\em ({\rm P}$^{\Delta t}_{L}$)}, with $\rho^n_L=\rho^{[\Delta t]}_L(\cdot,t_n) \in \Upsilon$, $\utae^n \in \Vt$ and $\hpsiaet^n \in X \cap Z_2$ for all $n=1,\dots, N$.
\end{lemma}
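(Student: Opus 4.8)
The plan is to pass to the limit $\delta \rightarrow 0_+$ in the regularized system (\ref{bLMd}), (\ref{genLMd}), using the $\delta$-independent energy bound (\ref{E1}) to extract convergent subsequences and the weak--strong structure of the nonlinear terms to identify the limits with a solution of (\ref{Gequn}), (\ref{psiG}). Throughout, $n \in \{1,\dots,N\}$ is fixed, as is $\rho^{[\Delta t]}_L|_{[t_{n-1},t_n]}$ (hence $\rho^{n-1}_L, \rho^n_L \in \Upsilon$ and $\zeta(\rho^n_L) \in [\zeta_{\rm min},\zeta_{\rm max}]$); equation (\ref{rhonL}) does not involve $\delta$ and is carried along unchanged. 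From (\ref{E1}), Korn's inequality (\ref{Korn}) and the uniform positivity of $\mu(\rho^n_L)$ and $\rho^n_L$, the family $\{\utaed^n\}_{\delta \in (0,1)}$ is bounded in $\Vt$. From (\ref{E1}) and the lower bound (\ref{cFbelow}) on $\mathcal{F}^L_\delta$ --- splitting $\int_{\Omega \times D} M\,\zeta(\rho^n_L)\,\mathcal{F}^L_\delta(\hpsiaedt^n) \dq \dx$ over $\{\hpsiaedt^n \geq 0\}$ and $\{\hpsiaedt^n < 0\}$ and using $\delta < 1$ on the latter set --- together with the gradient terms in (\ref{E1}), the family $\{\hpsiaedt^n\}_{\delta \in (0,1)}$ is bounded in $X = H^1_M(\Omega \times D)$; all these bounds depend on $L$ and $(\Delta t)^{-1}$ but not on $\delta$. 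In addition, the maximum-principle-type arguments already used for $\widetilde\psi^0$ (testing (\ref{genLMd}) with the negative part $\min(\hpsiaedt^n,0)$, exploiting the skew-symmetric form of the convective term in (\ref{agen}), and testing with $\xt$-only functions combined with a truncation, as in the derivation of (\ref{inidata-2})) show, with $\delta$-uniform constants, that $\hpsiaedt^n \geq 0$ a.e.\ on $\Omega \times D$ and $\int_D M\,\hpsiaedt^n \dq \in L^\infty(\Omega)$.

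Since $\Vt$ and $X$ are reflexive, we may extract a subsequence (not relabelled) along which $\utaed^n \rightharpoonup \utae^n$ weakly in $\Vt$ and $\hpsiaedt^n \rightharpoonup \hpsiaet^n$ weakly in $X$, giving (\ref{uwconH1}) and (\ref{psiwconL2})--(\ref{psiwconH1x}). The Rellich--Kondrachov theorem applied to $\Vt \hookrightarrow \Lt^r(\Omega)$ yields (\ref{usconL2}) for the stated range of $r$, and the compactness of (\ref{wcomp2}) (valid since $\gamma_i \geq 1$) yields (\ref{psisconL2}). Passing to a further subsequence, $\hpsiaedt^n \rightarrow \hpsiaet^n$ a.e.\ on $\Omega \times D$, so $\hpsiaet^n \geq 0$ a.e.; then, by (\ref{betaLd}), $\beta^L_\delta$ is $1$-Lipschitz, $0 \leq \beta^L_\delta(\hpsiaedt^n) - \beta^L_\delta(\hpsiaet^n) \leq \delta$ wherever $\hpsiaet^n \geq 0$, and $\delta \leq \beta^L_\delta(\hpsiaedt^n) \leq L$ on $\{\hpsiaedt^n \geq 0\}$; combining this with (\ref{psisconL2}) and dominated convergence on the finite measure space $(\Omega \times D, M\dq\dx)$ gives (\ref{betaLdsconL2}) for every $s \in [1,\infty)$. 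Finally, the continuity estimate (\ref{eqCttbd}) together with boundedness of the fixed function $\zeta(\rho^n_L)$ turns (\ref{psisconL2}) into (\ref{CwconL2}).

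It remains to pass to the limit in (\ref{bLMd}) and (\ref{genLMd}). In (\ref{bLMd}) every term is linear in $(\utaed^n, \hpsiaedt^n)$ with fixed continuous coefficients except the Kramers term $\sum_i \Ctt_i(M\,\zeta(\rho^n_L)\,\hpsiaedt^n) : \nabxtt \wt$, for which (\ref{CwconL2}) suffices; hence $\utae^n \in \Vt$ satisfies $b(\utae^n,\wt) = \ell_b(\hpsiaet^n)(\wt)$ for all $\wt \in \Vt$. In (\ref{genLMd}) the delicate terms are the convective term, involving the \emph{fixed} field $\utae^{n-1}$ and $\hpsiaedt^n \rightharpoonup \hpsiaet^n$ (weakly in $X$, strongly in $L^2_M$), and the drag term: for $\varphi \in C^\infty(\overline{\Omega \times D})$ we rewrite the latter as $\Delta t \int_\Omega \nabxtt \utaed^n : \Bigl(\int_D M\,\zeta(\rho^n_L)\,\beta^L_\delta(\hpsiaedt^n)\, \qt_i \otimes \nabqi \varphi \dq\Bigr) \dx$, whose inner $\qt$-integral converges strongly in $\Ltt^2(\Omega)$ by (\ref{betaLdsconL2}) and boundedness of $\beta^L_\delta$, while $\nabxtt \utaed^n \rightharpoonup \nabxtt \utae^n$ weakly in $\Ltt^2(\Omega)$, so the weak--strong pairing passes to the limit. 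This yields $a(\hpsiaet^n,\varphi) = \ell_a(\utae^n,\beta^L(\hpsiaet^n))(\varphi)$ first for $\varphi \in C^\infty(\overline{\Omega \times D})$ and then, by continuity of $a(\cdot,\cdot)$ and of $\ell_a(\utae^n,\beta^L(\hpsiaet^n))(\cdot)$ on $X$ together with the density (\ref{cal K}), for all $\varphi \in X$; that is, (\ref{Gequn}) and (\ref{psiG}) hold. The properties $\hpsiaet^n \geq 0$ and $\int_D M\,\hpsiaet^n \dq \in L^\infty(\Omega)$ are inherited from the $\delta$-uniform bounds established above (the first by a.e.\ convergence, the second by Fatou or directly from the $\xt$-truncation estimate in the limit), so $\hpsiaet^n \in X \cap Z_2$. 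Collecting this with (\ref{rhonL}) and $\rho^n_L = \rho^{[\Delta t]}_L(\cdot,t_n) \in \Upsilon$ gives a solution $\{(\rho^{[\Delta t]}_L|_{[t_{n-1},t_n]}, \utae^n, \hpsiaet^n)\}_{n=1}^N$ of $({\rm P}^{\Delta t}_L)$.

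The main obstacle is the passage to the limit in the drag term of (\ref{genLMd}): the factors $\nabxtt \utaed^n$ and $\beta^L_\delta(\hpsiaedt^n)$ converge only weakly, respectively strongly, so the term must be organized so that exactly one factor carries the weak-$L^2$ convergence and the other --- together with the fixed test function and the fixed coefficient $\zeta(\rho^n_L)$ --- carries strong $L^2$ convergence. The cut-off $\beta^L_\delta$ is indispensable here both for the $\delta$-uniform bound via (\ref{cFbelow}) and for the strong convergence (\ref{betaLdsconL2}), which is also why the nonnegativity $\hpsiaedt^n \geq 0$ must be secured before (\ref{betaLdsconL2}) can be concluded.
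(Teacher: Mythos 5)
Your proposal follows the paper's argument in all of its essentials: the $\delta$-uniform bounds extracted from \eqref{E1} together with \eqref{Korn} and \eqref{cFbelow}, weak compactness in $\Vt\times X$, strong convergence via the compact embeddings of $\Vt$ into $\Lt^r(\Omega)$ and \eqref{wcomp2}, the deduction of \eqref{betaLdsconL2} and \eqref{CwconL2} from \eqref{psisconL2}, \eqref{betaLd} and \eqref{eqCttbd}, and the weak--strong pairing in the drag term followed by the density argument \eqref{cal K}. The identification of the limit equations is organized exactly as in the paper.

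There is, however, one genuine flaw: your claim that a maximum-principle argument (testing \eqref{genLMd} with $[\hpsiaedt^n]_-:=\min(\hpsiaedt^n,0)$) yields $\hpsiaedt^n \geq 0$ a.e.\ with $\delta$-uniform constants. This does not close. On the set $\{\hpsiaedt^n<0\}$ one has $\beta^L_\delta(\hpsiaedt^n)=\delta>0$ by \eqref{betaLd}, so the drag term in \eqref{genLMd} contributes a nonvanishing $O\bigl(\delta\,\|\nabxtt\utaed^n\|\,\|[\hpsiaedt^n]_-\|_{L^2_M}\bigr)$ source on the negative set, which Young's inequality only absorbs up to a remainder of order $\delta^2$; moreover, with $\varphi_1=\hpsiaedt^n\neq\varphi_2=[\hpsiaedt^n]_-$ the antisymmetrized convective term in \eqref{agen} does not vanish but produces a $\zeta(\rho^n_L)-\zeta(\rho^{n-1}_L)$ commutator via \eqref{Gequn-trans0}. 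This is precisely why the lower cut-off $\delta$ was introduced: the regularized solution need not be nonnegative. The repair is the one the paper uses, and it is already implicit in your own splitting: since $\mathcal{F}^L_\delta(s)\geq s^2/(2\delta)$ for $s\leq 0$ by \eqref{cFbelow}, the bound \eqref{E1} gives $\|[\hpsiaedt^n]_-\|^2_{L^2_M(\Omega\times D)}\leq C(L)\,\delta\to 0$, so nonnegativity holds only for the \emph{limit} $\hpsiaet^n$ (via \eqref{psisconL2}); do not discard the factor $1/\delta$ by "using $\delta<1$" when you need this conclusion. Your subsequent steps ($\hpsiaet^n\geq 0$ from a.e.\ convergence, the estimate for $\beta^L_\delta-\beta^L$, and the lower bound in the $Z_2$ membership) all lean on the unproved pointwise nonnegativity and should be rerouted through this observation. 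Relatedly, the paper establishes $\int_D M\,\hpsiaet^n\dq\in L^\infty(\Omega)$ from the \emph{limit} equation \eqref{psiG} by the truncation argument leading to \eqref{lambda-bound}, rather than at the $\delta$-level; your $\delta$-level version of the upper bound could be made to work (the drag term drops out for $\qt$-independent test functions), but the lower bound $\int_D M\,\hpsiaedt^n\dq\geq 0$ again presupposes the nonnegativity you have not established, so the paper's ordering of the steps is the safer one.
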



\begin{proof}

The weak convergence results (\ref{uwconH1}), (\ref{psiwconL2}) and that
$\hpsiaet^n \ge 0$ a.e.\ on $\Omega \times D$ follow immediately from 
(\ref{E1}), on noting (\ref{Korn}) and (\ref{cFbelow}). The strong convergence
(\ref{usconL2}) for $\utaed^{n}$ follows from (\ref{uwconH1}),
on noting that $\Vt \subset \Ht^{1}_0(\Omega)$ is compactly embedded in $\Lt^r(\Omega)$ for
the stated values of $r$.

The results (\ref{psiwconH1},c)
follow from 
(\ref{E1});
see the proof of Lemma 3.3 in \cite{BS2011-fene} for details.
The strong convergence result (\ref{psisconL2}) for $\hpsiaedt^{n}$
follows directly from (\ref{psiwconL2}--c) and (\ref{wcomp2}).
In addition, 
(\ref{betaLdsconL2},f) follow from (\ref{psisconL2}), (\ref{betaLd}), (\ref{eqCtt}) and (\ref{eqCttbd}).

It follows from 
(\ref{uwconH1},b), (\ref{psiwconH1}--f), (\ref{bgen},b), (\ref{agen},b),
(\ref{lgenbd}) and
(\ref{cal K}) that we may pass to the limit $\delta \rightarrow 0_+$ in
(\ref{bLMd},c) to obtain that $(\utae^n,\hpsiaet^n) \in \Vt \times X$
with $\hpsiaet^n \geq 0$ a.e.\ on $\Omega \times D$ solves (\ref{bLM}),
(\ref{genLM}); that is, (\ref{Gequn},c).

Next we shall show that
\begin{equation}\label{lambda-Linfty}
\int_D M(\qt)\, \widetilde\psi^n_L(\xt, \qt) \dq \dx \in L^\infty(\Omega),
\end{equation}
uniformly with respect to $L>1$ for all $n \in \{1,\dots,N\}$. Hence we shall deduce in particular that
$\widetilde\psi^n_L\in Z_2$. We begin by selecting $\varphi(\xt,\qt) = \widetilde\varphi(\xt) \otimes 1(\qt)$
in \eqref{psiG} with $\widetilde\varphi \in H^1(\Omega)$, which then yields that
\begin{eqnarray*}
&&\int_{\Omega \times D} M \frac{\zeta(\rho^n_L)\, \widetilde\psi^n_{L} - \zeta(\rho^{n-1}_L)\, \widetilde\psi^{n-1}_{L}}
{\Delta t} \,\widetilde\varphi \dq \dx\\
&&\qquad \qquad - \int_{\Omega \times D} M \left( \frac{1}{\Delta t} \int_{t_{n-1}}^{t_n} \zeta(\rho^{[\Delta t]}) \dd t\right)\, \ut^{n-1}_{L}(\xt) \cdot (\nabx \widetilde\varphi)\, \widetilde \psi^n_{L} \dq \dx\\
&&\qquad \qquad \qquad \qquad +\, \varepsilon \int_{\Omega \times D} M\, \nabx \widetilde \psi^n_{L} \cdot \nabx \widetilde \varphi \dq \dx = 0
\qquad \forall \widetilde\varphi \in H^1(\Omega).
\end{eqnarray*}
We define
\[ \lambda^n_{L}(\xt) := \int_D M(\qt)\, \widetilde\psi^n_{L}(\xt, \qt) \dq, \qquad n=0,\dots,N,\]
with $\widetilde\psi^0_{L} := \widetilde\psi^0 = \beta^L(\widetilde\psi^0)$, and note that $\lambda^n_L \in H^1(\Omega)$.
By Fubini's theorem we then have that
\begin{eqnarray*}
&&\int_\Omega \frac{\zeta(\rho^n_L)\, \lambda^n_{L} - \zeta(\rho^{n-1}_L)\,\lambda^{n-1}_{L}}{\Delta t}\, \widetilde\varphi(\xt) \dx
- \int_\Omega \left( \frac{1}{\Delta t} \int_{t_{n-1}}^{t_n} \zeta(\rho^{[\Delta t]}) \dd t \right)
\ut^{n-1}_{L} \cdot (\nabx \widetilde \varphi) \, \lambda^n_{L} \dx\\
&& \qquad \qquad \qquad +\, \varepsilon \int_\Omega \nabx \lambda^n_{L} \cdot \nabx \widetilde \varphi \dx = 0
\qquad \forall \widetilde \varphi \in H^1(\Omega).
\end{eqnarray*}
This in particular implies, using the identity  \eqref{Gequn-trans0}
with $\varphi$ replaced by $\omega\,\varphi$, $\omega \in \mathbb{R}$, that, for each $\omega \in \mathbb{R}$,
\begin{eqnarray*}
&&\int_\Omega \frac{\zeta(\rho^n_L)\, (\lambda^n_{L}-\omega) - \zeta(\rho^{n-1}_L)\,(\lambda^{n-1}_{L}-\omega)}{\Delta t}\, \widetilde\varphi(\xt) \dx\\
&& \qquad\qquad - \int_\Omega \left( \frac{1}{\Delta t} \int_{t_{n-1}}^{t_n} \zeta(\rho^{[\Delta t]}) \dd t \right)
\ut^{n-1}_{L} \cdot (\nabx \widetilde \varphi) \, (\lambda^n_{L}-\omega) \dx\\
&& \qquad \qquad \qquad \qquad +\, \varepsilon \int_\Omega \nabx (\lambda^n_{L} - \omega) \cdot \nabx \widetilde \varphi \dx = 0
\qquad \forall \widetilde \varphi \in H^1(\Omega).
\end{eqnarray*}
On selecting $\widetilde\varphi = [\lambda^n_{L} - \omega]_{+}$ in this identity we then deduce that
%
%
\begin{eqnarray*}
&&\int_\Omega \frac{\zeta(\rho^n_L)\, (\lambda^n_{L}-\omega) - \zeta(\rho^{n-1}_L)\,(\lambda^{n-1}_{L}-\omega)}{\Delta t}\, [\lambda^n_{L} - \omega]_{+} \dx\\
&& \qquad \qquad- \frac{1}{2} \int_\Omega \left( \frac{1}{\Delta t} \int_{t_{n-1}}^{t_n} \zeta(\rho^{[\Delta t]}) \dd t \right)
\ut^{n-1}_{L} \cdot\nabx  \left([\lambda^n_{L} - \omega]_{+}\right)^2 \dx\\
&& \qquad \qquad \qquad \qquad  +\, \varepsilon \int_\Omega |\nabx \left([\lambda^n_{L} - \alpha
]_{+}\right)|^2 \dx = 0,
\end{eqnarray*}
and hence, by omitting the (nonnegative) last term from the left-hand side, we have that
\begin{eqnarray}\label{inter1a}
&&\int_\Omega \frac{\zeta(\rho^n_L)\, (\lambda^n_{L}-\omega) - \zeta(\rho^{n-1}_L)\,(\lambda^{n-1}_{L}-\omega)}{\Delta t}\, [\lambda^n_{L} - \omega]_{+} \dx\nonumber\\
&& \qquad \qquad- \frac{1}{2} \int_\Omega \left( \frac{1}{\Delta t} \int_{t_{n-1}}^{t_n} \zeta(\rho^{[\Delta t]}) \dd t \right)
\ut^{n-1}_{L} \cdot\nabx  \left([\lambda^n_{L} - \omega]_{+}\right)^2 \dx \leq  0.
\end{eqnarray}
As, once again using the identity \eqref{Gequn-trans0},
with $\varphi = \left( [\lambda^n_{L} - \omega]_+\right)^2$ this time, we have for each $\omega \in \mathbb{R}$ that
\begin{eqnarray*}
&&\int_\Omega \frac{\zeta(\rho^n_L) - \zeta(\rho^{n-1}_L)}{\Delta t}\, ([\lambda^n_{L} - \omega ]_{+})^2 \dx\\
&&\qquad\qquad - \int_\Omega \left(\frac{1}{\Delta t}\int_{t_{n-1}}^{t_n} \zeta(\rho^{[\Delta t]}) \dd t \right) \ut^{n-1}_L \cdot
\nabx \left( [\lambda^n_{L} - \omega]_+\right)^2 \dx = 0,
\end{eqnarray*}
we can rewrite the second term in \eqref{inter1a} to deduce that
\begin{eqnarray}\label{inter2a}
&&\int_\Omega \frac{\zeta(\rho^n_L)\, (\lambda^n_{L}-\omega) - \zeta(\rho^{n-1}_L)\,(\lambda^{n-1}_{L}-\omega)}{\Delta t}\, [\lambda^n_{L} - \omega]_{+} \dx\nonumber\\
&& \qquad \qquad- \frac{1}{2}
\int_\Omega \frac{\zeta(\rho^n_L) - \zeta(\rho^{n-1}_L)}{\Delta t}\, ([\lambda^n_{L} - \omega ]_{+})^2 \dx \leq 0.
\end{eqnarray}
The inequality \eqref{inter2a} can be restated in the following equivalent form:
\begin{eqnarray*}
&&\int_\Omega \zeta(\rho^{n-1}_L)\frac{(\lambda^n_{L}-\omega) - (\lambda^{n-1}_{L}-\omega)}{\Delta t}\, [\lambda^n_{L} - \omega]_{+} \dx \nonumber \\
&&\qquad\qquad+ \int_\Omega \frac{\zeta(\rho^n_L) - \zeta(\rho^{n-1}_L)}{\Delta t}\, (\lambda^n_{L} - \omega)\,[\lambda^n_{L} - \omega ]_{+} \dx \nonumber\\
&& \qquad \qquad\qquad \qquad - \frac{1}{2}
\int_\Omega \frac{\zeta(\rho^n_L) - \zeta(\rho^{n-1}_L)}{\Delta t}\, ([\lambda^n_{L} - \omega ]_{+})^2 \dx \leq 0,
\end{eqnarray*}
and hence, after simplifying the sum of the second and the third term on the left-hand side,
\begin{eqnarray}\label{inter3a}
&&\int_\Omega \zeta(\rho^{n-1}_L)\frac{(\lambda^n_{L}-\omega) - (\lambda^{n-1}_{L}-\omega)}{\Delta t}\, [\lambda^n_{L} - \omega]_{+} \dx \nonumber \\
&&\qquad\qquad + \frac{1}{2}
\int_\Omega \frac{\zeta(\rho^n_L) - \zeta(\rho^{n-1}_L)}{\Delta t}\, ([\lambda^n_{L} - \omega ]_{+})^2 \dx \leq 0.
\end{eqnarray}
Since $s \in \mathbb{R} \mapsto \frac{1}{2}\left([s-\omega]_+\right)^2 \in \mathbb{R}_{\geq 0}$ is a convex function,
we have that
\[ \frac{(\lambda^n_{L}-\omega) - (\lambda^{n-1}_{L}-\omega)}{\Delta t}\,[\lambda^n_{L} - \omega ]_{+}  \geq \frac{1}{2\,\Delta t}\left(([\lambda^n_{L}-\omega]_+)^2
-([\lambda^{n-1}_{L}-\omega]_+)^2\right).\]
Using this inequality in the first term of \eqref{inter3a},
on noting that $\zeta(\rho^{n-1}_L) \geq \zeta_{\rm min} >0$
and multiplying the resulting inequality by $2\Delta t$, we get that
\begin{eqnarray*}
\int_\Omega \zeta(\rho^{n-1}_L)\left\{([\lambda^n_{L}-\omega]_+)^2 - ([\lambda^{n-1}_{L}-\omega]_+)^2 \right\}\dx
+
\int_\Omega \left(\zeta(\rho^n_L) - \zeta(\rho^{n-1}_L)\right)\, ([\lambda^n_{L} - \omega ]_{+})^2 \dx \leq 0,
\end{eqnarray*}
and therefore, upon simplification,
\begin{eqnarray*}
\int_\Omega \zeta(\rho^n_L) \, ([\lambda^n_{L} - \omega ]_{+})^2 \dx \leq
\int_\Omega \zeta(\rho^{n-1}_L)\, ([\lambda^{n-1}_{L}-\omega]_+)^2 \dx,\qquad n=1,\dots,N.
\end{eqnarray*}
Thus, on recalling that by hypothesis $\rho^0_L = \rho_0$, we have that
\begin{eqnarray}\label{inter4a}
0 \leq \int_\Omega \zeta(\rho^n_L) \, ([\lambda^n_{L} - \omega ]_{+})^2 \dx \leq
\int_\Omega \zeta(\rho_0)\, ([\lambda^{0}_{L}-\omega]_+)^2 \dx,\qquad n=1,\dots,N.
\end{eqnarray}
Now we choose $\omega$ as in \eqref{alpha}, which yields $[\lambda^0_{L} - \omega ]_{+}=0$ a.e. on $\Omega$,
and therefore, by \eqref{inter4a}, also $[\lambda^n_L - \omega]_+ = 0~$
a.e. on $\Omega$ for all $n \in \{1,\dots,N\}$; in other words,
\begin{eqnarray}\label{lambda-bound} 0 \leq \lambda^n_L(\xt) \leq \omega :=
\mbox{ess.sup}_{x \in \Omega}\left(\frac{1}{\zeta(\rho_0(\xt))}
\int_D \psi_0(\xt,\qt) \dq \right)
\end{eqnarray}
for a.e. $x \in \Omega$ and all $n=0,\dots, N$.
Since $\omega$ here is independent of $L$, by recalling the definition of $\lambda^n_L$ we thus deduce that
$\widetilde\psi^n_L \in Z_2$, uniformly with respect to $L$, as was claimed in the line below \eqref{lambda-Linfty}.

Finally, as $(\rho^0_L,\utae^0,\hpsiaet^0) \in \Upsilon \times \Vt \times Z_2$, performing
the above existence proof at each time level $t_n$, $n=1,\ldots,N$,
yields a solution  $\{(\rho^{[\Delta t]}_L|_{[t_{n-1},t_n]},\utae^n,\hpsiaet^n)\}_{n=1}^N$
to (P$^{\Delta t}_{L}$) with $\rho^n_L = \rho^{[\Delta t]}_L(\cdot,t_n)$, $n=1,\dots,N$,
by noting that $\rho^{[\Delta t]}_L$ thus constructed is an element of $C([0,T];L^2(\Omega))$.
\end{proof}

\section{Entropy estimates}
\label{sec:entropy}
\setcounter{equation}{0}

Next, we derive bounds on the solution of $({\rm P}^{\Delta t}_{L})$, independent of $L$.
Our starting point is Lemma \ref{conv}, concerning the existence of a solution to the problem
$({\rm P}^{\Delta t}_{L})$. The model $({\rm P}^{\Delta t}_{L})$
includes `microscopic cut-off' in the drag term of the Fokker--Planck equation,
where $L>1$ is a
(fixed, but otherwise arbitrary,) cut-off parameter.
Our ultimate objective is to pass to the limits $L \rightarrow \infty$
and $\Delta t \rightarrow 0_+$ in the model $({\rm P}^{\Delta t}_{L})$,
with $L$ and $\Delta t$ linked by the condition
$\Delta t = o(L^{-1})$,
as $L \rightarrow \infty$.
To that end, we need to develop various bounds on sequences of weak solutions
of $({\rm P}^{\Delta t}_{L})$ that are uniform in the cut-off parameter
$L$ and thus permit the extraction of weakly convergent subsequences,
as $L \rightarrow \infty$, through the use of a weak compactness argument.
The derivation of such bounds, based on the use of the relative entropy associated
with the Maxwellian $M$, is our
main task in this section.

Let us introduce the following definitions, in line with (\ref{fn}):
\begin{subequations}
\begin{alignat}{1}
\hspace{-4mm}\utaeD(\cdot,t)&:=\,\frac{t-t_{n-1}}{\Delta t}\,
\utae^n(\cdot)+
\frac{t_n-t}{\Delta t}\,\utae^{n-1}(\cdot),
\;\, t\in [t_{n-1},t_n], \;\, n=1,\dots,N, \label{ulin}\\
\hspace{-2.5mm}\utaeDp(\cdot,t)&:=\ut^n(\cdot),\;\;
\utaeDm(\cdot,t):=\ut^{n-1}(\cdot),
\;\; t\in(t_{n-1},t_n], \;\; n=1,\dots,N. \label{upm}
\end{alignat}
\end{subequations}
We shall adopt $\ut_L^{\Delta t (,\pm)}$ as a collective symbol for $\ut_L^{\Delta t}$,
$\ut_L^{\Delta t,\pm}$. The corresponding notations
$\rho_L^{\Delta t}$, $\rho_L^{\Delta t,\pm}$ and $\rho_L^{\Delta t (,\pm)}$, and
$\psi_L^{\Delta t}$, $\psi_L^{\Delta t,\pm}$ and $\psi_L^{\Delta t (,\pm)}$ are defined analogously.
In addition, we define the products $(\rho_L\,\ut_L)^{\Delta t}$, $(\rho_L\,\ut_L)^{\Delta t,\pm}$
and $(\rho_L \, \ut_L)^{\Delta t (,\pm)}$; and  $(\zeta(\rho_L)\,\widetilde\psi_L)^{\Delta t}$, $(\zeta(\rho_L)\,\widetilde\psi_L)^{\Delta t,\pm}$
and $(\zeta(\rho_L)\,\widetilde\psi_L)^{\Delta t (,\pm)}$ similarly. The notation $\rho^{\Delta t}_L$ signifying the
piecewise linear interpolant of $\rho_L$ with respect to the variable $t$ is not to be confused with
the notation $\rho^{[\Delta t]}_L$, which denotes the function defined piecewise, over the union of
time slabs $\Omega \times [t_{n-1},t_n]$, $n=1,\dots,N$, as the unique solution of the equation
\eqref{rhonL} subject to the initial condition $\rho^{[\Delta t]}_L(\cdot,t_{n-1}) = \rho^{n-1}_L$,
$n=1,\dots, N$, with $\rho^0_L:= \rho_0$. Finally, we define the functions
$\rho_L^{\{\Delta t\}}$ and $\zeta^{\{\Delta t\}}_L$ by
\begin{equation}\label{zeta-time-average}
\rho^{\{\Delta t\}}_L|_{(t_{n-1},t_n)} := \frac{1}{\Delta t}
\int_{t_{n-1}}^{t_n} \rho_L^{[\Delta t]} \dd t,\;\;\; \zeta^{\{\Delta t\}}_L|_{(t_{n-1},t_n)} := \frac{1}{\Delta t}
\int_{t_{n-1}}^{t_n} \zeta(\rho_L^{[\Delta t]}) \dd t, \;\; n=1,\dots,N.
\end{equation}

%
%

Using the above notation,
(\ref{rhonL}--c) summed for $n=1, \dots,  N$ can be restated in the form:
find $(\rho^{[\Delta t]}_{L}(t),\ut^{\Delta t}_{L}(t),
\widetilde \psi^{\Delta t}_{L}(t))
\in \Upsilon \times \Vt \times (X \cap Z_2)$,
$t \in [0,T]$, such that
%
%
\begin{subequations}
\begin{align}
&\displaystyle\int_{0}^{T}\left\langle \frac{\partial \rho^{[\Delta t]}_L}{\partial t}\,,\eta
\right\rangle_{W^{1,\frac{q}{q-1}}(\Omega)} \dd t
- \int_0^T \int_\Omega \rho^{[\Delta t]}_L \,\ut^{\Delta t,-}_L
\cdot \nabx \eta
\dx \dt =0
\nonumber \\
& \hspace{3.4in}
\qquad
\forall \eta \in L^1(0,T;W^{1,\frac{q}{q-1}}(\Omega)),
\label{eqrhocon}
\end{align}
\begin{align}
&\displaystyle\int_{0}^{T}\!\! \int_\Omega \left[
\frac{\partial}{\partial t} (\rho_L\,\ut_L)^{\Delta t}
- \tfrac{1}{2}
\frac{\partial \rho_L^{\Delta t}}{\partial t}\, \ut_L^{\Delta t,+}
\right]
\cdot
\wt \dx \dt
+
\displaystyle\int_{0}^{T}\!\! \int_\Omega
\mu(\rho^{\Delta t, +}_L) \,\Dtt(\utaeDp)
:
\Dtt(\wt) \dx \dt
\nonumber \\
&\qquad+
\tfrac{1}{2} \int_{0}^T\!\! \int_{\Omega}
\rho^{\{\Delta t\}}_L \left[ \left[ (\utaeDm \cdot \nabx) \utaeDp \right]\cdot\,\wt
- \left[ (\utaeDm \cdot \nabx) \wt  \right]\cdot\,\utaeDp
\right]\!\dx \dt
\nonumber
\\
&\qquad\qquad=\int_{0}^T
\left[ \int_{\Omega}  \rho_L^{\Delta t,+}
\ft^{\Delta t,+} \cdot \wt \dx 
- k\,\sum_{i=1}^K \int_{\Omega}
\Ctt_i(M\,\zeta(\rho^{\Delta t,+}_L)\,\hpsiaet^{\Delta t,+}): \nabxtt
\wt \dx \right]\! \dt
\nonumber\\
&\hspace{3.9in}\forall \wt \in L^1(0,T;\Vt),
\label{equncon}
\end{align}
%
%
%
\begin{align}
\label{eqpsincon}
&\int_{0}^T \!\!\int_{\Omega \times D}
\left[
M\,\frac{ \partial}{\partial t} (\zeta(\rho_L)\,\hpsiaet)^{\Delta t}\,
\varphi 
+
\frac{1}{4\,\lambda}
\,\sum_{i=1}^K
 \,\sum_{j=1}^K A_{ij}\,
 M\,
 \nabqj \hpsiaet^{\Delta t,+}
\cdot\, \nabqi
\varphi
\right]
\dq \dx \dt
\nonumber \\
& \qquad 
+ \int_{0}^T \!\!\int_{\Omega \times D} \left[
\epsilon\, M\,
\nabx \hpsiaet^{\Delta
t,+} - \utae^{\Delta t,-}\,M\,\zeta^{\{\Delta t\}}_L\,\hpsiaet^{\Delta t,+} \right]\cdot\, \nabx
\varphi
\dq \dx \dt
\nonumber \\
&
\qquad\qquad 
- \int_{0}^T \!\!\int_{\Omega \times D} M\,\sum_{i=1}^K
\left[\sigtt(\utae^{\Delta t,+})
\,\qt_i\right]\,\zeta(\rho_L^{\Delta t,+})\,\beta^L(\hpsiaet^{\Delta t,+}) \,\cdot\, \nabqi
\varphi
\,\dq \dx \dt = 0
\nonumber\\
&\hspace{3.77in}\forall \varphi \in L^1(0,T;X);
\end{align}
\end{subequations}
with $q \in (2,\infty)$ when $d=2$ and $q \in [3,6]$ when $d=3$,
subject to the initial conditions
$\rho_L^{\Delta t}(0)=\rho_0 \in \Upsilon$, $\ut_L^{\Delta t}(0) = \ut^0
\in \Vt$ and
$\hpsiaet^{\Delta t}(0) = \widetilde \psi^0
\in X \cap Z_2$, where we recall (\ref{proju0}) and (\ref{psi0}).
We emphasize that (\ref{eqrhocon}--c) 
is an equivalent restatement of problem (${\rm P}^{\Delta t}_{L}$),
for which existence of a solution has been established (cf. Lemma \ref{conv}).

In conjunction with $\beta^L$, defined by \eqref{betaLa},
we consider the following cut-off version $\mathcal{F}^L$
of the entropy function $\mathcal{F}\,: s \in \mathbb{R}_{\geq 0} \mapsto \mathcal{F}(s) = s (\log s - 1) + 1
\in \mathbb{R}_{\geq 0}$:
\begin{equation}\label{eq:FL}
\mathcal{F}^L(s):= \left\{\begin{array}{ll}
s(\log s - 1) + 1,   &  ~~0 \leq s \leq L,\\
\frac{s^2 - L^2}{2L} + s(\log L - 1) + 1,  &  ~~L \leq s.
\end{array} \right.
\end{equation}
Note that
\begin{equation}\label{eq:FL1}
(\mathcal{F}^L)'(s) = \left\{\begin{array}{ll}
\log s,   &  ~~0 < s \leq L,\\
\frac{s}{L} + \log L - 1,  &  ~~L \leq s,
\end{array} \right.
\end{equation}
and
\begin{equation}\label{eq:FL2}
(\mathcal{F}^L)''(s) = \left\{\begin{array}{ll}
\frac{1}{s},   &  ~~0 < s \leq L,\\
\frac{1}{L},  &  ~~L \leq s.
\end{array} \right.
\end{equation}
Hence,
\begin{equation}\label{eq:FL2a}
\beta^L(s) = \min(s,L) = [(\mathcal{F}^L)''(s)]^{-1},\quad s \in \mathbb{R}_{\geq 0},
\end{equation}
with the convention $1/\infty:=0$ when $s=0$, and

\begin{equation}\label{eq:FL2b}
(\mathcal{F}^L)''(s) \geq \mathcal{F}''(s) = s^{-1},\quad s \in \mathbb{R}_{> 0}.
\end{equation}
We shall also require the following inequality, relating $\mathcal{F}^L$ to $\mathcal{F}$:
\begin{equation}\label{eq:FL2c}
\mathcal{F}^L(s) \geq \mathcal{F}(s),\quad s \in \mathbb{R}_{\geq 0}.
\end{equation}
For $0\leq s \leq 1$, \eqref{eq:FL2c} trivially holds, with equality. For $s\geq 1$, it
follows from \eqref{eq:FL2b}, with $s$ replaced by a dummy variable $\sigma$, after integrating
twice over $\sigma \in [1,s]$, and noting that $(\mathcal{F}^L)'(1)= \mathcal{F}'(1)$ and $(\mathcal{F}^L)(1)=\mathcal{F}(1)$.

\subsection{$L$-independent bounds on the spatial derivatives}
\label{Lindep-space}

We are now ready to embark on the derivation of the required bounds,
uniform in the cut-off parameter $L$,
on norms of $\rho^{\Delta t,+}_L(t) \in \Upsilon$,
$\uta^{\Delta t,+}(t)\in \Vt$ and $\psia^{\Delta t,+}(t) \in X \cap Z_2$, $t \in (0,T]$.
As far as
$\rho^{\Delta t,+}_L$ is concerned, it follows by tracing the constants in the argument leading to
inequality (17) in DiPerna \& Lions \cite{DPL} and recalling from \eqref{inidata} that
$\rho_0 \in \Upsilon$, that, for each $p \in [1,\infty]$,
\begin{subequations}
\begin{align}
&\|\rho_L^{[\Delta t]}(t)\|_{L^p(\Omega)}  \leq \|\rho_0\|_{L^p(\Omega)}, \qquad t \in (0,T],
\label{eq:energy-rho-0}
\end{align}
and therefore, for each $p \in [1,\infty]$,
\begin{align}
&\|\rho_L^{\Delta t,+}(t)\|_{L^p(\Omega)}  \leq \|\rho_0\|_{L^p(\Omega)}, \qquad t \in (0,T].
\label{eq:energy-rho}
\end{align}
\end{subequations}
Concerning $\uta^{\Delta t,+}$,
we select
$\wt = \chi_{[0,t]}\,\uta^{\Delta t,+}$ as test function in (\ref{equncon}), 
with $t$ chosen as $t_n$, $n \in \{1,\dots,N\}$.
We then deduce using the identity
(\ref{Gequnbhat}) with $\vt = \ut^{\Delta t,+}_L$ and $\widetilde\psi = \widetilde\psi^{\Delta t,+}_L$,
on noting 
(\ref{inidata}), (\ref{fvkbd}),
(\ref{Korn}) and
\eqref{idatabd} that, with $t = t_n$,
\begin{align}
& \int_{\Omega} \rho_{L}^{\Delta t, +}(t) \,|\uta^{\Delta t,+}(t)|^2 \dx +
\frac{\rho_{\rm min}}{\Delta t}\int_0^t
\|\uta^{\Delta t,+} - \uta^{\Delta t,-}\|^2 \dd s
\nonumber \\
& \hspace{1.5cm}
+ \int_0^t \int_{\Omega} \mu(\rho_L^{\Delta t,+})\,|\Dtt(\uta^{\Delta t,+})|^2 \dx \dd s \nonumber\\
&\qquad \leq \int_{\Omega} \rho_{0}\,|\ut_0|^2 \dx
+\frac{\rho_{\rm max}^2 C_{\varkappa}^2}{\mu_{\rm min}\,c_0}\,\,
\int_0^t\|\ft^{\Delta t,+}\|^2_{L^\varkappa(\Omega)} \dd s \nonumber\\
& \hspace{1.5cm} -2k \int_0^t \int_{\Omega \times D} M(\qt)\,\sum_{i=1}^K
\,U_i'\big({\textstyle \frac{1}{2}}|\qt_i|^2\big)\,\zeta(\rho^{\Delta t,+}_L)\,\widetilde\psi^{\Delta t,+}_L
\,\qt_i\, \qt_i^{\rm T}:
\nabxtt \uta^{\Delta t,+} \dq \dx \dd s,~~~~ \label{eq:energy-u}
\end{align}
where $\|\cdot\|$ denotes the $L^2$ norm over $\Omega$.
We intentionally {\em did not} bound the final term on
the right-hand side of \eqref{eq:energy-u}.
As we shall see in what follows,
this simple trick will prove helpful: our bounds on $\psia^{\Delta t, +}$ below will furnish
an identical term with the opposite sign, so then by
combining the bounds on $\uta^{\Delta t, +}$ and $\psia^{\Delta t, +}$
this pair of, otherwise dangerous, terms will be removed. This fortuitous cancellation reflects
the balance of total energy in the system.

Having dealt with $\uta^{\Delta t,+}$, we now embark on the less
straightforward task of deriving bounds on
norms of $\psia^{\Delta t,+}$ that are uniform in the cut-off parameter $L$.
The appropriate choice of test function in \eqref{eqpsincon} for this purpose
is $\varphi = \chi_{[0,t]}\,(\mathcal{F}^L)'(\psia^{\Delta t,+})$ with $t=t_n$, $n \in \{1,\dots,N\}$;
this can be seen by noting that with such a $\varphi$, at
least formally, the final term on the left-hand side of
\eqref{eqpsincon} can be manipulated to become identical to
the final term in \eqref{eq:energy-u}, but with the opposite sign.
While Lemma \ref{conv} guarantees that $\psia^{\Delta t,+}(t)$ belongs to $Z_2$ for all
$t \in [0,T]$, and is therefore nonnegative a.e. on $\Omega \times D \times [0,T]$, there is unfortunately
no reason why $\psia^{\Delta t,+}$ should be strictly positive on $\Omega\times D \times [0,T]$,
and therefore the
expression $(\mathcal{F}^L)'(\psia^{\Delta t,+})$ may in general
be undefined; the same is true of $(\mathcal{F}^L)''(\psia^{\Delta t,+})$,
which also appears in the algebraic manipulations.
We shall circumvent this problem by working
with $(\mathcal{F}^L)'(\psia^{\Delta t,+} + \alpha)$ instead of
$(\mathcal{F}^L)'(\psia^{\Delta t,+})$, where $\alpha>0$; since
$\psia^{\Delta t,+}$ is known to be nonnegative from Lemma \ref{conv},
$(\mathcal{F}^L)'(\psia^{\Delta t,+} + \alpha)$ and $(\mathcal{F}^L)''(\psia^{\Delta t,+} + \alpha)$
are well-defined.
After deriving the relevant bounds, which will involve
$\mathcal{F}^L(\psia^{\Delta t,+} + \alpha)$ only,
we shall pass to the limit $\alpha \rightarrow 0_{+}$, noting that, unlike
$(\mathcal{F}^L)'(\psia^{\Delta t,+})$ and $(\mathcal{F}^L)''(\psia^{\Delta t,+})$, the function
$(\mathcal{F}^L)(\psia^{\Delta t,+})$ is well-defined for any {\em nonnegative} $\psia^{\Delta t,+}$.
Thus, the core of the idea is to take any $\alpha \in (0,1)$, whereby $0 < \alpha < 1 < L$, and choose
$\varphi = \chi_{[0,t]}\,(\mathcal{F}^L)'(\psia^{\Delta t,+} + \alpha)$,
with $t = t_n$, $\;n \in \{1,\dots,N\}$, as test function in \eqref{eqpsincon}, and then pass to the
limit $\alpha \rightarrow 0_+$. An equivalent but slightly more transparent approach is to start from
\eqref{psiG} with the indices $n$ and $n-1$ in \eqref{psiG} replaced by $k$ and $k-1$, respectively,
choose $\varphi = (\mathcal{F}^L)'(\widetilde\psi^k_L + \alpha)$ as test function, sum the resulting
expressions through $k=1,\dots, n$, with $n \in \{1,\dots,N\}$, and then pass to the
limit $\alpha \rightarrow 0_+$. For reasons of clarity, we shall adopt the latter approach.

Thus, for $k = 1, \dots, n$ and $n \in \{1, \dots, N\}$, we arrive at the following identity
\begin{align}\label{z-terms}
&\int_{\Omega \times D} M\, \frac{\zeta(\rho^k_L)\,\widetilde \psi^k_L - \zeta(\rho^{k-1}_L)\, \widetilde
\psi^{k-1}_L}{\Delta t} \, [(\mathcal{F}^L)'(\widetilde\psi^k_L + \alpha)]\dq \dx\nonumber\\
&\quad- \int_{\Omega \times D} M \left(\frac{1}{\Delta t}\int_{t_{k-1}}^{t_k} \zeta(\rho^{[\Delta t]}_L) \dd t\right)
\ut^{k-1}_L \cdot \left(\nabx [(\mathcal{F}^L)'(\widetilde\psi^k_L + \alpha)]\right)\,\widetilde\psi^k_L\dq \dx\nonumber\\
&\quad + \int_{\Omega\times D} \sum_{i=1}^K \left[\frac{1}{4\lambda}\, \sum_{j=1}^K A_{ij}\, M\, \nabqj\widetilde\psi^k_L - [ \sigtt(\ut^k_L) \, \qt_i]\,M\, \zeta(\rho^k_L)\, \beta^L(\widetilde\psi^k_L) \right]\cdot \nabqi [(\mathcal{F}^L)'(\widetilde\psi^k_L + \alpha)]\dq \dx\nonumber\\
&\quad + \int_{\Omega \times D} \varepsilon\, M\, \nabx \widetilde\psi^k_L \cdot \nabx [(\mathcal{F}^L)'(\widetilde\psi^k_L + \alpha)]
\dq \dx = 0.\nonumber\\
\end{align}
We shall manipulate each of the terms on the left-hand side of \eqref{z-terms}. We begin by considering
\[ {\tt T}_1 := \int_{\Omega \times D} M\, \frac{\zeta(\rho^k_L)\,\widetilde \psi^k_L - \zeta(\rho^{k-1}_L)\, \widetilde
\psi^{k-1}_L}{\Delta t} \,
[(\mathcal{F}^L)'(\widetilde\psi^k_L + \alpha)
]\dq \dx\]
and
\[ {\tt T}_2 := - \int_{\Omega \times D} M \left(\frac{1}{\Delta t}\int_{t_{k-1}}^{t_k} \zeta(\rho^{[\Delta t]}_L) \dd t\right)
\ut^{k-1}_L \cdot \left(\nabx [(\mathcal{F}^L)'(\widetilde\psi^k_L + \alpha)]\right)\,\widetilde\psi^k_L\dq \dx\]
in tandem. By noting that, thanks to \eqref{eq:FL2a},
\begin{align*}\left(\nabx [(\mathcal{F}^L)'(\widetilde\psi^k_L + \alpha)]\right) \widetilde\psi^k_L &=
\widetilde\psi^k_L\,[\beta^L(\widetilde\psi^k_L + \alpha)]^{-1}\, \nabx \widetilde\psi^k_L\\
&= \left[ (\widetilde\psi^k_L+ \alpha) -\alpha\right]\,
[\beta^L(\widetilde\psi^k_L + \alpha)]^{-1}\, \nabx (\widetilde\psi^k_L + \alpha)
\\
&= \nabx \left[{\mathcal G}^L(\widetilde\psi^k_L + \alpha) - \alpha\, [({\mathcal F}^L)'(\widetilde\psi^k_L + \alpha)]\right],
\end{align*}
where
\[ {\mathcal G}^L(s) := \left\{\begin{array}{ll} s - 1, & s \leq L, \\
                                             \frac{s^2}{2L} + \frac{L}{2} - 1, & L \leq s,
                           \end{array} \right.\]
we have that
\[ {\tt T}_2 =  - \int_{\Omega} \left(\frac{1}{\Delta t}\int_{t_{k-1}}^{t_k} \zeta(\rho^{[\Delta t]}_L) \dd t\right)
\ut^{k-1}_L \cdot \nabx \left[\int_D M \left[{\mathcal G}^L(\widetilde\psi^k_L + \alpha) - \alpha\, [({\mathcal F}^L)'(\widetilde\psi^k_L + \alpha)]\right]\dq \right] \dx.\]
By applying \eqref{Gequn-trans0} with $\varphi = \int_D M [{\mathcal G}^L(\widetilde\psi^k_L + \alpha) - \alpha\, ({\mathcal F}^L)'(\widetilde\psi^k_L + \alpha)]\dq \in W^{1, \frac{q}{q-1}}(\Omega)$, where $q \in (2,\infty)$ when $d=2$ and $q \in [3,6]$ when $d=3$, we then have that
\[ {\tt T}_2 = - \int_{\Omega \times D} M \, \frac{\zeta(\rho^k_L) - \zeta(\rho^{k-1}_L)}{\Delta t} \left[{\mathcal G}^L(\widetilde\psi^k_L + \alpha) - \alpha\, [({\mathcal F}^L)'(\widetilde\psi^k_L + \alpha)] \right] \dq \dx.\]
We note in passing that the statement $\int_D M [{\mathcal G}^L(\widetilde\psi^k_L + \alpha) - \alpha\, ({\mathcal F}^L)'(\widetilde\psi^k_L + \alpha)]\dq \in W^{1, \frac{q}{q-1}}(\Omega)$ above follows, for all $\alpha \in (0,1)$, from
the following considerations. Since $\widetilde\psi^k_L \in X$, also $(\mathcal{F}^L)'(\widetilde\psi^k_L + \alpha)
\in X$, and hence $\int_D M\, (\mathcal{F}^L)'(\widetilde\psi^k_L + \alpha) \dq
\in H^1(\Omega) \subset W^{1,\frac{q}{q-1}}(\Omega)$ for all $q \in (2,\infty)$ when $d=2$ and $q \in [3,6]$ when $d=3$.
Furthermore, since $\widetilde\psi^k_L \in X$, also $\Gamma:=\int_D M {\mathcal G}^L(\widetilde\psi^k_L + \alpha) \dq \in L^1(\Omega)$
and, since $X \subset H^1(\Omega; L^2_M(D))$ and, by the Sobolev embedding \eqref{embed}, $H^1(\Omega; L^2_M(D))\subset L^q(\Omega; L^2_M(D))$ for the range of $q$ under consideration here, the definition of $\mathcal{G}^L$, a straightforward application of the Cauchy--Schwarz inequality to the integral over $D$ involved in the definition of $\Gamma$ and the application of H\"older's inequality to the integral over $\Omega$ involved in the definition of the $L^{\frac{q}{q-1}}(\Omega)$ norm, imply that $\nabx \Gamma \in L^{\frac{q}{q-1}}(\Omega)$. Finally, by a Gagliardo--Nirenberg inequality applied to
the function $\Gamma- \dashint_\Omega \Gamma \dx$ (cf. (2.9) and (2.10) on p.45 in \cite{LU-English}; or inequality (2.19)
on p.73 of \cite{LU-Russian} in conjunction with Poincar\'e's inequality in the $L^{\frac{q}{q-1}}(\Omega)$ norm; or Theorem 2.2
and Remark 2.1 in \cite{LSU-English}, where the proof of the Gagliardo--Nirenberg inequality can also be found):
\[ \bigg\|\,\Gamma - \dashint_\Omega \Gamma \dx \,\bigg\|_{L^{\frac{q}{q-1}}(\Omega)} \leq C(q,d)\, \|\Gamma\|_{L^1(\Omega)}^{\frac{q}{d+q}}\, \|\nabx \Gamma\|_{L^{\frac{q}{q-1}}(\Omega)}^{\frac{d}{d+q}}; \]
hence $\Gamma \in L^{\frac{q}{q-1}}(\Omega)$, which, together with $\nabx \Gamma \in L^{\frac{q}{q-1}}(\Omega)$, implies
that $\Gamma \in W^{1,\frac{q}{q-1}}(\Omega)$. Thus we deduce that
$\int_D M [{\mathcal G}^L(\widetilde\psi^k_L + \alpha) - \alpha\, ({\mathcal F}^L)'(\widetilde\psi^k_L + \alpha)]\dq \in W^{1, \frac{q}{q-1}}(\Omega)$, where $q \in (2,\infty)$ when $d=2$ and $q \in [3,6]$ when $d=3$, as was claimed above.

By rewriting ${\tt T}_1$ as
\begin{eqnarray*}
{\tt T}_1 &=& \int_{\Omega \times D} M\, \zeta(\rho^{k-1}_L)\, \frac{(\widetilde\psi^k_L+\alpha) - (\widetilde\psi^{k-1}_L+\alpha)}{\Delta t}
\,
[(\mathcal{F}^L)'(\widetilde\psi^k_L + \alpha)
] \dq \dx \\
&&+ \int_{\Omega \times D} M\, \frac{\zeta(\rho^k_L) - \zeta(\rho^{k-1}_L)}{\Delta t}\, \widetilde\psi^k_L \,
[({\mathcal F}^L)'(\widetilde\psi^k_L + \alpha)
]\dq \dx
\end{eqnarray*}
and adding this to the expression for ${\tt T}_2$ yields that
\begin{eqnarray*}
{\tt T}_1 + {\tt T}_2 &=& \int_{\Omega \times D} M\, \zeta(\rho^{k-1}_L)\, \frac{(\widetilde\psi^k_L+\alpha) - (\widetilde\psi^{k-1}_L+\alpha)}{\Delta t}
\,
[(\mathcal{F}^L)'(\widetilde\psi^k_L + \alpha)
] \dq \dx \\
&&+ \int_{\Omega \times D} M\, \frac{\zeta(\rho^k_L) - \zeta(\rho^{k-1}_L)}{\Delta t} \left[(\widetilde\psi^k_L + \alpha)[({\mathcal F}^L)'(\widetilde\psi^k_L + \alpha)] - {\mathcal G}^L(\widetilde\psi^k_L + \alpha)\right]\dq \dx\\
&=& \int_{\Omega \times D} M\, \frac{\zeta(\rho^{k}_L)(\widetilde\psi^k_L+\alpha) - \zeta(\rho^{k-1}_L)(\widetilde\psi^{k-1}_L+\alpha)}{\Delta t}
\,
[(\mathcal{F}^L)'(\widetilde\psi^k_L + \alpha)
] \dq \dx \\
&&- \int_{\Omega \times D} M\, \frac{\zeta(\rho^k_L) - \zeta(\rho^{k-1}_L)}{\Delta t} \,{\mathcal G}^L(\widetilde\psi^k_L + \alpha)\dq \dx.
\end{eqnarray*}
By applying part c) of Lemma \ref{basic} with $F(s) = \mathcal{F}^L(s)$, $G(s) = \mathcal{G}^L(s)$, $A = \zeta(\rho^k_L)$, $B=\zeta(\rho^{k-1}_L)$, $a = \widetilde\psi^k_L + \alpha$, $b = \widetilde\psi^{k-1}_L + \alpha$, noting that
$s\, (\mathcal{F}^L)'(s) - \mathcal{F}^L(s) - \mathcal{G}^L(s) = 0:= c_0$ for all $s \in (0,\infty)$, and
$\mbox{ess.inf}_{s>0}(\mathcal{F}^{L})''(s) = 1/L :=d_0$, it follows that
\begin{eqnarray}\label{t1t2}
{\tt T}_1 + {\tt T}_2 &\geq & \frac{1}{\Delta t}\left[\int_{\Omega \times D} M\, \zeta(\rho^k_L)\, \mathcal{F}^L(\widetilde\psi^k_L+\alpha)\dq \dx - \int_{\Omega \times D} M\, \zeta(\rho^{k-1}_L)\, \mathcal{F}^L(\widetilde\psi^{k-1}_L+\alpha)\dq \dx\right]\nonumber\\
&& +~ \frac{1}{2\Delta t\,L} \int_{\Omega \times D} M\, \zeta(\rho^{k-1}_L)\,(\widetilde\psi^k_L - \widetilde\psi^{k-1}_L)^2 \dq \dx.
\end{eqnarray}

We now move on to the next term in \eqref{z-terms}: thanks to \eqref{A}, we have that
\begin{alignat}{2}\label{t3}
{\tt T}_3\! :\!&= \frac{1}{4\lambda}\,\int_{\Omega \times D} \sum_{i=1}^K \sum_{j=1}^K A_{ij}\,M\,\nabqj \widetilde\psi^k_L \cdot
\nabqi [(\mathcal{F}^L)'(\widetilde \psi^k_L + \alpha)] \dq \dx
\nonumber \\
&= \frac{1}{4\lambda}\,\int_{\Omega \times D} \,M \, [(\mathcal{F}^L)''(\widetilde \psi^k_L + \alpha)] \, \sum_{i=1}^K \sum_{j=1}^K A_{ij}\,\nabqj \widetilde\psi^k_L \cdot
\nabqi \widetilde \psi^k_L \dq \dx
\nonumber \\
&\geq \frac{a_0}{4\lambda}\int_{\Omega \times D} M \,  [(\mathcal{F}^L)''(\widetilde \psi^k_L + \alpha)] \,
|\nabq \widetilde\psi^k_L|^2 \dq \dx.
\end{alignat}
It is tempting to bound $ [(\mathcal{F}^L)''(\widetilde \psi^k_L + \alpha)]$ below further by
$(\widetilde \psi^k_L + \alpha)^{-1}$ using \eqref{eq:FL2b}. We have refrained from doing so as the
precise form of \eqref{t3} will be required to absorb an extraneous term that the process of shifting
$\widetilde \psi^k_L$ by the addition of $\alpha>0$ generates in term ${\tt T}_5$ below (cf. the
last line in \eqref{t5a}). Once the extraneous term has been absorbed into the right-hand side of
\eqref{t3}, we shall apply inequality \eqref{eq:FL2b} to the resulting expression to bound it below further.

The next term that has to be dealt with, this time by a direct use of \eqref{eq:FL2b}, is
\begin{eqnarray}\label{t4}
{\tt T}_4:= \varepsilon \int_{\Omega \times D} M\, \nabx \widetilde \psi^k_L \cdot \nabx [({\mathcal F}^L)'(\widetilde{\psi}^k_L + \alpha)] \geq \varepsilon\int_{\Omega \times D} M \, \frac{|\nabx \widetilde\psi^k_L|^2}{\widetilde \psi^k_L + \alpha} \dq \dx.
\end{eqnarray}

It remains to consider the critical final term
\begin{align}\label{t5a}
{\tt T}_5 :\!&= - \int_{\Omega \times D} \sum_{i=1}^K\, [\sigtt(\ut^k_L)\,\qt_i]\, M\, \zeta(\rho^k_L)\, \beta^L(\widetilde\psi^k_L)\cdot
\nabqi [(\mathcal{F}^L)'(\widetilde \psi^k_L + \alpha)] \dq \dx \nonumber\\
&= - \int_{\Omega \times D} \sum_{i=1}^K\, [\sigtt(\ut^k_L)\,\qt_i]\, M\, \zeta(\rho^k_L)\, \beta^L(\widetilde\psi^k_L+\alpha)\cdot
\nabqi [(\mathcal{F}^L)'(\widetilde \psi^k_L + \alpha)] \dq \dx \nonumber\\
&\qquad + \int_{\Omega \times D} \sum_{i=1}^K\, [\sigtt(\ut^k_L)\,\qt_i]\, M\, \zeta(\rho^k_L)\, [\beta^L(\widetilde\psi^k_L+\alpha)-\beta^L(\widetilde\psi^k_L)]\cdot
\nabqi [(\mathcal{F}^L)'(\widetilde \psi^k_L + \alpha)] \dq \dx \nonumber\\
&= - \int_\Omega \zeta(\rho^k_L)\, \left[\int_D M [(\nabxtt\ut^k_L)\, \qt_i] \cdot \nabqi\widetilde\psi^k_L \dq \right] \dx\nonumber\\
&\qquad + \int_{\Omega \times D} M\, \zeta(\rho^k_L)
\left[1 - \frac{\beta^L(\widetilde\psi^k_L)}{\beta^L(\widetilde\psi^k_L+\alpha)} \right] \sum_{i=1}^K\,[(\nabxtt \ut^k_L)\,\qt_i]
\cdot \nabqi \widetilde \psi^k_L \dq \dx.
\end{align}
Thus, by applying the integration-by-parts formula \eqref{intbyparts} to the expression in the square brackets in the penultimate line of \eqref{t5a}, we deduce that
\begin{align}\label{t5}
{\tt T}_5 &=
- \int_{\Omega\times D} M \, \sum_{i=1}^K U'(\textstyle{\frac{1}{2}}|\qt_i|^2)\, \zeta(\rho^k_L)\,
\widetilde\psi^k_L \, [(\qt_i\, \qt_i^{\tt T}):\nabxtt \ut^k_L] \dq\dx \nonumber\\
&\qquad + \int_{\Omega \times D} M\, \zeta(\rho^k_L)
\left[1 - \frac{\beta^L(\widetilde\psi^k_L)}{\beta^L(\widetilde\psi^k_L+\alpha)} \right] \sum_{i=1}^K\,[(\nabxtt \ut^k_L)\,\qt_i]
\cdot \nabqi \widetilde \psi^k_L \dq \dx.
\end{align}

By summing \eqref{t1t2}, \eqref{t3}, \eqref{t4} and \eqref{t5} we obtain
\begin{align}\label{t-sum-1}
&\frac{1}{\Delta t}\left[\int_{\Omega \times D} M\, \zeta(\rho^k_L)\, \mathcal{F}^L(\widetilde\psi^k_L+\alpha)\dq \dx - \int_{\Omega \times D} M\, \zeta(\rho^{k-1}_L)\, \mathcal{F}^L(\widetilde\psi^{k-1}_L+\alpha)\dq \dx\right]\nonumber\\
&\qquad
+ \frac{1}{2\Delta t\,L} \int_{\Omega \times D} M\, \zeta(\rho^{k-1}_L)\,(\widetilde\psi^k_L - \widetilde\psi^{k-1}_L)^2 \dq \dx
\nonumber\\
&\qquad + \frac{a_0}{4\lambda}\int_{\Omega \times D} M \, [(\mathcal{F}^L)''(\widetilde \psi^k_L + \alpha)]\, |\nabq \widetilde\psi^k_L|^2 \dq \dx 
+ \varepsilon\int_{\Omega \times D} M \, \frac{|\nabx \widetilde\psi^k_L|^2}{\widetilde \psi^k_L + \alpha} \dq \dx\nonumber\\
&\leq  \int_{\Omega\times D} M \, \sum_{i=1}^K U'(\textstyle{\frac{1}{2}}|\qt_i|^2)\, \zeta(\rho^k_L)\,
\widetilde\psi^k_L \, [(\qt_i\, \qt_i^{\rm T}):\nabxtt \ut^k_L] \dq\dx \nonumber\\
&\qquad - \int_{\Omega \times D} M\, \zeta(\rho^k_L)
\left[1 - \frac{\beta^L(\widetilde\psi^k_L)}{\beta^L(\widetilde\psi^k_L+\alpha)} \right] \sum_{i=1}^K\,[(\nabxtt \ut^k_L)\,\qt_i]
\cdot \nabqi \widetilde \psi^k_L \dq \dx.
\end{align}
As each term in \eqref{t-sum-1} can be seen as the value of a piecewise constant function on the interval $(t_{k-1},t_k)$, multiplication of \eqref{t-sum-1} by $\Delta t$ and summation over the indices $k=1,\dots, n$, where $n \in \{1, \dots, N\}$,
yields on noting that $\widetilde\psi^{\Delta t}_L(0) = \widetilde\psi^0 = \beta^L(\widetilde\psi^0)$,
for $t=t_n$, that

\begin{eqnarray}\label{eq:energy-psi-summ1}
&&\hspace{-3mm}\int_{\Omega \times D} M\, \zeta(\rho^{\Delta t,+}_L(t))\,\mathcal{F}^L(\psia^{\Delta t,+}(t) + \alpha) \dq \dx
\nonumber\\
&&\qquad+\,\frac{1}{2 \Delta t\, L}\int_0^t\int_{\Omega \times D}
M\,\zeta(\rho^{\Delta t,-}_L)\,(\psia^{\Delta t,+} - \psia^{\Delta t,-})^2 \dq \dx \dd s
\nonumber\\
&&\qquad +\,\frac{a_0}{4\,\lambda}  \int_0^t \int_{\Omega \times D}\,
M\,
[(\mathcal{F}^L)''(\psia^{\Delta t,+} + \alpha)]\,|\nabq \psia^{\Delta t,+} |^2 \,\dq \dx \dd s\nonumber\\
&&\qquad
+\, 
\varepsilon\, \int_0^t \int_{\Omega \times D}\, M\,
\frac{|\nabx \psia^{\Delta t,+}|^2}{\psia^{\Delta t,+} + \alpha} \dq \dx \dd s\nonumber\\
&&\hspace{-3mm}\leq \int_{\Omega \times D} M\, \zeta(\rho_0)\, \mathcal{F}^L(\beta^L(\widetilde\psi^0) + \alpha)
\dq \dx\nonumber\\
&&\qquad+ \int_0^t \int_{\Omega \times D} M\,\sum_{i=1}^K \qt_i\,\qt_i^{\rm T}
\,U_i'(\textstyle{\frac{1}{2}|\qt|^2})\,\zeta(\rho^{\Delta t,+}_L)\,\psia^{\Delta t,+} :
\nabxtt \uta^{\Delta t,+}  \dq \dx \dd s
\nonumber\\
&&\qquad- \int_0^t \int_{\Omega \times D} M\,\zeta(\rho^{\Delta t,+}_L)\left[1 -\frac{\beta^L(\psia^{\Delta t,+})}{\beta^L(\psia^{\Delta t,+}
+ \alpha)}\right] \sum_{i=1}^K
\left[(\nabxtt \uta^{\Delta t,+})\,\qt_i\right]
\cdot \nabqi \psia^{\Delta t,+}\, \dq \dx \dd s.
\end{eqnarray}
We refer to \cite[(4.14)--(4.18)]{BS2011-fene} for the details of a similar, but somewhat simpler, argument in the case of $\zeta \equiv 1$.
The denominator in the prefactor of the second integral motivates us to link $\Delta t$ to $L$
so that $\Delta t\, L = o(1)$
as $\Delta t \!\rightarrow\! 0_{+}$ (or, equivalently, $\Delta t = o(L^{-1})$ as
$L \rightarrow \infty$), in order to drive the integral multiplied by the prefactor to $0$ in
the limit of $L \rightarrow \infty$,
once the product of the two has been bounded above by a constant, independent of $L$.

Comparing \eqref{eq:energy-psi-summ1} with \eqref{eq:energy-u} we see that after multiplying
\eqref{eq:energy-psi-summ1} by $2k$ and adding the resulting inequality to \eqref{eq:energy-u}
the final term in \eqref{eq:energy-u} is cancelled by $2k$ times the
second term on the right-hand side of \eqref{eq:energy-psi-summ1}. Hence, for any $t=t_n$, with
$n \in \{1,\dots,N\}$, we deduce that
\begin{eqnarray}\label{eq:energy-u+psi}
&&\hspace{-2mm}\int_{\Omega} \rho_L^{\Delta t, +}(t)\,|\uta^{\Delta t, +}(t)|^2 \dx
+ \frac{\rho_{\rm min}}{\Delta t}
\int_0^t \|\uta^{\Delta t, +} - \uta^{\Delta t,-}\|^2
\dd s + \int_0^t \int_{\Omega} \mu(\rho_L^{\Delta t,+})\,|\Dtt(\uta^{\Delta t, +})|^2 \dx \dd s\nonumber\\
&&\hspace{-1mm}+\,2k\int_{\Omega \times D} M\,\zeta(\rho^{\Delta t,+}_L(t))\,
\mathcal{F}^L(\psia^{\Delta t, +}(t) + \alpha) \dq \dx \nonumber\\
&&\hspace{7mm} + \frac{\zeta_{\rm min}\,k}{\Delta t\, L}
\int_0^t \int_{\Omega \times D} M\, (\psia^{\Delta t, +} - \psia^{\Delta t, -})^2
\dq \dx \dd s
\nonumber \\
&&\hspace{15mm}\quad +\, 
2k\,\varepsilon\,
\int_0^t \int_{\Omega \times D} \, M\,
\frac{|\nabx \psia^{\Delta t, +} |^2}{\psia^{\Delta t, +} + \alpha} \dq \dx \dd s\nonumber\\
&&\hspace{23mm}\quad\quad + \frac{a_0\, k}{2\,\lambda}\,
\int_0^t \int_{\Omega \times D} \,M\,
(\mathcal{F}^L)''(\psia^{\Delta t, +} + \alpha)\,|\nabq \psia^{\Delta t, +} |^2 \,\dq \dx \dd s\nonumber
\\
&&\hspace{-2mm}\leq \int_{\Omega} \rho_{0}\,|\ut_0|^2 \dx +
\frac{\rho_{\rm max}^2 \,C_{\varkappa}^2}{\mu_{\rm min}\,c_0}\,
\int_0^t
\|\ft^{\Delta t,+}\|^2_{L^\varkappa(\Omega)} \dd s + 2k \int_{\Omega \times D}
M\, \zeta(\rho_0)\,\mathcal{F}^L(\beta^L(\widetilde\psi^0) + \alpha) \dq \dx \nonumber
\\
&&\quad  -\, 2k \int_0^t \int_{\Omega \times D}\! M\,\zeta(\rho^{\Delta t,+}_L)\,
\sum_{i=1}^K \left[(\nabxtt \uta^{\Delta t, +})\,\qt_i\right]
\left[1 -\frac{\beta^L(\psia^{\Delta t, +})}{\beta^L(\psia^{\Delta t, +}
+ \alpha)}\right]\!\cdot \nabqi \psia^{\Delta t, +} \dq \dx \dd s.\nonumber\\
\end{eqnarray}
Let $\bt := (b_1,\ldots,b_K)$, recall (\ref{inidata}), and $b :=|\bt|_1 := b_1 +\ldots + b_K$,
then 
we can bound the magnitude of the last term on the right-hand side of \eqref{eq:energy-u+psi} by
\begin{align}
&\frac{a_0\, k}{4 \lambda} \left(\int_0^t \int_{\Omega \times D}
M (\mathcal{F}^L)''(\psia^{\Delta t, +} + \alpha)
\, |\nabq \psia^{\Delta t, +}|^2\, \dq \dx \dd s\right)
\nonumber \\
&\hspace{2.5in}
+ \alpha\,\frac{4\lambda\, k\, b\, \zeta_{\max}^2}{a_0}
\left(\int_0^t \int_{\Omega}  |\nabxtt \uta^{\Delta t, +}|^2 \dx \dd s\right),
\label{dragbd}
\end{align}
%
%
see \cite[(4.20)]{BS2010-hookean} for details in the case of $\zeta \equiv 1$.
Noting (\ref{dragbd}), (\ref{Korn}),
and using \eqref{eq:FL2b} to bound $(\mathcal{F}^L)''(\psia^{\Delta t, +} + \alpha)$
 from below by $\mathcal{F}''(\psia^{\Delta t, +} + \alpha)= (\psia^{\Delta t, +}
 + \alpha)^{-1}$ and
\eqref{eq:FL2c} to bound $\mathcal{F}^L(\psia^{\Delta t, +} + \alpha)$
by $\mathcal{F}(\psia^{\Delta t, +}+\alpha)$ from below yields, for all $t=t_n$,
$n \in \{1,\dots, N\}$, that
\begin{eqnarray}\label{eq:energy-u+psi1}
&&\hspace{-2mm}
\int_{\Omega}
\rho_{L}^{\Delta t, +}(t)\,|\uta^{\Delta t, +}(t)|^2 \dx + \frac{\rho_{\rm min}}{\Delta t}
\int_0^t \|\uta^{\Delta t, +} - \uta^{\Delta t,-}\|^2
\dd s + \int_0^t \int_{\Omega} \mu(\rho_L^{\Delta t,+})\,|\Dtt(\uta^{\Delta t, +})|^2 \dx \dd s\nonumber\\
&&\hspace{-2mm}+ \,2k\int_{\Omega \times D}\!\! M\, \zeta(\rho^{\Delta t,+}_L(t))\,
\mathcal{F}(\psia^{\Delta t, +}(t) + \alpha) \dq \dx + \frac{\zeta_{\rm min}\,k}{\Delta t\, L}
\int_0^t \int_{\Omega \times D}\!\! M\, (\psia^{\Delta t, +}
- \psia^{\Delta t, -})^2 \dq \dx \dd s
\nonumber \\
&&\hspace{-2mm}+\, 2k\,\varepsilon \int_0^t \int_{\Omega \times D}
\,M\,
\frac{|\nabx \psia^{\Delta t, +} |^2}{\psia^{\Delta t, +} + \alpha} \dq \dx \dd s
+\, \frac{a_0 \,k}{4\,\lambda}  \int_0^t \int_{\Omega \times D}
\,M\,
\frac{|\nabq \psia^{\Delta t, +}|^2}{\psia^{\Delta t, +} + \alpha} \,\dq \dx \dd s\nonumber\\
&&\hspace{4mm}\leq
\int_{\Omega} \rho_{0}\,|\ut_0|^2 \dx +
\,\frac{\rho_{\rm max}^2\,C_{\varkappa}^2}{\mu_{\rm min}\,c_0}\,
\int_0^t
\|\ft^{\Delta t,+}\|^2_{L^\varkappa(\Omega)} \dd s
\nonumber \\
&& \hspace{10mm}
+ \,2k \int_{\Omega \times D} M\, \zeta(\rho_0)\,\mathcal{F}^L(\beta^L(\widetilde\psi^0) + \alpha
) \dq \dx
+ \alpha\,\frac{4\lambda\, k\, b\, \zeta_{\rm max}^2}{a_0\,c_0} \int_0^t
\|\Dtt(\uta^{\Delta t, +})\|^2 \dd s.
\end{eqnarray}
Next we note that an analogous argument to the one that was used to derive \cite[(4.25)]{BS2011-fene} yields that
\begin{align}
\int_{\Omega \times D} M\, \zeta(\rho_0)\, \mathcal{F}^L(\beta^L(\widetilde\psi^0) + \alpha) \dq \dx
\leq
 \frac{3\alpha}{2}\, \zeta_{\rm max}\,|\Omega|
+ \int_{\Omega \times D} M\, \zeta(\rho_0)\, \mathcal{F}(\widetilde\psi^0 + \alpha) \dq \dx.
\label{FLbLbd}
\end{align}
The only restriction we have imposed on $\alpha$ so far is that it belongs to the open
interval $(0,1)$; let us now restrict the range of $\alpha$ further by demanding that, in fact,
\begin{equation}\label{alphacond}
0 < \alpha < \min \left(1 , \frac{\mu_{\rm min}\,a_0\,c_0}{\,4\lambda\,k\,b\,\zeta_{\rm max}^2}\right),
\end{equation}
where $c_0$ is the constant appearing in the Korn inequality (\ref{Korn}).
Then, the last term on the right-hand side of \eqref{eq:energy-u+psi1}
can be absorbed into the third term
on the left-hand side,
giving, on noting (\ref{Korn}) and (\ref{FLbLbd}), for $t=t_n$ and $n \in \{1,\dots, N\}$,
\begin{align}\label{eq:energy-u+psi2}
&
\int_{\Omega} \rho_{L}^{\Delta t,+}(t) \,|\uta^{\Delta t, +}(t)|^2 \dx
+ \frac{\rho_{\rm min}}{\Delta t}
\int_0^t \|\uta^{\Delta t, +} - \uta^{\Delta t,-}\|^2
\dd s
\nonumber \\
& \qquad +
\int_0^t \int_{\Omega}
\left(\mu(\rho^{\Delta t,+}_L) - \alpha\,\frac{4\lambda\,
 k\, b\, \zeta_{\rm max}^2}{a_0\,c_0}\right)\,
|\Dtt(\uta^{\Delta t, +})|^2 \dx \dd s\nonumber\\
&\qquad \qquad + \,2k\int_{\Omega \times D}\!\! M\,\zeta(\rho^{\Delta t,+}_L(t))\,
\mathcal{F}(\psia^{\Delta t, +}(t) + \alpha) \dq \dx \nonumber\\
&\qquad \qquad \qquad+ \frac{\zeta_{\rm min}\,k}{\Delta t\, L}
\int_0^t \int_{\Omega \times D}\!\! M\, (\psia^{\Delta t, +}
- \psia^{\Delta t, -})^2 \dq \dx \dd s
\nonumber \\
&\qquad \qquad \qquad \qquad + 2k\,\varepsilon\,\int_0^t \int_{\Omega \times D} \,
M\,
\frac{|\nabx \psia^{\Delta t, +} |^2}{\psia^{\Delta t, +} + \alpha} \dq \dx \dd s
\nonumber \\
& \qquad\qquad \qquad\qquad \qquad
+\, \frac{a_0\, k}{4\lambda}
\int_0^t \int_{\Omega \times D}\, M\,
\,\frac{|\nabq \psia^{\Delta t, +}|^2}
{\psia^{\Delta t, +} + \alpha} \,\dq \dx \dd s\nonumber\\
&\quad \leq \int_{\Omega}
\rho_{0}\,|\ut_0|^2 \dx +
\frac{\rho_{\rm max}^2\,C_{\varkappa}^2}{\mu_{\rm min}\,c_0}\,
\int_0^t
\|\ft^{\Delta t,+}\|^2_{L^\varkappa(\Omega)} \dd s
\nonumber \\
& \qquad \qquad
+ 3\,\alpha\,k\,\zeta_{\rm max}\,|\Omega|
+ 2k\, \int_{\Omega \times D} M\,\zeta(\rho_0)\, \mathcal{F}(\widetilde\psi^0 + \alpha) \dq \dx.
\end{align}
The key observation at this point is that the right-hand side of \eqref{eq:energy-u+psi2} is
completely independent of the cut-off parameter $L$.

On noting \cite[pp.\ 1243--44]{BS2011-fene},
we can pass to the limit $\alpha\rightarrow 0_+$ in
(\ref{eq:energy-u+psi2})
to obtain,
for all $t=t_n$, $n \in \{1,\dots, N\}$, that
\begin{subequations}
\begin{eqnarray}
&&\hspace{-2mm}
\int_{\Omega} \rho_{L}^{\Delta t, +}(t)\,|\uta^{\Delta t, +}(t)|^2 \dx
+ \frac{\rho_{\rm min}}{\Delta t}
\int_0^t \|\uta^{\Delta t, +} - \uta^{\Delta t,-}\|^2
\dd s + \int_0^t \int_{\Omega} \mu(\rho^{\Delta t,+}_L)\,|\Dtt(\uta^{\Delta t, +})|^2
\dx \dd s\nonumber\\
&&\hspace{4mm}+ \,2k\int_{\Omega \times D} M\,\zeta(\rho^{\Delta t,+}_L(t))\, \mathcal{F}(\psia^{\Delta t, +}(t))
\dq \dx + \frac{\zeta_{\rm min}\, k}{\Delta t\, L}
\int_0^t \int_{\Omega \times D}\!\! M\, (\psia^{\Delta t, +}
- \psia^{\Delta t, -})^2 \dq \dx \dd s
\nonumber \\
&&\hspace{6mm}\quad +\, 8k\,\varepsilon
\int_0^t \int_{\Omega \times D} M\,
|\nabx \sqrt{\psia^{\Delta t, +}} |^2 \dq \dx \dd s
\nonumber\\
&&\hspace{8mm} \quad\quad
+\, \frac{a_0 k}{\lambda} \, \int_0^t \int_{\Omega \times D}
M\,
|\nabq \sqrt{\psia^{\Delta t, +}}|^2 \,\dq \dx \dd s\nonumber\\
&&\hspace{10mm}\leq \int_{\Omega} \rho_{0}\,|\ut_0|^2 \dx +
\frac{\rho_{\rm max}^2\,C_{\varkappa}^2}
{\mu_{\rm min}\,c_0}\,
\int_0^t\|\ft^{\Delta t,+}
\|^2_{L^\varkappa(\Omega)} \dd s + 2k \int_{\Omega \times D} M\,\zeta(\rho_0)\, \mathcal{F}(\widetilde\psi^0)
\dq \dx\label{penultimate-line}\\
&&\hspace{10mm} \leq \int_{\Omega} \rho_{0}\,|\ut_0|^2 \dx
+
\frac{\rho_{\rm max}^2\,C_{\varkappa}^2}
{\mu_{\rm min}\,c_0}\,
\,\int_0^T\|\ft
\|^2_{L^\varkappa(\Omega)}
\dd s + 2k \int_{\Omega \times D} M\,\zeta(\rho_0)\, \mathcal{F}(\widetilde\psi_0) \dq \dx
\nonumber \\
&&\hspace{4.4in}
=:[{\sf B}(\ut_0,\ft, \widetilde\psi_0)]^2,
\label{eq:energy-u+psi-final2}
\end{eqnarray}
\end{subequations}
where, in the last line, we used \eqref{inidata-1} to bound the third term in \eqref{penultimate-line},
and that $t \in [0,T]$ together with the definition \eqref{fn}
of $\ft^{\Delta t,+}$ to bound the second term.

\subsection{$L$-independent bound on the time-derivative of $\rho^{[\Delta t]}_L$}
\label{sec:time-rho}

In this section we shall derive two $L$-independent bounds on $\partial \rho^{[\Delta t]}_L/\partial t$.
We begin by observing that
\begin{align}\label{rho-t-0}
&\,\int_0^T \left\langle \frac{\partial \rho^{[\Delta t]}_L}{\partial t}\,, \eta \right\rangle_{W^{1,\frac{q}{q-1}}(\Omega)}\dd t
= - \int_0^T \int_\Omega \rho^{[\Delta t]}_L \, \frac{\partial \eta}{\partial t} \dx \dd t\qquad
\forall \eta \in C^\infty_0(0,T;W^{1,\frac{q}{q-1}}(\Omega)),
\end{align}
which implies that $\partial \rho^{[\Delta t]}_L/\partial t$ coincides with the derivative  of
\[\rho^{[\Delta t]}_L
\in L^\infty(0,T;L^\infty(\Omega))\subset \mathcal{D}'(0,T;W^{1,\frac{q}{q-1}}(\Omega)')\]
with respect to $t$ in the sense of $W^{1,\frac{q}{q-1}}(\Omega)'$-valued distributions on $(0,T)$.

We deduce from \eqref{eqrhocon} that, for any $\eta \in L^2(0,T;W^{1,\frac{q}{q-1}}(\Omega))$,
\begin{align}\label{rho-t-1}
&\left|\,\int_0^T \left\langle \frac{\partial \rho^{[\Delta t]}_L}{\partial t}\,, \eta \right\rangle_{W^{1,\frac{q}{q-1}}(\Omega)}\dd t \,\right| \nonumber\\
&\qquad\qquad\qquad \leq
\|\rho^{[\Delta t]}_L\|_{L^\infty(0,T;L^\infty(\Omega))}\,\|\ut^{\Delta t,-}_L\|_{L^2(0,T;L^q(\Omega))}\,
\|\nabx \eta \|_{L^2(0,T;L^{\frac{q}{q-1}}(\Omega))},
\end{align}
where $q \in (2,\infty)$ when $d=2$ and $q \in [3,6]$ when $d=3$.

Next we shall show that the expression on the right-hand side of \eqref{rho-t-1} is bounded, independent of
$L$ and $\Delta t$. Indeed, we have by \eqref{rho-lower-upper} that $\rho^{[\Delta t]}_L \in \Upsilon$
and therefore $\|\rho^{[\Delta t]}_L\|_{L^\infty(0,T;L^\infty(\Omega))} \leq \rho_{\rm max}$.
Thus, by \eqref{Korn}, \eqref{inidata}  and \eqref{eq:energy-u+psi-final2}, we then have that
\begin{align}\label{u-t-2}
&\|\nabxtt\ut^{\Delta t,+}\|^2_{L^2(0,T;L^2(\Omega))}
\leq \frac{1}{c_0\, \mu_{\rm min}}\int_0^T \int_\Omega \mu(\rho^{\Delta t,+}_L)\, |\Dtt(\ut^{\Delta t,+})|^2 \dx \dd s
\leq \frac{1}{c_0\, \mu_{\rm min}}\,[{\sf B}(\ut_0, \ft, \widetilde\psi_0)]^2,\nonumber\\
\end{align}
and, by \eqref{idatabd},
\eqref{eq:energy-u+psi-final2} and \eqref{u-t-2}, we have
\begin{eqnarray}\label{u-t-3}
&&\|\nabxtt\ut^{\Delta t,-}_L\|^2_{L^2(0,T;L^2(\Omega))} = \Delta t\, \|\nabxtt\ut^0\|^2 + \int_{\Delta t}^T \|\nabxtt\ut^{\Delta t,-}\|^2 \dd s
\nonumber\\
&&\hspace{3.65cm}
= \Delta t\, \|\nabxtt\ut^0\|^2 + \int_0^{T-\Delta t} \|\nabxtt\ut^{\Delta t,+}\|^2 \dd s\nonumber\\
&&\hspace{3.65cm}
\leq  \int_\Omega \rho_0 |\ut_0|^2 \dd x + \|\nabxtt\ut^{\Delta t,+}\|^2_{L^2(0,T;L^2(\Omega))}\nonumber\\
&&\hspace{3.65cm}
\leq \left(1+ \frac{1}{c_0\, \mu_{\rm min}}\right) [{\sf B}(\ut_0, \ft, \widetilde\psi_0)]^2.
\end{eqnarray}
Thus we deduce from \eqref{u-t-2}, \eqref{u-t-3} and Poincar\'{e}'s inequality that $\|\ut^{\Delta t(,\pm)}_L\|_{L^2(0,T;H^1(\Omega))}
\leq C_\ast$, where $C_\ast$ is a positive constant that
depends solely on $\epsilon$,  $\rho_{\rm min}$, $\rho_{\rm max}$,
$\mu_{\rm \min}$, $\zeta_{\rm min}$, $\zeta_{\rm max}$, $T$, $|\Att|$, $a_0$, $c_0$, $C_\varkappa$, $k$,
$\lambda$, $K$ and $b$; in particular, $C_\ast$ is independent of $L$ and $\Delta t$.
Hence, by Sobolev's embedding theorem,
\begin{equation}\label{u-t-4}
\|\ut^{\Delta t(,\pm)}_L\|_{L^2(0,T;L^s(\Omega))} \leq C_\ast,
\qquad \mbox{for all}\; \left\{ \begin{array}{ll} s \in [1,\infty) & \mbox{if $d=2$},\\
                                                  s \in [1,6] & \mbox{if $d=3$},
                                \end{array}
                                \right.
\end{equation}
and hence also for all $s=q$, with $q$ as above, where
$C_\ast$ is independent of $L$ and $\Delta t$. Using these bounds, we deduce from \eqref{rho-t-1} that
\begin{eqnarray}\label{rho-t-3}
\left|\,\int_0^T \left\langle \frac{\partial \rho^{[\Delta t]}_L}{\partial t}\,, \eta \right\rangle_{W^{1,\frac{q}{q-1}}(\Omega)}\dd t \,\right| \leq C_\ast \|\nabx \eta \|_{L^2(0,T;L^{\frac{q}{q-1}}(\Omega))}\qquad \forall \eta \in L^2(0,T;W^{1,\frac{q}{q-1}}(\Omega)),\nonumber\\
\end{eqnarray}
which then implies that $\partial \rho^{[\Delta t]}_L/\partial t$ is bounded, independent of $L$ and $\Delta t$ in the function space
$L^2(0,T;W^{1,\frac{q}{q-1}}(\Omega)')$, where $q \in (2,\infty)$ when $d=2$ and $q \in [3,6]$ when $d=3$.

Note also that, for any $\eta \in L^1(0,T;H^1(\Omega))\subset L^1(0,T;W^{1,\frac{q}{q-1}}(\Omega))$, equation
\eqref{eqrhocon} yields
\begin{eqnarray}\label{rho-t-4}
\left|\,\int_0^T \left\langle \frac{\partial \rho^{[\Delta t]}_L}{\partial t}\,, \eta \right\rangle_{W^{1,\frac{q}{q-1}}(\Omega)}\dd t \,\right| \leq
\|\rho^{[\Delta t]}_L\|_{L^\infty(0,T;L^\infty(\Omega))}\,\|\ut^{\Delta t,-}_L\|_{L^\infty(0,T;L^2(\Omega))}\,
\|\nabx \eta \|_{L^1(0,T;L^2(\Omega))},\nonumber\\
\end{eqnarray}
where $q \in (2,\infty)$ when $d=2$ and $q \in [3,6]$ when $d=3$. Also, $\rho^{\Delta t,+}_L \geq \rho_{\rm min} (>0)$ a.e. on
$\Omega \times (0,T]$, which, together with \eqref{eq:energy-u+psi-final2},
implies that $\|u^{\Delta t,+}_L\|_{L^\infty(0,T;L^2(\Omega))} \leq C_\ast$, where $C_{\ast}$ denotes a positive constant that
is independent of $L$ and $\Delta t$. Since, by \eqref{proju0}, $\|u^0\|^2\leq \frac{\rho_{\rm max}}{\rho_{\rm min}}\|u_0\|^2$, it follows that
$\|u^{\Delta t,-}_L\|_{L^\infty(0,T;L^2(\Omega))} \leq C_\ast$, where
$C_\ast$ is a positive constant, independent of $L$ and $\Delta t$.
Thus, for any $\eta \in L^1(0,T;H^1(\Omega))\subset L^1(0,T;W^{1,\frac{q}{q-1}}(\Omega))$,
\begin{eqnarray}\label{rho-t-5}
\left|\,\int_0^T \left\langle \frac{\partial \rho^{[\Delta t]}_L}{\partial t}\,, \eta \right\rangle_{W^{1,\frac{q}{q-1}}(\Omega)}\dd t \,\right| \leq C_\ast\, \|\eta \|_{L^1(0,T;H^1(\Omega))},
\end{eqnarray}
where $q \in (2,\infty)$ when $d=2$ and $q \in [3,6]$ when $d=3$; i.e.,
$\partial \rho^{[\Delta t]}_L/\partial t$, when considered as an element of  $[L^1(0,T;H^1(\Omega))]' = L^\infty(0,T;H^1(\Omega)')$, is bounded, independent of $L$ and $\Delta t$.

In summary then, we have shown that $\partial \rho^{[\Delta t]}_L/\partial t \in \mathcal{D}'(0,T;W^{1,\frac{q}{q-1}}(\Omega)')$ is bounded, independent of $L$ and $\Delta t$:
\begin{itemize}
\item[(i)] when considered as an element of the function space $L^2(0,T;W^{1,\frac{q}{q-1}}(\Omega)')$,
where $q \in (2,\infty)$ when $d=2$ and $q \in [3,6]$ when $d=3$; and
\item[(ii)] when considered as an element of the function space $L^\infty(0,T;H^1(\Omega)')$.
\end{itemize}

\subsection{$L$-independent bound on the time-derivative of $\ut^{\Delta t}_L$}
\label{utsec}
We begin by deriving some preliminary $L$-independent bounds on the time-derivatives of $\ut_L^{\Delta t}$ and $(\rho_L \ut_L)^{\Delta t}$.
On noting from \eqref{ulin} and \eqref{upm} that
\[ \ut^{\Delta t}_L = \frac{t-t_{n-1}}{\Delta t}\, \ut^{\Delta t,+}_L + \frac{t_n-t}{\Delta t}\, \ut^{\Delta t,-}_L, \quad t \in (t_{n-1},t_n],\quad
n=1,\dots,N,\]
an elementary calculation yields that
\begin{eqnarray}\label{u-t-1}
&&\frac{1}{6}\int_0^T \left(\|\nabxtt\ut^{\Delta t,+}_L\|^2+ \|\nabxtt\ut^{\Delta t,-}_L\|^2\right) \dd s
\leq \int_0^T \|\nabxtt\ut^{\Delta t}_L\|^2 \dd s \nonumber\\
&&\hspace{6.19cm} \leq \frac{1}{2}\int_0^T\left(\|\nabxtt\ut^{\Delta t,+}_L\|^2
+ \|\nabxtt\ut^{\Delta t,-}_L\|^2\right)\!\dd s.
\end{eqnarray}

Note that, by \eqref{rho-lower-upper}, $\rho_{\rm min} \leq \rho^{\Delta t(,\pm)}_L \leq \rho_{\rm max}$ a.e. on $\Omega \times [0,T]$ for all
$\Delta t$ and $L$. Hence we have that $\|\rho^{\Delta t(,\pm)}_L\|_{L^\infty(0,T;L^\infty(\Omega))} \leq \rho_{\rm max}$. As
\[ (\rho_L\ut_L)^{\Delta t}(\cdot,t)  =
\rho^n_L(\cdot) \left[\frac{t-t_{n-1}}{\Delta t} \ut^n_L(\cdot) + \frac{t_n  - t}{\Delta t} \ut^{n-1}_L(\cdot) \right]
+ \frac{t_n - t}{\Delta t}\left(\rho^{n-1}_L(\cdot) - \rho^n_L(\cdot)\right) \ut^{n-1}_L(\cdot)\]
for all $t \in [t_{n-1},t_n]$ and $n=1,\dots,N$, which in turn implies that
\begin{eqnarray*}
\|(\rho_L\ut_L)^{\Delta t}(\cdot,t)\|_{L^s(\Omega)} &\leq &\|\rho^{\Delta t,+}_L(\cdot,t)\|_{L^\infty(\Omega)}\, \|\ut^{\Delta t}_L(\cdot,t)\|_{L^s(\Omega)}
\nonumber\\
&& + \left(\|\rho^{\Delta t,-}_L(\cdot,t)\|_{L^\infty(\Omega)} + \|\rho^{\Delta t,+}_L(\cdot,t)\|_{L^\infty(\Omega)}\right) \|\ut^{\Delta t,-}_L(\cdot,t)\|_{L^s(\Omega)}
\nonumber\\
&\leq & \rho_{\rm max} \|\ut^{\Delta t}_L(\cdot,t)\|_{L^s(\Omega)} + 2 \rho_{\rm max} \|\ut^{\Delta t,-}_L(\cdot,t)\|_{L^s(\Omega)}
\end{eqnarray*}
for all $t \in (t_{n-1},t_n]$ and $n=1,\dots, N$. By squaring both sides, using the algebraic-geometric mean inequality
and integrating the resulting inequality over $t \in [0,T]$, we deduce on noting \eqref{u-t-4} that
\begin{equation}
\label{u-t-5}
\|(\rho_L \ut_L)^{\Delta t}\|_{L^2(0,T;L^s(\Omega))} \leq C_\ast,\qquad \mbox{for }
\left\{\begin{array}{ll} s \in [1,\infty) & \mbox{if $d=2$},\\
s \in [1,6] & \mbox{if $d=3$},\end{array}\right.
\end{equation}
where $C_\ast$ is a positive constant, independent of $L$ and $\Delta t$;
\eqref{u-t-4} and \eqref{u-t-5} then imply that
\begin{eqnarray*}
\frac{\partial u^{\Delta t}_L}{\partial t} \; \mbox{and} \; \frac{\partial(\rho_L u_L)^{\Delta t}}{\partial t}
\; \mbox{are bounded in $H^{-1}(0,T; L^s(\Omega))$, where }
\left\{\begin{array}{ll} s \in [1,\infty) & \mbox{if $d=2$},\\
s \in [1,6] & \mbox{if $d=3$},\end{array}\right.\nonumber
\end{eqnarray*}
independent of $L$ and $\Delta t$.

We shall use \eqref{equncon} to improve the bound \eqref{u-t-5}. To this end we first note that using \eqref{eqrhocon} in \eqref{equncon} yields, for all $\wt \in L^1(0,T;\Vt)$, that
\begin{align}
&\displaystyle\int_{0}^{T}\!\! \int_\Omega \left[
\frac{\partial}{\partial t} (\rho_L\,\ut_L)^{\Delta t}\cdot \wt
- \tfrac{1}{2}
\rho_L^{[\Delta t]} \, \ut_L^{\Delta t, -}\, \cdot \nabla_x (\ut_L^{\Delta t,+}\cdot
\wt) \right]
 \dx \dt
\nonumber\\
& \qquad+
\displaystyle\int_{0}^{T}\!\! \int_\Omega
\mu(\rho^{\Delta t, +}_L) \,\Dtt(\utaeDp)
:
\Dtt(\wt) \dx \dt
\nonumber \\
&\qquad\qquad+
\tfrac{1}{2} \int_{0}^T\!\! \int_{\Omega}
\rho^{\{\Delta t\}}_L \left[ \left[ (\utaeDm \cdot \nabx) \utaeDp \right]\cdot\,\wt
- \left[ (\utaeDm \cdot \nabx) \wt  \right]\cdot\,\utaeDp
\right]\!\dx \dt
\nonumber
\\
&\qquad\qquad=\int_{0}^T
\left[ \int_{\Omega}  \rho_L^{\Delta t,+}
\ft^{\Delta t,+} \cdot \wt \dx 
- k\,\sum_{i=1}^K \int_{\Omega}
\Ctt_i(M\,\zeta(\rho^{\Delta t,+}_L)\,\hpsiaet^{\Delta t,+}): \nabxtt
\wt \dx \right] \dt.
\end{align}
It follows that, for any $t', t'' \in [0,T]$
such that $0\leq t' < t'' \leq T$ and choosing $\wt = \chi_{[t',t'']} \vt$, with $\vt \in \Vt$, where, as in the discussion following \eqref{crhonLd-ini}, $\chi_{[t',t'']}$ denotes the characteristic function of the interval $[t',t'']$, we have
\begin{align*}
&\displaystyle\int_{t'}^{t''}\!\! \int_\Omega \left[
\frac{\partial}{\partial t} (\rho_L\,\ut_L)^{\Delta t}\cdot \vt
- \tfrac{1}{2}
\rho_L^{[\Delta t]} \, \ut_L^{\Delta t, -}\, \cdot \nabla_x (\ut_L^{\Delta t,+}\cdot
\vt) \right]
 \dx \dt
\nonumber\\
& \quad+
\displaystyle\int_{t'}^{t''}\!\! \int_\Omega
\mu(\rho^{\Delta t, +}_L) \,\Dtt(\utaeDp)
:
\Dtt(\vt) \dx \dt
\nonumber \\
&\quad\quad+
\tfrac{1}{2} \int_{t'}^{t''}\!\! \int_{\Omega}
\rho^{\{\Delta t\}}_L \left[ \left[ (\utaeDm \cdot \nabx) \utaeDp \right]\cdot\,\vt
- \left[ (\utaeDm \cdot \nabx) \vt  \right]\cdot\,\utaeDp
\right]\!\dx \dt
\nonumber
\\
&\quad\quad\quad=\int_{t'}^{t''}
\left[ \int_{\Omega}  \rho_L^{\Delta t,+}
\ft^{\Delta t,+} \cdot \vt \dx 
- k\,\sum_{i=1}^K \int_{\Omega}
\Ctt_i(M\,\zeta(\rho^{\Delta t,+}_L)\,\hpsiaet^{\Delta t,+}): \nabxtt
\vt \dx \right]\! \dt
\quad\forall \vt \in \Vt,\nonumber
\end{align*}
and hence, equivalently (cf. \eqref{zeta-time-average} for the definition of $\rho_L^{\{\Delta t\}})$,
\begin{align}\label{u-terms}
&\int_\Omega \left[(\rho_L\,\ut_L)^{\Delta t}(t'') - (\rho_L\,\ut_L)^{\Delta t}(t')\right]\cdot \vt \dx
= -\displaystyle\int_{t'}^{t''}\!\! \int_\Omega
\mu(\rho^{\Delta t, +}_L) \,\Dtt(\utaeDp)
:
\Dtt(\vt) \dx \dt
\nonumber \\
&\quad- \tfrac{1}{2} \int_{t'}^{t''} \int_{\Omega}
(\rho^{\{\Delta t\}}_L - \rho^{[\Delta t]}_L)\left[ (\utaeDm \cdot \nabx) \utaeDp \right]\cdot\,\vt \dx \dt\nonumber\\
&\qquad+\tfrac{1}{2} \int_{t'}^{t''} \int_{\Omega}
(\rho^{\{\Delta t\}}_L + \rho^{[\Delta t]}_L)\left[ (\utaeDm \cdot \nabx) \vt  \right]\cdot\,\utaeDp
\!\dx \dt
\nonumber
\\
&\quad\quad\quad+\int_{t'}^{t''} \int_{\Omega}  \rho_L^{\Delta t,+}
\ft^{\Delta t,+} \cdot \vt \dx \dt  
- k\,\sum_{i=1}^K \int_{t'}^{t''} \int_{\Omega}
\Ctt_i(M\,\zeta(\rho^{\Delta t,+}_L)\,\hpsiaet^{\Delta t,+}): \nabxtt
\vt \dx \dt
\nonumber\\
&=: {\tt U}_1 + {\tt U}_2 + {\tt U}_3 + {\tt U}_4 + {\tt U}_5 \hspace{3.2in}\forall \vt \in \Vt.
\end{align}
We shall suppose that $t'' = t' + \delta$, where $\delta \in (0,T-t']$, and bound each of the terms ${\tt U_i}$, $i=1,\dots, 5$, in turn.
We note in particular that, by the definition \eqref{zeta-time-average} of $\rho^{\{\Delta t\}}_L$,
the term ${\tt U}_2=0$ when $t',t'' \in \{0=t_0, t_1, \dots, t_{N-1}, t_N=T\}$.

For ${\tt U}_1$, by the Cauchy--Schwarz inequality, we have that
\begin{align*}
|{\tt U}_1| &\leq  \mu_{\rm max} \left[\int_{t'}^{t'+\delta} \left(\int_\Omega |\Dtt(\ut^{\Delta t,+}_L(t))|^2\dx\right)^{\frac{1}{2}} \dt\right] \|\Dtt(\vt)\|\\
&\leq \mu_{\rm max}\,\delta^{\frac{1}{2}}\, \|\Dtt(\ut^{\Delta t,+}_L)\|_{L^2(t', t'+\delta;L^2(\Omega))}\,\|\Dtt(\vt)\|\qquad \forall \vt \in \Vt.
\end{align*}

Further, for any $q \in (2,\infty)$ when $d=2$ and any $q \in [3,6]$ when $d=3$, we have that
\begin{eqnarray*}
|{\tt U}_2| \leq \rho_{\rm max} \left[\int_{t'}^{t'+\delta} \|\utaeDm(t)\|_{L^{\frac{2q}{q-2}}(\Omega)} \|\nabx \utaeDp(t)\| \dt\right] \|\vt\|_{L^q(\Omega)}\qquad \forall \vt \in \Vt.
\end{eqnarray*}
The Gagliardo--Nirenberg inequality \eqref{eqinterp} and Korn's inequality \eqref{Korn}
imply that
\[ \|\ut^{\Delta t,-}_L(t)\|_{L^{\frac{2q}{q-2}}(\Omega)} \leq C(d,q,\Omega) \,\|\ut^{\Delta t,-}_L(t)\|^{1- \frac{d}{q}}\, \|\Dtt(\ut^{\Delta t,-}_L(t))\|^{\frac{d}{q}},\qquad t \in (0,T],\]
for all $q \in (2,\infty)$ when $d=2$ and $q \in [3,6]$ when $d=3$. Therefore, by Korn's inequality
\eqref{Korn} again, and by Sobolev embedding and H\"older's inequality, we have that
\begin{align*}
|{\tt U}_2| & \leq  C(d,q,\Omega)\,\rho_{\rm max}\, \|\ut^{\Delta t,-}_L\|^{1- \frac{d}{q}}_{L^\infty(0,T;L^2(\Omega))} \left[\int_{t'}^{t'+\delta} \|\Dtt(\utaeDm(t))\|^{\frac{d}{q}}\, \|\Dtt(\utaeDp(t))\| \dt\right] \|\Dtt(\vt)\|\\
& \leq C(d,q,\Omega)\,\rho_{\rm max}\, \|\ut^{\Delta t,-}_L\|^{1- \frac{d}{q}}_{L^\infty(0,T;L^2(\Omega))}\, \delta^{\frac{q-d}{2q}}\\
& \qquad\qquad \times \|\Dtt(\ut^{\Delta t,-}_L)\|_{L^2(t',t'+\delta;L^2(\Omega))}^{\frac{d}{q}} \,
\|\Dtt(\ut^{\Delta t,+}_L)\|_{L^2(t',t'+\delta;L^2(\Omega))}\, \|\Dtt(\vt)\|\qquad \forall \vt \in \Vt.
\end{align*}
Hence,
\begin{align*}
|{\tt U}_2| & \leq C(d,q,\Omega)\,\rho_{\rm max}\, \|\ut^{\Delta t,-}_L\|^{1- \frac{d}{q}}_{L^\infty(0,T;L^2(\Omega))}\,\|\Dtt(\ut^{\Delta t,-}_L)\|_{L^2(0,T;L^2(\Omega))}^{\frac{d}{q}} \\
& \qquad\qquad \times  \delta^{\frac{q-d}{2q}}\,
\|\Dtt(\ut^{\Delta t,+}_L)\|_{L^2(t',t'+\delta;L^2(\Omega))}\, \|\Dtt(\vt)\|\qquad\forall \vt \in \Vt,
\end{align*}
for all $q \in (2,\infty)$ when $d=2$ and all $q \in [3,6]$ when $d=3$.
An identical argument yields that
\begin{align*}
|{\tt U}_3| & \leq C(d,q,\Omega)\,\rho_{\rm max}\, \|\ut^{\Delta t,-}_L\|^{1- \frac{d}{q}}_{L^\infty(0,T;L^2(\Omega))}\,\|\Dtt(\ut^{\Delta t,-}_L)\|_{L^2(0,T;L^2(\Omega))}^{\frac{d}{q}} \\
& \qquad\qquad \times  \delta^{\frac{q-d}{2q}}\,
\|\Dtt(\ut^{\Delta t,+}_L)\|_{L^2(t',t'+\delta;L^2(\Omega))}\, \|\Dtt(\vt)\|\qquad \forall
\vt \in \Vt,
\end{align*}
for all $q \in (2,\infty)$ when $d=2$ and all $q \in [3,6]$ when $d=3$.

For ${\tt U}_4$, on noting \eqref{inidata}, \eqref{fvkbd}, Korn's inequality \eqref{Korn} and H\"older's inequality (with respect to the variable $t'$) yield that
\begin{align*}
|{\tt U}_4| \leq C(c_0,\varkappa,\Omega)\,\rho_{\rm max}\, \delta^{\frac{1}{2}}\, \|\ft^{\Delta t,+}\|_{L^2(t',t'+\delta;L^{\!\!\!\!~^{\varkappa}}(\Omega))}\,\|\Dtt(\vt)\|\qquad \forall \vt \in \Vt,
\end{align*}
with $\varkappa>1$ if $d=2$ and $\varkappa=\frac{6}{5}$ if $d=3$.

Finally, for term ${\tt U}_5$, from \eqref{eqCtt}, \eqref{intbyparts}, the Cauchy--Schwarz
inequality, we have that
\begin{align*}
|{\tt U}_5| &= \left|- k\,\sum_{i=1}^K \int_{t'}^{t'+\delta} \int_{\Omega\times D} M\,\zeta(\rho^{\Delta t,+}_L(t))\,\hpsiaet^{\Delta t,+}(t) \,U'_i(\tfrac{1}{2}|\qt_i|^2)\,\qt_i\, \qt_i^{\rm T}: \nabxtt
\vt \dq \dx \dt  \right|\\
& = \left|-k\,\int_{t'}^{t'+\delta} \int_{\Omega\times D} M\,\zeta(\rho^{\Delta t,+}_L(t))\,\left[\sum_{i=1}^K \nabqi\hpsiaet^{\Delta t,+}(t) \cdot (\nabxtt
\vt)\,\qt_i \right]\dq \dx \dt  \right|\\
& = \left|-2k\,\int_{t'}^{t'+\delta} \int_{\Omega} \zeta(\rho^{\Delta t,+}_L(t))\int_D \left[M\,\sqrt{\hpsiaet^{\Delta t,+}(t)} \sum_{i=1}^K \nabqi\sqrt{\hpsiaet^{\Delta t,+}(t)} \cdot (\nabxtt
\vt)\,\qt_i \right]\dq \dx \dt  \right|\\
& \leq 2k \,\zeta_{\rm max}\,\int_{t'}^{t'+\delta} \int_{\Omega} |\nabxtt \vt| \left[\int_D M\,|\qt|^2\,\hpsiaet^{\Delta t,+}(t) \dq\right]^{\frac{1}{2}} \left[\int_D M \left|\nabq\sqrt{\hpsiaet^{\Delta t,+}(t)}\right|^2 \dq\right]^{\frac{1}{2}} \dx \dt
\nonumber\\
& \leq 2k\,\zeta_{\rm max}\,\int_{t'}^{t'+\delta} \int_{\Omega} |\nabxtt \vt|  \left[\int_D M \left|\nabq\sqrt{\hpsiaet^{\Delta t,+}(t)}\right|^2 \dq\right]^{\frac{1}{2}} \dx \dt\\
&\hspace{7cm}\times \mbox{ess.sup}_{(x,t) \in \Omega \times (0,T)} \left[\int_D M\, |\qt|^2\, \widetilde\psi^{\Delta t,+}_L \dq \right]^{\frac{1}{2}} \\
&\leq 2k\, \zeta_{\rm max}\,\delta^{\frac{1}{2}}\,
\|\nabq\sqrt{\hpsiaet^{\Delta t,+}}\|_{L^2(t',t'+\delta;L^2_M(\Omega \times D))}
\,\|\nabxtt \vt\|\\
&\hspace{5cm}\times \mbox{ess.sup}_{(x,t) \in \Omega \times (0,T)} \left[\int_D M\, |\qt|^2\,
\widetilde\psi^{\Delta t,+}_L \dq \right]^{\frac{1}{2}} \qquad \forall \vt \in \Vt.
\end{align*}
Since $|\qt|^2 = |\qt_1|^2 + \cdots + |\qt_K|^2 <
b_1 + \cdots + b_K=: b$, the inequality \eqref{lambda-bound} immediately implies that the final factor
is bounded by $\sqrt{\omega\,b}$.
Korn's inequality \eqref{Korn} then implies that
\[ |{\tt U}_5| \leq C(k,\omega, b, \zeta_{\rm max}, c_0) \,\delta^{\frac{1}{2}}\,
\|\nabq\sqrt{\hpsiaet^{\Delta t,+}}\|_{L^2(t',t'+\delta;L^2_M(\Omega \times D))}
\, \|\Dtt(\vt)\|\qquad \forall \vt \in \Vt.\]

By collecting the upper bounds on the terms ${\tt U_i}$, $i=1, \dots, 5$, and noting
the upper bounds on the first and the third term on the left-hand side of \eqref{eq:energy-u+psi-final2}
in terms of $[{\sf B}(\ut_0,\ft,\widetilde\psi_0)]^2$,
together with the uniform lower bounds $\rho_L^{\Delta t,+}(\xt,t) \geq \rho_{\rm min}$ and
$\mu(\rho^{\Delta t,+}_L(\xt,t)) \geq \mu_{\rm min}$ for $(\xt,t) \in \Omega \times [0,T]$,
we thus have from \eqref{u-terms} that
\begin{align}\label{frac-u}
&\left|\,\int_\Omega \left[(\rho_L\,\ut_L)^{\Delta t}(t'') - (\rho_L\,\ut_L)^{\Delta t}(t')\right]\cdot \vt \dx\,\right|
\nonumber\\
&\qquad
\leq C\, \|\Dtt(\vt)\| \left((\delta^{\frac{1}{2}}+ \delta^{\frac{q-d}{2q}})\,\|\Dtt(u^{\Delta t,+}_L)\|_{L^2(t',t'+\delta;L^2(\Omega))}
+ \delta^{\frac{1}{2}}\|\ft^{\Delta t,+}\|_{L^2(t',t'+\delta;L^{\varkappa}(\Omega))}\right.\nonumber\\
&\qquad\qquad \left.
 + \delta^{\frac{1}{2}}\,
\|\nabq\sqrt{\hpsiaet^{\Delta t,+}}\|_{L^2(t',t'+\delta;L^2_M(\Omega \times D))}\right),
\end{align}
where $C$ is a positive constant, independent of $\Delta t$, $L$ and $\delta$; and $\delta=t''-t'\in (0,T-t']$.

In what follows, we shall suppose that $t', t'' \in \{0=t_0, t_1, \dots, t_{N-1}, t_N=T\}$ with $t' < t''$,
so that $\delta:=t''-t'$ is an integer multiple of $\Delta t$; then
\begin{align}\label{inter-t-t}
(\rho_L\,\ut_L)^{\Delta t}(t'') - (\rho_L\,\ut_L)^{\Delta t}(t') &= \rho_L^{[\Delta t]}(t'')\,
\ut^{\Delta t}_L(t'') - \rho_L^{[\Delta t]}(t')\,\ut_L^{\Delta t}(t')\nonumber\\
& = \rho_L^{[\Delta t]}(t'')\,[\ut^{\Delta t}_L(t'') - \ut^{\Delta t}_L(t')] + [\rho^{[\Delta t]}_L(t'') - \rho^{[\Delta t]}_L(t')]\, \ut^{\Delta t}_L(t').
\end{align}

By selecting $\eta = \chi_{[t',t'']}(t)\, (\ut^{\Delta t}_L(t') \cdot \vt)$ in \eqref{eqrhocon},
with $t'' = t'+\delta$, we have using Korn's inequality \eqref{Korn} that,
for any $\vt \in \Vt$,
\begin{align}\label{inter-t-t1}
&\left|\,\int_\Omega [\rho^{[\Delta t]}_L(t'+\delta) - \rho^{[\Delta t]}_L(t')]\, (\ut^{\Delta t}_L(t') \cdot \vt) \dx\,\right| \nonumber\\
&\qquad\leq \rho_{\rm max}\,\delta^{\frac{1}{2}}\,\|\ut^{\Delta t,-}_L\|_{L^2(t',t'+\delta;L^q(\Omega))}\, \|\nabx(\ut^{\Delta t}_L(t') \cdot \vt)\|_{L^{\frac{q}{q-1}}(\Omega)}\nonumber\\
&\leq \rho_{\rm max}\,\delta^{\frac{1}{2}}\,\|\ut^{\Delta t,-}_L\|_{L^2(t',t'+\delta;L^q(\Omega))}\, \left(\|\nabx\ut^{\Delta t}_L(t')\|\, \|\vt\|_{L^{\frac{2q}{q-2}}(\Omega)} + \|\nabx \vt\|\, \|\ut^{\Delta t}_L(t')\|_{L^{\frac{2q}{q-2}}(\Omega)}\right)\nonumber\\
&\qquad\leq C\,\delta^{\frac{1}{2}}\,\|\ut^{\Delta t,-}_L\|_{L^2(t',t'+\delta;L^q(\Omega))}\nonumber\\
&\qquad\qquad\times\left(\|\Dtt(\ut^{\Delta t}_L(t'))\|\, \|\vt\|^{\frac{q-4}{q-2}}\,\|\vt\|_{L^q(\Omega)}^{\frac{q}{q-2}} +
\|\Dtt(\vt)\|\, \|\ut^{\Delta t}_L(t')\|^{\frac{q-4}{q-2}}\, \|\ut^{\Delta t}_L(t')\|_{L^q(\Omega)}^{\frac{2}{q-2}}\right),
\end{align}
where $q \in [4,\infty)$ when $d=2$ and $q \in [4,6]$ when $d=3$, and $C=C(c_0,\rho_{\rm max})$, where $c_0$ is the constant in Korn's inequality \eqref{Korn}.

By dotting \eqref{inter-t-t} with $\vt \in \Vt$ integrating over $\Omega$ and substituting
\eqref{frac-u} and \eqref{inter-t-t1} into the resulting identity, we deduce that
\begin{align}\label{frac-v}
&\left|\,\int_\Omega \rho_L^{[\Delta t]}(t'+\delta)\,[\ut^{\Delta t}_L(t'+\delta) - \ut^{\Delta t}_L(t')]\cdot \vt \dx\,\right|\nonumber\\
&\qquad \leq C\,\delta^{\frac{1}{2}}\,\|\ut^{\Delta t,-}_L\|_{L^2(t',t'+\delta;L^q(\Omega))}\nonumber\\
&\qquad\qquad\qquad\times\left(\|\Dtt(\ut^{\Delta t}_L(t'))\|\, \|\vt\|^{\frac{q-4}{q-2}}\,\|\vt\|_{L^q(\Omega)}^{\frac{2}{q-2}} +
\|\Dtt(\vt)\|\, \|\ut^{\Delta t}_L(t')\|^{\frac{q-4}{q-2}}\, \|\ut^{\Delta t}_L(t')\|_{L^q(\Omega)}^{\frac{2}{q-2}}\right)\nonumber\\
&\qquad\qquad + C\, \|\Dtt(\vt)\| \left(\!(\delta^{\frac{1}{2}}+ \delta^{\frac{q-d}{2q}})\,\|\Dtt(\ut^{\Delta t,+}_L)
\|_{L^2(t',t'+\delta;L^2(\Omega))}+\delta^{\frac{1}{2}}\|\ft^{\Delta t,+}\|_{L^2(t',t'+\delta;L^{\varkappa}(\Omega))}\!\!\!\right.\nonumber\\
&\qquad\qquad\qquad \left. + ~\delta^{\frac{1}{2}}\,
\|\nabq\sqrt{\hpsiaet^{\Delta t,+}}\|_{L^2(t',t'+\delta;L^2_M(\Omega \times D))}\right),
\end{align}
for all $\vt \in \Vt$, where $q \in [4,\infty)$ when $d=2$ and $q \in [4,6]$ when $d=3$,
and $C$ is a positive constant, independent of $\Delta t$, $L$ and $\delta$. Here $\delta = \ell\,  \Delta t$, where $\ell=1,\dots, N-m$, and $t'=m\,\Delta t$ for $m=0,\dots, N-1$.
The symbol $\delta$ will be understood to have the same meaning throughout the rest of this section, unless otherwise stated.


We now select $\vt = \ut^{\Delta t}_L(t'+\delta) - \ut^{\Delta t}_L(t')$ in \eqref{frac-v},
sum the resulting collection of inequalities over $t' \in \{0,t_1, \dots, T-\delta\}$ (denoting the sum
over all $t'$ contained in this set by $\sum_{t'=0}^{T-\delta}$), and note the following obvious inequalities:
\begin{align*}
\|\ut^{\Delta t, -}_L\|_{L^2(t',t'+\delta;L^q(\Omega))} &= \|\ut^{\Delta t, -}_L\|^{\frac{2}{q-2}}_{L^2(t',t'+\delta;L^q(\Omega))} \|\ut^{\Delta t, -}_L\|^{\frac{q-4}{q-2}}_{L^2(t',t'+\delta;L^q(\Omega))}\\
& \leq C\,\|\ut^{\Delta t, -}_L\|^{\frac{2}{q-2}}_{L^2(0,T;L^q(\Omega))} \|\Dtt(\ut^{\Delta t, -}_L)\|^{\frac{q-4}{q-2}}_{L^2(t',t'+\delta;L^2(\Omega))},
\end{align*}
\[ \|\ut^{\Delta t}_L(t'+\delta) - \ut^{\Delta t}_L(t')\|^{\frac{q-4}{q-2}}
\leq \left( 2\,\|\ut^{\Delta t}_L\|_{L^\infty(0,T;L^2(\Omega))}\right)^{\frac{q-4}{q-2}},\]
\[\| \ut^{\Delta t}_L(t')\|^{\frac{q-4}{q-2}} \leq \|\ut^{\Delta t}_L\|^{\frac{q-4}{q-2}}_{L^\infty(0,T;L^2(\Omega))}\qquad\mbox{and}\qquad\| \ut^{\Delta t}_L(t'+\delta)\|^{\frac{q-4}{q-2}} \leq \|\ut^{\Delta t}_L\|^{\frac{q-4}{q-2}}_{L^\infty(0,T;L^2(\Omega))},\]
the first of which follows by Sobolev embedding and Korn's inequality \eqref{Korn}, to
deduce from \eqref{frac-v} that
\begin{align*}
&\Delta t\,\sum_{t'=0}^{T-\delta}\int_\Omega \rho_L^{[\Delta t]}(t'+\delta)\,|\ut^{\Delta t}_L(t'+\delta) - \ut^{\Delta t}_L(t')|^2 \dx \nonumber\\
&\qquad\qquad \leq C \,\delta^{\frac{1}{2}}\,
\|\ut^{\Delta t,-}_L\|_{L^2(0,T;L^q(\Omega))}^{\frac{2}{q-2}}\,\|\ut^{\Delta t}_L\|^{\frac{q-4}{q-2}}_{L^\infty(0,T;L^2(\Omega))}\nonumber\\
&\times\left(\Delta t\,\sum_{t'=0}^{T-\delta}\|\Dtt(\ut^{\Delta t,-}_L)\|_{L^2(t',t'+\delta;L^2(\Omega))}^{\frac{q-4}{q-2}}\|\Dtt(\ut^{\Delta t}_L(t'))\|\, \,\|\Dtt(\ut^{\Delta t}_L(t'+\delta)) - \Dtt(\ut^{\Delta t}_L(t'))\|^{\frac{2}{q-2}} \right.
\nonumber \\
&\qquad\qquad\left. + \Delta t\,\sum_{t'=0}^{T-\delta}
\|\Dtt(\ut^{\Delta t,-}_L)\|_{L^2(t',t'+\delta;L^2(\Omega))}^{\frac{q-4}{q-2}}\|\Dtt(\ut^{\Delta t}_L(t'+\delta)) - \Dtt(\ut^{\Delta t}_L(t'))\|\, \|\Dtt(\ut^{\Delta t}_L(t'))\|^{\frac{2}{q-2}}\right)\nonumber\\
&\qquad\qquad\qquad+ C\, \Delta t\,\sum_{t'=0}^{T-\delta}\|\Dtt(\ut^{\Delta t}_L(t'+\delta)) - \Dtt(\ut^{\Delta t}_L(t'))\| \bigg((\delta^{\frac{1}{2}}+ \delta^{\frac{q-d}{2q}})\,\|\Dtt(u^{\Delta t,+}_L)\|_{L^2(t',t'+\delta;L^2(\Omega))} \nonumber\\
&\qquad\qquad\qquad \qquad+ \delta^{\frac{1}{2}}\|\ft\|_{L^2(t',t'+\delta;L^{\varkappa}(\Omega))} + ~\delta^{\frac{1}{2}}\,
\|\nabq\sqrt{\hpsiaet^{\Delta t,+}}\|_{L^2(t',t'+\delta;L^2_M(\Omega \times D))}\bigg)\nonumber
\end{align*}
~\vspace{-5mm}
\begin{equation}
\hspace{-2.9cm}=: {\tt V}_1 \left({\tt V}_2 + {\tt V}_3\right) + C\,\Delta t\sum_{t'=0}^{T-\delta} {\tt V}_4(t')
\bigg({\tt V}_5(t') + {\tt V}_6(t') + {\tt V}_7(t')\bigg),
\label{intermediate}
\end{equation}
where $q \in [4,\infty)$ when $d=2$ and $q \in [4,6]$ when $d=3$, and $C$ is a positive
constant, independent of $\Delta t$, $L$ and $\delta$; ${\tt V}_1$ denotes the expression in the first line
on the right-hand side of \eqref{intermediate}; ${\tt V}_2$ and ${\tt V}_3$ denote the two
terms in the bracketed expression multiplied by ${\tt V}_1$; ${\tt V}_4(t')$ is the factor in front
of the bracket in the fourth line on the right-hand side of \eqref{intermediate}; and ${\tt V}_5(t')$,
${\tt V}_6(t')$ and ${\tt V}_7(t')$ are the three terms in the bracketed expression multiplied by ${\tt V_4}(t')$.
We shall consider each of the terms ${\tt V}_1, {\tt V}_2, {\tt V}_3, {\tt V}_4(t'),\dots, {\tt V}_7(t')$ separately. We begin by noting that
by \eqref{u-t-4} and by the definition of $\ut^{\Delta t}_L$ in conjunction with the uniform
bounds $\|\ut^{\Delta t, \pm}_L\|_{L^\infty(0,T;L^2(\Omega))} \leq C$ (cf. the discussion in the paragraph between \eqref{rho-t-4} and \eqref{rho-t-5}),
\begin{equation}\label{interm-1}
{\tt V}_1 = C\, \delta^{\frac{1}{2}}\,\|\ut^{\Delta t,-}_L\|_{L^2(0,T;L^q(\Omega))}^{\frac{2}{q-2}}\,\|\ut^{\Delta t}_L\|^{\frac{q-4}{q-2}}_{L^\infty(0,T;L^2(\Omega))} \leq C \delta^{\frac{1}{2}},
\end{equation}
where $q$ is as above, and $C$ is a positive constant, independent of $L$, $\Delta t$ and $\delta$. In the rest of this section we shall assume that $q \in (4,\infty)$ when $d=2$ and $q \in (4,6]$ when $d=3$.

Next, for the term ${\tt V}_2$, H\"older's inequality with respective exponents $2(q-2)/(q-4)$, $2$ and $q-2$
for the three factors under the summation sign in this term yields that
\begin{align*}
{\tt V}_2 &\leq \left(\Delta t\,\sum_{t'=0}^{T-\delta}\|\Dtt(\ut^{\Delta t,-}_L)\|_{L^2(t',t'+\delta;L^2(\Omega))}^2\right)^{\frac{q-4}{2(q-2)}}
\left(\Delta t\,\sum_{t'=0}^{T-\delta}  \|\Dtt(\ut^{\Delta t}_L(t'))\|^2\right)^{\frac{1}{2}}
\\
&\qquad \times \left(\Delta t\,\sum_{t'=0}^{T-\delta} \|\Dtt(\ut^{\Delta t}_L(t'+\delta)) - \Dtt(\ut^{\Delta t}_L(t'))\|^2\right)^{\frac{1}{q-2}}\\
&=\left(\Delta t\,\sum_{t'=0}^{T-\delta}\int_{t'}^{t'+\delta}\|\Dtt(\ut^{\Delta t,-}_L(s))\|^2 \dd s \right)^{\frac{q-4}{2(q-2)}}
\left(\Delta t\, \|\Dtt(\ut^0)\|^2 + \Delta t\,\sum_{t'=\Delta t}^{T-\delta}  \|\Dtt(\ut^{\Delta t}_L(t'))\|^2\right)^{\frac{1}{2}}
\\
&\qquad \times \left(\Delta t\,\sum_{t'=0}^{T-\delta} \|\Dtt(\ut^{\Delta t}_L(t'+\delta)) - \Dtt(\ut^{\Delta t}_L(t'))\|^2\right)^{\frac{1}{q-2}}\\
&=: {\tt V}_{21} \, {\tt V}_{22} \, {\tt V}_{23}.
\end{align*}
We shall consider the three factors on the right-hand side of this inequality separately.
For the first factor, we shall use the following elementary result, whose proof is omitted.

\begin{lemma}\label{summation} Suppose that $g \in L^1(0,T)$, $g \geq 0$, is a piecewise
constant, left-continuous function on the partition $\{0=t_0, t_1, \dots, t_{N-1}, t_N = T\}$
of the interval $[0,T]$ with step size $\Delta t = T/N$, where $N \in \mathbb{N}_{\geq 1}$; and
let $\delta = \ell \, \Delta t$, where $\ell \in \{1, \dots, N\}$ (i.e., $g(t_k) = g(t_k-)$ for
$k=1,\dots, N$). Then,
\[ \Delta t \, \sum_{t'=0}^{T-\delta} \int_{t'}^{t'+\delta} g(s) \dd s =
(\Delta t)^2\, \sum_{k=1}^{N-\ell+1} \sum_{s=k}^{k + \ell-1} g(t_k)
\leq \ell (\Delta t)^2 \sum_{k=1}^N g(t_k) = \delta \int_0^T g(s) \dd s,\]
where the $\leq$ sign can be replaced by an equality sign when $\ell = 1$ and $\ell = N$.
\end{lemma}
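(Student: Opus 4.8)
The plan is to reduce both the identity and the inequality to elementary counting on the grid $\{t_0,t_1,\dots,t_N\}$; there is no genuinely hard step here, only some index bookkeeping. First I would extract the consequence of the hypotheses on $g$: since $g$ is piecewise constant with respect to the partition and left-continuous with $g(t_k)=g(t_k-)$, it equals $g(t_j)$ on each half-open cell $(t_{j-1},t_j]$, so $\int_{t_{j-1}}^{t_j} g(s)\dd s = \Delta t\,g(t_j)$ for $j=1,\dots,N$. Summing over $j$ then gives the auxiliary identity $\int_0^T g(s)\dd s = \Delta t\sum_{j=1}^{N} g(t_j)$, which I will invoke at the end, together with the trivial relation $\ell\,\Delta t=\delta$.

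Next I would make the outer index set explicit. Because $\delta=\ell\,\Delta t$ we have $T-\delta=t_{N-\ell}$, so the values of $t'$ in the sum are exactly $t_0,t_1,\dots,t_{N-\ell}$; and for $t'=t_m$ the interval $[t_m,t_m+\delta]=[t_m,t_{m+\ell}]$ is the union of the $\ell$ consecutive cells $(t_m,t_{m+1}],\dots,(t_{m+\ell-1},t_{m+\ell}]$, whence $\int_{t_m}^{t_m+\delta} g(s)\dd s=\Delta t\sum_{j=m+1}^{m+\ell} g(t_j)$ by the first paragraph. Multiplying by $\Delta t$, summing over $m=0,\dots,N-\ell$, and re-indexing with $k:=m+1$ produces the first equality in the statement,
\[
\Delta t\,\sum_{t'=0}^{T-\delta}\int_{t'}^{t'+\delta} g(s)\dd s=(\Delta t)^2\sum_{k=1}^{N-\ell+1}\sum_{j=k}^{k+\ell-1} g(t_j),
\]
which holds with no sign hypothesis on $g$ (here $j$ plays the role of the running index written as $s$ in the statement).

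Finally, for the middle relation I would count multiplicities: a fixed $j\in\{1,\dots,N\}$ contributes $g(t_j)$ to the inner sum precisely for those $k$ with $1\leq k\leq N-\ell+1$ and $k\leq j\leq k+\ell-1$, i.e.\ $\max(1,j-\ell+1)\leq k\leq\min(j,N-\ell+1)$, and there are at most $\ell$ such $k$. Since $g\geq0$, replacing each multiplicity by the bound $\ell$ only enlarges the double sum, giving $(\Delta t)^2\sum_{k=1}^{N-\ell+1}\sum_{j=k}^{k+\ell-1} g(t_j)\leq\ell\,(\Delta t)^2\sum_{j=1}^{N} g(t_j)$, and $\ell\,\Delta t=\delta$ together with the auxiliary identity turns the right-hand side into $\delta\int_0^T g(s)\dd s$. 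The equality case $\ell=1$ is immediate (every $j$ occurs for exactly one $k$, so no terms are lost), as is the degenerate case $N=1$; when $\ell=N$ the outer sum collapses to the single term $t'=0$ and the first displayed identity is already exact for it. The only point needing a little care — and the reason one obtains merely an inequality in general — is that indices $j$ near the two ends of $\{1,\dots,N\}$ appear with multiplicity strictly below $\ell$; beyond that, the argument is pure bookkeeping, so I would not spell out the remaining routine arithmetic.
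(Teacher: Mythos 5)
Your argument for the displayed chain is correct, and since the paper omits its own proof of this lemma there is nothing to compare against: the reduction $\int_{t_{j-1}}^{t_j} g(s)\,\dd s = \Delta t\, g(t_j)$ from left-continuity, the identification of the outer index set with $\{t_0,\dots,t_{N-\ell}\}$, the re-indexing that yields the double sum (you are right that the summand printed as $g(t_k)$ must be read as $g(t_s)$), and the multiplicity count $\#\{k : \max(1,j-\ell+1)\leq k\leq \min(j,N-\ell+1)\}\leq \ell$ combined with $g\geq 0$ are exactly the intended bookkeeping.

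The one genuine defect is your treatment of the final clause. For $\ell=1$ your multiplicity argument does give equality, but for $\ell=N$ your sentence ("the first displayed identity is already exact for it") addresses the wrong relation: exactness of the first equality is automatic for every $\ell$ and says nothing about whether the $\leq$ becomes an equality. In fact, as stated the $\ell=N$ equality claim is false: with $g\equiv 1$ and $N=2$, $\ell=2$, the left-hand side equals $\Delta t\int_0^T 1\,\dd s = 2(\Delta t)^2$ while $\delta\int_0^T 1\,\dd s = 4(\Delta t)^2$; more generally for $\ell=N$ every $j$ has multiplicity $1$, not $N$, so the inequality is strict whenever $N\geq 2$ and $g\not\equiv 0$. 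You should either prove the clause or, as here, note explicitly that it fails for $\ell=N$ rather than papering over it with a non sequitur. This has no bearing on the paper's use of the lemma, which only ever invokes the inequality, but your proof as written leaves the $\ell=N$ assertion neither established nor refuted.
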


On applying Lemma \ref{summation} with $g: s \in (0,T] \mapsto \|\Dtt(\ut^{\Delta t,-}_L(s))\|^2$ in conjunction with
\eqref{idatabd} and \eqref{eq:energy-u+psi-final2} we deduce that
\begin{align*}
{\tt V}_{21} &\leq \left(\delta \int_0^T\|\Dtt(\ut^{\Delta t,-}_L(s))\|^2 \dd s\right)^{\frac{q-4}{2(q-2)}}
\leq  \delta^{\frac{q-4}{2(q-2)}} \left(\Delta t \|\Dtt(\ut^0)\|^2 + \int_0^T
\|\Dtt(\ut^{\Delta t,+}_L(s))\|^2 \dd s \right)^{\frac{q-4}{2(q-2)}}\\
& \leq  C\,\delta^{\frac{q-4}{2(q-2)}},
\end{align*}
where $C$ is a positive constant, independent of $L$, $\Delta t$ and $\delta$. Analogously, since $u^{\Delta t}_L$
and $u^{\Delta t,+}_L$ coincide at all points $t'=\ell\,\Delta t$, $\ell = 1, \dots, N$, we have, again by
\eqref{idatabd} and \eqref{eq:energy-u+psi-final2}, that
\begin{align*}
{\tt V}_{22} & =  \left(\Delta t\, \|\Dtt(\ut^0)\|^2 + \Delta t\,\sum_{t'=\Delta t}^{T-\delta}
\|\Dtt(\ut^{\Delta t,+}_L(t'))\|^2\right)^{\frac{1}{2}} \\
& =  \left(\Delta t\, \|\Dtt(\ut^0)\|^2
+ \Delta t\,\int_0^{T-\delta} \|\Dtt(\ut^{\Delta t,+}_L(s))\|^2 \dd s\right)^{\frac{1}{2}}
\leq  C,
\end{align*}
where $C$ is a positive constant, independent of $L$, $\Delta t$ and $\delta$.
For the term ${\tt V}_{23}$, we have by the triangle inequality, shifting indices in the summation, and noting,
once again, \eqref{idatabd} and \eqref{eq:energy-u+psi-final2}, that
\begin{align*}
{\tt V}_{23} & \leq  2^{\frac{2}{q-2}} \left(\Delta t\,\sum_{t'=0}^{T}
\|\Dtt(\ut^{\Delta t}_L(t'))\|^2\right)^{\frac{1}{q-2}}
\\
&
= 2^{\frac{2}{q-2}}\left(\Delta t\, \|\Dtt(\ut^0)\|^2 + \Delta t\,\sum_{t'=\Delta t}^{T}
\|\Dtt(\ut^{\Delta t,+}_L(t'))\|^2\right)^{\frac{1}{q-2}} \leq  C,
\end{align*}
where $C$ is a positive constant, independent of $L$, $\Delta t$ and $\delta$.
Thus we deduce that
\[ {\tt V}_2 = {\tt V}_{21}\,{\tt V}_{22}\,{\tt V}_{23} \leq C\, \delta^{\frac{q-4}{2(q-2)}},\]
where $C$ is a positive constant, independent of $L$, $\Delta t$ and $\delta$.
An identical argument yields that
\[{\tt V}_3 \leq C\, \delta^{\frac{q-4}{2(q-2)}},\]
and therefore
\begin{equation}\label{u123}
  {\tt V}_1\, ({\tt V}_{2} + {\tt V}_3) \leq C\, \delta^{\frac{1}{2}}\, \delta^{\frac{q-4}{2(q-2)}},
\end{equation}
where $C$ is a positive constant, independent of $L$, $\Delta t$ and $\delta$.

Finally, by the Cauchy--Schwarz inequality applied to the sum starting in the fifth line of \eqref{intermediate},
the bound on the term ${\tt V}_{23}$ above, using Lemma \eqref{summation} with
\[ s \in (0,T]\mapsto g(s) = (\delta^{\frac{1}{2}}+ \delta^{\frac{q-d}{2q}})^2\,\|\Dtt(u^{\Delta t,+}_L(s))\|^2 + \delta\,
 \|\ft^{\Delta t,+}(s)\|^2_{L^{\varkappa}(\Omega)} + ~\delta\,\|\nabq\sqrt{\hpsiaet^{\Delta t,+}}\|^2_{L^2_M(\Omega \times D)},
\]
and the bound \eqref{eq:energy-u+psi-final2}, we deduce that
\begin{equation}\label{u4567}
C\,\Delta t\sum_{t'=0}^{T-\delta} {\tt V}_4(t')\, ({\tt V}_5(t') + {\tt V}_6(t') + {\tt V}_7(t')) \leq C\,
(\delta^{\frac{1}{2}}+ \delta^{\frac{q-d}{2q}})\, \delta^{\frac{1}{2}} + C \delta^{\frac{1}{2} + \frac{1}{2}}  + C \delta^{\frac{1}{2} + \frac{1}{2}},
\end{equation}
where $C$ is a positive constant, independent of $L$, $\Delta t$ and $\delta$,
with $\delta = \ell\, \Delta t$, $\ell=1,\dots, N$.

On substituting \eqref{u123} and \eqref{u4567} into \eqref{intermediate}, we thus have that
\begin{align*}
&\Delta t\,\sum_{t'=0}^{T-\delta}\int_\Omega \rho_L^{[\Delta t]}(t'+\delta)\,|\ut^{\Delta t}_L(t'+\delta) - \ut^{\Delta t}_L(t')|^2 \dx
\\
&\qquad\quad \leq C\, \delta^{\frac{1}{2} + \frac{q-4}{2(q-2)}} + C\, (\delta^{\frac{1}{2}}+ \delta^{\frac{q-d}{2q}})\, \delta^{\frac{1}{2}}
+ C \delta \leq C\, \delta^{1- \frac{1}{q-2}} \quad \mbox{with}\quad\!
\left\{\begin{array}{ll}
q \in (4,\infty) & \mbox{if $d=2$}\\
q \in (4,6]      & \mbox{if $d=3$,}
\end{array}
\right.
\end{align*}
where $C$ is a positive constant, independent of $L$, $\Delta t$ and $\delta$,
with $\delta = \ell\, \Delta t$, $\ell=1,\dots, N$.

On recalling that $\rho_L^{[\Delta t]}(t'+\delta)  \geq \rho_{\rm min}$ for all $t' \in [0,T-\delta]$, we finally
have that
\begin{align*}
&\Delta t\,\sum_{t'=0}^{T-\delta}\|\ut^{\Delta t}_L(t'+\delta) - \ut^{\Delta t}_L(t')\|^2 \leq C\, \delta^{1- \frac{1}{q-2}} \quad \mbox{with}\quad\!
\left\{\begin{array}{ll}
q \in (4,\infty) & \mbox{if $d=2$}\\
q \in (4,6]      & \mbox{if $d=3$,}
\end{array}
\right.
\end{align*}
where $C$ is a positive constant, independent of $\Delta t$, $L$ and $\delta$;
with $\delta =\ell\,\Delta t$, $\ell=1,\ldots,N$.
As $\ut^{\Delta t}_L(\ell\,\Delta t) = \ut_L^{\Delta t,+}(\ell\,\Delta t)$ and
$\ut^{\Delta t}_L((\ell-1)\,\Delta t) = \ut_L^{\Delta t,-}(\ell\,\Delta t)$, $\ell = 1,\dots, N$,
and $\ut^{\Delta t,\pm}_L$ are piecewise constant functions on the partition
$\{0=t_0, t_1, \dots, t_{N-1}, t_N=T\}$, we thus deduce that
\begin{align}\label{delta-0}
&\Delta t\,\sum_{t'=\Delta t}^{T-\delta}\|\ut^{\Delta t,\pm}_L(t'+\delta) - \ut^{\Delta t,\pm}_L(t')\|^2 \leq C\, \delta^{1- \frac{1}{q-2}} \quad \mbox{with}\quad\!
\left\{\begin{array}{ll}
q \in (4,\infty) & \mbox{if $d=2$}\\
q \in (4,6]      & \mbox{if $d=3$,}
\end{array}
\right.
\end{align}
where $C$ is a positive constant, independent of $\Delta t$, $L$ and $\delta$, with
$\delta=\ell\,\Delta t$, $\ell=1,\dots, N-1$.
By selecting $q$ as large as possible and taking the square root of the previous inequality,
we have that
\begin{align}\label{delta-00}
\|\ut^{\Delta t,\pm}_L(\cdot+\delta) - \ut^{\Delta t,\pm}_L(\,\cdot\,)\|_{L^2(0,T-\delta;L^2(\Omega))} \leq C\,\delta^{\gamma}
\end{align}
for all $\delta = \ell \, \Delta t$, $\ell = 1,\dots, N-1$,
where $C$ is a positive constant independent of step size
$\Delta t$, $L$ and $\delta$; $0<\gamma<1/2$ when
$d=2$ and $0 < \gamma \leq  3/8$ when $d=3$.

We shall now extend the validity of \eqref{delta-00} to values of $\delta \in (0,T]$ that are not necessarily integer multiples of $\Delta t$.
We shall therefore at this point alter our original notational convention for $\delta$, and will consider
$\delta = \upsilon\, \Delta t$, with $\upsilon \in (0,N]$.
Let us define to this end $\ell = [ \upsilon ] := \max\{k \in \mathbb{N}\,:\,
k \leq \upsilon\}$, $\vartheta := \upsilon - [\upsilon ] \in [0,1)$, and for $t \in (0,T]$
let  $m \in \{0,\dots, N-\ell-2\}$ be such that $t \in (m\,\Delta t, (m+1)\,\Delta t]$.
Hence,
\begin{equation}\label{delta-1}
\ut^{\Delta t,\pm}_L(t + \upsilon\, \Delta t) = \left\{\begin{array}{ll}
\ut^{\Delta t, \pm}(t + \ell\, \Delta t) & \quad \mbox{if $t \in (m\, \Delta t, m\, \Delta t
+ (1-\vartheta)\, \Delta t]$} \\
\ut^{\Delta t, \pm}(t + (\ell+1)\, \Delta t) & \quad \mbox{if $t \in (m\, \Delta t
+ (1-\vartheta)\, \Delta t, (m+1)\,\Delta t]$,}
\end{array}
  \right.
\end{equation}
which then implies on noting that
\[s \in \mathbb{R}_{\geq 0} \mapsto s^\gamma\in \mathbb{R}_{\geq 0}\]
is a concave function, that
\begin{align*}
&\|\ut^{\Delta t,\pm}_L(\cdot + \delta) -
\ut^{\Delta t,\pm}_L(\cdot)\|^2_{L^2(0,T-\delta;L^2(\Omega))} = \int_0^{T - \upsilon\, \Delta t} \|\ut^{\Delta t,\pm}_L(\cdot + \upsilon\, \Delta t) -
\ut^{\Delta t,\pm}_L(\cdot)\|^2 \dd t \nonumber\\
&\qquad\leq  (1-\vartheta)\,\Delta t\! \sum_{t'=\Delta t}^{T-\ell\,\Delta t}
\|\ut^{\Delta t,\pm}_L(t'+\ell\,\Delta t) - \ut^{\Delta t,\pm}_L(t')\|^2 \nonumber\\
&\qquad\qquad +
\vartheta\,\Delta t \!\sum_{t'=\Delta t}^{T-(\ell +1) \Delta t}
\|\ut^{\Delta t,\pm}_L(t'+(\ell + 1)\Delta t) - \ut^{\Delta t,\pm}_L(t')\|^2\nonumber\\
&\qquad\leq (1-\vartheta)\,C\,(\ell\, \Delta t)^{\gamma} +
\vartheta\,C\,((\ell+1)\, \Delta t)^{\gamma}\nonumber\\
&\qquad\leq C\left[(1-\vartheta)\, \ell\,\Delta t+
\vartheta\,(\ell+1)\, \Delta t\right]^{\gamma}\nonumber\\
&\qquad = C\left[(\ell + \vartheta)\,\Delta t\right]
= C\left(\upsilon\, \Delta t\right)^{\gamma} = C\, \delta^{\gamma},
\end{align*}
where $\delta = \upsilon\, \Delta t$, $\upsilon \in (0,N]$;
 $0<\gamma<1/2$ when $d=2$ and $0 < \gamma \leq  3/8$ when $d=3$; and $C$ is a
positive constant, independent of $\Delta t$, $L$ and $\delta$. The second inequality
in the chain of inequalities above follows by applying \eqref{delta-0} first with $\delta = \ell\, \Delta t$ and then with $\delta = (\ell + 1)\,\Delta t$.

Consequently,
\begin{align*}
\|\ut^{\Delta t,\pm}_L(\cdot+\delta) - \ut^{\Delta t,\pm}_L(\,\cdot\,)\|_{L^2(0,T-\delta;L^2(\Omega))} \leq C\,\delta^{\gamma}
\end{align*}
for all $\delta  \in (0,T]$, where $C$ is a positive constant independent of
$\Delta t$ and $L$; $0<\gamma<1/2$ when $d=2$ and $0 < \gamma \leq  3/8$ when $d=3$.

Thus we have established the following \textit{Nikol'ski\u{\i} norm} estimate:
\begin{align}\label{fractional-u}
\|\ut_L^{\Delta t,\pm}\|_{N^{\gamma,2}(0,T;L^2(\Omega))}:=\sup_{0<\delta <T}\delta^{-\gamma}\| \ut_L^{\Delta t,\pm}(\cdot +\delta) - \ut_L^{\Delta t,\pm}(\,\cdot\,)\|_{L^2(0, T-\delta ;L^2(\Omega))} \leq C,
\end{align}
where $C$ is a positive constant, independent of $L$ and $\Delta t$;  $0<\gamma<1/2$ when $d=2$ and
$0 < \gamma \leq 3/8$ when $d=3$.

\begin{remark}
We note in passing that in the special case of an incompressible Newtonian fluid
with variable density, when the extra stress tensor appearing on the right-hand side of \eqref{ns1a} is identically
zero, \eqref{fractional-u} continues to hold and improves the
Nikol'ski\u{\i} index $\gamma=1/4$ obtained in the work of Simon \cite[p.1100, Proposition 8 (ii)]{Simon-density} and \cite[p.1103, Theorem 9 (ii)]{Simon-density}
(under the hypothesis $\ft \in L^1(0,T;\Lt^2(\Omega))$ compared with $\ft
\in L^2(0,T;\Lt^\varkappa(\Omega))$ assumed here with $\varkappa>1$ when $d=2$ and
$\varkappa>\frac{6}{5}$ when $d=3$, and under the same assumptions on the initial data $\ut_0$ and $\rho_0$ as in \eqref{inidata} here).
\end{remark}


\subsection{Strong convergence of the sequences $\{\rho^{\Delta t(,\pm)}_L\}_{L>1}$,
$\{\ut^{\Delta t(,\pm)}_L\}_{L>1}$, and weak convergence
of $\{\psi^{\Delta t(,\pm)}_L\}_{L>1}$}
\label{passage}

We begin by collecting a number of relevant bounds on the sequences
$\{\rho^{\Delta t(,\pm)}_L\}_{L>1}$, 
$\{\ut^{\Delta t(,\pm)}_L\}_{L>1}$, and $\{\psi^{\Delta t(,\pm)}_L\}_{L>1}$.

First, we recall that $\rho_L^{\Delta t}(t) \in \Upsilon$ for all $t\in [0,T]$ and
from \eqref{eq:energy-rho-0} that
\begin{subequations}
\begin{align}
&\|\rho_L^{[\Delta t]}(t)\|_{L^p(\Omega)}  \leq \|\rho_0\|_{L^p(\Omega)}, \qquad t \in (0,T],
\label{eq:energy-rho-0-a}
\end{align}
and therefore, for each $p \in [1,\infty]$,
\begin{align}
&\|\rho_L^{\Delta t(,\pm)}(t)\|_{L^p(\Omega)}  \leq \|\rho_0\|_{L^p(\Omega)}, \qquad t \in (0,T].
\label{eq:energy-rho6}
\end{align}
\end{subequations}
%
%

Next, noting (\ref{ulin},b), a simple calculation yields that
[see (6.32)--(6.34) in \cite{BS2010} for details]:
\begin{align}
\int_0^T\int_{\Omega \times D} M\big|\nabx \sqrt{\psia^{\Delta t}}\big|^2\dq\dx \dt
&\leq 2 \int_0^T \int_{\Omega \times D} M\,
\left[ |\nabx\sqrt{\psia^{\Delta t,+}}|^2
+ |\nabx\sqrt{\psia^{\Delta t,-}}|^2 \right] \dq \dx \dt,
\label{nabxDTpm}
\end{align}
and an analogous result with $\nabx$ replaced by $\nabq$.
Then the bound \eqref{eq:energy-u+psi-final2},
on noting (\ref{inidata}), (\ref{fractional-u}), (\ref{Korn}),
(\ref{ulin},b), (\ref{idatabd}), (\ref{inidata-1}), (\ref{nabxDTpm})
and the convexity of ${\mathcal F}$,
imply the existence
of a constant ${C}_{\star}>0$, depending only on
${\sf B}(\ut_0,\ft,\widetilde\psi_0)$ and
on
$\epsilon$,  $\rho_{\rm min}$, $\rho_{\rm max}$, $\mu_{\rm \min}$,
$\zeta_{\rm min}$, $\zeta_{\rm max}$,
$T$, $|\Att|$, $a_0$, $c_0$, $C_\varkappa$, $k$,
$\lambda$, $K$ and  $b$, 
but {\em not} on $L$ or $\Delta t$, such that:
\begin{align}\label{eq:energy-u+psi-final5}
&\mbox{ess.sup}_{t \in [0,T]}\|\uta^{\Delta t(,\pm)}(t)\|^2
+ \|\ut_L^{\Delta t,\pm}\|_{N^{\gamma,2}(0,T;L^2(\Omega))}^2
\nonumber \\
& \;
+ \frac{1}{\Delta t} \int_{0}^T \|\uta^{\Delta t, +}- \uta^{\Delta t,-}\|^2
\dd t
+ \int_0^T \|\nabxtt \uta^{\Delta t(,\pm)}\|^2 \dd t
\nonumber \\
&\;
+ \mbox{ess.sup}_{t \in [0,T]}\!\!
\int_{\Omega \times D}\!\! M\, \mathcal{F}(\psia^{\Delta t(,\pm)}(t)) \dq \dx
 + \frac{1}{\Delta t\, L}
\int_0^T \!\! \int_{\Omega \times D}\!\! M\, (\psia^{\Delta t, +} - \psia^{\Delta t, -})^2
\dq \dx \dd t
\nonumber \\
&\; + \int_0^T\!\! \int_{\Omega \times D} M\,
\big|\nabx \sqrt{\psia^{\Delta t(, \pm)}} \big|^2 \dq \dx \dd t
+\, \int_0^T\!\! \int_{\Omega \times D}M\,\big|\nabq \sqrt{\psia^{\Delta t(, \pm)}}\big|^2
\,\dq \dx \dd t \leq C_\ast,
\end{align}
where $0 < \gamma < 1/2$ when $d = 2$ and $0 < \gamma \leq 3/8$
when $d = 3$.
%
%

Henceforth, we shall assume that
\begin{equation}\label{LT}
\Delta t = o(L^{-1})\qquad \mbox{as $L \rightarrow \infty$}.
 \end{equation}
Requiring, for example, that $0<\Delta t \leq C_0/(L\,\log L)$, $L > 1$,
with an arbitrary (but fixed)
constant $C_0$ will suffice to ensure that \eqref{LT} holds. The sequences
$\{\rho^{[\Delta t]}_L\}_{L>1}$,
$\{\rho^{\Delta t(,\pm)}_L\}_{L>1}$, $\{\rho^{\{\Delta t\}}_L\}_{L>1}$
$\{\uta^{\Delta t(,\pm)}\}_{L>1}$ and $\{\psia^{\Delta t(,\pm)}\}_{L>1}$
as well as all sequences of spatial and temporal derivatives of the entries of these sequences
will thus be, indirectly, indexed by $L$ alone, although for reasons of consistency
with our previous notation
we shall not introduce new, compressed, notation with $\Delta t$ omitted
from the superscripts. Instead, whenever $L\rightarrow \infty$
in the rest of this section, it will be understood that $\Delta t$ tends to $0$ according to \eqref{LT}.
We are now ready to embark on the passage to limit with $L\rightarrow \infty$.

\begin{theorem}
\label{convfinal} Suppose that the assumptions \eqref{inidata} and the condition \eqref{LT},
relating $\Delta t$ to $L$, hold. Then,
there exists a subsequence of $\{(\rho^{\Delta t}_L,\utae^{\Delta t}, \widetilde\psi^{\Delta t}_L)
\}_{L >1}$ (not indicated)
with $\Delta t = o(L^{-1})$, and functions $(\rho,\ut, \widetilde\psi)$,
with $\widetilde\psi \geq 0$ a.e. on $\Omega \times D \times [0,T]$,
such that
\[ \rho \in L^\infty(0,T;\Upsilon)\cap C([0,T];L^p(\Omega)), \qquad
\ut \in L^{\infty}(0,T;\Lt^2(\Omega))\cap L^{2}(0,T;\Vt),
\]
where $p \in [1,\infty)$, and
%
\[
\sqrt{\widetilde\psi} \in L^{2}(0,T;H^1_M(\Omega \times D)),
\]
%
such that, as $L\rightarrow \infty$ (and thereby $\Delta t \rightarrow 0_+$),
\begin{subequations}
\begin{alignat}{2}
\rho_L^{[\Delta t]} &\rightarrow \rho &&\qquad \mbox{weak$^\star$ in }
L^\infty(0,T;\Lt^\infty(\Omega)), \label{rho[]weak}\\
\rho_L^{[\Delta t]} &\rightarrow \rho &&\qquad \mbox{strongly in }
L^{\infty}(0,T;{L}^p(\Omega)), \label{rho[]sconL2}\\
\rho_L^{\Delta t (,\pm)}, \, \rho_L^{\{\Delta t\}},
&\rightarrow \rho &&\qquad \mbox{strongly in }
L^{\infty}(0,T;{L}^p(\Omega)), \label{rhosconL2}\\
\mu(\rho_L^{\Delta t(,\pm)}) &\rightarrow \mu(\rho) &&\qquad \mbox{strongly in }
L^{\infty}(0,T;{L}^p(\Omega)), \label{musconL2}\\
\zeta(\rho_L^{[\Delta t]}),\,\zeta(\rho_L^{\Delta t(,\pm)}),
\,
\zeta_L^{\{\Delta t\}}
 &\rightarrow \zeta(\rho) &&\qquad \mbox{strongly in }
L^{\infty}(0,T;{L}^p(\Omega)), \label{zetasconL2}
\end{alignat}
\end{subequations}
where $p \in [1,\infty)$;
\begin{subequations}
\begin{alignat}{2}
\utae^{\Delta t (,\pm)} &\rightarrow \ut &&\qquad \mbox{weak$^\star$ in }
L^{\infty}(0,T;{\Lt}^2(\Omega)), \label{uwconL2a}\\
\utae^{\Delta t (,\pm)} &\rightarrow \ut &&\qquad \mbox{weakly in }
L^{2}(0,T;\Vt), \label{uwconH1a}\\
\utae^{\Delta t (,\pm)} &\rightarrow \ut &&\qquad \mbox{strongly in }
L^{2}(0,T;\Lt^{r}(\Omega)), \label{usconL2a}
\end{alignat}
\end{subequations}
where $r \in [1,\infty)$ if $d=2$ and $r \in [1,6)$ if $d=3$;
and
\begin{subequations}
\begin{alignat}{2}
\hpsiaet^{\Delta t(,\pm)} & \rightarrow  \hpsiaet &&\qquad \mbox{weakly in } L^1(0,T;L^1_M(\Omega\times D)),\label{psiwconL1}\\
M^{\frac{1}{2}}\,\nabx \sqrt{\hpsiaet^{\Delta t(,\pm)}} &\rightarrow M^{\frac{1}{2}}\,\nabx \sqrt{\widetilde\psi}
&&\qquad \mbox{weakly in } L^{2}(0,T;\Lt^2(\Omega\times D)), \label{psiwconH1a}\\
M^{\frac{1}{2}}\,\nabq \sqrt{\hpsiaet^{\Delta t(,\pm)}} &\rightarrow M^{\frac{1}{2}}\,\nabq \sqrt{\widetilde\psi}
&&\qquad \mbox{weakly in } L^{2}(0,T;\Lt^2(\Omega\times D)). \label{psiwconH1xa}
\end{alignat}
\end{subequations}
\end{theorem}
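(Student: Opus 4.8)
The plan is to extract the convergent subsequences by a sequence of weak and strong compactness arguments, exploiting the $L$-independent (equivalently, by \eqref{LT}, $\Delta t$-independent) bounds collected in \eqref{eq:energy-u+psi-final5}, \eqref{eq:energy-rho6}, and the two bounds on $\partial \rho^{[\Delta t]}_L/\partial t$ from Section~\ref{sec:time-rho}. First I would deal with the density. From \eqref{eq:energy-rho6} with $p=\infty$ and the Banach--Alaoglu theorem we obtain, along a subsequence, the weak$^\star$ limit \eqref{rho[]weak}, with $\rho \in L^\infty(0,T;\Upsilon)$ since $\Upsilon$ is weak$^\star$ closed. To upgrade this to the strong convergence \eqref{rho[]sconL2}, I would first establish the strong convergence of $\{\ut^{\Delta t(,\pm)}_L\}_{L>1}$ in $L^2(0,T;\Lt^r(\Omega))$, and then invoke the DiPerna--Lions theory of renormalized solutions to the linear transport equation \eqref{eqrhocon}: since the transport velocity $\ut^{\Delta t,-}_L$ is divergence-free and converges strongly in $L^2(0,T;\Lt^2(\Omega))$, the sequence of solutions $\rho^{[\Delta t]}_L$ (uniformly bounded in $L^\infty$ and equicontinuous in time into $W^{1,\frac{q}{q-1}}(\Omega)'$ by Section~\ref{sec:time-rho}) converges strongly to the (unique renormalized) solution $\rho$ of the limiting continuity equation; stability of renormalized solutions then gives \eqref{rho[]sconL2} for every $p\in[1,\infty)$. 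The companion statements \eqref{rhosconL2}, \eqref{musconL2}, \eqref{zetasconL2} follow: the piecewise-constant and piecewise-linear interpolants $\rho^{\Delta t(,\pm)}_L$ and the time averages $\rho^{\{\Delta t\}}_L$ differ from $\rho^{[\Delta t]}_L$ in $L^\infty(0,T;L^2(\Omega))$ by a quantity controlled by $\Delta t\,\|\partial\rho^{[\Delta t]}_L/\partial t\|_{L^\infty(0,T;H^1(\Omega)')}\to 0$; and $\mu,\zeta$ being uniformly continuous on $[\rho_{\rm min},\rho_{\rm max}]$ and bounded, composition preserves strong $L^p$ convergence (via, e.g., the dominated convergence theorem along a further a.e.-convergent subsequence).

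Next I would treat the velocity. The bounds \eqref{eq:energy-u+psi-final5} give $\{\ut^{\Delta t(,\pm)}_L\}_{L>1}$ bounded in $L^\infty(0,T;\Lt^2(\Omega))\cap L^2(0,T;\Vt)$, whence by Banach--Alaoglu the weak$^\star$ and weak limits \eqref{uwconL2a}, \eqref{uwconH1a} along a subsequence; moreover $\ut^{\Delta t,+}_L-\ut^{\Delta t,-}_L\to 0$ in $L^2(0,T;\Lt^2(\Omega))$ by the $\frac{1}{\Delta t}\int_0^T\|\ut^{\Delta t,+}-\ut^{\Delta t,-}\|^2$ term, so the three interpolants share a common weak limit $\ut$. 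For the strong convergence \eqref{usconL2a} I would apply Simon's extension of the Aubin--Lions compactness theorem (stated in the excerpt): take $\mathcal{B}_0 = \Ht^1_0(\Omega)$, $\mathcal{B} = \Lt^r(\Omega)$ (compact embedding for the stated $r$), $\mathcal{B}_1 = \Lt^2(\Omega)$; the spatial bound $\int_0^T\|\nabxtt\ut^{\Delta t(,\pm)}_L\|^2\,\dd t\le C_\ast$ gives boundedness in $L^2(0,T;\mathcal{B}_0)$, and the Nikol'ski\u\i{} estimate \eqref{fractional-u} gives exactly the time-translation condition \eqref{compact1} (uniformly in $L$, since $\gamma>0$ implies $\int_\theta^T\|\ut^{\Delta t(,\pm)}_L(t)-\ut^{\Delta t(,\pm)}_L(t-\theta)\|_{\mathcal{B}_1}^2\,\dd t\le C\theta^{2\gamma}\to 0$). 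This yields \eqref{usconL2a}; the same strong convergence feeds back into the DiPerna--Lions argument above, closing the loop. (There is a mild circularity to unknot: the velocity compactness does not use the density convergence, only \eqref{eq:energy-u+psi-final5} and \eqref{fractional-u}, so the order velocity-then-density is legitimate.)

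Finally, for $\widetilde\psi$: \eqref{eq:energy-u+psi-final5} controls $\mathrm{ess.sup}_t\int_{\Omega\times D}M\,\mathcal{F}(\widetilde\psi^{\Delta t(,\pm)}_L(t))\,\dq\dx$, so by de la Vall\'ee-Poussin's criterion (with the superlinear Young function $\mathcal{F}$) the family $\{\widetilde\psi^{\Delta t(,\pm)}_L\}_{L>1}$ is uniformly integrable on $(0,T)\times\Omega\times D$ with respect to the finite measure $M\,\dq\dx\dt$; the Dunford--Pettis theorem then yields \eqref{psiwconL1} along a subsequence, with the limit $\widetilde\psi\ge 0$ a.e. (weak $L^1$ limits of nonnegative functions are nonnegative). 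For the gradient statements \eqref{psiwconH1a}, \eqref{psiwconH1xa}, note that \eqref{eq:energy-u+psi-final5} bounds $M^{1/2}\nabx\sqrt{\widetilde\psi^{\Delta t(,\pm)}_L}$ and $M^{1/2}\nabq\sqrt{\widetilde\psi^{\Delta t(,\pm)}_L}$ in $L^2(0,T;\Lt^2(\Omega\times D))$, so along a subsequence each converges weakly in that space to some limit; combined with \eqref{psiwconL1} this forces $\sqrt{\widetilde\psi}\in L^2(0,T;H^1_M(\Omega\times D))$ with $M^{1/2}\nabx\sqrt{\widetilde\psi^{\Delta t(,\pm)}_L}\rightharpoonup M^{1/2}\nabx\sqrt{\widetilde\psi}$ and likewise for $\nabq$ --- the identification of the weak limit of the gradients of the square roots with the gradient of the square root of the weak limit being the technically delicate point, handled as in \cite{BS2011-fene} by first passing to a further subsequence converging a.e. (using the compact embedding \eqref{wcomp2} at the level of $\sqrt{\widetilde\psi^{\Delta t}_L}$, for which one also needs a time-translation estimate on $\sqrt{\widetilde\psi^{\Delta t}_L}$ coming from the $\frac{1}{\Delta t\,L}\int M(\widetilde\psi^{\Delta t,+}-\widetilde\psi^{\Delta t,-})^2$ term together with \eqref{LT}), and then using that weak-$L^2$ and a.e. convergence identify limits. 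I expect this last identification --- controlling the temporal oscillation of $\widetilde\psi^{\Delta t}_L$ well enough to pass the nonlinearity $s\mapsto\sqrt s$ through the limit --- to be the main obstacle; note, however, that the full strong $L^1_M$ convergence of $\{\widetilde\psi^{\Delta t}_L\}$ (requiring the Div-Curl lemma) is \emph{not} needed here and is deferred to Section~\ref{Lindep-time}, so for the present theorem the weaker a.e.-along-a-subsequence argument suffices.
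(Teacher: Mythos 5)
Your overall architecture coincides with the paper's: velocity compactness via the Nikol'ski\u{\i} estimate \eqref{fractional-u} and Simon's theorem, density convergence via DiPerna--Lions applied to \eqref{eqrhocon}, and weak $L^1_M$ compactness of $\widetilde\psi^{\Delta t(,\pm)}_L$ via de la Vall\'ee-Poussin and Dunford--Pettis. The ordering (velocity first, then density) is the paper's as well, and your remark that the strong $L^1_M$ convergence of $\widetilde\psi^{\Delta t}_L$ is not needed here is correct.

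There is, however, one step that fails as written: your claim that $\rho^{\Delta t(,\pm)}_L$ and $\rho^{\{\Delta t\}}_L$ differ from $\rho^{[\Delta t]}_L$ ``in $L^\infty(0,T;L^2(\Omega))$ by a quantity controlled by $\Delta t\,\|\partial\rho^{[\Delta t]}_L/\partial t\|_{L^\infty(0,T;H^1(\Omega)')}$''. An $L^2(\Omega)$ norm of a difference cannot be bounded by the $H^1(\Omega)'$ norm of a time derivative; integrating $\partial\rho^{[\Delta t]}_L/\partial t$ over a time interval of length $O(\Delta t)$ only controls the difference in the \emph{dual} space. This is precisely what the paper proves in \eqref{rho-rho1}--\eqref{zeta-zeta2}: the interpolants are $O((\Delta t)^{1/2})$-close to $\rho^{[\Delta t]}_L$ only in $L^\infty(0,T;W^{1,\frac{q}{q-1}}(\Omega)')$, and there is no uniform-in-$L$ modulus of continuity in $L^2(\Omega)$ for solutions of a transport equation with a merely $H^1$ velocity. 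To conclude \eqref{rhosconL2} one therefore needs the additional argument the paper uses: the dual-space closeness identifies the weak$^\star$ limits of the interpolants with $\rho$, and strong convergence is then recovered from weak convergence together with the norm identity $\|\rho(t)\|=\|\rho_0\|=\lim_L\|\rho^{\Delta t(,\pm)}_L(t)\|$ (uniform convexity of $L^2$), followed by a.e.\ convergence and dominated convergence for general $p$. The same mechanism, not renormalized-solution ``stability'' alone, is what the paper uses for \eqref{rho[]sconL2} itself, although your invocation of the DiPerna--Lions stability theorem (strongly convergent divergence-free velocities, fixed initial datum) is an acceptable alternative for that particular step.

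On the Fokker--Planck part, you correctly isolate the identification of the weak limits of $M^{1/2}\nabla\sqrt{\widetilde\psi^{\Delta t(,\pm)}_L}$ as the delicate point; the paper discharges it by citation to Step 2 of Theorem 6.1 in \cite{BS2011-fene}, and your proposed route (a.e.\ convergence of $\sqrt{\widetilde\psi^{\Delta t}_L}$ via the compact embedding \eqref{wcomp2} plus temporal control from the $\frac{1}{\Delta t\,L}$-weighted term and \eqref{LT}) is in the same spirit, though you would need to carry it out in detail for the argument to be complete.
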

\begin{proof}
The weak convergence results (\ref{uwconL2a},b) follow directly from
the first and fourth bounds in \eqref{eq:energy-u+psi-final5}.
We deduce the strong convergence result \eqref{usconL2a}
in the case of $\uta^{\Delta t,+}$ on noting
the second and fourth bounds in \eqref{eq:energy-u+psi-final5},
(\ref{compact1}), and the compact embedding of
$\Vt$ into $\Lt^r(\Omega)\cap \Ht$, with the values of $r$ as in the statement of the theorem.
In particular, with $r=2$, a subsequence of $\{\uta^{\Delta t, +}\}_{L >1}$ converges to $\ut$,
strongly in $L^2(0,T;\Lt^2(\Omega))$ as $L\rightarrow \infty$, with $\Delta t = o(L^{-1})$.
Then, by the third bound in \eqref{eq:energy-u+psi-final5},
we deduce that the three corresponding subsequences
$\{\uta^{\Delta t(,\pm)}\}_{L >1}$ converge to $\ut$, strongly in $L^2(0,T;\Lt^2(\Omega))$ as
$L \rightarrow \infty$ (and thereby $\Delta t \rightarrow 0_+$).
Since these subsequences 
are bounded in $L^2(0,T;\Ht^1(\Omega))$ (cf.\ the bound on the fourth term in \eqref{eq:energy-u+psi-final5})
and strongly convergent in $L^2(0,T;\Lt^2(\Omega))$, it follows from  \eqref{eqinterp} that
\eqref{usconL2a} holds, with the values of $r$ as in the statement of the theorem.
Thus we have proved (\ref{uwconL2a}--c).

The convergence result (\ref{rho[]weak}) and the fact that $\rho \in
L^\infty(0,T;\Upsilon)$ follow immediately from (\ref{eq:energy-rho-0-a}) and as
$\rho_L^{\Delta t}(t) \in \Upsilon$ for all $t\in [0,T]$.
Further \eqref{rhonL} implies that
\begin{align*}
-\int_{t_{n-1}}^{t_n} \int_\Omega \rho^{[\Delta t]}_L \, \frac{\partial \eta}{\partial t}
\dx \dt - \int_{t_{n-1}}^{t_n} \int_\Omega \rho^{[\Delta t]}_L\, \ut^{\Delta t,-}_L
\cdot \nabx \eta \dx \dt =& -\left[\left.\int_\Omega \rho^{[\Delta t]}_L\, \eta \dx\right]
\right|^{t_n}_{t_{n-1}}\\
&
\quad\forall \eta \in C^1([t_{n-1},t_n]; W^{1,\frac{q}{q-1}}(\Omega)),
\end{align*}
with $q \in (2,\infty)$ when $d=2$ and $q \in [3,6]$ when $d=3$. Upon summation through
$n=1,\dots, N$, and noting that $\rho^0_L = \rho_0$, we then deduce that
\begin{align*}
&-\int_{0}^{T} \int_\Omega \rho^{[\Delta t]}_L \, \frac{\partial \eta}{\partial t}
\dx \dt - \int_{0}^{T} \int_\Omega \rho^{[\Delta t]}_L\, \ut^{\Delta t,-}_L
\cdot \nabx \eta \dx \dt = \int_\Omega \rho_0\, \eta \dx
\\
&\hspace{2in}\forall \eta \in C^1([0,T]; W^{1,\frac{q}{q-1}}(\Omega)) \mbox{ s.t.\ }
\eta(\cdot,T)=0,
\end{align*}
with $q \in (2,\infty)$ when $d=2$ and $q \in [3,6]$ when $d=3$. Hence, on letting
$L \rightarrow \infty$, with $\Delta t = o(L^{-1})$, and noting \eqref{rho[]weak}
and \eqref{usconL2a}, we deduce that
\begin{align}
&-\int_{0}^{T} \int_\Omega \rho \, \frac{\partial \eta}{\partial t}
\dx \dt - \int_{0}^{T} \int_\Omega \rho\, \ut
\cdot \nabx \eta \dx \dt = \int_\Omega \rho_0\, \eta \dx
\nonumber \\
&
\hspace{2in}\forall \eta \in C^1([0,T]; W^{1,\frac{q}{q-1}}(\Omega)) \mbox{ s.t.\ }
\eta(\cdot,T)=0,
\label{eqrhoconP}
\end{align}
with $q \in (2,\infty)$ when $d=2$ and $q \in [3,6]$ when $d=3$. Thus we have shown
that $\rho$ is a weak solution to (\ref{ns0a},b).
One can now apply the theory of DiPerna \& Lions \cite{DPL} to (\ref{ns0a},b).
As $\ut \in L^2(0,T;\Vt)$, it follows from Corollaries II.1 and II.2, and p.546, in \cite{DPL},
that there exists a unique
solution to (\ref{ns0a},b) for this given $\ut$, which must therefore coincide with $\rho$.
In addition, $\rho \in C([0,T],L^p(\Omega))$ for $p \in [1,\infty)$,
and the following equality holds:
\begin{align}\label{energyrho}
&\|\rho(t)\|_{L^p(\Omega)}  = \|\rho_0\|_{L^p(\Omega)}, \qquad t \in (0,T].
\end{align}
Thanks to (\ref{eq:energy-rho-0-a}) and (\ref{rho[]weak}), by the weak$^\ast$
lower semicontinuity of the norm function, and (\ref{energyrho}) with $p=2$, we have that, for a.e.\  $t \in [0,T]$,
\[
\|\rho(t)\|^2 \leq \liminf_{L \rightarrow \infty} \|\rho^{[\Delta t]}_L(t)\|^2
\leq \|\rho_0\|^2 = \|\rho(t)\|^2.
\]
This then implies 
for a.e. $t \in [0,T]$,
\begin{align}
\|\rho(t)\|^2 = \lim_{L \rightarrow \infty} \|\rho^{[\Delta t]}_L(t)\|^2
= \|\rho_0\|^2.
\label{rhosL2}
\end{align}
Thus we have proved \eqref{rho[]sconL2} in the case of $p=2$, which, on extracting a further
subsequence, implies that
\begin{align}
\lim_{L \rightarrow \infty} \rho^{[\Delta t]}_L =
\rho \qquad \mbox{a.e. on $\Omega \times (0,T)$.}
\label{rho-ae}
\end{align}
By recalling \eqref{rho-lower-upper}, we then deduce \eqref{rho[]sconL2} for all $p \in [1,\infty)$
by Lebesgue's dominated convergence theorem.

It follows from \eqref{rhonL-2}, with $\eta = \chi_{[t_{n-1},t)}\,\varphi$, $t \in (t_{n-1},t_n]$,
and $\eta = \chi_{(t,t_n]}\,\varphi$, $t \in [t_{n-1},t_n)$, and $\varphi \in W^{1,\frac{q}{q-1}}(\Omega)$, where $q \in (2,\infty)$ when $d=2$ and $q \in [3,6]$ when $d=3$,
that
\begin{equation}\label{rho-rho1}
\| \rho^{[\Delta t]}_L - \rho^{\Delta t,\pm}_L \|_{L^\infty(0,T;W^{1,\frac{q}{q-1}}(\Omega)')}
\leq
C \max_{n=1,\ldots, N} \int_{t_{n-1}}^{t_n} \|\nabxtt \ut^{\Delta t, -}_L \| \dt
\leq
C \,(\Delta t)^{\frac{1}{2}},
\end{equation}
where we have noted (\ref{eq:energy-rho-0-a}), (\ref{eqinterp}) and (\ref{eq:energy-u+psi-final5}).
Similarly, on recalling (\ref{zeta-time-average}), we obtain
\begin{equation}\label{rho-rho2}
\| \rho^{[\Delta t]}_L - \rho^{{\Delta t}}_L \|_{L^\infty(0,T;W^{1,\frac{q}{q-1}}(\Omega)')}
+ \| \rho^{[\Delta t]}_L - \rho^{\{\Delta t\}}_L \|_{L^\infty(0,T;W^{1,\frac{q}{q-1}}(\Omega)')}
\leq C \,(\Delta t)^{\frac{1}{2}},
\end{equation}
and on noting (\ref{Gequn-trans00})
\begin{equation}\label{zeta-zeta2}
\| \zeta(\rho^{[\Delta t]}_L) - \zeta^{\{\Delta t\}}_L \|_{L^\infty(0,T;W^{1,\frac{q}{q-1}}(\Omega)')}
\leq C \,(\Delta t)^{\frac{1}{2}}.
\end{equation}
Applying \eqref{rho[]sconL2} with $p=q$, we have that $\rho^{[\Delta t]}_L$
converges to $\rho$ strongly in $L^\infty(0,T;L^q(\Omega))
= L^\infty(0,T;L^{\frac{q}{q-1}}(\Omega)') \subset L^\infty(0,T;W^{1,\frac{q}{q-1}}(\Omega)')$, where $q \in (2,\infty)$ when $d=2$ and $q \in [3,6]$ when $d=3$; thus,
$\rho^{[\Delta t]}_L$ converges to $\rho$ strongly in $L^\infty(0,T;W^{1,\frac{q}{q-1}}(\Omega)')$, where $q \in (2,\infty)$ when $d=2$ and $q \in [3,6]$ when $d=3$. By means of a triangle inequality in
the norm of ${L^\infty(0,T;W^{1,\frac{q}{q-1}}(\Omega)')}$ and noting \eqref{rho-rho1}, we thus
deduce that $\rho^{\Delta t,\pm}_L$ converges to $\rho$ strongly in $L^\infty(0,T;W^{1,\frac{q}{q-1}}(\Omega)')$, where $q \in (2,\infty)$ when $d=2$ and $q \in [3,6]$ when $d=3$; analogously, using \eqref{rho-rho2} this time, $\rho^{\Delta t}_L$
and $\rho^{\{\Delta t\}}_L$,  converge to $\rho$ strongly in $L^\infty(0,T;W^{1,\frac{q}{q-1}}(\Omega)')$, where $q \in (2,\infty)$ when $d=2$ and $q \in [3,6]$ when $d=3$. Since, thanks to \eqref{eq:energy-rho6}, $\{\rho^{\Delta t,\pm}_L\}_{L>1}$, $\{\rho^{{\Delta t}}_L\}_{L>1}$
and $\{\rho^{\{\Delta t\}}_L\}_{L>1}$
are weak$^\ast$ compact in $L^\infty(0,T;L^\infty(\Omega))
\subset L^\infty(0,T;W^{1,\frac{q}{q-1}}(\Omega)')$, where $q \in (2,\infty)$ when $d=2$ and $q \in [3,6]$ when $d=3$, it follows that the weak$^\ast$ limits of the weak$^\ast$ convergent subsequences extracted from $\{\rho^{\Delta t,\pm}_L\}_{L>1}$, $\{\rho^{{\Delta t}}_L\}_{L>1}$
and $\{\rho^{\{\Delta t\}}_L\}_{L>1}$
have the same limit
as $\rho^{[\Delta t]}_L$: the element $\rho \in L^\infty(0,T;L^\infty(\Omega))$.
The weak$^*$ lower semicontinuity of the norm function and inequality \eqref{eq:energy-rho6}
together imply that
\[
\|\rho(t)\|^2 \leq \liminf_{L \rightarrow \infty} \|\rho^{\Delta t (,\pm)}_L(t)\|^2
\leq \|\rho_0\|^2 = \|\rho(t)\|^2.
\]
Hence, proceeding as above in the case of $\rho^{[\Delta t]}$, we deduce \eqref{rhosconL2}.

Concerning \eqref{musconL2} and \eqref{zetasconL2}, these follow from \eqref{rhosconL2}
and (\ref{zeta-zeta2})
via Lebesgue's dominated convergence theorem by possibly extracting a further subsequence,
thanks to our assumptions in \eqref{inidata} on $\mu$ and $\zeta$.

We complete the proof by establishing (\ref{psiwconL1}--c). According to \eqref{eq:energy-u+psi-final5} and \eqref{inidata},
\[ 2k\,\zeta_{\rm min} \int_{\Omega \times D} M\, \mathcal{F}(\widetilde\psi^{\Delta t(,\pm)}_L(t)) \dq \dx \leq [{\sf B}(\ut_0, \ft,\widetilde\psi_0)]^2\]
for all $t \in [0,T]$;
hence, on noting that $s \log(s+1)< 2\,[\mathcal{F}(s) + 1]$ for all $s\in \mathbb{R}_{\geq 0}$, we have that
\begin{equation}\label{psi-weak-0}
\max_{t \in [0,T]}\int_{\Omega \times D} M\, \widetilde\psi^{\Delta t(,\pm)}_L(t)
\log (\widetilde\psi^{\Delta t(,\pm)}_L(t)+1) \dq \dx \leq \frac{1}{k\,\zeta_{\rm min}}[{\sf B}(\ut_0, \ft,\widetilde\psi_0)]^2 + 2|\Omega|.
\end{equation}
As $s \in \mathbb{R}_{\geq 0} \mapsto s \log(s+1) \in \mathbb{R}_{\geq 0}$ is nonnegative, strictly monotonic increasing and convex, it follows from de la Vall\'ee-Poussin's theorem that the sequence
of nonnegative functions $\{\widetilde\psi^{\Delta t(,\pm)}_L\}_{L>1}$, with $\Delta t = o(L^{-1})$ is uniformly integrable on $\Omega \times D\times(0,T)$ with respect to the measure $\dd \nu:= M(\qt) \dq \dx \dt$.
Hence, by the Dunford--Pettis theorem, $\{\widetilde\psi^{\Delta t(,\pm)}_L\}_{L>1}$, with $\Delta t = o(L^{-1})$, is weakly relatively compact in $L^1(\Omega \times D\times (0,T);\nu)=L^1(0,T;L^1_M(\Omega \times D))$; i.e., there exists a nonnegative function
$\widetilde \psi \in L^1(0,T;L^1_M(\Omega \times D))$ and a subsequence (not indicated) such that
\begin{equation}\label{psi-weak-1}
\widetilde\psi^{\Delta t(,\pm)}_L \rightarrow \widetilde\psi\qquad \mbox{weakly in $L^1(0,T;L^1_M(\Omega \times D))$},
\end{equation}
as $L \rightarrow \infty$, where $\Delta t = o(L^{-1})$.
The fact that the limits of the subsequences of $\{\widetilde\psi^{\Delta t(,\pm)}_L\}_{L>1}$
are the same follows from the sixth bound in \eqref{eq:energy-u+psi-final5}.
Thus we have shown that \eqref{psiwconL1}
holds,
and that the limiting function $\widetilde\psi$ is nonnegative.

The extraction of the convergence results
(\ref{psiwconH1a},c) from \eqref{eq:energy-u+psi-final5} can be found in Step 2 in the
proof of Theorem 6.1 in \cite{BS2011-fene}.
\end{proof}

In the next section we shall strengthen \eqref{psiwconL1} by showing that
(a subsequence of) the sequence $\{\widetilde\psi_L^{\Delta t(,\pm)}\}_{L>1}$ is strongly convergent to $\widetilde \psi$ in $L^1(0,T;L^1_M(\Omega \times D))$ as $L \rightarrow \infty$, with
$\Delta t = o(L^{-1})$.


\subsection{Strong convergence of $\widetilde\psi_L^{\Delta t(,\pm)}$ in $L^1(0,T;L^1_M(\Omega \times D))$}
\label{Lindep-time}

In Section \ref{utsec} we derived an $L$-independent bound on the Nikol'ski\u{\i} norm,
based on time-shifts, of the sequence $\{\ut^{\Delta t (,\pm)}_L\}_{L>1}$ of approximate velocities. The Nikol'ski\u{\i} norm bound \eqref{fractional-u}
was used in the previous section
in conjunction with the bounds on spatial derivatives of $\{\ut^{\Delta t (,\pm)}_L\}_{L>1}$
established in Section \ref{Lindep-space} to deduce, via Simon's extension of the Aubin--Lions theorem \cite{Simon},
strong convergence of $\{\ut^{\Delta t (,\pm)}_L\}_{L>1}$ in $L^2(0,T;\Lt^r(\Omega))$ as $L \rightarrow \infty$, with $\Delta t=o(L^{-1})$, for
$1\leq r<\infty$ when $d=2$ and $1\leq r < 6$ when $d=2$, which we shall then use to pass to the limit
in nonlinear terms in \eqref{equncon} in conjunction with weak convergence results for the sequence,
which suffice for passage to the limit in those terms in \eqref{equncon} that depend linearly on
$\{\ut^{\Delta t (,\pm)}_L\}_{L>1}$.

In \cite{BS2011-fene} we used a similar argument for the sequence of approximations to the solution of the Fokker--Planck equation, except that due to the form of the Kullback--Leibler relative entropy and the associated Fisher information in the bounds on spatial norms of the sequence resulting from our entropy-based testing, (which, in turn, was motivated by the natural energy balance between the Navier--Stokes and Fokker--Planck equations in the coupled system, that manifests itself in
a fortuitous cancellation of the extra-stress tensor in the Navier--Stokes equation with the drag term in the Fokker--Planck equation in the course of the entropy-testing), we had to appeal to Dubinski\u{\i}'s extension to seminormed cones in Banach spaces of the original Aubin--Lions theorem to deduce strong convergence of the approximating sequence of probability density functions.

Unfortunately, in the present setting, the appearance of the nonlinear drag $\zeta(\rho)$ in the Fokker--Planck equation obstructs the application of Dubinski\u{\i}'s compactness theorem, and the approach based on Nikol'ski\u{\i} norm estimates, that was used in Section \ref{utsec}
in the density-dependent Navier--Stokes equation, also fails,
because --- in order to compensate for the rather
weak spatial control in \eqref{eq:energy-u+psi-final2} of the Kullback--Leibler relative entropy and the Fisher information --- its application ultimately requires a uniform $L^\infty(0,T;L^\infty(\Omega \times D))$ bound on the sequence of approximations to the probability density function, which is not available. We shall therefore adopt a different approach here.
Since the argument below that finally delivers the desired compactness of the sequence
$\{\widetilde\psi_L^{\Delta t(,\pm)}\}_{L>1}$ (with $\Delta t = o(L^{-1})$ as $L \rightarrow \infty$), in $L^1(0,T;L^1_M(\Omega \times D))$ is long and rather
technical, we begin with a brief overview of the key steps.

First, by \eqref{uwconL2a}
(a subsequence of) the sequence $\{\widetilde\psi_L^{\Delta t(,\pm)}\}_{L>1}$ is weakly convergent in the space $L^1(0,T;L^1_M(\Omega \times D))$
to $\widetilde\psi \in L^1(0,T;L^1_M(\Omega \times D))$ as $L \rightarrow \infty$, with $\Delta t = o(L^{-1})$. We shall then make use of the property that if $\Phi$
is a strictly convex weakly lower-semicontinuous function
defined on a convex open set $U$ of $\mathbb{R}$, and the weak limit of
$\Phi(\widetilde\psi_L^{\Delta t(,\pm)})$ is {\em equal} to
$\Phi(\widetilde\psi)$, then the sequence $\{\widetilde\psi_L^{\Delta t(,\pm)}\}_{L>1}$ converges almost everywhere on $(0,T)\times \Omega \times D$
as $L \rightarrow \infty$, with $\Delta t = o(L^{-1})$ (cf. Theorem 10.20 on p.339 of \cite{FN}).
According to \eqref{eq:energy-u+psi-final5},
%
%
%
\[ \max_{t \in [0,T]} \int_{\Omega \times D} M\, \mathcal{F}(\widetilde\psi_L^{\Delta t(,\pm)}) \dq \dx\]
is bounded, uniformly in $L$ and $\Delta t$; in addition $\mathcal{F}$ is strictly convex. Thus
$\mathcal{F}$ may appear as a logical first candidate for the choice of the function $\Phi$.
Unfortunately, we do not know at this point if the weak limit of the sequence
$\{\mathcal{F}(\widetilde\psi_L^{\Delta t(,\pm)})\}_{L>1}$ in $L^1(0,T;L^1_M(\Omega \times D))$ is {\em equal} to $\mathcal{F}(\widetilde\psi)$, and therefore the argument outlined in the previous paragraph is not directly applicable with the choice
$\Phi(s) = \mathcal{F}(s)$. We shall therefore make a different choice: we
select the {\em strictly convex} function
$\Phi(s) = (1+s)^{1+\alpha}$, $s \geq 0$, where $\alpha \in (0,1)$ is a suitable (small) positive real number. We note in passing, as this will be important in the argument that will follow,
that $s \mapsto (1+s)^{\alpha}$, $s \geq 0$, is a {\em strictly concave} function on $\mathbb{R}_{\geq 0}$ for $\alpha \in (0,1)$.
Although we do not know at this point if, with the latter choice of $\Phi$,  the weak limit of the sequence $\{\Phi(\widetilde\psi_L^{\Delta t(,\pm)})\}_{L>1}$ in $L^{1}(0,T;L^{1}_M(\Omega \times D))$ is {\em equal} to  $\Phi(\widetilde\psi)$,
and therefore this particular $\Phi$  may seem no better than the original suggestion of $\Phi(s) = \mathcal{F}(s)$, we note that by using estimates interior to $\Omega \times D$ on subdomains $\Omega_0 \times D_0 \Subset \Omega \times D$ on which the Maxwellian weight is bounded above and below by positive constants, and therefore the uniform bounds in Maxwellian-weighted norms that result from \eqref{eq:energy-u+psi-final2} become bounds in standard, unweighted, Lebesgue and Sobolev norms, one can use function space interpolation between these unweighted norms to deduce a
uniform bound on the $L^{1+\delta}(0,T;L^{1+\delta}(\Omega_0 \times D_0))$ norm of
$\widetilde\psi_L^{\Delta t(,\pm)}$, for a suitable (small) value of $\delta$, which, with $\alpha \in (0,\delta)$ and an application of the Div-Curl lemma, then implies that the weak limit in $L^1(0,T;L^1(\Omega_0 \times D_0))$ of
$\Phi(\widetilde\psi_L^{\Delta t(,\pm)})$ is {\em equal} to $\Phi(\widetilde\psi)$ as $L \rightarrow \infty$, with $\Delta t=o(L^{-1})$. Hence, by the argument, outlined in the previous paragraph, we
deduce almost everywhere convergence of a subsequence on $(0,T) \times \Omega_0 \times D_0$, and finally, using an increasing sequence of nested Lipschitz subdomains $(0,T) \times \Omega_k \times D_k$, $k=1,2, \dots$, and extracting a diagonal sequence from $\widetilde\psi_L^{\Delta t(,\pm)}$, we arrive at a subsequence of
$\{\widetilde\psi_L^{\Delta t(,\pm)}\}_{L>1}$ that converges almost everywhere on $(0,T)\times \Omega \times D$ to $\widetilde\psi$ as $L \rightarrow \infty$, with $\Delta t=o(L^{-1})$.
Since the set $\mathfrak{D}:=(0,T) \times \Omega \times D$ has finite measure, according to
Egoroff's theorem (cf. Theorem 2.22 on p.149 of \cite{Fonseca_Leoni}) almost everywhere convergence implies almost uniform convergence, and in particular
convergence in measure. Thus, by Vitali's convergence theorem (cf. Theorem 2.24 on p.150 of \cite{Fonseca_Leoni}), and thanks to the uniform integrability of the sequence $\{\widetilde\psi_L^{\Delta t(,\pm)}\}_{L>1}$ in $L^1(0,T;L^1_M(\Omega \times D))$,
 we finally deduce the desired strong convergence of the sequence $\{\widetilde\psi_L^{\Delta t(,\pm)}\}_{L>1}$ in $L^1(0,T;L^1_M(\Omega \times D))$ as $L \rightarrow \infty$, with $\Delta t=o(L^{-1})$. We will further
strengthen this by using Lemma \ref{le:supplementary} below to strong convergence in the $L^p(0,T;L^1_M(\Omega \times D))$ norm, for any $p \in [1,\infty)$.

\smallskip

We now embark on the programme outlined above by observing that, since on each compact subset of $D$ the Maxwellian $M$ is bounded above and below by positive constants (depending on the choice of the compact subset), it follows from \eqref{psiwconL1} that
$\{\widetilde\psi^{\Delta t(,\pm)}_L\}_{L>1}$,
with $\Delta t = o(L^{-1})$, is weakly relatively compact
in $L^1_{\rm loc}(\Omega \times D \times (0,T))$. Hence, by uniqueness of the weak limit,
\begin{equation}\label{psi-weak-2}
\widetilde\psi^{\Delta t(,\pm)}_L \rightarrow \widetilde\psi\qquad
\mbox{weakly in $L^1_{\rm loc}(0,T;L^1(\Omega \times D))$}.
\end{equation}
We shall show that in fact
\begin{equation}\label{psi-weak-3}
\widetilde\psi^{\Delta t(,\pm)}_L \rightarrow \widetilde\psi\qquad \mbox{a.e. on $(0,T)\times
\Omega \times D$}.
\end{equation}

Let $\mathcal{O}:=\Omega \times D$, and suppose that $\mathcal{O}_0$ is a Lipschitz subdomain of
 $\mathcal{O}$ such that $\mathcal{O}_0\Subset \mathcal{O}$. As $s\log(s+1) + 1 > s$ for all $s \in \mathbb{R}_{\geq 0}$ we have from \eqref{psi-weak-0}, the bounds on the seventh and the eighth term on the left-hand side of \eqref{eq:energy-u+psi-final5}, and noting once again that $M$ is bounded below
on $\mathcal{O}_0$ by a positive constant (which may depend on $\mathcal{O}_0$), that
\begin{equation}\label{psi-weak-4}
\max_{t \in [0,T]}\|\sqrt{\widetilde\psi^{\Delta t(,\pm)}_L(t)}\|^2_{L^2(\mathcal{O}_0)}
+ \int_0^T \|\sqrt{\widetilde\psi^{\Delta t(,\pm)}_L(t)}\|^2_{H^1(\mathcal{O}_0)} \dt  \leq
C(\mathcal{O}_0),
\end{equation}
where $C(\mathcal{O}_0)$ is a positive constant, which may depend on $\mathcal{O}_0$ but is
independent of $L$ and $\Delta t$. It then
follows from the bound on the second term on the left-hand side of \eqref{psi-weak-4} by Sobolev embedding applied on the bounded Lipschitz domain $\mathcal{O}_0 \Subset \mathcal{O} \subset \mathbb{R}^{(K+1)d}$ that
\begin{equation}\label{psi-weak-5}
\int_0^T \|\sqrt{\widetilde\psi^{\Delta t(,\pm)}_L(t)}\|^2_{L^{\frac{2(K+1)d}{(K+1)d - 2}}(\mathcal{O}_0)} \dt  \leq C(\mathcal{O}_0).
\end{equation}
Interpolation between the first inequality in \eqref{psi-weak-4} and the inequality \eqref{psi-weak-5} then yields that
\begin{equation}\label{psi-weak-6}
\int_0^T \|\widetilde\psi^{\Delta t(,\pm)}_L(t)\|^{\frac{(K+1)d + 2}{(K+1)d}}_{L^{\frac{(K+1)d + 2}{(K+1)d}}(\mathcal{O}_0)} \dt = \int_0^T \|\sqrt{\widetilde\psi^{\Delta t(,\pm)}_L(t)}\|^{2\,\frac{(K+1)d + 2}{(K+1)d}}_{L^{2\,\frac{(K+1)d + 2}{(K+1)d}}(\mathcal{O}_0)} \dt \leq C(\mathcal{O}_0).
\end{equation}
By writing $\widetilde\psi^{\Delta t(,\pm)}_L(t) = [\sqrt{\widetilde\psi^{\Delta t(,\pm)}_L}]^2$ and applying H\"older's inequality we then deduce for any $p \in [1,2)$ that
\begin{eqnarray}\label{psi-weak-7}
&&\int_0^T |\widetilde\psi^{\Delta t(,\pm)}_L(t)|_{W^{1,p}(\mathcal{O}_0)}^p \dt\nonumber\\
&&\qquad \leq 2^p \left(\int_0^T |\sqrt{\widetilde\psi^{\Delta t(,\pm)}_L(t)}|_{H^1(\mathcal{O}_0)}^2\dt \right)^{\frac{p}{2}}
\left(\int_0^T \|\sqrt{\widetilde\psi^{\Delta t(,\pm)}_L(t)}
\|^{\frac{2p}{2-p}}_{L^{\frac{2p}{2-p}}(\mathcal{O}_0)}\dt\right)^{\frac{2-p}{p}}
 \leq C(\mathcal{O}_0),
\end{eqnarray}
provided that
\begin{equation}\label{psi-weak-7a}
\frac{2p}{2-p} \leq 2\,\frac{(K+1)d + 2}{(K+1)d}.
\end{equation}
Let $p_0$ be the largest number in the range $[1,2)$ that satisfies \eqref{psi-weak-7a}; we thus deduce from \eqref{psi-weak-7} that
\begin{eqnarray}\label{psi-weak-8}
\int_0^T |\widetilde\psi^{\Delta t(,\pm)}_L(t)|^{p_0}_{W^{1,p_0}(\mathcal{O}_0)} \dt \leq C(\mathcal{O}_0),\qquad \mbox{with}\quad p_0:=\frac{(K+1)d +2}{(K+1)d + 1} \in (1,2).
\end{eqnarray}

Thanks to \eqref{lambda-bound} we also have that
\[
\|\widetilde\psi^{\Delta t(,\pm)}_L\|_{L^\infty((0,T)\times \Omega; L^1_M(D))} \leq C,
\]
and therefore,
\begin{equation}\label{psi-weak-9}
\|\widetilde\psi^{\Delta t(,\pm)}_L\|_{L^\infty((0,T)\times \Omega_0; L^1(D_0))} \leq C
\end{equation}
for any two Lipschitz
subdomains\footnote{
Let us suppose that $\mathcal{O} := \Omega \times D$, where $\Omega$ and $D$
are bounded open Lipschitz domains in $\mathbb{R}^m$ and $\mathbb{R}^n$ respectively.
Then, $\mathcal{O}$ is a bounded open Lipschitz domain in $\mathbb{R}^{m+n}$. This follows on noting
that the Cartesian product of two bounded open sets is itself is open. That $\mathcal{O}$
is a Lipschitz domain follows by combining Theorem 3.1 in the Ph.D. Thesis of Hochmuth \cite{RH},
which implies that the Cartesian product of a finite number of bounded domains,
each satisfying the uniform cone property, is a bounded domain satisfying the uniform cone property;
and Theorem 1.2.2.2 in the book of Grisvard \cite{PG}, which states that a bounded open set in
$\mathbb{R}^n$ has the uniform cone property if, and only if, its boundary is Lipschitz.}
$\Omega_0 \Subset \Omega$
and $D_0 \Subset D$; here $C$ is a positive constant, independent of $L$ and $\Delta t$. Fixing $\mathcal{O}_0 = \Omega_0 \times D_0$ and interpolating between
\eqref{psi-weak-9} and \eqref{psi-weak-6}, which states that
\[ \|\widetilde\psi^{\Delta t(,\pm)}_L\|_{L^{\frac{(K+1)d + 2}{(K+1)d}}((0,T)\times \Omega_0 \times D_0)} \leq C(\mathcal{O}_0),\]
we deduce that for any two real numbers $q_1$ and $q_2$, with
\begin{equation}\label{psi-weak-10aa}
 1+ \frac{2}{(K+1)d}\leq q_1<\infty \quad \mbox{and}\quad 1< q_2 \leq 1+ \frac{2}{(K+1)d}
\end{equation}
and satisfying the relation
\begin{equation}\label{psi-weak-10bb}
q_1\left(1-\frac{1}{q_2}\right) = \frac{2}{(K+1)d}\,,
\end{equation}
we have that
\begin{equation}\label{psi-weak-10}
\|\widetilde\psi^{\Delta t(,\pm)}_L\|_{L^{q_1}((0,T)\times \Omega_0; L^{q_2}(D_0))}
\leq C(\mathcal{O}_0).
\end{equation}

Note further that since $\rho^{\Delta t,+}_L \geq \rho_{\rm min}$ a.e. on $\Omega \times [0,T]$
and $\mu(\rho^{\Delta t,+}_L) \geq \mu_{\rm min}$ a.e. on $\Omega \times [0,T]$, interpolation
between the bounds (cf. \eqref{eq:energy-u+psi-final5})
\begin{equation}\label{psi-weak-10a}
\max_{t \in [0,T]} \int_\Omega |\ut^{\Delta t(,\pm)}_L(t)|^2 \dx
\leq \frac{[{\sf B}(\ut_0, \ft, \widetilde\psi_0)]^2}{\rho_{\rm min}};
\quad \int_0^T \int_\Omega |\Dtt(\ut^{\Delta t(,\pm)}_L)(t)|^2 \dx \dt \leq
\frac{[{\sf B}(\ut_0, \ft, \widetilde\psi_0)]^2}{\mu_{\rm min}}
\end{equation}
yields, using Korn's inequality \eqref{Korn}, that
\begin{align} \int_0^T \|\ut^{\Delta t(,\pm)}_L(t)\|^s_{L^s(\Omega)} \dt \leq C,
\qquad
 \mbox{where}\quad \left\{\begin{array}{ll} s < \frac{2(d+2)}{d} = 4 & \mbox{when $d=2$}\\
                                     s = \frac{2(d+2)}{d} = \frac{10}{3} & \mbox{when $d=3$,}
                   \end{array}\right.
 \label{psi-weak-11}
                   \end{align}
and $C$ is a positive constant, independent of $L$ and $\Delta t$.
%
%
Hence, by H\"older's inequality,
\eqref{psi-weak-11} and \eqref{psi-weak-10}, we get that
\begin{eqnarray}\label{psi-weak-11a}
&&\left[\int_0^T \int_{\Omega_0 \times D_0} \left[|\ut^{\Delta t, -}_L|\,\zeta^{\{\Delta t\}}_L\, \widetilde\psi^{\Delta t,+}_L\right]^{1+\delta} \dq \dx \dt \right]^{\frac{1}{1+\delta}}\nonumber\\
&&\qquad\leq \zeta_{\rm max} \left[\int_0^T \int_{\Omega_0} |\ut^{\Delta t, -}_L|^{1+\delta}\left[\int_{D_0}\left[\widetilde\psi^{\Delta t,+}_L\right]^{1+\delta} \dq\right] \dx\dt \right]^{\frac{1}{1+\delta}}
\nonumber\\
&&\qquad\leq \zeta_{\rm max} \left[\int_0^T \int_{\Omega_0}
|\ut^{\Delta t, -}_L|^{(1+\delta){\mathfrak{a}}}\dx \dt\right]^{\frac{1}{(1+\delta){\mathfrak{a}}}}
\left[\int_0^T \int_{\Omega_0}\left(\int_{D_0}\left[\widetilde\psi^{\Delta t,+}_L\right]^{1+\delta}\dq \right)^{\!\!\mathfrak{b}} \dx \dt\right]^{\frac{1}{(1+\delta){\mathfrak{b}}}},
\end{eqnarray}
where $\delta>0$ is to be chosen, and $1/{\mathfrak{a}} + 1/{\mathfrak{b}} = 1$, $1 < {\mathfrak a}, {\mathfrak{b}} < \infty$, with
\[ (1+\delta)\,{\mathfrak{a}} < \frac{2}{d}\,(d+2).\]

To this end, we define
\[r:= \frac{2}{(K+1)\, d}\]
and we select any
\[q_1 > \left(1 + \frac{d}{d+4}\right) \left(1 + r\right).\]
Note that $q_1> 1 + r = 1 + 2/((K+1)d)$; and in particular $q_1>r$.
We then define $q_2 := 1+\delta$, where $\delta := r/(q_1-r)$; hence $q_1>q_2$. We let
${\mathfrak{a}} := q_1/(q_1-q_2)$, ${\mathfrak{b}} := q_1/q_2$. Clearly, with such
a choice of $q_1$ and $q_2$,  we have that $q_1 (1- (1/{q_2}))=r$ and $1< q_2 < 1+r = 1 + 2/((K+1)d)<q_1$. We thus deduce from \eqref{psi-weak-11a} using \eqref{psi-weak-11} and \eqref{psi-weak-10} that
\begin{equation}\label{psi-weak-12}
\| \ut^{\Delta t, -}_L\,\zeta^{\{\Delta t\}}_L\, \widetilde\psi^{\Delta t,+}_L\|_{L^{1+\delta}((0,T)\times \mathcal{O}_0)} \leq C(\mathcal{O}_0),
\end{equation}
where $\delta>0$ is as defined above;
$\mathcal{O}_0= \Omega_0 \times D_0$; and
$C(\mathcal{O}_0)$ is a positive constant, independent of $L$ and $\Delta t$.

Analogously,
\begin{equation}\label{psi-weak-13}
 \left\|\sum_{i=1}^K
\left[\sigtt(\utae^{\Delta t,+})
\,\qt_i\right]\,\zeta(\rho_L^{\Delta t,+})\,\beta^L(\hpsiaet^{\Delta t,+}) \right\|_{L^{1+\delta}((0,T) \times \mathcal{O}_0)} \leq C(\mathcal{O}_0),
\end{equation}
this time with $0<\delta \leq r/(r+2)$, where, as above, $r= 2/((K+1)d)$. This follows on noting
the second inequality in \eqref{psi-weak-10a}, (\ref{Korn}) and that, thanks to \eqref{psi-weak-10},
\[
\|\widetilde\psi^{\Delta t(,\pm)}_L\|_{L^{\hat{q}_1}((0,T)\times \Omega_0; L^{\hat{q}_2}(D_0))}
\leq \|\widetilde\psi^{\Delta t(,\pm)}_L\|_{L^{q_1}((0,T)\times \Omega_0; L^{q_2}(D_0))}
\leq C(\mathcal{O}_0),
\]
with $\hat{q}_1 = 2(1+\delta)/(1-\delta)$; $\hat{q}_2 = 1 + \delta$ with $0<\delta \leq r/(r+2)$;
$q_2 = \hat{q}_2$; $q_1$ related to $q_2$ via \eqref{psi-weak-10bb}, and
noting that since $0<r/(r+2)<r<1$, we have $1+r<\hat{q}_1\leq q_1 = r(1+\delta)/\delta<\infty$, $1<\hat{q}_2=q_2 <1+r$. Here, again, $C(\mathcal{O}_0)$ is a positive constant, independent of $L$ and $\Delta t$.

With the bounds we have established on $\widetilde\psi^{\Delta t(,\pm)}_L$, we are now in a position to
apply the Div-Curl lemma, which we next state (cf., for example, \cite{FN}, p.343, Theorem 10.21).

\begin{theorem}\label{div-curl}
Suppose that $\mathfrak{D} \subset \mathbb{R}^{\mathfrak{N}}$ is a bounded open Lipschitz domain and $\mathfrak{N} \in \mathbb{N}_{\geq 2}$.
Let, for any real number $s>1$, $W^{-1,s}(\mathfrak{D})$ and $W^{-1,s}(\mathfrak{D};\mathbb{R}^{{\mathfrak{N}} \times {\mathfrak{N}}})$ denote the
duals of the Sobolev spaces $W^{1,\frac{s}{s-1}}_0(\mathfrak{D})$ and
$W^{1,\frac{s}{s-1}}_0(\mathfrak{D};\mathbb{R}^{{\mathfrak{N}} \times {\mathfrak{N}}})$, respectively.
Assume that
\begin{align*}
\left.
\begin{array}{rl}
\undertilde{H}_n \rightarrow \undertilde{H}\qquad & \mbox{weakly in $L^p(\mathfrak{D}; \mathbb{R}^{\mathfrak{N}})$},\\
\undertilde{Q}_n \rightarrow \undertilde{Q}\qquad & \mbox{weakly in $L^q(\mathfrak{D}; \mathbb{R}^{\mathfrak{N}})$},
\end{array}
\right\}
\end{align*}
where
\[ \frac{1}{p} + \frac{1}{q} = \frac{1}{r}<1.\]
Suppose also that there exists a real number $s>1$ such that
\begin{align*}
\left. \begin{array}{rll}    {\rm div }~ \undertilde{H}_n
&\equiv ~\nabla \cdot \undertilde{H}_n
& \mbox{is precompact in $W^{-1,s}(\mathfrak{D})$, and}\\
                             {\rm curl }~ \undertilde{Q}_n
&\equiv ~\left(\underdtilde{\nabla} \undertilde{Q}_n - (\underdtilde{\nabla} \undertilde{Q}_n)^{\rm T}\right)
& \mbox{is precompact in $W^{-1,s}(\mathfrak{D};\mathbb{R}^{{\mathfrak{N}} \times {\mathfrak{N}}})$.}
       \end{array}\right\}
\end{align*}
Then,
\[ \undertilde{H}_n\cdot \undertilde{Q}_n �\rightarrow  \undertilde{H}\cdot \undertilde{Q}\quad
 \mbox{weakly in $L^r(\mathfrak{D})$}.\]
\end{theorem}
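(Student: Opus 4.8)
The plan is to prove Theorem~\ref{div-curl} by the classical compensated--compactness argument of Murat and Tartar: reduce to a purely local statement via a smooth cut-off, and then split $\undertilde Q_n$ through a Helmholtz-type decomposition whose two pieces are handled by, respectively, integration by parts (using the precompactness of $\operatorname{div}\undertilde H_n$) and a singular-integral compactness estimate (using the precompactness of $\operatorname{curl}\undertilde Q_n$). First I would reduce to the case $\undertilde H = \undertilde Q = \zerot$. Writing
$$\undertilde H_n \cdot \undertilde Q_n = (\undertilde H_n - \undertilde H)\cdot(\undertilde Q_n - \undertilde Q) + \undertilde H \cdot \undertilde Q_n + \undertilde H_n \cdot \undertilde Q - \undertilde H \cdot \undertilde Q,$$
the last three terms converge to $\undertilde H \cdot \undertilde Q$ weakly in $L^r(\mathfrak D)$ by the weak convergences $\undertilde H_n \rightharpoonup \undertilde H$ in $L^p$, $\undertilde Q_n \rightharpoonup \undertilde Q$ in $L^q$ together with $\undertilde H \in L^p$, $\undertilde Q \in L^q$ and $1/p+1/q=1/r$, so it suffices to prove $(\undertilde H_n - \undertilde H)\cdot(\undertilde Q_n - \undertilde Q) \rightharpoonup 0$ in $L^r(\mathfrak D)$. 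Relabelling, we may assume $\undertilde H_n \rightharpoonup \zerot$ in $L^p$ and $\undertilde Q_n \rightharpoonup \zerot$ in $L^q$; the shifted divergences and curls still form precompact sets in $W^{-1,s}$ and now converge to $0$ in the sense of distributions, so precompactness forces $\operatorname{div}\undertilde H_n \to 0$ strongly in $W^{-1,s}(\mathfrak D)$ and $\operatorname{curl}\undertilde Q_n \to 0$ strongly in $W^{-1,s}(\mathfrak D;\mathbb R^{\mathfrak N\times\mathfrak N})$. The goal becomes: for every $\varphi \in C_c^\infty(\mathfrak D)$, $\int_{\mathfrak D}\varphi\,\undertilde H_n\cdot\undertilde Q_n\,\mathrm{d}x \to 0$.

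Next I would localize: fix an open ball $B$ with $\operatorname{supp}\varphi \subset B \Subset \mathfrak D$, and, using a cut-off equal to $1$ on $B$, replace $\undertilde Q_n$ by a sequence still bounded in $L^q$ and supported in a fixed compact subset of $\mathfrak D$. Set $\Phi_n := (-\Delta)^{-1}\operatorname{div}\undertilde Q_n$ via the Newtonian potential, so that $\undertilde Q_n = \nabla\Phi_n + \undertilde G_n$ with $\operatorname{div}\undertilde G_n = 0$ and $\operatorname{curl}\undertilde G_n = \operatorname{curl}\undertilde Q_n$. By the Calderón--Zygmund estimates $\{\nabla\Phi_n\}$ is bounded in $L^q$, hence, after subtracting constants, $\{\Phi_n\}$ is bounded in $W^{1,q}(B)$ and, by Rellich--Kondrachov, precompact in $L^{q}(B)$; extracting a subsequence, $\Phi_n \to 0$ strongly in $L^{q}(B)$ (the weak limit being $0$ since $\undertilde Q_n \rightharpoonup \zerot$). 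On the other hand $\undertilde G_n$ is, up to a harmonic term absorbable into $\nabla\Phi_n$, a fixed matrix of Riesz transforms applied to $\operatorname{curl}\undertilde Q_n$; since $\operatorname{curl}\undertilde Q_n \to 0$ strongly in $W^{-1,s}$, it follows that $\undertilde G_n \to \zerot$ strongly in $L^{s}(B)$, while $\{\undertilde G_n\}$ remains bounded in $L^q(B)$; by interpolation $\undertilde G_n \to \zerot$ strongly in $L^{\tilde q}(B)$ for every $\tilde q \in [s,q)$, and the strict inequality $1/p+1/q=1/r<1$ gives enough slack to pick such a $\tilde q$ with $1/p + 1/\tilde q \le 1$.

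Then I would split $\int_{\mathfrak D}\varphi\,\undertilde H_n\cdot\undertilde Q_n\,\mathrm{d}x = \int_{\mathfrak D}\varphi\,\undertilde H_n\cdot\nabla\Phi_n\,\mathrm{d}x + \int_{\mathfrak D}\varphi\,\undertilde H_n\cdot\undertilde G_n\,\mathrm{d}x =: I_n + II_n$. For $II_n$, Hölder with $\{\undertilde H_n\}$ bounded in $L^p$ and $\undertilde G_n \to \zerot$ strongly in $L^{\tilde q}$ gives $II_n \to 0$. For $I_n$, integrating by parts,
$$I_n = -\int_{\mathfrak D}\Phi_n\,\operatorname{div}(\varphi\,\undertilde H_n)\,\mathrm{d}x = -\bigl\langle \operatorname{div}\undertilde H_n,\varphi\Phi_n\bigr\rangle - \int_{\mathfrak D}\Phi_n\,\undertilde H_n\cdot\nabla\varphi\,\mathrm{d}x,$$
where the first term is the $W^{-1,s}$--$W^{1,s'}_0$ pairing. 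Since $\|\operatorname{div}\undertilde H_n\|_{W^{-1,s}} \to 0$ and $\{\varphi\Phi_n\}$ is bounded in $W^{1,s'}$ (from the $W^{1,q}(B)$ bound and its Sobolev consequence, interpolated with the $L^q(B)$ bound if the exponents require it), this pairing tends to $0$; and the last integral tends to $0$ because $\Phi_n \to 0$ strongly in $L^{q}(B)$, $\undertilde H_n \rightharpoonup \zerot$ weakly in $L^p$, $\nabla\varphi$ is smooth, and $1/p+1/q<1$. Hence $\int_{\mathfrak D}\varphi\,\undertilde H_n\cdot\undertilde Q_n\,\mathrm{d}x \to 0$ for all $\varphi \in C_c^\infty(\mathfrak D)$; since $\{\undertilde H_n\cdot\undertilde Q_n\}$ is bounded in $L^r(\mathfrak D)$ with $r>1$ and $C_c^\infty(\mathfrak D)$ is dense in $L^{r'}(\mathfrak D)$, the Banach--Steinhaus/density argument upgrades this to $\undertilde H_n\cdot\undertilde Q_n \rightharpoonup 0$ weakly in $L^r(\mathfrak D)$, and undoing the reduction at the start yields the assertion. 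The step I expect to be the main obstacle is the bookkeeping of exponents in the decomposition: arranging that the Calderón--Zygmund / Riesz-transform bounds produce pieces whose integrability is simultaneously good enough to be absorbed against $\{\undertilde H_n\}$ in $L^p$, against $\{\Phi_n\}$ in $W^{1,q}$, and against $\operatorname{div}\undertilde H_n$ in $W^{-1,s}$ through the duality pairing; it is precisely the strict inequality $1/p+1/q=1/r<1$, together with the localization that turns the homogeneous singular-integral estimates on $\mathbb R^{\mathfrak N}$ into compactness on the bounded Lipschitz domain $\mathfrak D$, that makes all these interpolations go through.
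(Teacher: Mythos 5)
First, note that the paper does not prove this statement at all: Theorem~\ref{div-curl} is quoted verbatim as a known result, with the proof delegated to Feireisl \& Novotn\'y \cite{FN} (Theorem 10.21, p.~343). So the comparison here is against the standard proof in that reference, which follows exactly the Murat--Tartar strategy you adopt: reduction to $\undertilde{H}=\undertilde{Q}=\zerot$, localization by a cut-off, Helmholtz splitting of $\undertilde{Q}_n$ via Riesz operators, and treatment of the gradient part by duality against $\operatorname{div}\undertilde{H}_n$. Your reduction step, the identification of the strong $W^{-1,s}$ limits, the treatment of the divergence-free piece $\undertilde{G}_n$ (strong convergence in $L^{\tilde q}$ for $\tilde q<q$ by interpolation, absorbed against the $L^p$ bound on $\undertilde{H}_n$ because $q>p'$), and the final density upgrade to weak $L^r$ convergence are all sound.

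The genuine gap is in the term $I_n$, precisely at the pairing $\langle \operatorname{div}\undertilde{H}_n,\varphi\Phi_n\rangle$. You assert that $\{\varphi\Phi_n\}$ is bounded in $W^{1,s'}_0(\mathfrak{D})$ ``from the $W^{1,q}(B)$ bound and its Sobolev consequence, interpolated with the $L^q(B)$ bound if the exponents require it.'' This is false in general: the hypothesis only gives precompactness of $\operatorname{div}\undertilde{H}_n$ in $W^{-1,s}$ for \emph{some} $s>1$, possibly with $s$ arbitrarily close to $1$, so $s'=s/(s-1)$ may be arbitrarily large, in particular larger than $q$. A bound on $\nabla(\varphi\Phi_n)$ in $L^q$ cannot yield a bound in $L^{s'}$ for $s'>q$; Sobolev embedding improves the integrability of $\varphi\Phi_n$ itself, not of its gradient, and interpolating $W^{1,q}$ with $L^q$ only produces fractional spaces $W^{\sigma,q}$, $\sigma<1$, none of which embed into $W^{1,s'}$. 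The standard repair works on the other side of the pairing: since $\{\undertilde{H}_n\}$ is bounded in $L^p$, $\{\operatorname{div}\undertilde{H}_n\}$ is bounded in $W^{-1,p}$; combining this with precompactness in $W^{-1,s}$ and interpolating (and using that $W^{-1,s}\hookrightarrow W^{-1,m}$ for $m\le s$ on a bounded domain) shows that $\operatorname{div}\undertilde{H}_n\to 0$ \emph{strongly} in $W^{-1,m}$ for every $m<p$, in particular for $m=q'$, which is admissible because $1/p+1/q<1$ is equivalent to $q'<p$. One then evaluates the pairing in the duality $W^{-1,q'}\times W^{1,q}_0$, where $\{\varphi\Phi_n\}$ genuinely is bounded. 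This interpolation of the negative-order compactness is the one essential idea of the compensated-compactness argument that your write-up is missing; without it the proof does not close.
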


\begin{remark}
Concerning the extension of Theorem \ref{div-curl} to the case of $r=1$, we refer to the recent paper by Conti, Dolzmann and M\"{u}ller \cite{CDM}.
For the our purposes here the version of the Div-Curl lemma as stated above will suffice.
\end{remark}

\bigskip

Consider the following sequences of $\mathfrak{N} = 1 + d  + Kd$ component vector functions defined on the Lipschitz
domain $\mathfrak{D}:=(0,T)\times \Omega_0 \times D_0 \subset \mathbb{R}^{\mathfrak{N}}$,
\[H_{\Delta t, L} := \bigg(H^{(t)}_{\Delta t, L}\,;\, H^{(x,1)}_{\Delta t, L}, \dots, H^{(x,d)}_{\Delta t, L}\,;\, H^{(q_1,1)}_{\Delta t, L}, \dots, H^{(q_1,d)}_{\Delta t, L}, \dots, H^{(q_K,1)}_{\Delta t, L}, \dots, H^{(q_K,d)}_{\Delta t, L}\bigg)\]
where,
\begin{eqnarray*}
H^{(t)}_{\Delta t, L} &:=& M\, (\zeta(\rho_L)\,\widetilde\psi_L)^{\Delta t},\\
(H^{(x,1)}_{\Delta t, L}, \dots, H^{(x,d)}_{\Delta t, L}) &:=&
\utae^{\Delta t,-}\,M\,\zeta^{\{\Delta t\}}_L\,\hpsiaet^{\Delta t,+}-\epsilon\, M\,
\nabx \hpsiaet^{\Delta
t,+},\\
(H^{(q_i,1)}_{\Delta t, L},\dots, H^{(q_i,d)}_{\Delta t, L}) &:=&
M\,\left[\sigtt(\utae^{\Delta t,+})
\,\qt_i\right]\,\zeta(\rho_L^{\Delta t,+})\,\beta^L(\hpsiaet^{\Delta t,+})-\frac{1}{4\,\lambda}
\,\sum_{j=1}^K A_{ij}\,
 M\, \nabqj \hpsiaet^{\Delta t,+},\\
&&\hspace{8cm}\quad i=1,\dots,K,
\end{eqnarray*}
and
\[Q_{\Delta t, L} := \bigg((1+\widetilde \psi^{\Delta t}_L)^\alpha,
\underbrace{0, \dots, 0}_{
\begin{smallmatrix}
  \text{$d + Kd$} \\
  \text{times}
\end{smallmatrix}}\,\bigg),\qquad \mbox{with $\alpha \in (0,\textstyle{\frac{1}{2}})$ fixed}.\]

Thanks to \eqref{psi-weak-10}, \eqref{psi-weak-7}, \eqref{psi-weak-12} and \eqref{psi-weak-13},
there exists a real number $p_\ast \in (1,2)$ such that the sequence $\{H_{\Delta t, L}\}_{L>1}$, with $\Delta t=o(L^{-1})$,
is bounded in $L^{p_\ast}(\mathfrak{D},\mathbb{R}^{\mathfrak{N}})$; hence there exists an element  $H \in L^{p_\ast}(\mathfrak{D},
\mathbb{R}^{\mathfrak{N}})$ and a subsequence, not indicated, such that $H_{\Delta t, L} \rightarrow H$, weakly in $L^{p_\ast}(\mathfrak{D},\mathbb{R}^{\mathfrak{N}})$. Also, the sequence
$\{Q_{\Delta t, L}\}_{L>1}$, with $\Delta t=o(L^{-1})$,
is bounded in $L^{q_\ast}(\mathfrak{D},\mathbb{R}^{\mathfrak{N}})$ with $q_\ast=1/\alpha$; hence there exists a $Q \in L^{q_*}(\mathfrak{D},
\mathbb{R}^{\mathfrak{N}})$ and a subsequence, not indicated, such that $Q_{\Delta t, L} \rightarrow Q$,
weakly in $L^{q_\ast}(\mathfrak{D},\mathbb{R}^{\mathfrak{N}})$.

With our definition of $\{H_{\Delta t, L}\}_{L>1}$, we have that
\[ \mbox{div}_{t,x,q} \,H_{\Delta t, L} = 0.\]
Therefore the sequence $\{\mbox{div}_{t,x,q}\, H_{\Delta t, L}\}_{L>1}$, with $\Delta t = o(L^{-1})$, is precompact in $W^{-1,s}(\mathfrak{D})$ for all $s>1$.

Further, since $\alpha \in (0,\textstyle{\frac{1}{2}})$, it follows from \eqref{psi-weak-4} that $\{Q_{\Delta t, L}\}_{L>1}$ satisfies
\begin{eqnarray*}
\int_{(0,T)\times \mathcal{O}_0}|\,\mbox{curl}_{t,x,q}  \,Q_{\Delta t, L}\,|^2 \dq \dx \dt &\leq & C \int_{(0,T)\times \mathcal{O}_0} |\nabla_{x,q}\,(1+\widetilde\psi^{\Delta t}_L)^\alpha|^2 \dq \dx \dt\\
&\leq & C \int_{(0,T)\times \mathcal{O}_0} |\nabla_{x,q}\sqrt{\widetilde\psi^{\Delta t}_L}|^2 \dq \dx \dt \\
&\leq & C(\mathcal{O}_0).
\end{eqnarray*}
Therefore, the sequence $\{\mbox{curl}_{t,x,q}  \,Q_{\Delta t, L}\}_{L>1}$, with $\Delta t=o(L^{-1})$,  is precompact in the function space $W^{-1,2}(\mathfrak{D};\mathbb{R}^{\mathfrak{N}\times \mathfrak{N}})$.

We thus deduce from Theorem \ref{div-curl} that
\[
 \overline{H_{\Delta t, L} \cdot Q_{\Delta t, L}} = \overline{H_{\Delta t, L}}\cdot\overline{Q_{\Delta t, L}},
\]
where the overline $\overline{~\cdot~}$ signifies the weak limit in $L^1(\mathfrak{D})$ of the sequence appearing under the overline; thus,
\begin{equation}\label{weak-1}
\overline{(\zeta(\rho_L)\,\widetilde\psi_L)^{\Delta t}\, (1+\widetilde{\psi}^{\Delta t}_L)^\alpha}
= \overline{(\zeta(\rho_L)\,\widetilde\psi_L)^{\Delta t}}\,\, \overline{(1+\widetilde{\psi}^{\Delta t}_L)^\alpha}.
\end{equation}
As
\[ (\zeta(\rho_L)\,\widetilde\psi_L)^{\Delta t}(\cdot,t)  =
\zeta^n_L(\cdot) \left[\frac{t-t_{n-1}}{\Delta t} \widetilde\psi^n_L(\cdot) + \frac{t_n  - t}{\Delta t} \widetilde\psi^{n-1}_L(\cdot) \right]
+ \left(\zeta^{n-1}_L(\cdot) - \zeta^n_L(\cdot)\right) \widetilde\psi^{n-1}_L(\cdot)\,\frac{t_n - t}{\Delta t}\]
for all $t \in [t_{n-1},t_n]$ and $n=1,\dots,N$, which in turn implies that
\begin{equation}\label{weak-2} (\zeta(\rho_L)\,\widetilde\psi_L)^{\Delta t} = \zeta(\rho^{\Delta t,+}_L) \,\widetilde\psi_L^{\Delta t}
+ \big(\zeta(\rho^{\Delta t,-}_L) - \zeta(\rho^{\Delta t,+}_L)\big)\, \widetilde\psi^{\Delta t,-}_L\, \theta_{\Delta t},
\end{equation}
where 
$\theta_{\Delta t}$ is the nonnegative discontinuous piecewise linear function defined on $(0,T]$ by
\[ \theta_{\Delta t}(t) = \frac{t_n - t}{\Delta t}, \qquad t \in (t_{n-1},t_n], \quad n=1,\dots,N,\]
the fact that, by \eqref{zetasconL2},
\begin{align*}
&\|\zeta(\rho^{\Delta t,-}_L) - \zeta(\rho^{\Delta t,+}_L)\|_{L^\infty(0,T;L^p(\Omega))}\\
&\hspace{2cm}\leq  \|\zeta(\rho^{\Delta t,-}_L) - \zeta(\rho)\|_{L^\infty(0,T;L^p(\Omega))} +
 \|\zeta(\rho^{\Delta t,+}_L) - \zeta(\rho)\|_{L^\infty(0,T;L^p(\Omega))}\rightarrow 0
\end{align*}
as $L \rightarrow \infty$ (with $\Delta t = o(L^{-1})$), $1<p<\infty$, implies,
on noting (\ref{psi-weak-6}), that
\begin{equation}\label{weak-3} \|\big(\zeta(\rho^{\Delta t,-}_L) - \zeta(\rho^{\Delta t,+}_L)\big)\, \widetilde\psi^{\Delta t,-}_L\, \theta_{\Delta t}\|_{L^1(\mathfrak{D})} \rightarrow 0,
\end{equation}
as $L \rightarrow \infty$ (with $\Delta t = o(L^{-1})$). Further, by \eqref{zetasconL2},
$\zeta(\rho^{\Delta t,+}_L)\rightarrow
\zeta \in L^\infty(0,T;L^\infty(\Omega))$ strongly in $L^\infty(0,T; L^p(\Omega))$, $1\leq p<\infty$,  and,
by \eqref{psi-weak-2}, $\widetilde\psi_L^{\Delta t}
\rightarrow \widetilde\psi$ weakly in $L^1_{{\rm loc}}(0,T;L^1(\Omega\times D))$;
thus we deduce, on noting (\ref{psi-weak-6}), that
$\zeta(\rho^{\Delta t,+}_L) \,\widetilde\psi_L^{\Delta t}$ converges to $\zeta(\rho)\, \widetilde\psi$, weakly in $L^1(\mathfrak{D})$. Hence we have shown that
\[ \overline{(\zeta(\rho_L)\,\widetilde\psi_L)^{\Delta t}} = \overline{\zeta(\rho^{\Delta t,+}_L) \,\widetilde\psi_L^{\Delta t}} = \zeta(\rho)\, \widetilde\psi \in L^1(\mathfrak{D}).\]
Consequently, we have from \eqref{weak-1} that
\[
(\zeta(\rho_L)\,\widetilde\psi_L)^{\Delta t}\, (1+\widetilde{\psi}^{\Delta t}_L)^\alpha
\rightarrow  \zeta(\rho)\,\widetilde\psi\,\, \overline{(1+\widetilde{\psi}^{\Delta t}_L)^\alpha}
\qquad \mbox{weakly in $L^1(\mathfrak{D})$.}
\]
Noting \eqref{weak-2} and \eqref{psi-weak-6} then yields that
\[
\zeta(\rho_L^{\Delta t,+})\,\widetilde\psi_L^{\Delta t}\, (1+\widetilde{\psi}^{\Delta t}_L)^\alpha
\rightarrow  \zeta(\rho)\,\widetilde\psi\,\, \overline{(1+\widetilde{\psi}^{\Delta t}_L)^\alpha}
\qquad \mbox{weakly in $L^1(\mathfrak{D})$.}
\]
Thus, by the strong convergence $\zeta(\rho_L^{\Delta t,+}) \rightarrow \zeta(\rho)
\in L^\infty(0,T;L^\infty(\Omega))$ in $L^\infty(0,T;L^p(\Omega))$, $1<p<\infty$, which, thanks
to our assumptions on the function $\zeta$ stated in \eqref{inidata}
implies that $1/\zeta(\rho_L^{\Delta t,+}) \rightarrow 1/\zeta(\rho) \in L^\infty(0,T;L^\infty(\Omega))$ in $L^\infty(0,T;L^p(\Omega))$, $1<p<\infty$, we finally have, on noting (\ref{psi-weak-6}),
that
\begin{equation}\label{weak-3a}
\widetilde\psi_L^{\Delta t}\, (1+\widetilde{\psi}^{\Delta t}_L)^\alpha
\rightarrow \widetilde\psi\,\, \overline{(1+\widetilde{\psi}^{\Delta t}_L)^\alpha}
\qquad \mbox{weakly in $L^1(\mathfrak{D})$.}
\end{equation}
As, by definition, $(1+\widetilde{\psi}^{\Delta t}_L)^\alpha \rightarrow \overline{(1+\widetilde{\psi}^{\Delta t}_L)^\alpha}$, weakly in $L^1(\mathfrak{D})$, by adding this to
\eqref{weak-3a} we have that
\[
(1+\widetilde{\psi}^{\Delta t}_L)^{\alpha+1} = (1+\widetilde\psi_L^{\Delta t})\, (1+\widetilde{\psi}^{\Delta t}_L)^\alpha
\rightarrow (1+\widetilde\psi)\, \overline{(1+\widetilde{\psi}^{\Delta t}_L)^\alpha}
\qquad \mbox{weakly in $L^1(\mathfrak{D})$.}
\]
Thanks to the weak lower-semicontinuity of the continuous convex function $s\in [0,\infty) \mapsto s^{\alpha+1} \in [0,\infty)$ it follows (cf. Theorem 10.20 on p.339 of \cite{FN}) that
\[
(1+\widetilde\psi)^{1+\alpha} \leq (1+\widetilde\psi)\,\overline{(1+\widetilde\psi_L^{\Delta t})^{\alpha}}.
\]
Consequently,
\begin{equation}\label{weak-3aa}
(1+\widetilde\psi)^{\alpha}\le \overline{(1+\widetilde\psi_L^{\Delta t})^\alpha}.
\end{equation}

On the other hand, the function $s\in [0,\infty) \mapsto s^\alpha \in [0,\infty)$ is continuous and concave, and therefore $s\in [0,\infty) \mapsto -s^\alpha \in (-\infty,0]$ is continuous and convex;
thus, once again by the weak lower-semicontinuity of continuous convex functions,
we immediately have (cf. Theorem 10.20 on p.339 of \cite{FN}) that
\begin{equation}\label{weak-3aaa}
- (1+\widetilde\psi)^{\alpha}\leq - \overline{(1+\widetilde\psi_L^{\Delta t})^\alpha}.
\end{equation}
We deduce from \eqref{weak-3aa} and \eqref{weak-3aaa} that
\begin{equation}
-(1+\widetilde\psi)^{\alpha}=-\overline{(1+\widetilde\psi_L^{\Delta t})^\alpha},
\end{equation}
and consequently, since the function $s\in [0,\infty) \mapsto -s^\alpha \in (-\infty,0]$ is continuous and {\em strictly} convex, and its domain of definition, $[0,\infty)$, is a convex set, Theorem 10.20 on p.339 of \cite{FN} implies that there exists a subsequence (not relabelled) such that
\begin{equation}
\widetilde\psi_L^{\Delta t} \to \widetilde\psi\qquad \mbox{ a.e. in  $\mathfrak{D}$}.
\label{weak-4}
\end{equation}

Next, we select an increasing nested sequence $\{\mathfrak{D}_0^k\}_{k=1}^{\infty}$ of bounded open Lipschitz domains $\mathfrak{D}_0^k = (0,T)\times \Omega_0^k \times D_0^k$,
where $\{\Omega_0^k\}_{k=1}^\infty$ and $\{D_0^k\}_{k=1}^\infty$ are increasing nested sequences of bounded open Lipschitz domains in $\Omega$ and $D$, respectively,
such that $\bigcup_{k=1}^{\infty}\mathfrak{D}_0^k = (0,T)\times \Omega \times D$.
Since for each $k$ we have pointwise convergence on $\mathfrak{D}_0^k$ of a subsequence of $\{\widetilde\psi_L^{\Delta t}\}_{L>1}$, by using a diagonal procedure, we can extract
from $\{\widetilde\psi_L^{\Delta t}\}_{L>1}$ a subsequence, (which is, once again, not relabelled) such that
\[ \widetilde\psi_L^{\Delta t} \to \widetilde\psi\qquad \mbox{ a.e. in  $(0,T)\times \Omega \times D$}
\]
with respect to the Lebesgue measure on $(0,T)\times \Omega \times D$. Let, for any Borel subset
$A$ of $(0,T)\times\Omega\times D$,
\[ \nu(A):= \int_A M \dq \dx \dt.\]
 Since $M \in L^1((0,T)\times \Omega \times D)$, the measure $\nu$ is absolutely continuous with respect to the Lebesgue measure, which then
implies that $\widetilde\psi_L^{\Delta t} \to \widetilde\psi$, almost everywhere  with respect to the measure $\nu$ (or, briefly, $\nu$ almost everywhere) in $(0,T)\times \Omega \times D$.
Since $\nu((0,T) \times \Omega \times D)<\infty$, according to
Egoroff's theorem (cf. Theorem 2.22 on p.149 of \cite{Fonseca_Leoni}) $\nu$ almost everywhere convergence of $\widetilde\psi_L^{\Delta t}$ to $\widetilde\psi$ implies $\nu$ almost uniform convergence of $\widetilde\psi_L^{\Delta t}$ to $\widetilde\psi$, and in particular $\nu$ convergence in measure of  $\widetilde\psi_L^{\Delta t}$ to $\widetilde\psi$. Finally, by Vitali's convergence theorem (cf. Theorem 2.24 on p.150 of \cite{Fonseca_Leoni}), the uniform integrability of the sequence $\{\widetilde\psi_L^{\Delta t}\}_{L>1}$ in $L^1((0,T)\times\Omega \times D; \nu) = L^1(0,T;L^1_M(\Omega \times D))$ and $\nu$ convergence in measure of $\widetilde\psi_L^{\Delta t}$ to $\widetilde\psi$
together imply that
\begin{align}
\widetilde\psi_L^{\Delta t} \to \widetilde\psi \qquad \mbox{ strongly in } L^1(0,T; L^1_M(\Omega \times D)).\label{strong-1}
\end{align}
It follows from the (\ref{strong-1}) and the sixth bound in (\ref{eq:energy-u+psi-final5}) that
\begin{align}
\widetilde\psi_L^{\Delta t(,\pm)} \to \widetilde\psi \qquad \mbox{ strongly in } L^1(0,T; L^1_M(\Omega \times D)).\label{strong-1pm}
\end{align}
In fact, \eqref{strong-1pm} can be further strengthened: it follows from Lemma \ref{le:supplementary} below and \eqref{strong-1pm} that
\begin{align}
\widetilde\psi_L^{\Delta t(,\pm)} \to \widetilde\psi \qquad \mbox{ strongly in } L^p(0,T; L^1_M(\Omega \times D))\qquad \forall p \in [1,\infty).\label{strong-2}
\end{align}

\begin{lemma}\label{le:supplementary}
Suppose that a sequence $\{\varphi_n\}_{n=1}^\infty$ converges in
$L^1(0,T; L^1_M(\Omega \times D))$ to a function $\varphi \in L^1(0,T; L^1_M(\Omega \times D))$,
and is bounded in $L^\infty(0,T; L^1_M(\Omega \times D))$; i.e., there exists $K_0>0$ such that
$\|\varphi_n\|_{L^\infty(0,T; L^1_M(\Omega \times D))} \leq K_0$ for all $n \geq 1$.
Then, $\varphi \in L^p(0,T; L^1_M(\Omega \times D))$ for all $p \in [1,\infty)$,
and the sequence $\{\varphi_n\}_{n \geq 1}$ converges to $\varphi$ in
$L^p(0,T; L^1_M(\Omega \times D))$ for all $p \in [1,\infty)$.
\end{lemma}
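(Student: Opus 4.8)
The plan is to reduce everything to a scalar statement about the sequence of functions $g_n\colon t \mapsto \|\varphi_n(t) - \varphi(t)\|_{L^1_M(\Omega\times D)}$ on $(0,T)$. By hypothesis $\int_0^T g_n(t)\,\dd t \to 0$ as $n\to\infty$; that is, $g_n \to 0$ in $L^1(0,T)$. The two ingredients I will need are a uniform $L^\infty$ bound on $g_n$ in the time variable and the finiteness of the measure of $(0,T)$.

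First I would establish that $\varphi \in L^\infty(0,T;L^1_M(\Omega\times D))$ with the \emph{same} bound $K_0$. Since $\varphi_n \to \varphi$ in $L^1(0,T;L^1_M(\Omega\times D))$, there is a subsequence $\{\varphi_{n_k}\}_{k\ge 1}$ such that $\|\varphi_{n_k}(t)-\varphi(t)\|_{L^1_M(\Omega\times D)} \to 0$ for a.e. $t\in(0,T)$; for each such $t$ the triangle inequality gives $\|\varphi(t)\|_{L^1_M(\Omega\times D)} = \lim_{k\to\infty}\|\varphi_{n_k}(t)\|_{L^1_M(\Omega\times D)} \le K_0$. Hence $\|\varphi(t)\|_{L^1_M(\Omega\times D)} \le K_0$ for a.e. $t\in(0,T)$, so $\varphi \in L^\infty(0,T;L^1_M(\Omega\times D)) \subset L^p(0,T;L^1_M(\Omega\times D))$ for every $p\in[1,\infty)$, since $(0,T)$ has finite measure.

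Next, for the full sequence, the triangle inequality together with the two $L^\infty$ bounds yields $0\le g_n(t) \le \|\varphi_n(t)\|_{L^1_M(\Omega\times D)} + \|\varphi(t)\|_{L^1_M(\Omega\times D)} \le 2K_0$ for a.e. $t\in(0,T)$ and all $n\ge 1$. Fixing $p\in[1,\infty)$, I would then estimate
\[
\|\varphi_n - \varphi\|_{L^p(0,T;L^1_M(\Omega\times D))}^p = \int_0^T g_n(t)^p\,\dd t = \int_0^T g_n(t)^{p-1}\,g_n(t)\,\dd t \le (2K_0)^{p-1} \int_0^T g_n(t)\,\dd t,
\]
and the right-hand side tends to $0$ as $n\to\infty$ by the assumed $L^1$ convergence. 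Taking $p$-th roots gives $\varphi_n \to \varphi$ in $L^p(0,T;L^1_M(\Omega\times D))$, which is the assertion of the lemma.

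There is essentially no obstacle here beyond the routine extraction of an a.e.-convergent subsequence needed to transfer the uniform bound to the limit $\varphi$; the core of the argument is the elementary interpolation inequality $g_n^p \le (2K_0)^{p-1} g_n$, valid because $g_n$ is uniformly bounded in time, which converts $L^1$ convergence into $L^p$ convergence over a finite-measure interval. (The same reasoning applies verbatim with $\varphi_n$ replaced by $\widetilde\psi_L^{\Delta t(,\pm)}$ and $\varphi$ by $\widetilde\psi$, using \eqref{strong-1pm} for the $L^1$ convergence and the sixth bound in \eqref{eq:energy-u+psi-final5} — which, via \eqref{psi-weak-0}, controls the relative entropy uniformly in $t$ and hence bounds $\widetilde\psi_L^{\Delta t(,\pm)}$ in $L^\infty(0,T;L^1_M(\Omega\times D))$ — for the $L^\infty$ bound; this is precisely how Lemma \ref{le:supplementary} is invoked to pass from \eqref{strong-1pm} to \eqref{strong-2}.)
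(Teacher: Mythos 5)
Your argument is correct: the uniform $L^\infty(0,T;L^1_M)$ bound passes to the limit $\varphi$ via an a.e.-convergent subsequence, and the elementary estimate $g_n^p \le (2K_0)^{p-1} g_n$ on the finite interval $(0,T)$ converts $L^1$-in-time convergence into $L^p$-in-time convergence. The paper itself only cites the proof of Lemma 5.1 in \cite{BS2010}, but that proof is exactly this standard $L^1$--$L^\infty$ interpolation argument, so your proposal matches the intended route.
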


\begin{proof}
See the proof of Lemma 5.1 in \cite{BS2010}.
\end{proof}

This then completes our proof of strong convergence of the sequence
$\{\widetilde\psi_L^{\Delta t(,\pm)}\}_{L>1}$  to the function $\widetilde\psi \in L^\infty(0,T;L^1_M(\Omega \times D))$ in the norm
of the space $L^p(0,T; L^1_M(\Omega \times D))$, for all $p \in [1,\infty)$.


\section{Passage to the limit $L \rightarrow \infty$:
existence of weak solutions to this FENE chain model with variable density and viscosity}
\label{5-sec:passage.to.limit}
\setcounter{equation}{0}

We are now ready to pass to the limit with $L\rightarrow \infty$.

\subsection{Passage to the limit $L\rightarrow \infty$}
\label{5-passage}

In this section we prove the central result of the paper.

\begin{theorem}
\label{5-convfinal} Suppose that the assumptions \eqref{inidata} and the condition \eqref{LT},
relating $\Delta t$ to $L$, hold. Then,
there exists a subsequence of $\{(\rho^{\Delta t}_L,\utae^{\Delta t}, \widetilde\psi^{\Delta t}_L)
\}_{L >1}$ (not indicated)
with $\Delta t = o(L^{-1})$, and functions $(\rho,\ut, \widetilde\psi)$ such that
\[ \rho \in L^\infty(0,T;\Upsilon)\cap C([0,T];L^p(\Omega)), \qquad
\ut \in L^{\infty}(0,T;\Lt^2(\Omega))\cap L^{2}(0,T;\Vt),
\]
where $p \in [1,\infty)$, and
\[\widetilde\psi \in L^1(0,T;L^1_M(\Omega \times D)),
\]
with $\widetilde\psi \geq 0$ a.e. on $\Omega \times D \times [0,T]$, satisfying
\begin{equation}\label{5-mass-conserved}
\int_D M(\qt)\,\widetilde\psi(\xt,\qt,t) \dq \leq
\mbox{\em ess.sup}_{x \in \Omega}\left(\frac{1}{\zeta(\rho_0(\xt))}
\int_D \psi_0(\xt,\qt) \dq \right)
\quad \mbox{for a.e. $(x,t) \in \Omega \times [0,T]$},
\end{equation}
whereby $\widetilde\psi \in L^\infty(0,T; L^1_M(\Omega \times D))$;
and finite relative entropy and Fisher information, with
\begin{equation}\label{5-relent-fisher}
\mathcal{F}(\widetilde\psi) \in L^\infty(0,T;L^1_M(\Omega\times D))\quad
\mbox{and}\quad \sqrt{\widetilde\psi} \in L^{2}(0,T;H^1_M(\Omega \times D)),
\end{equation}
such that, as $L\rightarrow \infty$ (and thereby $\Delta t \rightarrow 0_+$),
\begin{subequations}
\begin{alignat}{2}
\rho_L^{[\Delta t]} &\rightarrow \rho &&\qquad \mbox{weak$^\star$ in }
L^{\infty}(0,T;{L}^\infty(\Omega)), \label{5-rhowconL2}\\
\rho_L^{[\Delta t]},\,\rho_L^{\Delta t (,\pm)},\,
\rho_L^{\{\Delta t\}}
&\rightarrow \rho &&\qquad \mbox{strongly in }
L^{\infty}(0,T;{L}^p(\Omega)), \label{5-rhosconL2}\\
\mu(\rho_L^{\Delta t(,\pm)}) &\rightarrow \mu(\rho) &&\qquad \mbox{strongly in }
L^{\infty}(0,T;{L}^p(\Omega)), \label{5-musconL2}\\
\zeta(\rho_L^{[\Delta t]}),\,
\zeta(\rho_L^{\Delta t(,\pm)}),\,
\zeta_L^{\{\Delta t\}}
&\rightarrow \zeta(\rho) &&\qquad \mbox{strongly in }
L^{\infty}(0,T;{L}^p(\Omega)), \label{5-zetasconL2}
\end{alignat}
\end{subequations}
where $p \in [1,\infty)$;
\begin{subequations}
\begin{alignat}{2}
\utae^{\Delta t (,\pm)} &\rightarrow \ut &&\qquad \mbox{weak$^\star$ in }
L^{\infty}(0,T;{\Lt}^2(\Omega)), \label{5-uwconL2a}\\
\utae^{\Delta t (,\pm)} &\rightarrow \ut &&\qquad \mbox{weakly in }
L^{2}(0,T;\Vt), \label{5-uwconH1a}\\
\utae^{\Delta t (,\pm)} &\rightarrow \ut &&\qquad \mbox{strongly in }
L^{2}(0,T;\Lt^{r}(\Omega)), \label{5-usconL2a}
\end{alignat}
\end{subequations}
where $r \in [1,\infty)$ if $d=2$ and $r \in [1,6)$ if $d=3$;
and
\begin{subequations}
\begin{alignat}{2}
M^{\frac{1}{2}}\,\nabx \sqrt{\hpsiaet^{\Delta t(,\pm)}} &\rightarrow M^{\frac{1}{2}}\,\nabx \sqrt{\widetilde\psi}
&&\qquad \mbox{weakly in } L^{2}(0,T;\Lt^2(\Omega\times D)), \label{5-psiwconH1a}\\
M^{\frac{1}{2}}\,\nabq \sqrt{\hpsiaet^{\Delta t(,\pm)}} &\rightarrow M^{\frac{1}{2}}\,\nabq \sqrt{\widetilde\psi}
&&\qquad \mbox{weakly in } L^{2}(0,T;\Lt^2(\Omega\times D)), \label{5-psiwconH1xa}\\
\widetilde\psi^{\Delta t (,\pm)}_L &\rightarrow \widetilde\psi
&&\qquad \mbox{strongly in }
L^{p}(0,T;L^{1}_M(\Omega\times D)),\label{5-psisconL2a}
\\
\beta^L(\widetilde\psi^{\Delta t (,\pm)}_L) &\rightarrow \widetilde\psi
&&\qquad \mbox{strongly in }
L^{p}(0,T;L^{1}_M(\Omega\times D)),\label{5-psisconL2beta}
\end{alignat}
for all $p \in [1,\infty)$; and,
\begin{alignat}{2}
\nabx\cdot\sum_{i=1}^K\Ctt_i(M\,
\zeta(\rho^{\Delta t,+}_L)\,
\widetilde\psi^{\Delta t ,+}_L) &\rightarrow \nabx \cdot\sum_{i=1}^K
\Ctt_i(M\,\zeta(\rho)\,\widetilde\psi)
&&\qquad \mbox{weakly in }
L^{2}(0,T;\Vt').\label{5-CwconL2a}
\end{alignat}
\end{subequations}
%
The triple $(\rho,\ut,\widetilde\psi)$ is a global weak solution to problem (P), in the sense that
\begin{subequations}
\begin{align}\label{5-eqrhoconP}
&\displaystyle-\int_{0}^{T}
\int_{\Omega} \rho \, \frac{\partial \eta}{\partial t}
\dx \dt
- \int_{0}^T \int_{\Omega}
\rho\, \ut \cdot \nabx \eta \dx \dt
= \int_\Omega \rho_0(\xt) \,\eta(\xt ,0) \dx
\nonumber \\
& \hspace{2in} \forall \eta \in W^{1,1}(0,T;W^{1,\frac{q}{q-1}}(\Omega)) \mbox{ s.t.\ $\eta(\cdot,T)=0$},
\end{align}
with $q \in (2,\infty)$ when $d=2$ and $q \in [3,6]$ when $d=3$,
\begin{align}\label{5-equnconP}
&\displaystyle-\int_{0}^{T}
\int_{\Omega} \rho\,\ut \cdot \frac{\partial \wt}{\partial t}
\dx \dt
+ \int_{0}^T \int_{\Omega}
\left[ \,
\mu(\rho) \,\Dtt(\ut)
:
\Dtt(\wt)
- \rho\,(\ut \otimes \ut) : \nabxtt \wt
\right] \dx \dt
\nonumber
\\
&\quad  = \int_\Omega \rho_0(\xt)\,\ut_0(\xt) \cdot \wt(\xt ,0) \dx  + \int_{0}^T
\int_{\Omega} \left[ \rho\, \ft \cdot \wt
- k\,\sum_{i=1}^K
\Ctt_i(M\,\zeta(\rho)\,\widetilde\psi): \nabxtt
\wt \dx \right] \dx \dt
\nonumber
\\
& \hspace{2in}
\forall \wt \in W^{1,1}(0,T;\Vt) \mbox{ s.t.\ $\wt(\cdot,T)=0$},
\end{align}
and
\begin{align}
&-\int_{0}^T
\int_{\Omega \times D} M\,\zeta(\rho)\,\widetilde\psi\, \frac{\partial \varphi}{\partial t}
\dq \dx \dt
+ \int_{0}^T \int_{\Omega \times D} M \left[
\epsilon\,
\nabx \widetilde\psi - \ut \,\zeta(\rho)\,\widetilde\psi \right]\cdot\, \nabx
\varphi
\,\dq \dx \dt
\nonumber \\
&
\quad +\frac{1}{4\,\lambda}
\int_{0}^T \int_{\Omega \times D} \sum_{i=1}^K
 \,\sum_{j=1}^K A_{ij}\,
 M\,
 \nabqj \widetilde\psi
\cdot\, \nabqi
\varphi
\,\dq \dx \dt
\nonumber \\
&
\quad -
\int_{0}^T \int_{\Omega \times D}\! M\,\sum_{i=1}^K\,
\left[\sigtt(\ut)
\,\qt_i\right] \zeta(\rho)\,\widetilde\psi \,\cdot\, \nabqi
\varphi
\,\dq \dx \dt
\nonumber \\
& \hspace{1in}
= \int_{\Omega \times D}M(\qt)\, \zeta(\rho_0(\xt))\,\widetilde\psi_0(\xt,\qt)\,\varphi(\xt,\qt,0)
\dq \dx
\nonumber \\
& \hspace{2in}
\forall \varphi \in W^{1,1}(0,T;H^s(\Omega\times D))
\mbox{ s.t.\ $\varphi(\cdot,\cdot,T)=0$},
\label{5-eqpsinconP}
\end{align}
\end{subequations}
with $s > 1+ \frac{1}{2}\,(K+1)\,d$.
In addition, the weak solution $(\rho,\ut,\widetilde\psi)$
satisfies for all $t \in [0,T]$:
\begin{subequations}
\begin{align}
\int_{\Omega} |\rho(t)|^p \dx = \int_{\Omega} |\rho_0|^p \dx,
\label{5-energyrho}
\end{align}
for $p \in [1,\infty)$, and the following energy inequality
for a.e.\ $t \in [0,T]$:
\begin{align}\label{5-eq:energyest}
&\int_{\Omega} \rho(t)\,|\ut(t)|^2 \dx
+ \int_0^t \int_{\Omega} \mu(\rho)\,|\Dtt(\ut)|^2 \dx \dd s
+ \,2\,k\int_{\Omega \times D}\!\! M\, \zeta(\rho(t))\,\mathcal{F}(\widetilde\psi(t)) \dq \dx
\nonumber \\
&\qquad +\, 8\,k\,\varepsilon\, \int_0^t \int_{\Omega \times D} M\,
|\nabx \sqrt{\widetilde\psi} |^2 \dq \dx \dd s
+\, \frac{a_0 \,k}{\lambda}  \int_0^t \int_{\Omega \times D}\,
M\,
|\nabq \sqrt{\widetilde\psi}|^2 \,\dq \dx \dd s\nonumber\\
&\quad \leq \int_{\Omega} \rho_{0}\,|\ut_0|^2 \dx + \frac{\rho_{\rm max}^2 C_\varkappa^2}{
\mu_{\rm min}\,c_0}\,
\int_0^t\|\ft\|^2_{L^\varkappa(\Omega)}
\dd s + 2\,k \int_{\Omega \times D} M\,\zeta(\rho_0)\,
\mathcal{F}(\widetilde\psi_0) \dq \dx
\nonumber \\
& \quad \leq [{\sf B}(\ut_0,\ft, \widetilde\psi_0)]^2,~~~~~~~
\end{align}
\end{subequations}
with $\mathcal{F}(s)= s(\log s - 1) + 1$, $s \geq 0$,
and $[{\sf B}(\ut_0,\ft, \widetilde\psi_0)]^2$ as defined in
{\rm (\ref{eq:energy-u+psi-final2})}.
\end{theorem}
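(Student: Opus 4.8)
The plan is to pass to the limit $L \to \infty$ (with $\Delta t = o(L^{-1})$) in the time-discrete weak formulation (\ref{eqrhocon}--c), using the strong and weak convergence results established in Section~\ref{passage} and Section~\ref{Lindep-time}, together with the uniform bound (\ref{eq:energy-u+psi-final5}). Most of the convergences listed in the theorem statement have already been proved in Theorem~\ref{convfinal}; what remains is (i) to upgrade (\ref{psiwconL1}) to the strong convergence (\ref{5-psisconL2a})--(\ref{5-psisconL2beta}), which is exactly the content of (\ref{strong-2}) together with an elementary argument that $\beta^L(\widetilde\psi^{\Delta t(,\pm)}_L) - \widetilde\psi^{\Delta t(,\pm)}_L \to 0$ strongly in $L^p(0,T;L^1_M(\Omega \times D))$ since $|\beta^L(s) - s| \le [s-L]_+ \le s\,\chi_{\{s > L\}}$ and the sequence is uniformly integrable; (ii) to prove (\ref{5-CwconL2a}) by combining (\ref{eqCttbd}), the strong convergence (\ref{5-zetasconL2}) of $\zeta(\rho^{\Delta t,+}_L)$, and the fact that $M^{1/2}\widetilde\psi^{\Delta t,+}_L \to M^{1/2}\widetilde\psi$ strongly in $L^1$ and is bounded in the relevant weighted norm (via Vitali and the second-moment bound $\int_D M |\qt|^2 \widetilde\psi \dq \le \omega b$), which gives strong $L^2(0,T)$ convergence of $\Ctt_i(M\,\zeta(\rho^{\Delta t,+}_L)\,\widetilde\psi^{\Delta t,+}_L)$ in $\Ltt^2(\Omega)$ and hence weak convergence of its divergence in $L^2(0,T;\Vt')$; and (iii) to verify that each term in (\ref{eqrhocon}--c) converges to the corresponding term in (\ref{5-eqrhoconP})--(\ref{5-eqpsinconP}).

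For the momentum equation, I would first rewrite (\ref{equncon}) using (\ref{ruwiden}) and integration by parts in $t$ so that the time-derivative falls on the test function $\wt \in W^{1,1}(0,T;\Vt)$ with $\wt(\cdot,T) = 0$; the linear viscous term converges by (\ref{5-musconL2}) and (\ref{5-uwconH1a}); the convective term $\int_0^T\int_\Omega \rho^{\{\Delta t\}}_L\,[(\ut^{\Delta t,-}_L\cdot\nabx)\ut^{\Delta t,+}_L]\cdot\wt$ converges because $\rho^{\{\Delta t\}}_L \to \rho$ strongly in $L^\infty(0,T;L^p(\Omega))$, $\ut^{\Delta t,-}_L \to \ut$ strongly in $L^2(0,T;\Lt^r(\Omega))$ for $r$ large, and $\nabx\ut^{\Delta t,+}_L \rightharpoonup \nabx\ut$ weakly in $L^2$ — one passes to the limit in the product of one strongly and one weakly convergent sequence after using Hölder with the right exponents (this is where $d=3$, $r<6$ is used); the term $\frac{1}{2}\int_0^T\int_\Omega \frac{\partial\rho^{\Delta t}_L}{\partial t}\,\ut^{\Delta t,+}_L\cdot\wt$ is controlled by (\ref{rho-t-5}) and, after rewriting it as $-\frac{1}{2}\int\rho^{\{\Delta t\}}_L\,[\ut^{\Delta t,-}_L\cdot\nabx(\ut^{\Delta t,+}_L\cdot\wt)]$ via (\ref{Gequn-trans}), combines with the convective term to produce $-\int\rho(\ut\otimes\ut):\nabxtt\wt$ in the limit; the force term uses (\ref{fncon}) and (\ref{5-rhosconL2}); the extra-stress term uses (\ref{5-CwconL2a}); and the initial datum is recovered from (\ref{proju0}) and the last statement in the paragraph following (\ref{idatabd}). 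For the Fokker--Planck equation I would similarly integrate by parts in $t$, placing the derivative on $\varphi \in W^{1,1}(0,T;H^s(\Omega\times D))$ with $\varphi(\cdot,\cdot,T) = 0$ (noting $H^s \hookrightarrow L^\infty$ and into $X$ for $s > 1 + \frac{1}{2}(K+1)d$); the diffusion terms converge by (\ref{5-psiwconH1a})--(\ref{5-psiwconH1xa}); the drift term $\int_0^T\int_{\Omega\times D} M\,\zeta^{\{\Delta t\}}_L\,\ut^{\Delta t,-}_L\cdot(\nabx\varphi)\,\widetilde\psi^{\Delta t,+}_L$ and the drag term $\int_0^T\int_{\Omega\times D} M\,[\sigtt(\ut^{\Delta t,+}_L)\,\qt_i]\,\zeta(\rho^{\Delta t,+}_L)\,\beta^L(\widetilde\psi^{\Delta t,+}_L)\cdot\nabqi\varphi$ both involve a product of a strongly convergent factor ($\zeta^{\{\Delta t\}}_L$, resp.\ $\zeta(\rho^{\Delta t,+}_L)$), a strongly $L^1_M$-convergent factor ($\widetilde\psi^{\Delta t,+}_L$, resp.\ $\beta^L(\widetilde\psi^{\Delta t,+}_L)$), and a weakly convergent factor ($\ut^{\Delta t,-}_L$, resp.\ $\nabxtt\ut^{\Delta t,+}_L$); passage to the limit here again requires care — I would use a Vitali-type argument together with the $L^{1+\delta}$ interior bounds (\ref{psi-weak-12})--(\ref{psi-weak-13}) and a density argument reducing to $\varphi$ compactly supported in $\Omega \times D$, then the boundary conditions (\ref{eqpsi2}) and the divergence structure handle the remainder.

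The main obstacle, as the authors themselves flag, is the drag term in the Fokker--Planck equation: it is the product $\nabxtt\ut^{\Delta t,+}_L \cdot \qt_i\,\nabqi\varphi$ paired with $M\,\zeta(\rho^{\Delta t,+}_L)\,\beta^L(\widetilde\psi^{\Delta t,+}_L)$, and $\nabxtt\ut^{\Delta t,+}_L$ is only weakly convergent in $L^2$. To pass to the limit one needs $M^{1/2}\beta^L(\widetilde\psi^{\Delta t,+}_L)$ to converge \emph{strongly} in $L^2$ on the support of $\varphi$, or at least in a space dual to where $\nabxtt\ut$ lives. This is precisely why the strong $L^1_M$ convergence (\ref{strong-2}), obtained through the Div-Curl argument of Section~\ref{Lindep-time}, is indispensable: combined with the second-moment bound $\mbox{ess.sup}\int_D M|\qt|^2\widetilde\psi^{\Delta t,+}_L\dq \le \omega b$ from (\ref{lambda-bound}) and uniform integrability (de la Vall\'ee-Poussin applied to $\frac{|\nabq\sqrt{\widetilde\psi}|^2}{\widetilde\psi}$ via the Fisher information bound), it upgrades to strong convergence of $M^{1/2}\qt_i\,\beta^L(\widetilde\psi^{\Delta t,+}_L)$ in $L^2(0,T;\Lt^2_M(\Omega\times D))$ on any set where $|\qt|$ is bounded, which is enough against the weakly convergent velocity gradient. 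I would therefore organize the proof so that the Div-Curl compactness is invoked first, deduce all the requisite strong convergences, and only then carry out the (now routine) passage to the limit term by term.

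Finally, once the limiting identities (\ref{5-eqrhoconP})--(\ref{5-eqpsinconP}) are established, the mass bound (\ref{5-mass-conserved}) follows by passing to the limit in (\ref{lambda-bound}) using (\ref{5-psisconL2a}) and the lower semicontinuity of $\varphi \mapsto \int_A M\varphi$; the density identity (\ref{5-energyrho}) is (\ref{energyrho}), already proved in Theorem~\ref{convfinal} via DiPerna--Lions uniqueness for the transport equation with $\ut \in L^2(0,T;\Vt)$; the relative-entropy and Fisher-information regularity (\ref{5-relent-fisher}) follows from (\ref{eq:energy-u+psi-final5}) by weak lower semicontinuity of $\mathcal{F}$ (Fatou/convexity) and of the $L^2$ norms of $M^{1/2}\nabx\sqrt{\widetilde\psi}$, $M^{1/2}\nabq\sqrt{\widetilde\psi}$; and the energy inequality (\ref{5-eq:energyest}) is obtained by taking $\liminf_{L\to\infty}$ in (\ref{eq:energy-u+psi-final2}) (equivalently its $\alpha\to 0_+$ limit (\ref{eq:energy-u+psi-final2}) as stated in (\ref{penultimate-line})), discarding the nonnegative term $\frac{\zeta_{\rm min}k}{\Delta t\,L}\int\int M(\widetilde\psi^{\Delta t,+}-\widetilde\psi^{\Delta t,-})^2$ on the left, and using weak lower semicontinuity of each remaining left-hand term together with (\ref{fncon}) and (\ref{inidata-1}) on the right; the final bound by $[{\sf B}(\ut_0,\ft,\widetilde\psi_0)]^2$ is then immediate from its definition in (\ref{eq:energy-u+psi-final2}).
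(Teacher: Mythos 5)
Your overall route is the same as the paper's: establish the Div--Curl--based strong convergence of $\{\widetilde\psi^{\Delta t(,\pm)}_L\}_{L>1}$ in $L^p(0,T;L^1_M(\Omega\times D))$ first, import the remaining convergences from Theorem \ref{convfinal}, pass to the limit term by term in (\ref{eqrhocon}--c) against smooth test functions followed by density, and obtain \eqref{5-eq:energyest} by weak lower semicontinuity and Fatou. Your treatments of \eqref{5-psisconL2beta}, of the convective/time-derivative recombination via \eqref{Gequn-trans} and \eqref{ruwiden}, and of \eqref{5-mass-conserved}, \eqref{5-energyrho} and \eqref{5-relent-fisher} all coincide with Steps A--E of the paper's proof.

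There is, however, a concrete flaw in your justification of the two stress-type limit passages. You claim strong convergence of $\Ctt_i(M\,\zeta(\rho^{\Delta t,+}_L)\,\widetilde\psi^{\Delta t,+}_L)$ in $L^2(0,T;\Ltt^2(\Omega))$ ``via \eqref{eqCttbd}'', and strong convergence of $M^{1/2}\qt_i\,\beta^L(\widetilde\psi^{\Delta t,+}_L)$ in $L^2(0,T;\Lt^2_M(\Omega\times D))$. Neither is available: \eqref{eqCttbd} requires a uniform $L^2_M(\Omega\times D)$ bound on $\widetilde\psi^{\Delta t,+}_L$, but the entropy estimate \eqref{eq:energy-u+psi-final5} only controls $\mathcal{F}(\widetilde\psi^{\Delta t,+}_L)$ in $L^\infty(0,T;L^1_M)$, i.e.\ $L\log L$ integrability, and moreover the kernel $U_i'(\frac12|\qt_i|^2)|\qt_i|^2$ blows up at $\partial D_i$ so it is not dominated by the second moment alone. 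The correct devices --- which the paper uses and which you only partially gesture at --- are: (i) for the extra-stress term, the integration-by-parts identity \eqref{intbyparts}, which rewrites $\int_\Omega \Ctt_i(M\zeta\widetilde\psi):\nabxtt\wt\dx$ as $2\int_{\Omega\times D}M\zeta\sqrt{\widetilde\psi}\,\nabqi\sqrt{\widetilde\psi}\cdot(\nabxtt\wt)\,\qt_i\dq\dx$, pairing the weakly convergent $M^{1/2}\nabq\sqrt{\widetilde\psi^{\Delta t,+}_L}$ with the strongly convergent $M^{1/2}\sqrt{\widetilde\psi^{\Delta t,+}_L}$ and the bounded weight $|\qt_i|$, yielding only the weak $L^2(0,T;\Vt')$ convergence \eqref{5-CwconL2a} (which is all the theorem asserts); and (ii) for the drag term, integrating out the $\qt$-variable first, so that $\int_D M\,\beta^L(\widetilde\psi^{\Delta t,+}_L)\,\qt_i\cdot\nabqi\varphi\dq$ is a function of $(\xt,t)$ that is uniformly bounded in $L^\infty$ by \eqref{lambda-bound} and convergent in $L^1$ by \eqref{strong-2}, hence by interpolation strongly convergent in $L^2((0,T)\times\Omega)$, which suffices against the weakly $L^2$-convergent $\nabxtt\ut^{\Delta t,+}_L$. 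With these two repairs your argument closes.
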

\begin{proof}
We split the proof into a number of steps.

\smallskip

\textit{Step A.}
The convergence results (\ref{5-rhowconL2}--d), (\ref{5-uwconL2a}--c)
and (\ref{5-psiwconH1a},b) were proved in Theorem \ref{convfinal}.
The strong convergence result (\ref{5-psisconL2a}) was established in Section \ref{Lindep-time},
see (\ref{strong-2}).
Next from the Lipschitz continuity of $\beta^L$, we obtain for any $p \in [1,\infty)$ that
\begin{align}
&\|\widetilde \psi- \beta^L(\widetilde \psi^{\Delta t(,\pm)}_L)\|_{L^p(0,T;L^1_M(\Omega \times D))}
\nonumber \\
& \hspace{1in} \leq
\|\widetilde \psi- \beta^L(\widetilde \psi)\|_{L^p(0,T;L^1_M(\Omega \times D))}
+
\|\beta^L(\widetilde \psi)
- \beta^L(\widetilde \psi^{\Delta t(,\pm)}_L)\|_{L^p(0,T;L^1_M(\Omega \times D))}
\nonumber \\
& \hspace{1in} \leq
\|\widetilde \psi- \beta^L(\widetilde \psi)\|_{L^p(0,T;L^1_M(\Omega \times D))}
+
\|\widetilde \psi- \widetilde \psi^{\Delta t(,\pm)}_L
\|_{L^p(0,T;L^1_M(\Omega \times D))}.
\label{betapsicon}
\end{align}
The first term on the right-hand side of (\ref{betapsicon})
converges to zero as $L \rightarrow \infty$
on noting that $\beta^L(\widetilde \psi)$ converges to $\widetilde \psi$ almost everywhere
on $\Omega \times D \times (0,T)$ and applying Lebesgue's dominated convergence theorem,
the second term converges to $0$ on noting (\ref{5-psisconL2a}).
Hence, we obtain the desired result (\ref{5-psisconL2beta}).

As the sequences $\{\psia^{\Delta t(,\pm)}\}_{L>1}$ converge to $\widetilde\psi$ strongly in
$L^p(0,T; L^1_M(\Omega \times D))$, it follows (upon extraction of suitable subsequences) that
they converge to $\widetilde\psi$ a.e. on
$\Omega \times D \times [0,T]$. This then, in turn, implies that the sequences
$\{\mathcal{F}(\psia^{\Delta t(,\pm)})\}_{L>1}$
converge to $\mathcal{F}(\widetilde\psi)$ a.e. on $\Omega \times D \times [0,T]$; in particular,
for a.e. $t \in [0,T]$, the sequences $\{\mathcal{F}(\psia^{\Delta t(,\pm)}(\cdot,\cdot,t))\}_{L>1}$
converge to $\mathcal{F}(\widetilde\psi(\cdot,\cdot,t))$ a.e. on $\Omega \times D$.
Since $\mathcal{F}$ is nonnegative, Fatou's lemma then implies that, for a.e. $t \in [0,T]$,
\begin{align}\label{5-fatou-app}
&\int_{\Omega \times D} M(\qt)\, \mathcal{F}(\widetilde\psi(\xt,\qt,t))\dx \dq\nonumber\\
&\hspace{1.5in} \leq \mbox{lim inf}_{L \rightarrow \infty}
\int_{\Omega \times D} M(\qt)\, \mathcal{F}(\psia^{\Delta t(,\pm)}(\xt,\qt,t)) \dx \dq \leq C_\ast,
\end{align}
where the second inequality in \eqref{5-fatou-app} stems from
the bound on the fifth term on the left-hand side of \eqref{eq:energy-u+psi-final5}.
Hence the first result in (\ref{5-relent-fisher}) holds,
and the second was established in Theorem \ref{convfinal}.
Similarly, (\ref{5-mass-conserved}) is established on noting (\ref{lambda-bound})
and that $\psi^{\Delta(,\pm)}_L \geq 0$.
Analogously to (\ref{5-fatou-app}), one can establish
for a.e. $t \in [0,T]$
that
\begin{align}\label{5-fatou-app-rho}
&\int_{\Omega \times D} M(\qt)\, \zeta(\rho(\xt))\,
\mathcal{F}(\widetilde\psi(\xt,\qt,t))\dx \dq\nonumber\\
&\hspace{0.5in} \leq \mbox{lim inf}_{L \rightarrow \infty}
\int_{\Omega \times D} M(\qt)\, \zeta(\rho^{\Delta t(,\pm)}_L(\xt))\,
\mathcal{F}a(\psia^{\Delta t(,\pm)}(\xt,\qt,t)) \dx \dq.
\end{align}

Finally, it is shown in Step 3.7 in the proof of Theorem 6.1 in \cite{BS2011-fene}
on noting (\ref{intbyparts}), (\ref{5-psiwconH1xa},c) and
(\ref{5-mass-conserved}), that in the case $\zeta \equiv 1$
\begin{align}
k\int_{0}^T\!\sum_{i=1}^K \int_{\Omega}
\Ctt_i(M\,\zeta(\rho^{\Delta t,+}_L)\,\psia^{\Delta t,+}): \nabxtt \wt \dx \dt
\rightarrow
 k\int_{0}^T\!\sum_{i=1}^K \int_{\Omega}
\Ctt_i(M\,\zeta(\rho)\,\widetilde\psi): \nabxtt \wt \dx \dt\,
\label{5-stressconv}
\end{align}
as $L \rightarrow \infty$,
for any divergence-free function $\wt \in C^1([0,T];\Ct^\infty_0(\Omega))$.
The proof there is easily generalised to the present variable $\zeta$ on noting
(\ref{5-zetasconL2}).
This implies \eqref{5-CwconL2a}, thanks
to the denseness of
these smooth divergence functions
in the function space $L^2(0,T;\Vt)$,
and on showing that the right-hand side of (\ref{5-stressconv}) is well-defined for
$\wt \in L^2(0,T;\Vt)$,
on noting (\ref{intbyparts}), (\ref{5-relent-fisher}) and (\ref{5-mass-conserved}).
\smallskip

\textit{Step B.}
We are now ready to return to
(\ref{eqrhocon}--c)
and pass to the limit $L\rightarrow \infty$ (and thereby also $\Delta t \rightarrow
0_+$). 
We shall discuss them one at a time, starting with equation
\eqref{eqrhocon}. 
We have already passed to the limit $L \rightarrow \infty$ in (\ref{eqrhocon})
using (\ref{5-rhosconL2}) and (\ref{5-usconL2a}) to obtain (\ref{eqrhoconP})
in the proof of Theorem \ref{convfinal}.
The desired result (\ref{5-eqrhoconP}) then follows from (\ref{eqrhoconP}) on noting
the denseness of the set of all functions contained in $C^1([0,T];W^{1,\frac{q}{q-1}}(\Omega))$
and vanishing at $t=T$ in the set of all functions contained in
$W^{1,1}(0,T;W^{\frac{q}{q-1}}(\Omega))$
and vanishing at $t=T$.
In addition, 
the energy equality (\ref{5-energyrho}) was proved in the proof of Theorem \ref{convfinal},
see (\ref{energyrho}).

\smallskip

\textit{Step C.}
Having dealt with \eqref{eqrhocon}, we now turn to \eqref{equncon},
with the aim to pass to the limit with $L$ (and $\Delta t$).
We choose as our test function
\begin{align}
\wt \in C^1([0,T];\Ct^\infty_0(\Omega)) \quad
\mbox{with $\wt(\cdot,T)=0$, and $\nabx \cdot \wt = 0$ on $\Omega$
for all $t \in [0,T]$}.
\label{5-smoothtest}
\end{align}
Clearly, any such $\wt$ belongs to $L^1(0,T;\Vt)$ and is therefore a legitimate choice of
test function in \eqref{equncon}.
Integration by parts with respect to $t$ on the first term in (\ref{equncon}),
and noting (\ref{Gequn-trans}) and (\ref{zeta-time-average}) for the second term yields that
\begin{align}
&\displaystyle - \int_{0}^{T} \int_\Omega
(\rho_L\,\ut_L)^{\Delta t}
\cdot \frac{\partial \wt }{\partial t} \dx \dt
- \tfrac{1}{2}
\int_0^T \int_\Omega
\rho^{\{\Delta t\}}_L \,\ut^{\Delta t,-}_L
\cdot \nabx (\ut_L^{\Delta t,+}\cdot \wt) \dx \dt
\nonumber  \\
&\hspace{1cm}
+ \displaystyle\int_{0}^{T} \int_\Omega
\mu(\rho^{\Delta t, +}_L) \,\Dtt(\utaeDp)
:
\Dtt(\wt) \dx \dt
\nonumber \\
& \hspace{1cm}+
\tfrac{1}{2} \int_{0}^T\!\! \int_{\Omega}
\rho^{\{\Delta t\}}_L \left[ \left[ (\utaeDm \cdot \nabx) \utaeDp \right]\cdot\,\wt
- \left[ (\utaeDm \cdot \nabx) \wt  \right]\cdot\,\utaeDp
\right]\!\dx \dt
\nonumber
\end{align}
\begin{align}
&\hspace{0.5cm} =
\int_{\Omega} \rho_0\,\ut^0 \cdot \wt(0) \dx +
\int_{0}^T \int_{\Omega}  \rho_L^{\Delta t,+}
\, \ft^{\Delta t,+} \cdot \wt \dx \dt 
\nonumber \\
& \hspace{1.5cm}
- k\,\sum_{i=1}^K \int_0^T \int_{\Omega}
\Ctt_i(M\,\zeta(\rho^{\Delta t,+}_L)\, \widetilde\psi^{\Delta t,+}_L): \nabxtt
\wt \dx \dt.
\label{5-equnconint}
\end{align}
Next we note from (\ref{ulin},b) that for $t \in (t_{n-1},t_n]$, $n=1,\ldots,N$,
\begin{align}
(\rho_L \, \ut_L)^{\Delta t} = \rho^{\Delta t}_L \,\ut^{\Delta t,+}_L
-\frac{(t_n-t)}{\Delta t}\,\rho^{\Delta t,-}_L\,(\ut^{\Delta t,+}_L - \ut^{\Delta t, -}_L).
\label{5-rhoutdiff}
\end{align}
It follows
that we can pass to the limit $L \rightarrow \infty$, $(\Delta t \rightarrow 0)$, in (\ref{5-equnconint}),
on noting (\ref{5-rhoutdiff}),
(\ref{5-rhosconL2},c), (\ref{5-uwconH1a},c), (\ref{5-CwconL2a}),
(\ref{fncon}) and that $\ut^0$ converges to $\ut_0$ weakly in $\Ht$, to obtain
\begin{align}
&\displaystyle - \int_{0}^{T} \int_\Omega
\rho\,\ut
\cdot \frac{\partial \wt }{\partial t} \dx \dt
+ \displaystyle\int_{0}^{T} \int_\Omega
\mu(\rho) \,\Dtt(\ut)
:
\Dtt(\wt) \dx \dt
- \tfrac{1}{2}
\int_0^T \int_\Omega
\rho \,\ut \cdot \nabx (\ut\cdot \wt) \dx \dt
\nonumber
\\
& \hspace{1cm}+
\tfrac{1}{2} \int_{0}^T \int_{\Omega}
\rho \left[ \left[ (\ut \cdot \nabx) \ut \right]\cdot\,\wt
- \left[ (\ut \cdot \nabx) \wt  \right]\cdot\,\ut
\right]\!\dx \dt
\nonumber
\\
&\hspace{0.5cm} =
\int_{\Omega} \rho_0\,\ut_0 \cdot \wt(0) \dx +
\int_{0}^T \int_{\Omega}  \rho\,
\ft \cdot \wt \dx \dt 
- k\,\sum_{i=1}^K \int_0^T \int_{\Omega}
\Ctt_i(M\,\zeta(\rho)\,\widetilde \psi): \nabxtt
\wt \dx \dt.
\label{5-equnconintconv}
\end{align}
The desired result (\ref{5-equnconP}) then follows from (\ref{5-equnconintconv})
on noting (\ref{ruwiden}), the denseness of the test functions (\ref{5-smoothtest})
in $W^{1,1}(0,T;\Vt)$ and that all the terms in (\ref{5-equnconP}) are well-defined.

\smallskip

\textit{Step D.}
Similarly to (\ref{5-equnconint}), we obtain from performing integration by parts
with respect to time on the first term in
(\ref{eqpsincon}) that
\begin{align}
&\int_{0}^T \!\!\int_{\Omega \times D}
\left[
-M\, (\zeta(\rho_L)\,\hpsiaet)^{\Delta t}\,
\frac{ \partial \varphi}{\partial t} 
+
\frac{1}{4\,\lambda}
\,\sum_{i=1}^K
 \,\sum_{j=1}^K A_{ij}\,
 M\,
 \nabqj \hpsiaet^{\Delta t,+}
\cdot\, \nabqi
\varphi
\right]
\dq \dx \dt
\nonumber \\
& \qquad \qquad 
+ \int_{0}^T \!\!\int_{\Omega \times D} \left[
\epsilon\, M\,
\nabx \hpsiaet^{\Delta
t,+} - \utae^{\Delta t,-}\,M\,\zeta^{\{\Delta t\}}_L\,\hpsiaet^{\Delta t,+} \right]\cdot\, \nabx
\varphi
\dq \dx \dt
\nonumber \\
&
\qquad\qquad 
- \int_{0}^T \!\!\int_{\Omega \times D} M\,\sum_{i=1}^K
\left[\sigtt(\utae^{\Delta t,+})
\,\qt_i\right]\,\zeta(\rho_L^{\Delta t,+})\,\beta^L(\hpsiaet^{\Delta t,+}) \,\cdot\, \nabqi
\varphi
\,\dq \dx \dt
\nonumber\\
&\qquad = \int_{\Omega \times D} M\,\zeta(\rho_0)\,\widetilde \psi^0\,
\varphi(0) \dq \dx
\qquad \forall \varphi \in C^1([0,T];C^\infty(\overline{\Omega \times D}))
\mbox{ s.t.\ } \varphi(\cdot,\cdot,T)=0.
\label{eqpsiconibyp}
\end{align}
We now pass to the limit $L \rightarrow \infty$ (and $\Delta t \rightarrow 0_+$)
in \eqref{eqpsiconibyp} to obtain
(\ref{5-eqpsinconP}) for the smooth $\varphi$ of (\ref{eqpsiconibyp})
using the convergence results
(\ref{5-zetasconL2}), (\ref{5-uwconH1a},c), (\ref{5-psiwconH1a}--d)
and (\ref{psi0conv}).
The desired result (\ref{5-eqpsinconP}) then follows
on noting the denseness of the test functions $\varphi \in C^1([0,T],C^\infty(\overline{\Omega \times D}))$
with $\varphi(\cdot,\cdot,T)=0$
in $W^{1,1}(0,T;H^s(\Omega \times D))$ with $\varphi(\cdot,\cdot,T)=0$, for $s > 1+ \frac{1}{2}\,(K+1)\,d$,
and that all the terms in (\ref{5-eqpsinconP}) are well-defined.

\smallskip

\textit{Step E.}
The energy inequality \eqref{5-eq:energyest} is a direct consequence of
the convergence results (\ref{5-rhowconL2}--d),
(\ref{5-uwconL2a},b)
and (\ref{5-psiwconH1a},b), on noting
\eqref{fncon},
\eqref{5-fatou-app-rho}
and the (weak) lower-semicontinuity
of the terms on the left-hand side
of \eqref{eq:energy-u+psi-final2}. For example, it follows from
(\ref{eq:energy-u+psi-final2}) for a.e.\ $t \in [0,T]$ that
\begin{align}
&[\rho^{\Delta t,+}_L(t)]^{\frac{1}{2}}\,\ut_L^{\Delta t,+}(t)
\rightarrow \gt(t) \quad \mbox{weakly in } L^2(\Omega) \quad \mbox{as } L \rightarrow \infty
\nonumber \\
\Rightarrow \qquad &\liminf_{L \rightarrow \infty} \int_{\Omega} \rho^{\Delta t,+}_L(t)\,
|\ut^{\Delta t,+}_L(t)|^2 \dx \ge \int_{\Omega} |\gt(t)|^2 \dx.
\end{align}
It follows from (\ref{5-rhosconL2}) and (\ref{5-uwconL2a}) that
$\gt(t) = [\rho(t)]^{\frac{1}{2}}\,\ut(t)$.
\end{proof}

\noindent
\textbf{Acknowledgement}
ES was supported by the EPSRC Science and Innovation award to the Oxford Centre
for Nonlinear PDE (EP/E035027/1). We are grateful to Leonardo Figueroa for
supplying the image in Figure 1.

\bibliographystyle{siam}

\bibliography{polyjwbesrefs}

\appendix

\section{Equivalence of notions of solution to the continuity equation}\label{sec:apx}

The purpose of this Appendix is to show that the notion of solution that was used in \eqref{rho-reg}, \eqref{rhonL} coincides
with the notion of distributional solution to the continuity equation used by DiPerna \& Lions \cite{DPL}, whose work we referred to
following \eqref{fncon-1} for a proof of the existence of a unique solution to \eqref{rho-reg}, \eqref{rhonL}.

As previously, we shall assume throughout this Appendix that $\Omega$ is a bounded open Lipschitz domain in $\mathbb{R}^d$, $d \in \{2,3\}$. We shall suppose that $\bt \in L^2(0,T; \Vt)$ and $\rho_0 \in \Upsilon$ and we extend $\bt$ by $\zerot$ outside $\overline\Omega \times (0,T)$ to the whole of $\mathbb{R}^d \times (0,T)$.
The resulting function, denoted by  $\bt^*$, then belongs to $L^2(0,T;\Ht^1(\mathbb{R}^d))$
and is divergence-free on the whole of $\mathbb{R}^d\times (0,T)$. We define $\rho_0^*$ as being equal to $\rho_0$
on $\Omega$ and to $0$ in $\mathbb{R}^d\setminus \Omega$. Clearly, $\rho_0^* \in L^p(\mathbb{R}^d)$ for all $p \in [1,\infty]$.

We consider the transport equation
\[ \frac{\partial \rho^*}{\partial t} + \nabx \cdot (\bt^* \rho^*) = 0 \qquad \mbox{in $\mathbb{R}^d \times (0,T)$},\]
subject to the initial condition $\rho^*(\cdot,0) = \rho^*_0(\cdot)$. This initial-value problem is to
be understood in the sense of distributions of $\mathbb{R}^d \times (0,T)$; i.e.,
\begin{equation}\label{apx1}
-\int_0^T \int_{\mathbb{R}^d} \rho^\ast\, \frac{\partial \eta^*}{\partial t} \dx \dd t - \int_{\mathbb{R}^d}\rho^*_0\,\eta^*(\xt,0) \dx - \int_0^T \int_{\mathbb{R}^d} \bt^*\, \rho^* \cdot \nabx \eta^* \dx \dd t = 0\qquad
\end{equation}
for all test functions $\eta^* \in C^\infty(\mathbb{R}^d\times [0,T])$ with compact support in $\mathbb{R}^d\times [0,T)$;
we will denote this test space by $\mathcal{D}(\mathbb{R}^d \times [0,T))$ and note that $\mathcal{D}(\mathbb{R}^d \times [0,T))
= C^\infty_0([0,T); C^\infty_0(\mathbb{R}^d))$.

According to Proposition II.1 in the work of DiPerna \& Lions \cite{DPL}, there exists a solution
$\rho^\ast$ to the initial-value problem \eqref{apx1} in $L^\infty(0,T;L^p(\mathbb{R}^d))$ corresponding to the
initial condition $\rho^*_0 \in L^p(\mathbb{R}^d)$ for each $p \in [1,\infty]$.
Furthermore, by Corollary II.1 in \cite{DPL} the solution of \eqref{apx1} is unique in $L^\infty(0,T;L^p(\mathbb{R}^d))$
for all $p \in [1,\infty]$, and by Corollary II.2 in \cite{DPL}, $\rho^\ast \in C([0,T];L^p(\mathbb{R}^d))$ for all
$p \in [1,\infty)$.

Since $\bt^\ast = \zerot$ in $(\mathbb{R}^d \setminus\overline\Omega) \times (0,T)$ and $\rho^*_0=0$ in $\mathbb{R}^d \setminus\Omega$, trivially, $\rho^\ast = 0$ in $(\mathbb{R}^d\setminus\Omega) \times (0,T)$. Therefore, \eqref{apx1}
is equivalent to the following:
\begin{equation}\label{apx2}
-\int_0^T \int_{\Omega} \rho\, \frac{\partial \eta}{\partial t} \dx \dd t - \int_{\Omega}\rho_0\,\eta(\xt,0) \dx - \int_0^T \int_{\Omega} \rho\,\bt  \cdot \nabx \eta \dx \dd t = 0\qquad
\end{equation}
for all test functions $\eta \in C^\infty_0([0,T); C^\infty(\overline \Omega))$, where $\rho$ denotes the restriction of $\rho^*$
to $\Omega \times (0,T)$. Thus, the solution $\rho$ to \eqref{apx2}, subject to the initial datum $\rho_0 \in \Upsilon$, exists and is unique in $L^\infty(0,T;L^p(\Omega))$ for all $p\in [1,\infty]$, and it belongs to $C([0,T];L^p(\Omega))$ for all $p \in [1,\infty)$.

By selecting $\eta \in C^\infty_0((0,T); C^\infty(\overline \Omega))$ in \eqref{apx2}, the second term on the left-hand side
vanishes, and by noting that, thanks to the Sobolev embedding theorem,  $H^1(\Omega)$ is continuously embedded in $L^q(\Omega)$ for $q \in (2,\infty)$ when $d=2$ and for $q \in [3,6]$ when $d=3$, we then have that
\begin{equation}\label{apx3}
\left|-\int_0^T \int_{\Omega} \rho\, \frac{\partial \eta}{\partial t} \dx \dd t\right| \leq c(q)\,
\|\rho\|_{L^\infty(0,T;L^\infty(\Omega))}\,\|\bt\|_{L^2(0,T;L^q(\Omega))} \, \|\nabx \eta\|_{L^2(0,T;L^{\frac{q}{q-1}}(\Omega))},
\end{equation}
for all $\eta \in C^\infty_0((0,T); C^\infty(\overline \Omega))$, where $c(q)$ is a positive constant;
and hence, by the density of $C^\infty(\overline\Omega)$ in $W^{1,\frac{q}{q-1}}(\Omega)$, also for all
$\eta \in C^\infty_0((0,T); W^{1,\frac{q}{q-1}}(\Omega))$.

Let us consider, for $\bt \in L^2(0,T;\Vt)$ and $\rho_0 \in \Upsilon$ fixed,
and therefore $\rho \in L^\infty(0,T;L^\infty(\Omega))$ also fixed,
\[ \ell(\eta): = -\int_0^T \int_{\Omega} \rho\, \frac{\partial \eta}{\partial t} \dx \dd t,\qquad \eta \in
C^\infty_0((0,T); W^{1,\frac{q}{q-1}}(\Omega)).\]
By \eqref{apx3},
\[ |\ell(\eta)| \leq c(q,\rho,\bt)\, \|\eta\|_{L^2(0,T;W^{1,\frac{q}{q-1}}(\Omega))} \qquad \forall \eta \in
C^\infty_0((0,T); W^{1,\frac{q}{q-1}}(\Omega)).\]
By the Hahn--Banach theorem the continuous linear functional $\ell$ can be extended from the linear subspace
$C^\infty_0((0,T); W^{1,\frac{q}{q-1}}(\Omega))$ of $L^2(0,T;W^{1,\frac{q}{q-1}}(\Omega))$ to a continuous linear
functional on the whole of $L^2(0,T;W^{1,\frac{q}{q-1}}(\Omega))$, and since $C^\infty_0((0,T); W^{1,\frac{q}{q-1}}(\Omega))$ is dense in $L^2(0,T;W^{1,\frac{q}{q-1}}(\Omega))$ the extension is unique; we denote it by $\ell^*$. Hence,
$\ell^\ast(\eta) = \ell(\eta)$ for all $\eta \in C^\infty_0((0,T); W^{1,\frac{q}{q-1}}(\Omega))$, and
\[ |\ell^\ast(\eta)| \leq c(q,\rho,\bt)\, \|\eta\|_{L^2(0,T;W^{1,\frac{q}{q-1}}(\Omega))} \qquad \forall \eta \in
L^2(0,T; W^{1,\frac{q}{q-1}}(\Omega));\]
and therefore $\ell^* \in L^2(0,T; W^{1,\frac{q}{q-1}}(\Omega))'$. Since for the range of $q$ under consideration
both $W^{1,\frac{q}{q-1}}(\Omega)$ and $W^{1,\frac{q}{q-1}}(\Omega)'$ are reflexive Banach spaces, it
follows from a result of Bochner \& Taylor (cf. Remark 2.22.5 on p.125 in Kufner, John \& Fu\v{c}ik
\cite{KJF}; and a result of Gel'fand cited in item (b) on the same page in \cite{KJF})
that $L^2(0,T; W^{1,\frac{q}{q-1}}(\Omega))$ is a reflexive Banach space and
there exists an element $g \in L^2(0,T;W^{1,\frac{q}{q-1}}(\Omega)')$ such that
\[ \ell^\ast(\eta) = \int_0^T \left \langle g , \eta \right \rangle_{W^{1,\frac{q}{q-1}}(\Omega)} \dd t
\qquad \forall \eta \in L^2(0,T; W^{1,\frac{q}{q-1}}(\Omega)).\]
In particular since $\rho\, \frac{\partial \eta}{\partial t}$ is an integrable function on $\Omega$, we have that
\begin{eqnarray*}
&&\int_0^T \left\langle \rho , \frac{\partial \eta}{\partial t} \right\rangle_{W^{1,\frac{q}{q-1}}(\Omega)}
\dd t  = \int_0^T \left[\int_{\Omega} \rho\, \frac{\partial \eta}{\partial t} \dx \right] \dd t = -\ell(\eta)
= -\ell^\ast(\eta)
\nonumber\\
&&\qquad\qquad\qquad =-\int_0^T \left \langle g , \eta \right \rangle_{W^{1,\frac{q}{q-1}}(\Omega)} \dd t
\qquad \forall\eta \in C^\infty_0((0,T); W^{1,\frac{q}{q-1}}(\Omega)).
\end{eqnarray*}
Consequently, by selecting test functions $\eta$ of the form $\eta = \varphi(t)\, \xi(x)$,
where $\varphi \in C^\infty_0(0,T)$ and $\xi \in  W^{1,\frac{q}{q-1}}(\Omega)$, we deduce that
\begin{eqnarray*}
\int_0^T \left\langle \rho , \xi \right\rangle_{W^{1,\frac{q}{q-1}}(\Omega)} \frac{\dd \varphi}{\dd t}
\dd t  =-\int_0^T \left \langle g , \xi \right \rangle_{W^{1,\frac{q}{q-1}}(\Omega)} \varphi \dd t
\qquad \forall\varphi \in C^\infty_0(0,T),\; \forall \xi \in  W^{1,\frac{q}{q-1}}(\Omega),
\end{eqnarray*}
and therefore, by the bilinearity of the duality pairing $\langle \cdot , \cdot \rangle_{W^{1,\frac{q}{q-1}}(\Omega)}$, also
\begin{eqnarray*}
\int_0^T \left\langle \rho\,  \frac{\dd \varphi}{\dd t} , \xi \right\rangle_{W^{1,\frac{q}{q-1}}(\Omega)}
\dd t  =-\int_0^T \left \langle g\,  \varphi , \xi \right \rangle_{W^{1,\frac{q}{q-1}}(\Omega)}\dd t
\qquad \forall\varphi \in C^\infty_0(0,T),\; \forall \xi \in  W^{1,\frac{q}{q-1}}(\Omega).
\end{eqnarray*}
By applying Corollary \ref{Bochner4} to both sides of the last equality, noting in particular that $\rho$ and $g$ both belong to
$L^1(0,T;W^{1,\frac{q}{q-1}}(\Omega)')$, the latter since
$g \in L^2(0,T;W^{1,\frac{q}{q-1}}(\Omega)') \subset L^1(0,T;W^{1,\frac{q}{q-1}}(\Omega)')$, and the former since
$\rho \in L^\infty(0,T; L^\infty(\Omega))$, and $W^{1,\frac{q}{q-1}}(\Omega)
\subset L^1(\Omega)$ implies $L^\infty(\Omega) \subset W^{1,\frac{q}{q-1}}(\Omega)'$,
we deduce that
\begin{eqnarray*}
\left\langle \int_0^T \rho\,  \frac{\dd \varphi}{\dd t} \dd t, \xi \right \rangle_{W^{1,\frac{q}{q-1}}(\Omega)}
=\left \langle -\int_0^T  g\,  \varphi \dd t, \xi \right \rangle_{W^{1,\frac{q}{q-1}}(\Omega)}
\qquad \forall\varphi \in C^\infty_0(0,T),\; \forall \xi \in  W^{1,\frac{q}{q-1}}(\Omega).
\end{eqnarray*}
Hence,
\begin{eqnarray*}
\int_0^T \rho(t)\, \frac{\dd \varphi}{\dd t}(t) \dd t
=  -\int_0^T  g(t)\, \varphi(t) \dd t \qquad  \forall\varphi \in C^\infty_0(0,T),
\end{eqnarray*}
as an equality in the Banach space $W^{1,\frac{q}{q-1}}(\Omega)'$. By the implication (ii) $\Rightarrow$ (i) in Lemma
\ref{Bochner}, we then deduce that $\rho$ is almost everywhere on $(0,T)$ equal to the primitive of $g$; i.e., $g = \dd \rho/\dt$.
Therefore,
\[ \ell^*(\eta) = \int_0^T \left \langle \frac{\dd \rho}{\dt} , \eta \right \rangle_{W^{1,\frac{q}{q-1}}(\Omega)} \dd t
\qquad \forall \eta \in L^2(0,T; W^{1,\frac{q}{q-1}}(\Omega)),\]
and (reverting from the notation $\dd \rho/\dd t$ to $\partial \rho/\partial t$), in particular,
\[ - \int_0^T \int_\Omega \rho\, \frac{\partial \eta}{\partial t} \dx \dd t = \ell(\eta) = \ell^\ast(\eta) =
\int_0^T \left \langle \frac{\partial \rho}{\partial t} , \eta \right \rangle_{W^{1,\frac{q}{q-1}}(\Omega)} \dd t
\qquad \forall \eta \in C^\infty_0(0,T; W^{1,\frac{q}{q-1}}(\Omega)).\]
We now substitute this into \eqref{apx2} to deduce that
\begin{equation}\label{apx4}
\int_0^T \left \langle \frac{\partial \rho}{\partial t} , \eta \right \rangle_{W^{1,\frac{q}{q-1}}(\Omega)} \dd t - \int_0^T \int_{\Omega} \rho\,\bt \cdot \nabx \eta \dx \dd t = 0\qquad \forall \eta \in C^\infty_0(0,T; W^{1,\frac{q}{q-1}}(\Omega)).
\end{equation}
Hence, by density,
\begin{equation}\label{apx5}
\int_0^T \left \langle \frac{\partial \rho}{\partial t} , \eta \right \rangle_{W^{1,\frac{q}{q-1}}(\Omega)} \dd t - \int_0^T \int_{\Omega} \rho\,\bt \cdot \nabx \eta \dx \dd t = 0\qquad \forall \eta \in L^2(0,T; W^{1,\frac{q}{q-1}}(\Omega)).
\end{equation}
In particular if $\bt \in L^\infty(0,T;\Vt)$ (as is the case when $\bt\in \Vt$ is independent of $t$; see, for
  example \eqref{rhonL} where $\ut^{n-1} \in \Vt$ was used as $\bt$), the choice of the test function in \eqref{apx4} can be further relaxed thanks to
  the density of $C^\infty_0(0,T)$ in $L^1(0,T)$; so \eqref{apx4} then implies that
\begin{equation*}
\int_0^T \left \langle \frac{\partial \rho}{\partial t} , \eta \right \rangle_{W^{1,\frac{q}{q-1}}(\Omega)} \dd t - \int_0^T \int_{\Omega} \rho\,\bt \cdot \nabx \eta \dx \dd t = 0\qquad \forall \eta \in L^1(0,T; W^{1,\frac{q}{q-1}}(\Omega)),
\end{equation*}
and in this case $\partial\rho/\partial t \in L^\infty(0,T;W^{1,\frac{q}{q-1}}(\Omega)')$; i.e.,
$\rho \in W^{1,\infty}(0,T;W^{1,\frac{q}{q-1}}(\Omega)')$. This is the notion of solution that was used in \eqref{rho-reg}, \eqref{rhonL}.

Conversely, if $\rho \in L^\infty(0,T;L^\infty(\Omega)) \cap C([0,T]; L^p(\Omega)) \cap W^{1,\infty}(0,T;W^{1,\frac{q}{q-1}}(\Omega)')$ solves \eqref{apx5} subject to the initial
condition $\rho(\cdot,0) = \rho_0(\cdot) \in \Upsilon$, then, by choosing test functions $\eta$ of the form
$\eta(\xt,t) = \varphi(t)\, \xi(x)$, with $\varphi \in C^\infty_0(0,T)$ and $\xi \in C^\infty_0(\Omega)$,
we have from \eqref{apx4} that
\begin{equation}\label{apx7}
\int_0^T \left \langle \frac{\partial \rho}{\partial t} , \xi \right \rangle_{W^{1,\frac{q}{q-1}}(\Omega)} \varphi \dd t - \int_0^T \varphi\int_{\Omega} \rho\,\bt \cdot \nabx \xi  \dx \dd t = 0\quad \forall\varphi \in C^\infty_0(0,T),\; \forall \xi \in  C^\infty_0(\Omega).
\end{equation}
Since $\rho \in W^{1,\infty}(0,T;W^{1,\frac{q}{q-1}}(\Omega)')$ and so both $\rho$ and $g = \dd \rho/\dd t$ belong to $L^1(0,T;W^{1,\frac{q}{q-1}}(\Omega)')$, it follows from Corollary \ref{Bochner2} that
\[ \rho(t) = \rho_0 + \int_0^t \frac{\dd \rho}{\dd t}(s) \dd s,\qquad \rho_0 \in L^\infty(\Omega) \subset
W^{1,\frac{q}{q-1}}(\Omega)',\quad \mbox{for a.e. $t \in [0,T]$}.\]
The implication (i) $\Rightarrow$ (iii) of Lemma \ref{Bochner} applied to the first term in \eqref{apx7} then implies that \eqref{apx7} can be rewritten as
\begin{equation}\label{apx8}
\int_0^T \frac{\dd}{\dd t}\left \langle \rho , \xi \right \rangle_{W^{1,\frac{q}{q-1}}(\Omega)} \varphi \dd t - \int_0^T \varphi\int_{\Omega} \rho\,\bt \cdot \nabx \xi  \dx \dd t = 0 \quad\forall\varphi \in C^\infty_0(0,T),\; \forall \xi \in  C^\infty_0(\Omega).
\end{equation}
After partial integration in the first term in \eqref{apx8} we have that
\begin{equation}\label{apx9}
-\int_0^T\left \langle \rho , \xi \right \rangle_{W^{1,\frac{q}{q-1}}(\Omega)}  \frac{\dd \varphi}{\dd t} \dd t - \int_0^T \varphi\int_{\Omega} \rho\,\bt \cdot \nabx \xi  \dx \dd t = 0 \quad\forall\varphi \in C^\infty_0(0,T),\; \forall \xi \in  C^\infty_0(\Omega),
\end{equation}
and then, by rewriting the duality pairing in the first term of \eqref{apx9}
as an integral over $\Omega$ using that the product $\rho\, \xi$ is integrable on $\Omega$,
\begin{equation}\label{apx10a}
-\int_0^T\left[\int_\Omega \rho \, \xi \dd x \right] \frac{\dd \varphi}{\dd t} \dd t - \int_0^T \varphi\int_{\Omega}
\rho\,\bt \cdot \nabx \xi  \dx \dd t = 0\quad  \forall\varphi \in C^\infty_0(0,T),\; \forall \xi \in  C^\infty_0(\Omega).
\end{equation}
Upon rearrangement \eqref{apx10a} becomes
\begin{equation}\label{apx10}
-\int_0^T \int_\Omega \rho \, \frac{\partial (\varphi\, \xi)}{\partial t} \dd t - \int_0^T \int_{\Omega}
\rho\,\bt \cdot \nabx (\varphi\, \xi)  \dx \dd t = 0\quad  \forall\varphi \in C^\infty_0(0,T),\; \forall \xi \in  C^\infty_0(\Omega).
\end{equation}
The set
\begin{eqnarray*}
\bigg\{\eta \in C^\infty_0(\Omega \times (0,T))\,:\,\eta(\xt,t)=\sum_{1 \leq i \leq N}\varphi_i(t)\, \xi_i(\xt),\quad
\mbox{for some $N \in \mathbb{N}$}\nonumber\\
\mbox{and some $\varphi_i \in C^\infty_0(0,T)$, $\xi_i(x) \in C^\infty_0(\Omega)$, $i=1, \dots, N$}\bigg\}
\end{eqnarray*}
is dense in $C^\infty_0(\Omega \times (0,T))$ (cf. Lemma in Ch~1., \P3, Sec.~2 of Vladimirov \cite{Vladimirov}, for example). We thus
deduce from \eqref{apx10} with $\varphi$ and $\xi$ replaced by $\varphi_i$ and $\xi_i$ respectively, forming
finite linear combinations of the resulting equations, and then using a density argument, that
\begin{equation}\label{apx11}
-\int_0^T \int_\Omega \rho \, \frac{\partial \eta}{\partial t} \dd t - \int_0^T \int_{\Omega}
\rho\,\bt \cdot \nabx \eta  \dx \dd t = 0\qquad  \forall\eta \in C^\infty_0(\Omega \times (0,T)).
\end{equation}
This implies that
$\rho \in L^\infty(0,T;L^\infty(\Omega)) \cap C([0,T]; L^p(\Omega)) \cap W^{1,\infty}(0,T;W^{1,\frac{q}{q-1}}(\Omega)')$
satisfies
\[ \frac{\partial \rho}{\partial t} + \nabx \cdot (\bt \rho) = 0 \qquad \mbox{in $\mathcal{D}'(\Omega \times (0,T))$},
\]
together with the initial condition $\rho(\cdot,0) = \rho_0(\cdot)$, where  $\rho_0\in \Upsilon$,
i.e., that $\rho \in L^\infty(0,T;L^\infty(\Omega)) \cap C([0,T]; L^p(\Omega)) \cap W^{1,\infty}(0,T;W^{1,\frac{q}{q-1}}(\Omega)')$ is a solution to the continuity equation in the sense of distributions on $\Omega \times (0,T)$.

Alternatively, on extending $\bt$ by $\zerot$ to $\bt^*$ and $\rho_0$ and $\rho$ by $0$ to $\rho^*_0$ and $\rho^*$, respectively,
as at the start of this Appendix, we deduce from \eqref{apx11} that $\rho^*$ satisfies
\[ \frac{\partial \rho^*}{\partial t} + \nabx \cdot (\bt^* \rho^*) = 0 \qquad \mbox{in $\mathcal{D}'(\mathbb{R}^d \times (0,T))$},
\]
together with the initial condition $\rho^*(\cdot,0) = \rho_0^*(\cdot)$, where  $\rho_0^*\in L^p(\mathbb{R}^d)$ for all
$p \in [1,\infty]$,
and $\rho^\ast \in L^\infty(0,T;L^p(\mathbb{R}^d))$ with $p \in [1,\infty]$; $\rho^*\in C([0,T]; L^p(\mathbb{R}^d)) \cap W^{1,\infty}(0,T;W^{1,\frac{q}{q-1}}(\mathbb{R}^d)')$ for $p \in [1,\infty)$ and for the range of $q$ under consideration.
Hence, $\rho^\ast$ is a solution to the continuity equation in the sense of distributions on $\mathbb{R}^d \times (0,T)$, as in \cite{DPL}.

~\\

~\hfill{\footnotesize \textit{London \& Oxford}

~\hfill{\footnotesize \textit{20th December 2011}}

~\\

~\\


\end{document}
